\definecolor{rev1}{HTML}{cb270f}
\definecolor{rev2}{HTML}{1c8235}
\numberwithin{equation}{section}
\newtheorem{lemma}{Lemma}
\newtheorem{proposition}[lemma]{Proposition}
\newtheorem{theorem}[lemma]{Theorem}
\numberwithin{lemma}{section}
\newtheorem{corollary}[lemma]{Corollary}
\newtheorem{definition}[lemma]{Definition}
\theoremstyle{definition}
\newtheorem{remark}[lemma]{Remark}
\theoremstyle{definition}
\theoremstyle{definition}
\newtheorem{example}[lemma]{Example}
\theoremstyle{definition}
\newcommand*{\MA}[1]{{\color{magenta}#1}}
\newenvironment{changemargin}[2]{%
  \begin{list}{}{%
    \setlength{\topsep}{0pt}%
    \setlength{\leftmargin}{#1}%
    \setlength{\rightmargin}{#2}%
    \setlength{\listparindent}{\parindent}%
    \setlength{\itemindent}{\parindent}%
    \setlength{\parsep}{\parskip}%
  }%
  \item[]}{\end{list}}
\begin{document}
\RestyleAlgo{boxruled}

\title[Foundations of spectral computations]{The foundations of spectral computations via the Solvability Complexity Index hierarchy}

\author{Matthew J. Colbrook} 
\address{Department of Applied Mathematics and Theoretical Physics, University of Cambridge}
\email{m.colbrook@damtp.cam.ac.uk \normalfont(corresponding author)}

\author{Anders C. Hansen} 
\email{a.hansen@damtp.cam.ac.uk}


\begin{abstract}
The problem of computing spectra of operators is arguably one of the most investigated areas of computational mathematics. However, the problem of computing spectra of general bounded infinite matrices has only recently been solved. We establish some of the foundations of computational spectral theory through the Solvability Complexity Index (SCI) hierarchy, an approach closely related to Smale's program on the foundations of computational mathematics and McMullen's results on polynomial root finding with rational maps. Infinite-dimensional problems yield an intricate infinite classification theory, determining which spectral problems can be solved and with what types of algorithms. We provide answers to many longstanding open questions on the existence of algorithms. For example, we show that spectra can be computed, with error control, from point sampling operator coefficients for large classes of partial differential operators on unbounded domains. Further results include: computing spectra of (possibly unbounded) operators on graphs and separable Hilbert spaces with error control; determining if the spectrum intersects a compact set; the computational spectral gap problem and computing spectral classifications at the bottom of the spectrum; and computing discrete spectra, multiplicities, eigenspaces and determining if the discrete spectrum is non-empty. Moreover, the positive results with error control can be used in computer-assisted proofs. In contrast, the negative results preclude computer-assisted proofs for classes of operators as a whole. Our proofs are constructive, yielding a library of new algorithms and techniques that handle problems that before were out of reach. We demonstrate these algorithms on challenging problems, giving concrete examples of the failure of traditional approaches (e.g., ``spectral pollution'') compared to the introduced techniques.
\end{abstract}

\keywords{Computational spectral problem, Solvability Complexity Index hierarchy, Smale's program on the foundations of computational mathematics, computer-assisted proofs\\
\indent\textup{2020} \textit{Mathematics Subject Classification.} {46N40, 47A10, 35P15, 65L15, 65N25}}

\maketitle

\tableofcontents

\section{Introduction}
\label{intro}

The problem of computing spectra of operators has fascinated yet frustrated mathematicians for several decades, resulting in a vast literature (see \S \ref{sec:connection_work}). Indeed, W. Arveson pointed out in the nineties that: ``{\it Unfortunately, there is a dearth of literature on this basic problem, and so far as we have been able to tell, there are no proven techniques}'' \cite{Arveson_role_of94}. This longstanding problem for general infinite matrices has recently been addressed \cite{ben2015can, hansen2011solvability}. Arveson's question, of why ``there are no proven techniques'', can be explained by classification results in the newly established Solvability Complexity Index (SCI) hierarchy \cite{opt_big, CRAS_SCI, Hansen2016ComplexityII, colbrook2021computing,colb1,colbrook4,colbrook2020foundations,colb2,ben2015can,hansen2011solvability, colbrook2021computingSIREV, colbrook2021can}. The fact that algorithms were not found for the general computational spectral problem has a potentially surprising cause: one needs several limits in the computation. Traditional approaches have been dominated by techniques based on one limit, and this is the reason behind Arveson's observation. Moreover, the fact that several limits are required is a phenomenon shared by other areas of computational mathematics. For example, the problem of root-finding of polynomials with rational maps initiated by S. Smale \cite{smale_question} is also subject to the issue of requiring several limits. This result was established by C. McMullen \cite{McMullen1, mcmullen1988braiding} and P. Doyle \& C. McMullen in \cite{Doyle_McMullen}, and their results become classification results in the SCI hierarchy.   

Recent results establishing the SCI hierarchy 
 \cite{opt_big, CRAS_SCI, Hansen2016ComplexityII, colbrook2021computing,colb1,colbrook4,colbrook2020foundations,colb2,ben2015can,hansen2011solvability, colbrook2021computingSIREV, colbrook2021can} reveal that the computational spectral problem becomes an infinite classification theory. There is a vast well of open problems, some of which have been open for decades. For example, the following issue, even when neglecting the requirement of an error parameter, has been open since the early days of spectral computations in the 1950s:

\vspace{1mm}
\begin{displayquote}
	\normalsize
	{\it 
	For which classes of differential operators on unbounded domains do there exist algorithms that converge to the true spectrum, and also guarantee that the output is in the spectrum up to an arbitrary small $\epsilon>0$ parameter (the problem is in $\Sigma_1$ in the SCI hierarchy language)? In other words, the algorithm is verifiable and will never make a mistake. 
}
\end{displayquote}
\vspace{1mm}

A vast literature on computing spectra of differential operators on bounded domains exists. However, these techniques will typically yield non-convergent methods in the unbounded domain case. Even for bounded domains, obtaining error bounds is, in general, well known to be very difficult. The purpose of this paper is to provide solutions to such problems, and this program has three main motivations:
\begin{labeling}{\,\,}
\item[\,\,\,\,\,(I) \textit{Classifications and lower bounds}:] 
Sharp classifications of problems in the SCI hierarchy establish the boundaries of what computers can achieve. Such classifications give a precise measure of the difficulty\footnote{We are referring here to difficulty in terms of \textit{computability}. This is different to computational \textit{complexity}, which only makes sense for $\Delta_1$ problems (i.e., problems for which there exists an algorithm that given $\epsilon$ produces an $\epsilon$-accurate solution). By simply considering diagonal infinite matrices, it is
easy to see that most infinite-dimensional spectral problems of interest are $\notin\Delta_1$.} of a computational problem and prevent the search for algorithms that cannot exist.

\vspace{1mm}

\item[\,\,\,\,\,(II) \textit{New algorithms}:] 
Constructive classifications, which we always provide in this paper, provide algorithms that realise these boundaries. Such algorithms solve problems in the sciences that before were not possible. We provide several examples in this paper.

\vspace{1mm}

\item[\,\,\,\,\,(III) \textit{Computer-assisted proofs}:] Computer-assisted proofs use computers to solve numerical problems rigorously. These have become essential in modern mathematics. What may be surprising is that undecidable (non-computable) problems can be used in computer-assisted proofs. For example, led by T. Hales, the recent proof of Kepler's conjecture (Hilbert's 18th problem) on optimal packings of $3$-spheres relies on such undecidable problems \cite{Hales_Annals, hales_Pi}. Another example is the Dirac--Schwinger conjecture on the asymptotic behaviour of ground states of certain Schr\"{o}dinger operators. This conjecture was proven in a series of papers by C. Fefferman and L. Seco \cite{fefferman1990, fefferman1992, fefferman1993aperiodicity,  fefferman1994, fefferman1994_2, fefferman1995, fefferman1996interval, fefferman1996, fefferman1997} using computer assistance. Fascinatingly, this proof also relies on computing non-computable problems. The SCI hierarchy explains this apparent paradox. In particular, the $\Sigma^A_1$ class described below is crucial. Hales, Fefferman and Seco implicitly prove $\Sigma^A_1$ classifications in the SCI hierarchy in their papers. Our classifications of spectral problems provide new results on which spectral problems can be used in computer-assisted proofs.
\end{labeling}

Table \ref{result_table} provides a summary of the main results of this paper. \S \ref{fhwoiuhg} contains the theorems, and we focus on the following four important open problems:

\begin{labeling}{\,\,}
\item[\,\,\,\,\,(i) \textit{Computing spectra of differential operators}.] Linked to computational PDE theory, there is a rich literature on computing spectra of differential operators on bounded domains (see \cite{Rappaz1,rappaz1997spectral,boffi2000problem,Boffi2,Annalisa2,zhao2007spurious,Annalisa5,Annalisa3,Snorre1} for a small sample). However, in general, it is unknown how to compute spectra of differential operators on unbounded domains. We provide a sharp solution to this problem for large classes of differential operators, realising the boundary of what computers can achieve. We provide convergent algorithms that are also guaranteed to produce an output contained in the spectrum, up to an arbitrarily small error chosen by the user. As such, these algorithms can be used in computer-assisted proofs.

\vspace{1mm}

\item[\,\,\,\,\,(ii) \textit{Computing spectra of unbounded operators on graphs}.] Operators on $l^2(\mathbb{N})$ and, more generally, graphs or lattices are ubiquitous in mathematics and physics. We establish sharp classifications of spectral problems for such operators. In many cases, we provide convergent algorithms with guaranteed error control on the output. Hence these algorithms may be used in computer-assisted proofs. We also consider the decision problem of determining whether spectra (or pseudospectra) intersect a given compact set.

\vspace{1mm}

\item[\,\,\,\,\,(iii) \textit{The spectral gap problem}.] 
The spectral gap problem has a long tradition. It is linked to many important conjectures and problems, such as the Haldane conjecture \cite{golinelli1994finite} and the Yang--Mills mass gap problem in quantum field theory \cite{bombieri2006millennium}. The problem consists of determining whether there is a gap between the lowest element in the spectrum and the next element. We show why this problem is notoriously difficult. The problem is higher up in the SCI hierarchy, even for the simplest of operators. This result means that no algorithm can provide verifiable results on a computer. Hence, these problems cannot be used in computer-assisted proofs without further (typically \textit{global}) assumptions on the class of operators. We extend this result to spectral classification at the bottom of the spectrum.

\vspace{1mm}

\item[\,\,\,\,\,(iv) \textit{Computing discrete spectra and multiplicities}.] Computing discrete spectra is a notoriously difficult problem and previous numerical approaches have found it very difficult to do this reliably, even for special classes of one-dimensional operators (see \S \ref{sec:spec_class_lit_added}). We demonstrate why this is a difficult problem by establishing the correct classification high up in the SCI hierarchy. However, the sharp algorithm we provide is still practical. Its first limit is always contained in the discrete spectrum, and one can obtain the distance of each point of the output to the spectrum. We extend these results to computing multiplicities, eigenspaces, and determining if the discrete spectrum is non-empty.
\end{labeling}

\begin{table}
\begin{center}
\addtolength{\leftskip} {-1.6cm}
    \addtolength{\rightskip}{-1.6cm}
\begin{tabular}{|p{7.6cm}|p{6.2cm}|l|}
\hline
 \rule{0pt}{1.1em}\textbf{Problem Description}&\textbf{SCI Hierarchy Classification}&\textbf{Theorems}\\
 \hline
 \hline

\rule{0pt}{1.1em}Computing spectrum/pseudospectrum of differential operators, whose coefficients have bounded total variation, from point evaluations of coefficients.&$\in\Sigma_1^A$, $\notin\Delta_1^G$

(see Theorem for relaxations)&\ref{PDE1}\\
\hline
\rule{0pt}{1.1em}Computing spectrum/pseudospectrum of differential operators, whose coefficients are entire, from power series of coefficients.&$\in\Sigma_1^A$, $\notin\Delta_1^G$

(see Theorem for relaxations)&\ref{PDE2}\\
\hline

 \rule{0pt}{1.1em}Computing spectrum/pseudospectrum of unbounded operators with known bounded dispersion and known resolvent bound.&$\in\Sigma_1^A$, $\notin\Delta_1^G$

(same for diagonal operators)&\ref{unbounded_theorem}\\
\hline

\rule{0pt}{1.1em}Determining if the spectrum/pseudospectrum of an operator with known bounded dispersion intersects a compact set.&$\in\Pi_2^A$, $\notin\Delta_2^G$

(same for diagonal operators)&\ref{unbounded_test_theorem}\\
\hline
\rule{0pt}{1.1em}Spectral gap problem.&$\in\Sigma_2^A$, $\notin\Delta_2^G$

(same for diagonal operators)&\ref{class_thdfjlwdjfkl}\\
\hline
 \rule{0pt}{1.1em}Spectral classification problem.&$\in\Pi_2^A$, $\notin\Delta_2^G$

(same for diagonal operators)&\ref{class_thdfjlwdjfkl}\\
 \hline
\rule{0pt}{1.1em}Computing $\mathrm{cl}(\mathrm{Sp}_d(A))$ (and multiplicities of eigenvalues) for bounded normal operators. Here, $\mathrm{Sp}_d(A)$ denotes the discrete spectrum of $A$.&With bounded dispersion: $\in\Sigma_2^A$, $\notin\Delta_2^G$

(same for diagonal operators)

Multiplicities: $\in\Pi_2^A$

Without bounded dispersion: $\in\Sigma_3^A$, $\notin\Delta_3^G$ &\ref{discreteojoiojo},  \ref{discrete_dont_now_f}\\
\hline
 \rule{0pt}{1.1em}Determining if the discrete spectrum is non-empty for bounded normal operators.&With bounded dispersion: $\in\Sigma_2^A$, $\notin\Delta_2^G$

Without bounded dispersion: $\in\Sigma_3^A$, $\notin\Delta_3^G$&\ref{discreteojoiojo},  \ref{discrete_dont_now_f}\\

\hline

\end{tabular}
\vspace{3mm}
\caption{Summary of the main results. Bounded dispersion means that we know the asymptotic off-diagonal decay of suitable matrix elements of the operator, see \eqref{bd_disp}. Known resolvent bound means control of the growth of the resolvent $(A-zI)^{-1}$ near the spectrum, see \eqref{comp_ball_gs2} and \eqref{comp_ball_gs}. Appendix \ref{append_pseudo} provides pseudocode for the algorithms.} 
\vspace{-7mm}
\label{result_table}
\end{center}
\end{table}

The rest of this paper is organised as follows. In \S \ref{fdgjoej} we provide a brief summary of the SCI hierarchy to allow the interpretation of Table \ref{result_table} and theorems, with a detailed discussion of the hierarchy delayed until \S \ref{SCI_Hierarchy}. The main results are given in \S \ref{fhwoiuhg} with connections to previous work provided in \S \ref{sec:connection_work}. Proofs are given in \S \ref{pf_unb_gr} -- \S \ref{Sec:proof_spec_gap}. Finally, some computational examples are given in \S \ref{num_test}, and pseudocode is provided in Appendix \ref{append_pseudo}.

\section{Classifications in the SCI Hierarchy}
\label{fdgjoej}

\subsection{The SCI hierarchy}\label{sec:SCI_hierarchy}

We start with the definition of a computational problem. The basic objects of a computational problem are:
$\Omega$, called the \emph{domain}, $\Lambda$ a set of complex-valued functions on $\Omega$, called the \emph{evaluation set}, $(\mathcal{M},d)$ a metric space, and $\Xi:\Omega\to \mathcal{M}$ the \emph{problem function}. The set $\Omega$ is the set of objects that give rise to our computational problems. The problem function $\Xi : \Omega\to \mathcal{M}$ describes what we want to compute (with the metric space giving the notion of convergence). Finally, $\Lambda$ is the collection of functions that provide the information we allow algorithms to read as input.

\begin{definition}[Computational problem]\label{def:comp_prob}
Given (i) a domain $\Omega$, (ii) an evaluation set $\Lambda$, such that for any $A_1, A_2 \in \Omega$, $A_1 = A_2$ if and only if $f(A_1) = f(A_2)$ for all $f \in \Lambda$, (iii) a metric space $\mathcal{M}$, and (iv) a problem function $\Xi:\Omega\to\mathcal{M}$, we call the collection $\{\Xi,\Omega,\mathcal{M},\Lambda\}$ a computational problem.
\end{definition}

The definition of a computational problem is deliberately general to capture any computational problem in the literature. The set-up of this paper has the following typical form: $\Omega$ is a class of operators on a separable Hilbert space $\mathcal{H}$, $\Xi(A) = \mathrm{Sp}(A)$ (the spectrum or other related maps), $(\mathcal{M},d)$ is the collection of closed subsets of $\mathbb{C}$ with an appropriate generalisation of the Hausdorff metric (see \eqref{Hausdorff} and \eqref{eq:Attouch-Wets}), and $\Lambda$ may be the set of complex functions that could provide the matrix elements of $A \in \Omega$ given some orthonormal basis $\{e_j\}$ of $\mathcal{H}$. For example, $\Lambda$ could consist of $f_{i,j}: A\mapsto \langle Ae_j,e_i\rangle$, $i,j\in\mathbb{N}$, the entries of the matrix representation of $A$ with respect to the basis. As another example, $\Lambda$ could be the collection of functions providing point samples of a potential (or coefficient) function of a Schr\"{o}dinger (or more general) partial differential operator. 

The SCI of a class of computational problems is the smallest number of limits needed to compute the solution to the problem. The SCI hierarchy for spectral problems can be informally described as follows \cite{hansen2011solvability,ben2015can,colbrook2020foundations}. For decision problems, the description is similar (see \S \ref{SCI_Hierarchy} for the formal definitions).

{\bf The SCI hierarchy:}
Given a collection $\mathcal{C}$ of computational problems,
\begin{itemize}
\item[(i)] $\Delta^{\alpha}_0 = \Pi^{\alpha}_0 = \Sigma^{\alpha}_0$ is the set of problems that can be computed in finite time, the SCI $=0$.
\item[(ii)] $\Delta^{\alpha}_1$ is the set of problems that can be computed using one limit (the SCI $=1$) with control of the error, i.e., there exists a sequence of algorithms $\{\Gamma_n\}$ such that $d(\Gamma_n(A), \Xi(A)) \leq 2^{-n}, \, \forall A \in \Omega$.
\item[(iii)] $\Sigma^{\alpha}_1$ is the set of problems for which there exists a sequence of algorithms $\{\Gamma_n\}$, such that $\lim_{n\rightarrow\infty}\Gamma_n(A)=\Xi(A), \, \forall A \in \Omega$. Moreover,  $\Gamma_n(A)$ is always contained in a set $X_n(A)$ such that $d(X_n,\Xi(A))\leq 2^{-n}$. We have $\Delta^{\alpha}_1 \subset \Sigma^{\alpha}_1 \subset \Delta^{\alpha}_2 $ (where $\Delta^{\alpha}_2$ is described below).
\item[(iv)] $\Pi^{\alpha}_1$ is the set of problems for which there exists a sequence of algorithms $\{\Gamma_n\}$, such that $\lim_{n\rightarrow\infty}\Gamma_n(A)=\Xi(A), \, \forall A \in \Omega$. Moreover,  there exists sets $X_n(A)$ such that $\Xi(A)\subset X_n(A)$ and $d(X_n,\Gamma_n(A))\leq 2^{-n}$. We have $\Delta^{\alpha}_1 \subset \Pi^{\alpha}_1 \subset \Delta^{\alpha}_2 $ (where $\Delta^{\alpha}_2$ is described below).
\item[(v)] $\Delta^{\alpha}_2$ is the set of problems that can be computed using one limit (the SCI $=1$) without the requirement of error control, i.e., there exists a sequence of algorithms $\{\Gamma_n\}$ such that $\lim_{n\rightarrow \infty}\Gamma_n(A) = \Xi(A), \, \forall A \in \Omega$.
\item[(vi)] $\Delta^{\alpha}_{m+1}$, for $m \in \mathbb{N}$, is the set of problems that can be computed by using $m$ limits, (the SCI $\leq m$), i.e., there exists a family of algorithms $\{\Gamma_{n_m, \hdots, n_1}\}$ such that 
$$
\lim_{n_m \rightarrow\infty}\hdots \lim_{n_1\rightarrow\infty}\Gamma_{n_m,\hdots, n_1}(A) = \Xi(A), \quad\, \forall A \in \Omega.
$$
\item[(vii)] $\Sigma^{\alpha}_{m}$ is the set of problems that can be computed by passing to $m$ limits, and computing the $m$th limit is a $\Sigma^{\alpha}_1$ problem.
\item[(viii)] $\Pi^{\alpha}_{m}$ is the set of problems that can be computed by passing to $m$ limits, and computing the $m$th limit is a $\Pi^{\alpha}_1$ problem. 
 \end{itemize}
 
Schematically, the SCI hierarchy can be viewed in the following way.

\begin{equation}\label{SCI_hierarchy}
\begin{tikzpicture}[baseline=(current  bounding  box.center)]
  \matrix (m) [matrix of math nodes,row sep=1.2em,column sep=1.5em] {
  \Pi_0^{\alpha}   &                    & \MA{\Pi_1^{\alpha}} &    &  \Pi_2^{\alpha}&  & {}\\
  \Delta_0^{\alpha}&  \Delta_1^{\alpha} & \Sigma_1^{\alpha}\cup\Pi_1^{\alpha} & \Delta_2^{\alpha}&      \Sigma_2^{\alpha}\cup\Pi_2^{\alpha} & \Delta_3^{\alpha}& \cdots\\
	\Sigma_0^{\alpha}&                    & \MA{\Sigma_1^{\alpha}} & &  \Sigma_2^{\alpha}&  &{} \\
  };
 \path[-stealth, auto] (m-1-1) edge[draw=none]
                                    node [sloped, auto=false,
                                     allow upside down] {$=$} (m-2-1)
																		(m-3-1) edge[draw=none]
                                    node [sloped, auto=false,
                                     allow upside down] {$=$} (m-2-1)
																		
																		(m-2-2) edge[draw=none]
                                    node [sloped, auto=false,
                                     allow upside down] {$\subsetneq$} (m-2-3)
																		(m-2-3) edge[draw=none]
                                    node [sloped, auto=false,
                                     allow upside down] {$\subsetneq$} (m-2-4)
																		(m-2-4) edge[draw=none]
                                    node [sloped, auto=false,
                                     allow upside down] {$\subsetneq$} (m-2-5)
																		(m-2-5) edge[draw=none]
                                    node [sloped, auto=false,
                                     allow upside down] {$\subsetneq$} (m-2-6)
																		(m-2-6) edge[draw=none]
                                    node [sloped, auto=false,
                                     allow upside down] {$\subsetneq$} (m-2-7)

												(m-2-1) edge[draw=none]
                                    node [sloped, auto=false,
                                     allow upside down] {$\subsetneq$} (m-2-2)
											 (m-2-2) edge[draw=none]
                                    node [sloped, auto=false,
                                     allow upside down] {$\subsetneq$} (m-1-3)
											(m-2-2) edge[draw=none]
                                    node [sloped, auto=false,
                                     allow upside down] {$\subsetneq$} (m-3-3)
											 (m-1-3) edge[draw=none]
                                    node [sloped, auto=false,
                                     allow upside down] {$\subsetneq$} (m-2-4)
																		(m-3-3) edge[draw=none]
                                    node [sloped, auto=false,
                                     allow upside down] {$\subsetneq$} (m-2-4)
																		(m-2-4) edge[draw=none]
                                    node [sloped, auto=false,
                                     allow upside down] {$\subsetneq$} (m-1-5)
											(m-2-4) edge[draw=none]
                                    node [sloped, auto=false,
                                     allow upside down] {$\subsetneq$} (m-3-5)
																		(m-1-5) edge[draw=none]
                                    node [sloped, auto=false,
                                     allow upside down] {$\subsetneq$} (m-2-6)
																		(m-3-5) edge[draw=none]
                                    node [sloped, auto=false,
                                     allow upside down] {$\subsetneq$} (m-2-6)
											(m-2-6) edge[draw=none]
                                    node [sloped, auto=false,
                                     allow upside down] {$\subsetneq$} (m-1-7)
																		(m-2-6) edge[draw=none]
                                    node [sloped, auto=false,
                                     allow upside down] {$\subsetneq$} (m-3-7);
																		
\end{tikzpicture}
\end{equation}
The $\Sigma_1^{\alpha}$ and $\Pi_1^{\alpha}$ classes are crucial in computer-assisted proofs, since they guarantee algorithms that will not make mistakes (see \S \ref{sec:comp_ass_proofs}). Figure \ref{sigma_meaning} shows the $\Sigma_1^{\alpha}$ and $\Pi_1^{\alpha}$ classes for the Hausdorff metric.

\begin{figure}
\vspace{4mm}
\centering
\begin{minipage}[b]{0.9\textwidth}
    \begin{overpic}[width=1\textwidth,trim={0mm 0mm 0mm 0mm},clip]{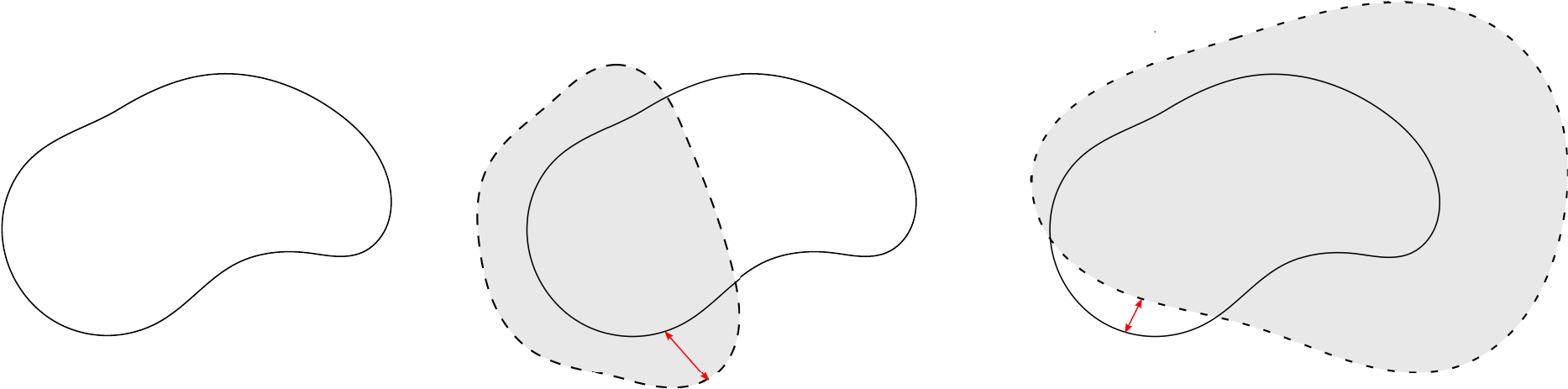}
		\put (9,12) {$\displaystyle \Xi(A)$}
		\put (38,26) {$\displaystyle \Sigma_1$ convergence}
		\put (37,9) {$\displaystyle \Gamma_n(A)$}
		\put (48,0.5) {$\displaystyle\leq 2^{-n}$}
		\put(47.8,1.5){\line(-3,1){4}}
		\put (75,26) {$\displaystyle \Pi_1$ convergence}
		\put (72,11) {$\displaystyle \Gamma_n(A)$}
		\put (75.7,0.5) {$\displaystyle\leq 2^{-n}$}
		\put(75.5,1.5){\line(-1,1){3}}
     \end{overpic}
\end{minipage}
\caption{Meaning of $\Sigma_1$ and $\Pi_1$ convergence for problem function $\Xi$ computed in the Hausdorff metric. The left plot shows the desired set $\Xi(A)$. The shaded areas show the output of the algorithm $\Gamma_n(A)$. $\Sigma_1$ convergence (middle plot) means convergence as $n\rightarrow\infty$ and each output point in $\Gamma_n(A)$ is at most distance $2^{-n}$ from $\Xi(A)$. Similarly, in the case of $\Pi_1$ (right plot), we have convergence as $n\rightarrow\infty$ and any point in $\Xi(A)$ is at most distance $2^{-n}$ from $\Gamma_n(A)$.}
\label{sigma_meaning}
\end{figure}

\begin{remark}[The model of computation $\alpha$]
The $\alpha$ in the superscript indicates the model of computation, which is described in \S \ref{SCI_Hierarchy}. For $\alpha = G$, the underlying algorithm is general and can use any tools at its disposal. The reader may think of a Blum--Shub--Smale (BSS) machine \cite{BCSS} or a Turing machine \cite{turing1937computable} (a general algorithm is more powerful than either model). However, $\alpha = A$ means that only arithmetic operations and comparisons are allowed. In particular, if rational inputs are considered, the algorithm is a Turing machine, and in the case of real inputs, a BSS machine. Hence, a result of the form
$$
\notin \Delta_k^G \text{ is stronger than } \notin \Delta_k^A.
$$  
Indeed, a $\notin \Delta_k^G$ result is universal and holds for any model of computation. Similarly, 
$$
\in \Delta_k^A \text{ is stronger than } \in \Delta_k^G.
$$
Of course, these comments also hold for the $\Pi_k$ and $\Sigma_k$ classes. In this paper, we always prove lower bounds for $\alpha = G$ and upper bounds for $\alpha = A$ (Table \ref{result_table}). Hence, we combine the strongest forms of results in terms of models of computation.
\end{remark}

\subsection{The SCI hierarchy and computer-assisted proofs}
\label{sec:comp_ass_proofs}
$\Delta_1^A$ is the class of problems that are computable according to Turing's definition of computability \cite{turing1937computable}. In particular, there exists an algorithm such that for any $\epsilon > 0$, the algorithm can produce an $\epsilon$-accurate output. Unlike the finite-dimensional case, most infinite-dimensional spectral problems are not in $ \Delta_1^A.$ The simplest way to see this is to consider the problem of computing spectra of infinite diagonal matrices. This problem is the simplest of infinite computational spectral problems, but it does not lie in $\Delta_1^A$. Hence, it should come as no surprise that very few interesting infinite-dimensional spectral problems are actually in $\Delta_1^A$. Instead, most existing results on spectral computations provide algorithms that yield $\Delta_2^A$ classification results. This means that an algorithm will converge, but error control may not be possible. 

Problems that are not in $\Delta_1^A$ are computed daily in the sciences, simply because numerical simulations may be suggestive rather than providing a rock-solid truth. Moreover, the lack of error control may be compensated for by comparing with experiments. However, this is not possible in computer-assisted proofs, where $100\%$ rigour is the only approach accepted.  It may be surprising that famous conjectures have been proven with numerical calculations of problems that are not in $\Delta_1^A$. A striking example is the proof of Kepler's conjecture \cite{Hales_Annals, hales_Pi}, where the decision problems computed are not in $\Delta_1^A$. The decision problems are of the form of deciding feasibility of linear programs given irrational inputs (shown in \cite{opt_big} to not lie in $\Delta_1^A$). Similarly, to prove the Dirac-Schwinger conjecture, asymptotics of the ground state of the operator 
\[
H_{dZ} = \sum_{k=1}^d(-\Delta_{x_k} - Z|x_k|^{-1}) + \sum_{1\leq j \leq k \leq d}|x_j-x_k|^{-1},
\]
as $Z \rightarrow \infty$ were obtained via a computer-assisted proof \cite{fefferman1990, fefferman1992, fefferman1993aperiodicity,  fefferman1994, fefferman1994_2, fefferman1995, fefferman1996interval, fefferman1996, fefferman1997} by Fefferman and Seco, and relied on problems that were not in $\Delta_1^A$. The SCI hierarchy can describe these paradoxical phenomena.

\subsubsection{The $\Sigma^A_1$ and $\Pi^A_1$ classes}
The key to the above paradoxical phenomena lies in the $\Sigma^A_1$ and $\Pi^A_1$ classes. These classes of problems are larger than $\Delta^A_1$, but can still be used in computer-assisted proofs. Indeed, suppose we consider computational spectral problems that are in $\Sigma^A_1$. In that case, there is an algorithm that will never provide incorrect output. The output may not include the whole spectrum, but it is always sound. Thus, a computer-assisted proof could disprove conjectures about operators never having spectra in a certain area of the complex plane. Similarly, $\Pi^A_1$ problems can be approximated from above, and thus conjectures on the spectrum being in a certain area could be disproved by computer simulations.

In both of the above examples (the proof of the Dirac-Schwinger conjecture and Kepler's conjecture), it is implicitly shown  that the relevant computational problems in the computer-assisted proofs are in $\Sigma^A_1$.

\section{Main Results}
\label{fhwoiuhg}
Our main results are sharp classifications in the SCI hierarchy, with algorithms, settling some of the open classification problems in computational spectral theory. We are concerned with the following problem:

\begin{itemize}
\item[] {\it Given a computational spectral problem, where is it in the SCI hierarchy?}
\end{itemize}
We consider the following four main problems: computing spectra of general differential operators, computing spectra of unbounded operators on graphs, the computational spectral gap problem, and computing discrete spectra with multiplicities.

In addition to the spectrum, we consider the pseudospectrum
\[
\mathrm{Sp}_{\epsilon}(A) := \mathrm{cl}(\{z\in\mathbb{C}: \|R(z,A)\| > 1/\epsilon \}), \quad \epsilon > 0,
\]
where $\mathrm{cl}$ denotes closure and $R(z,A)=(A-z I)^{-1}$. When computing the spectrum of bounded operators, we let $(\mathcal{M},d)$ be the set of all non-empty compact subsets of $\mathbb{C}$ provided with the Hausdorff metric $d=d_{\mathrm{H}}$:
\begin{equation}\label{Hausdorff}
d_\mathrm{H}(X,Y) = \max\left\{\sup_{x \in X} \inf_{y \in Y} d(x,y), \sup_{y \in Y} \inf_{x \in X} d(x,y) \right\},
\end{equation}
where $d(x,y) = |x-y|$ is the usual Euclidean distance.
In the case of unbounded operators, we use the \emph{Attouch--Wets metric} defined by
\begin{equation}\label{eq:Attouch-Wets}
d_{\mathrm{AW}}(C_1,C_2)=\sum_{n=1}^{\infty} 2^{-n}\min\left\{{1,\underset{\left|x\right|\leq n}{\sup}\left|\mathrm{dist}(x,C_1)-\mathrm{dist}(x,C_2)\right|}\right\},
\end{equation}
for $C_1,C_2\in\mathrm{Cl}(\mathbb{C})$. Here, $\mathrm{Cl}(\mathbb{C})$ denotes the set of closed non-empty subsets of $\mathbb{C}$.

\subsection{Computing spectra of differential operators on unbounded domains}
\label{dfohjg}
There is a rich literature on computing spectra of differential operators on bounded domains. The computation is often done with finite element, finite difference or spectral methods by discretising the operator on a suitable finite-dimensional space, and then using algorithms for finite-dimensional matrix eigenvalue problems on the discretised operator \cite{Rappaz1,rappaz1997spectral,boffi2000problem,Boffi2,Annalisa2,zhao2007spurious,Annalisa5,Annalisa3,Snorre1}. However, it is generally unknown how to compute spectra of differential operators on unbounded domains, or where this problem lies in the SCI hierarchy (e.g., is it possible in one limit?). 

For $N \in \mathbb{N}$, consider the operator formally defined on $L^2(\mathbb{R}^d)$ by
\begin{equation}\label{eq:diff_op}
Tu(x)=\sum_{k\in\mathbb{Z}_{\geq0}^d,\left|k\right|\leq N}a_k(x)\partial^ku(x),
\end{equation}
where throughout we use multi-index notation with $\left|k\right|=\max\{\left|k_1\right|,...,\left|k_d\right|\}$ and $\partial^k=\partial^{k_1}_{x_1}\partial^{k_2}_{x_2}...\partial^{k_d}_{x_d}$. We assume that the coefficients $a_k(x)$ are complex-valued functions on $\mathbb{R}^d$, and that $T$ can be defined on an appropriate domain $\mathcal{D}(T)$ such that $T$ is closed with non-empty spectrum. Our aim is to compute the spectrum and $\epsilon$-pseudospectrum of $T$ from the coefficients $a_k$. We consider two cases. First, the algorithm can point sample the coefficients, and second, the algorithm can access the coefficients in a Taylor series of each of the coefficients\footnote{We take Taylor series about the origin, but any point will do.} $a_k$ in the case that the $a_k$ are entire. Note that these are two very different computational problems.

\begin{remark}[The open problem of computing spectra of differential operators]
There is no existing theory guaranteeing even a finite SCI for this problem, even when each $a_k$ is a polynomial. For example, a standard procedure is to discretise the differential operator via finite differences, truncate the resulting infinite matrix, and then handle the finite matrix with standard algorithms designed for finite-dimensional problems. Such an approach would at best give a $\Delta^A_2$ classification, and, in general, this approach may not always converge. Despite this, we prove below that one can achieve $\Sigma^A_1$ classification for a large class of operators.
\end{remark}

\subsubsection{The set-up}
We let $\Omega$ consist of all such $T$ such that the following assumptions hold:

\begin{enumerate}
	\item[(1)] The set $C_{0}^\infty(\mathbb{R}^d)$ of smooth, compactly supported functions forms a core of $T$ and its adjoint $T^*$.
	\item[(2)] The adjoint operator $T^*$ can be initially defined on $C_{0}^\infty(\mathbb{R}^d)$ via
	$$
  T^*u(x)=\sum_{k\in\mathbb{Z}_{\geq0}^d,\left|k\right|\leq N}\tilde{a}_k(x)\partial^ku(x),
  $$
	where $\tilde{a}_k(x)$ are complex-valued functions on $\mathbb{R}^d$.
	\item[(3)] For each of the functions $a_k(x)$ and $\tilde{a}_k(x)$, there exists $A_k>0$ and $B_k\in\mathbb{Z}_{\geq0}$ such that 
	$$
	\max\left\{\left|a_k(x)\right|,\left|\tilde{a}_k(x)\right|\right\}\leq A_k\left(1+\left|x\right|^{2B_k}\right),\quad \forall x\in \mathbb{R}^d.
	$$
	That is, we have at most polynomial growth.
	\item[(4)] We have access to a sequence $\{g_m\}_{m\in\mathbb{N}}$ of strictly increasing continuous functions $g_m:\mathbb{R}_{\geq 0}\rightarrow\mathbb{R}_{\geq 0}$, that vanish at zero and diverge at infinity, such that
\begin{equation}
\label{comp_ball_gs2}
g_m(\mathrm{dist}(z,\mathrm{Sp}(T)))\leq\left\|R(z,T)\right\|^{-1},\quad \forall z\in B_m(0),
\end{equation}
where $B_m(0)$ is the closed ball of radius $m$ about the origin. In this case, we say that $T$ has \textit{resolvent bounded by $\{g_m\}$}. This implicitly assumes that $\mathrm{Sp}(T)$ (and hence each $\mathrm{Sp}_{\epsilon}(T)$) is non-empty.
\end{enumerate}

We consider the operator $T$ defined as the closure of \eqref{eq:diff_op} initially defined on $C_{0}^\infty(\mathbb{R}^d)$. The initial domain $C_0^\infty(\mathbb{R}^d)$ is commonly encountered in applications, and it is straightforward to adapt our methods to other initial domains such as Schwartz space.  

\begin{remark}
To handle non-self-adjoint operators, we need to control the resolvent as in \eqref{comp_ball_gs2}. Without such control, the spectral problem is not in $\Delta^G_2$, even for tridiagonal infinite matrices. If $T$ has $\mathrm{Sp}(T)\neq \emptyset$, a simple compactness argument shows the existence of a suitable sequence $\{g_m\}$. We may not be able to control the growth of the resolvent across the whole complex plane by a single function. For self-adjoint (and, more generally, normal) $T$, we can take $g_m(x)=x$. Operators with $g_m(x) = x$ are known as $G_1$ and include the well studied class of hyponormal operators (operators $A$ with $A^*A-AA^*\geq0$) \cite{putnam1979operators}. There are examples where suitable functions $\{g_m\}$ not equal to the identity are known for non-normal operators, such as perturbations of self-adjoint operators \cite[e.g., Theorem 7.7.1]{gil2003operator}. As another example, if an operator is similar to a normal operator with a similarity transformation $S$ that has bounded condition number $\kappa(S)$, we can take $g_m(x)=x/\kappa(S)$. Nonetheless, in general, knowledge of $\{g_m\}$ is a strong assumption on the behaviour of the resolvent and may be difficult to apply to practical examples. However, in what follows, the functions $\{g_m\}$ are \textit{not needed} to compute pseudospectra.
\end{remark}

\subsubsection{General case with function evaluations}

In this section, we treat the computation of spectra and pseudospectra of $T\in\Omega$ from point evaluations of the coefficients $a_k$ and $\tilde{a}_k$. For dimension $d$ and $r>0$, consider the space
\begin{equation}
\mathcal{A}_r=\{f\in M([-r,r]^d):\left\|f\right\|_{\infty}+\mathrm{TV}_{[-r,r]^d}(f)<\infty\},
\end{equation}
where $M([-r,r]^d)$ denotes the set of measurable functions on the hypercube $[-r,r]^d$ and $\mathrm{TV}_{[-r,r]^d}$ the total variation norm in the sense of Hardy and Krause \cite{niederreiter1992random}. This space is a Banach algebra when equipped with the norm
$
\left\|f\right\|_{\mathcal{A}_r}=\left\|f\right\|_{\infty}+(3^d+1)\mathrm{TV}_{[-r,r]^d}(f).
$
We assume that each of the (appropriate restrictions of) $a_k$ and $\tilde{a}_k$ lie in $\mathcal{A}_r$ for all $r>0$, and that we are given a sequence of positive numbers $\{c_n\}_{n\in\mathbb{N}}$ such that
\begin{equation}
\label{tot_var_bound}
\max\{\left\|a_k\right\|_{\mathcal{A}_n},\left\|\tilde{a}_k\right\|_{\mathcal{A}_n}\}\leq c_n,\quad  n\in\mathbb{N}\text{ and } \left|k\right|\leq N.
\end{equation}

This information is entirely analogous to using bounded dispersion for matrix problems encountered in \S \ref{gen_unbd}. We shall see that it cannot be omitted if one wishes to gain error control in the sense of $\Sigma_1$.
Let 
\[
\Omega_{\mathrm{TV}}^1 = \{T \in\Omega\, | \, \text{ such that assumptions (1) -- (4) and \eqref{tot_var_bound} hold}\}.
\]
A special subclass of $\Omega_{\mathrm{TV}}^1$ are Schr\"odinger operators $-\Delta+V$. The fact that computing spectra and pseudospectra of Schr\"odinger operators (by point sampling the potential) with bounded potentials of bounded total variation lies in $\Sigma_1^A$ was shown in \cite{ben2015can}. (Unbounded sectorial potentials without total variation bounds, that induce a compact resolvent, were also treated in \cite{ben2015can} without error control.) Part of Theorem \ref{PDE1} generalises this result to arbitrary differential operators with polynomially bounded coefficients. We let $\Lambda^1$ contain functions that point sample the functions $\{g_m\}_{m\in\mathbb{N}}$ at points in $\mathbb{Q}_{\geq 0}$ and $\{a_k,\tilde{a}_k\}_{\left|k\right|\leq N}$ at points in $\mathbb{Q}^d$, as well as the constants $\{A_k,B_k\}_{\left|k\right|\leq N},\{c_n\}_{n\in\mathbb{N}}$. Consider the weaker assumption on $\Lambda^1$ that we can evaluate $b_n>0$ (and not the $A_k$, $B_k$'s and the $c_n$'s) such that 
\begin{equation}
\sup_{n\in\mathbb{N}}\frac{\max\{\left\|a_k\right\|_{\mathcal{A}_n},\left\|\tilde{a}_k\right\|_{\mathcal{A}_n}:\left|k\right|\leq N\}}{b_n}<\infty.
\end{equation}
With a slight abuse of notation, we use $\Omega_{\mathrm{TV}}^2$ to denote the class of problems where we have this weaker requirement. We can now define the mappings
\begin{align*}
&\Xi_1^k: \Omega_{\mathrm{TV}}^k\ni T\mapsto \mathrm{Sp}(T)\in \mathrm{Cl}(\mathbb{C}),\quad \text{for }k=1,2,\\
&\Xi_2^k: \Omega_{\mathrm{TV}}^k\ni T\mapsto \mathrm{Sp}_{\epsilon}(T) \in \mathrm{Cl}(\mathbb{C}),\quad \text{for } k=1,2,
\end{align*}
where we equip $\mathrm{Cl}(\mathbb{C})$ with the Attouch--Wets metric $d_{\mathrm{AW}}$. The following theorem contains our result.

\vspace{1mm}
\begin{theorem}[Differential operators and point samples]
\label{PDE1}
Let $\Xi_j^1,\Xi_j^2,\Omega_{\mathrm{TV}}^1$ and $\Omega_{\mathrm{TV}}^2$ be as above. Then for $j=1$ or $2$, we have that
$$
\Delta^G_1\not\owns\{\Xi_j^1, \Omega_{\mathrm{TV}}^1\} \in \Sigma^A_1\quad \text{and} \quad
\Sigma^G_1\cup\Pi^G_1\not\owns \{\Xi_j^2, \Omega_{\mathrm{TV}}^2\} \in \Delta^A_2.
$$
\end{theorem}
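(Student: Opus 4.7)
The strategy is to compute $\mathrm{Sp}(T)$ and $\mathrm{Sp}_\epsilon(T)$ through upper bounds on $\|R(z,T)\|^{-1}$ at points of an expanding grid, exploiting the identity
\[
\|R(z,T)\|^{-1}=\inf_{\|u\|=1}\max\bigl\{\|(T-zI)u\|,\|(T^*-\bar zI)u\|\bigr\}
\]
valid because $T$ is closed and densely defined. The resolvent-growth assumption then converts any such upper bound into an explicit upper bound on $\mathrm{dist}(z,\mathrm{Sp}(T))$ via $\mathrm{dist}(z,\mathrm{Sp}(T))\leq g_m^{-1}(\|R(z,T)\|^{-1})$ for $z\in B_m(0)$. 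For the $\epsilon$-pseudospectrum no such conversion is needed, since $\mathrm{Sp}_\epsilon(T)$ is exactly the sublevel set $\{z:\|R(z,T)\|^{-1}\leq\epsilon\}$; this is why the $g_m$'s can be dropped for $\Xi_2^1,\Xi_2^2$.

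I would choose a nested sequence of finite-dimensional subspaces $V_n\subset C_0^\infty(\mathbb R^d)$ spanned by smooth functions compactly supported in $[-n,n]^d$ (for instance, tensor products of scaled B-splines on a mesh of size $2^{-n}$), arranged so that $\bigcup_n V_n$ is dense in $C_0^\infty(\mathbb R^d)$ in the graph norms of both $T$ and $T^*$. Define
\[
\tau_n(z):=\min_{u\in V_n,\,\|u\|=1}\max\bigl\{\|(T-zI)u\|,\|(T^*-\bar zI)u\|\bigr\}.
\]
Since $C_0^\infty(\mathbb R^d)$ is a core of $T$ and $T^*$ by assumption (1), $\tau_n(z)\downarrow\|R(z,T)\|^{-1}$. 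Each $\tau_n(z)$ is the smallest singular value of a finite generalised eigenvalue problem whose Gram-matrix entries are integrals $\int a_k\,\phi\,dx$ (or $\int \tilde a_k\,\phi\,dx$) with $\phi$ smooth and compactly supported in $[-n,n]^d$. I would approximate these by a piecewise-constant or quasi-Monte Carlo quadrature on a uniform sub-grid; the Koksma-Hlawka inequality, together with the Banach-algebra property of $\mathcal A_n$, bounds the quadrature error by an explicit constant times $c_n$ times a tractable norm of $\phi$. This yields a computable overestimate $e_n(z)$ of the error and hence a rigorous upper bound $\tilde\tau_n(z)+e_n(z)\geq\|R(z,T)\|^{-1}$.

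For the $\Sigma^A_1$ algorithm on $\Omega^1_{\mathrm{TV}}$, output the set of dyadic grid points $z\in B_n(0)$ satisfying $g_n(2^{-n})\geq\tilde\tau_n(z)+e_n(z)$; each such $z$ lies within $2^{-n}$ of $\mathrm{Sp}(T)$ by the resolvent bound, while the convergence $\tau_n\to\|R(z,T)\|^{-1}$ together with the existence of Weyl-type approximate eigenvectors near every spectral point delivers density in the Attouch-Wets metric. The pseudospectrum algorithm uses the same scheme with threshold $\epsilon$ in place of $g_n(2^{-n})$. For $\Omega^2_{\mathrm{TV}}$, the weaker data $b_n$ still give pointwise $\tilde\tau_n(z)\to\|R(z,T)\|^{-1}$, so the procedure converges and yields $\Delta^A_2$, but one cannot certify the one-sidedness required by $\Sigma^A_1$ or $\Pi^A_1$.

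The lower bounds I would prove by adversarial pairs. For $\notin\Delta^G_1$, produce two operators in $\Omega^1_{\mathrm{TV}}$ that agree on all coefficient samples queried in a given algorithm run but whose spectra differ by more than $2^{-n}$ in Attouch-Wets; a Schr\"odinger-type example $-\Delta+V_j$ with $V_0\equiv 0$ and $V_1$ a sharply localised bump placed beyond any finite set of query points, calibrated to share the same $c_n$ and $\{g_m\}$, will suffice. For $\notin\Sigma^G_1\cup\Pi^G_1$ on $\Omega^2_{\mathrm{TV}}$, the absence of quantitative control on $\|a_k\|_{\mathcal A_n}$ lets the adversary scale such a bump arbitrarily high while keeping the evaluation set $\Lambda^2$ internally consistent, shifting the spectrum across any purported one-sided approximation. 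The genuinely delicate step will be the quadrature analysis in the positive direction: one must bound $\mathrm{TV}_{[-n,n]^d}(a_k\phi)$ purely through $\|a_k\|_{\mathcal A_n}$ and the smoothness of $\phi$, then propagate this through a smallest-singular-value computation without ever losing the $\Sigma_1$ certification of sidedness; this is what forces the full use of the Hardy-Krause definition of $\mathrm{TV}$ rather than any naive pointwise bound.
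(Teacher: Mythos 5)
Your overall architecture (approximate the reciprocal resolvent norm on an expanding grid from a finite-dimensional subspace of a common core, control quadrature via Koksma--Hlawka and the Hardy--Krause Banach-algebra property, and prove lower bounds by adversarial pairs agreeing on all queried samples) is the same as the paper's, but three steps as written do not hold up. First, your starting identity is false for non-normal operators: the correct characterisation is $\|R(z,T)\|^{-1}=\min\{\sigma_1(T-zI),\sigma_1(T^*-\bar zI)\}$, i.e.\ the minimum of the two separate injection moduli (Lemma \ref{char_sigma}), not $\inf_{\|u\|=1}\max\{\|(T-zI)u\|,\|(T^*-\bar zI)u\|\}$. The inf-of-max can be bounded away from $0$ at spectral points (think of a shift-type operator at $z=0$: $\|(T-z)u\|$ small forces $\|(T^*-\bar z)u\|$ large), so your $\tau_n(z)$ converges monotonically to the wrong quantity, your algorithm can permanently miss parts of $\mathrm{Sp}(T)$ and $\mathrm{Sp}_\epsilon(T)$, and the quantity is not a smallest singular value of a Gram problem either. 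Second, even with the corrected formula, your acceptance rule \emph{at stage} $n$, namely $g_n(2^{-n})\geq\tilde\tau_n(z)+e_n(z)$, couples detection to the prescribed rate $2^{-n}$, whereas the gap between $\tilde\tau_n(z)$ and $\|R(z,T)\|^{-1}$ (the subspace-truncation error) decays at an operator-dependent, non-computable rate; nothing guarantees it ever beats $g_n(2^{-n})$, so Attouch--Wets convergence of your output is not established (it may be empty or miss spectrum for every $n$). The paper sidesteps this by outputting local minimisers of $\gamma_n$ unconditionally and reporting the a posteriori error $E_n(z)=g^{-1}_{\lceil|z|\rceil}(\gamma_n(z))$, which tends to zero on the output; the $\Sigma_1$ certificate is then recovered by the subsequence device of Lemma \ref{sigma1_unbounded}, not by forcing error $2^{-n}$ at step $n$.

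Third, your lower-bound sketch is only half right. The translated-bump adversary does give $\notin\Delta_1^G$ and the $\Pi_1^G$ half (the paper does the same with multiplication operators), but for the $\Sigma_1^G$ half on $\Omega_{\mathrm{TV}}^2$ your plan of scaling a compactly supported bump added to $-\Delta$ cannot work: such a perturbation leaves the essential spectrum $[0,\infty)$ intact and only \emph{adds} spectrum (and pushing an eigenvalue far away is invisible to the Attouch--Wets metric), so it never contradicts a guarantee that the output is \emph{inside} a neighbourhood of the spectrum. What is needed is to move the spectrum away from a point the algorithm has certified, while keeping every sampled value unchanged; the paper does this with a multiplication operator whose coefficient equals $1$ except that it is set to $0$ at precisely the finitely many points the algorithm sampled, which is admissible in $\Omega_{\mathrm{TV}}^2$ exactly because the (finite but unbounded) Hardy--Krause variation of such a coefficient is not constrained by any known budget, whereas in $\Omega_{\mathrm{TV}}^1$ the declared $c_n$ forbids it. Without an argument of this kind your proof of $\{\Xi_j^2,\Omega_{\mathrm{TV}}^2\}\notin\Sigma_1^G$ is missing. (The remaining ingredients of your positive direction --- the core property of your compactly supported spline spaces in both graph norms and the variation bound for the products of coefficients with basis functions --- are plausible and would replace the paper's Hermite-basis computation, but they are asserted rather than proved.)
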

\vspace{1mm}

\begin{remark}
The proof of Theorem \ref{PDE1} relies on Theorem \ref{unbounded_theorem} that covers unbounded operators on graphs. Thus, in the proof sections below, the theorems are proven in a different order than they are presented.
\end{remark}

\begin{remark}
\label{FEM_remark2}
The proof also shows that even if the information $\{A_k,B_k\}_{\left|k\right|\leq N}$ is added to the evaluation set for operators in $\Omega_{\mathrm{TV}}^2$, we would still have $\{\Xi_j^2, \Omega_{\mathrm{TV}}^2\}\notin \Sigma^G_1\cup\Pi^G_1$. Though we have chosen $\mathbb{R}^d$ as the geometrical domain of our operators, the result can easily be adapted to other domains for which we can build a suitable basis to represent the operator. Examples include the half-line (e.g., for radially symmetric Dirac operators in quantum chemistry), intervals using orthogonal polynomial series, or products of the above geometries. One can also extend our results to more complicated domains using finite elements, non-orthogonal bases, and generalised pencil eigenvalue problems, but this will be the topic of future work.
\end{remark}

\subsubsection{Entire coefficients}
\label{an_coeffs}
In this section, we assume that the functions $a_k$ and $\tilde{a}_k$ are entire. In particular, we assume we can evaluate $\{c_j\}_{j\in \mathbb{N}}$, an enumeration (where we know the ordering) of the coefficients $a_k^m$ where $a_k(x) = \sum_{m\in(\mathbb{Z}_{\geq0})^d} a_k^mx^m$. Note that this means we can compute the corresponding coefficients of the $\tilde{a}_k(x)$ using finitely many arithmetic operations on $\{c_j\}$. As well as the information $\{g_m\}$, $\{c_j\}$ and $\{A_k,B_k\}$, we assume our algorithms can read the following information. Given
$$
a_k(x)=\sum_{m\in(\mathbb{Z}_{\geq 0})^d}a_k^mx^m,\quad \tilde{a}_k(x)=\sum_{m\in(\mathbb{Z}_{\geq 0})^d}\tilde{a}_k^mx^m,
$$
for each $n\in\mathbb{N}$ we know a constant $d_n$ such that
\begin{equation}
\label{bound_tail}
\left|a_k^m\right|,\left|\tilde{a}_k^m\right|\leq d_n(n+1)^{-\left|m\right|}, \qquad \forall m\in(\mathbb{Z}_{\geq 0})^d, |k| \leq N.
\end{equation}
Suitable $d_n$ must exist since the power series converges absolutely on the whole of $\mathbb{R}^d$. Let 
\[
\Omega^1_{\mathrm{AN}} = \{T \in\Omega\, | \, \text{ such that assumptions (1) -- (4) hold, the functions $a_k$ are entire and \eqref{bound_tail} hold}\}.
\]
We let $\Lambda^1$ contain functions that point sample the functions $\{g_m\}_{m\in\mathbb{N}}$ at points in $\mathbb{Q}_{\geq 0}$, and access the constants $\{A_k,B_k\}_{\left|k\right|\leq N}$, $\{c_n\}_{n\in\mathbb{N}}$, and $\{d_n\}_{n \in \mathbb{N}}$. The proof makes clear that $\{d_n\}_{n \in \mathbb{N}}$ can be replaced by any suitable information that allows us to control the remainder term in the truncated Taylor series uniformly on compact subsets of $\mathbb{R}^d$. For example, we could use Cauchy's formula, together with bounds on the functions $a_k$ on compact subsets of $\mathbb{C}^d$. We also consider a weaker requirement on $\Lambda^1$ by replacing knowledge of $A_k,B_k$ and $d_n$ by some sequence of positive numbers $b_n$ with
$$
\sup_{n\in\mathbb{N}}\sup_{m\in(\mathbb{Z}_{\geq 0})^d}\frac{\max\{\left|a_k^m\right|(n+1)^{\left|m\right|},\left|\tilde{a}_k^m\right|(n+1)^{\left|m\right|}:\left|k\right|\leq N\}}{b_n}<\infty.
$$
With a slight abuse of notation, we use $\Omega_{\mathrm{AN}}^2$ to denote the class of problems where we have this weaker requirement. Moreover, let $\Omega_p$ denote the class of operators in $\Omega^2_{\mathrm{AN}}$ such that each $a_k$ is a polynomial (where we can let $b_n=n!$, for example).
We can now define the mappings
\begin{align*}
&\Xi_1^{k+2}: \Omega_{\mathrm{AN}}^k\ni T\mapsto \mathrm{Sp}(T)\in \mathrm{Cl}(\mathbb{C}),\quad \text{for }k=1,2,\\
&\Xi_2^{k+2}: \Omega_{\mathrm{AN}}^k\ni T\mapsto \mathrm{Sp}_{\epsilon}(T) \in \mathrm{Cl}(\mathbb{C}),\quad \text{for } k=1,2,
\end{align*}
where we equip $\mathrm{Cl}(\mathbb{C})$ with the Attouch--Wets metric $d_{\mathrm{AW}}$. The following theorem contains our result.
\vspace{1mm}
\begin{theorem}[Differential operators and entire coefficients]
\label{PDE2}
Let $\Xi_j^3,\Xi_j^4,\Omega_{\mathrm{AN}}^1$, $\Omega_{\mathrm{AN}}^2$ and $\Omega_p$ be as above. Then for $j=1$ or $2$, we have that
\[
\Delta^G_1\not\owns\{\Xi_j^3, \Omega_{\mathrm{AN}}^1\} \in \Sigma^A_1,\quad \Sigma^G_1\cup\Pi^G_1\not\owns \{\Xi_j^4, \Omega_{\mathrm{AN}}^2\} \in \Delta^A_2, \quad \Sigma^G_1\cup\Pi^G_1\not\owns \{\Xi_j^4, \Omega_p\} \in \Delta^A_2.
\]
\end{theorem}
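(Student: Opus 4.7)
The plan is to derive Theorem \ref{PDE2} by reducing the analytic case to the point-evaluation case treated in Theorem \ref{PDE1}, and to obtain the lower bounds by constructing adversarial examples already within the polynomial class $\Omega_p$. The key observation is that given the Taylor coefficients $\{a_k^m\}$ together with the bound \eqref{bound_tail}, one can compute, for any $r>0$ and any $\delta>0$, polynomial approximations $\hat{a}_k$ with $\|a_k-\hat{a}_k\|_\infty\le\delta$ on $[-r,r]^d$ and explicit upper bounds on $\|\hat{a}_k\|_{\mathcal{A}_r}$. This converts analytic data into the total-variation data required by the algorithm of Theorem \ref{PDE1}.

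First I would establish the $\Sigma_1^A$ upper bound for $\Omega_{\mathrm{AN}}^1$. Choosing $n>r$ in \eqref{bound_tail} and truncating the Taylor series at a sufficiently high order produces polynomials $\hat{a}_k$ with controlled uniform error on $[-r,r]^d$; applying the same tail estimate to term-by-term partial derivatives controls the Hardy--Krause total variation and yields an explicit $c_n$ as in \eqref{tot_var_bound}. Point values of $\hat{a}_k$ on rational grids are computable by finite arithmetic, and the adjoint coefficients $\tilde{a}_k$ are obtained from $\{c_j\}$ by finite arithmetic as noted in the excerpt. Feeding these approximate samples and the derived $c_n$ sequence into the $\Sigma_1^A$ algorithm of Theorem \ref{PDE1}, and taking a diagonal limit in the approximation parameter, gives the desired algorithm for both $j=1$ and $j=2$. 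For $\Omega_{\mathrm{AN}}^2$ the weaker information $b_n$ still gives pointwise convergence of truncations on compact sets, so the $\Delta_2^A$ part of Theorem \ref{PDE1} applies; the absence of explicit $\mathcal{A}_r$-norm bounds then prevents $\Sigma_1$-style error control, yielding only $\Delta_2^A$.

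The lower bounds $\{\Xi_j^3,\Omega_{\mathrm{AN}}^1\}\notin\Delta_1^G$ and $\{\Xi_j^4,\Omega_p\}\notin\Sigma_1^G\cup\Pi_1^G$ (which, together with the inclusion $\Omega_p\subset\Omega^2_{\mathrm{AN}}$, also yield the middle impossibility for $\Omega^2_{\mathrm{AN}}$) I would establish entirely within $\Omega_p$. The main obstacle is that, unlike the TV case of Theorem \ref{PDE1}, the evaluation functionals here are exact Taylor coefficients, so one cannot hide information in localised bump perturbations; instead I would construct families of polynomial potentials whose first $N$ coefficients agree with a fixed reference operator but whose higher-order tail encodes a non-computable decision (for instance, whether a Turing-machine transcript eventually halts, or whether a rational sequence delivered by an oracle stabilises). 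A Schr\"odinger-type operator $-\Delta+V$ with $V$ a radial polynomial is a natural candidate: its bottom-of-spectrum is sensitive to high-degree terms, and the resolvent control required by assumption (4) can be supplied uniformly across the family. The delicate part is arranging the scaling so that the tail produces a spectral shift bounded below while remaining consistent with the majorant $b_n=n!$ and with any $\{g_m\},\{A_k,B_k\}$ data provided to the algorithm, ensuring that the spectra of any pair in the adversarial family are separated by an Attouch--Wets distance that cannot be resolved by finitely many coefficient queries.
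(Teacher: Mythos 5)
Your upper-bound route (converting Taylor data into point samples with computable error and Hardy--Krause bounds, then invoking the machinery of Theorem \ref{PDE1}) is a genuinely different and basically plausible reduction -- the paper instead computes the matrix elements $\langle a_k\partial^k\psi_m,a_j\partial^j\psi_n\rangle$ directly by integrating the power series against Hermite functions -- but note that for $\Omega^2_{\mathrm{AN}}$ and $\Omega_p$ the reduction is not a verbatim application of Theorem \ref{PDE1}: with only $b_n$ known up to an unspecified constant, point values of $a_k$ are computable only with an error of the form (unknown constant)$\times$(known decaying quantity), so the samples are not the exact evaluations that $\Omega^2_{\mathrm{TV}}$ assumes, and one must redo the error propagation (as the paper does by forcing the unknown-constant error to decay faster than the added $1/n$, which is also what keeps the pseudospectrum algorithm's one-sided inequality valid in the limit).

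The genuine gap is in your lower bounds. You propose to prove $\{\Xi_j^3,\Omega^1_{\mathrm{AN}}\}\notin\Delta_1^G$ ``entirely within $\Omega_p$'' by hiding information in high-degree polynomial tails, but for $\Omega^1_{\mathrm{AN}}$ the evaluation set contains the growth constants $\{A_k,B_k\}$ and the tail bounds $\{d_n\}$, and an adversarial family must be consistent with a \emph{single} choice of this data, since a general algorithm may read it. A polynomial coefficient satisfying $|a_k(x)|\leq A_k(1+|x|^{2B_k})$ has degree at most $2B_k$, so once $A_k,B_k$ are fixed the family is parametrised by finitely many coefficients, all of which the algorithm can read exactly; a \emph{general} algorithm (which has no computability restriction) can then simply output the exact spectrum, and no impossibility result is available. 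This is precisely why the paper's proof leaves the polynomial class and uses the uniformly bounded analytic family $x^k e^{-x^2}/s_k$, devoting a Stirling-type estimate to verifying that one $d_r$ in \eqref{bound_tail} works for all $k$. Relatedly, your idea of encoding halting-type or oracle-stabilisation problems is a red herring for all three impossibility statements: the targets are $\Delta_1^G$ and $\Sigma_1^G\cup\Pi_1^G$, i.e.\ lower bounds against general algorithms that are strictly more powerful than Turing machines, so non-computability encodings can at best yield $A$-type obstructions; the only mechanism that works (and the one the paper uses, with the simple families $x$ versus $x-x^k$ and $0$ versus $x^k$ for $\Omega_p$) is the information-theoretic argument that finitely many coefficient evaluations, together with consistency condition (iii) of Definition \ref{Gen_alg}, cannot distinguish two inputs whose spectra are far apart in the Attouch--Wets metric, while the guaranteed $\Sigma_1$/$\Pi_1$ error statements on one input are contradicted on the other.
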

\vspace{1mm}

The new algorithms in Theorems \ref{PDE1} and \ref{PDE2} yielding the above $\Sigma^A_1$ results on unbounded domains are also useful on bounded domains. Standard algorithms for computing spectra of differential operators on bounded domains often have results on qualitative convergence rates. However, typically they do not have the above feature of error control. Moreover, it can be challenging to determine which portion of the output of standard algorithms can be trusted. This well-known problem occurs even if the algorithm is convergent \cite{howmany}, and when this happens, the algorithms cannot be used for computer-assisted proofs. In the language of the SCI hierarchy, these standard algorithms provide, at best, $\Delta^A_2$ classifications of the problems and not the sharp $\Sigma^A_1$ classification. Hence, we draw the following conclusion:
\begin{changemargin}{-0.3cm}{-0.3cm}
\begin{displayquote}
\vspace{1mm}
\normalsize
{\it Computing spectra of differential operators through discretising the operator and computing eigenvalues of the resulting finite matrix is typically not an optimal method. Such methods may not yield the sharp $\Sigma^A_1$ classification providing certainty about the output. However, as demonstrated above, optimal algorithms exist that provide error control and certainty about the computed output.}\vspace{1mm}
\end{displayquote}
\end{changemargin}

\subsection{Computing spectra of unbounded operators on graphs}
\label{gen_unbd}
Given a closed operator $A$ with domain $\mathcal{D}(A)\subset l^2(\mathbb{N})$ and non-empty spectrum, we consider the problem functions
\[
\Xi_1(A) = \mathrm{Sp}(A), \quad \Xi_2(A) = \mathrm{Sp}_{\epsilon}(A).
\]
Let $\mathcal{C}(l^2(\mathbb{N}))$ denote the set of closed, densely defined operators on $l^2(\mathbb{N})$, and consider the following assumptions:
\begin{itemize}
\item[(1)] The subspace $\mathrm{span}\{e_n:n\in\mathbb{N}\}$ forms a core of $A$ and $A^*$, where $\{e_j\}_{j\in\mathbb{N}}$ is the canonical basis for $l^2(\mathbb{N})$. This allows us to associate an infinite matrix $A_{ij}=\langle Ae_j,e_i\rangle$ with $A$, and ensures that the operator is fully determined by its action on finite sums of basis functions (e.g., see Theorem \ref{unif_conv_gamma}).
\item[(2)] Given $f:\mathbb{N}\rightarrow\mathbb{N}$ with $f(n)\geq n$, we define 
\begin{equation}
\begin{split}
D_{f,n}(A) :=\max\big\{{\left\|(I-P_{f(n)})AP_n\right\|},{\left\|(I-P_{f(n)})A^*P_n\right\|}\big\},
\end{split}
\label{bd_disp}
\end{equation}
where $P_n$ is the projection onto the span of $\{e_1,\hdots, e_n\}$ of the canonical basis. We say that an operator has \textit{bounded dispersion} with respect to $f$ if $\lim_{n\rightarrow\infty}D_{f,n}(A)=0$. We assume knowledge of a null sequence $\{c_n\}_{n\in \mathbb{N}} \subset \mathbb{Q}$ and such an $f$ with $D_{f,n}(A)\leq c_n$.
\item[(3)] As in the case of \S \ref{gen_unbd}, we have access to functions $\{g_m\}$ (see \eqref{comp_ball_gs2} and the assumptions on $\{g_m\}$) such that
\begin{equation}
\label{comp_ball_gs}
g_m(\mathrm{dist}(z,\mathrm{Sp}(A)))\leq\left\|R(z,A)\right\|^{-1},\quad \forall z\in B_m(0).
\end{equation}
Recall that if this holds, we say that $A$ has resolvent bounded by $\{g_m\}$. Note that this implicitly assumes that the spectrum is non-empty.
\end{itemize}

\begin{remark}
The concept of bounded dispersion in \eqref{bd_disp} generalises the notion of a banded matrix. Moreover, given any operator with assumption (1), there exists an $f$ such that $\lim_{n\rightarrow\infty}D_{f,n}(A)=0$. The theorem we prove is for the class of operators that have $\lim_{n\rightarrow\infty}D_{f,n}(A)=0$ given a fixed $f$. The function $f$ is used to construct certain \textit{rectangular} truncations of our operators, which is a key difference to previous methods that typically use \textit{square} truncations.
\end{remark}

\subsubsection{Defining $\Omega$ and $\Lambda$}~

\vspace{1mm}

{\bf Operators on $l^2(\mathbb{N})$:} Let $f$ be as in assumption (2), and let $\hat\Omega$ be the class of all $A\in\mathcal{C}(l^2(\mathbb{N}))$ with non-empty spectrum such that (1) and (2) hold. Given a sequence of functions $g=\{g_m\}$ as in (3), let $\Omega_g$ be the class of all $A \in \hat\Omega$ such that (\ref{comp_ball_gs}) holds. Finally, let $\Omega_D$ denote the set of diagonal operators in $\hat\Omega$.

\vspace{1mm}

{\bf Operators on graphs:} Consider a connected, undirected graph $\mathcal{G}$, such the set of vertices $V=V(\mathcal{G})$ is countably infinite. We treat operators on $l^2(V)$ that are closed and densely defined of the form
\begin{equation}
A=\sum_{v, w\in V}\alpha(v,w)\left|v\right\rangle\left\langle w\right|,
\label{kthNeigh}
\end{equation}
for some $\alpha: V\times V \rightarrow \mathbb{C}$ (below, we assume we can sample $\alpha$). We use the classical Dirac notation in \eqref{kthNeigh}, identifying any $v \in V$ by the element in $\psi_v \in l^2(V)$ such that $\psi_v(v) = 1$ and $\psi_v(w) = 0$ for $w \neq v$. We assume that the linear span of such vectors forms a core of both $A$ and $A^*$. We also assume that for any $v\in V$, the set of vertices $w$ with $\alpha(v,w)\neq 0$ or $\alpha(w,v)\neq 0$ is finite. Let $\Omega^\mathcal{G}$ be the class of all such $A$ with non-empty spectrum and let $\Omega_g^\mathcal{G}$ be the class of operators in $\Omega^\mathcal{G}$ of known $g=\{g_m\}$ such that (\ref{comp_ball_gs}) holds. Finally, we assume that with respect to some given enumeration $v_1,v_2,...$ of $V$, we have access to a function $S:\mathbb{N}\rightarrow\mathbb{N}$ such that if $m>S(n)$, then $\alpha(v_n,v_m)=\alpha(v_m,v_n)=0$.

\vspace{1mm}

{\bf Defining $\Lambda$:}
For operators on $l^2(\mathbb{N})$, $\Lambda$ contains the collection of matrix value evaluation functions $A\mapsto \langle Ae_j,e_i\rangle$, functions describing the dispersion and the family of the functions $\{g_m\}$ controlling the growth of the resolvent. For operators on $l^2(V)$, $\Lambda$ contains the functions $\alpha$, the function $S$ and, in the case of $\Omega_g^\mathcal{G}$, the family $\{g_m\}$.

\vspace{1mm}

We can now state the main result of this section:
\begin{theorem}[Unbounded operators on graphs]
\label{unbounded_theorem}
Let $\Xi_1$ be the problem function $\mathrm{Sp}(\cdot)$ and $\Xi_2$ be the problem function $\mathrm{Sp}_{\epsilon}(\cdot)$ for $\epsilon>0$, where these map into the metric space $(\mathrm{Cl}(\mathbb{C}),d_{\mathrm{AW}})$. Then
\[
\Delta_1^G\not\owns\{\Xi_1,\Omega_D\}\in\Sigma_1^A,\qquad \Delta_1^G\not\owns\{\Xi_1,\Omega_g\}\in\Sigma_1^A, \qquad \Delta_1^G\not\owns\{\Xi_1,\Omega_g^{\mathcal{G}}\}\in\Sigma_1^A,
\]
and
\[
\Delta_1^G\not\owns\{\Xi_2,\Omega_D\}\in\Sigma_1^A, \qquad \Delta_1^G\not\owns\{\Xi_2,\hat\Omega\}\in\Sigma_1^A, \qquad \Delta_1^G\not\owns\{\Xi_2,\Omega^{\mathcal{G}}\}\in\Sigma_1^A.
\]
Furthermore, the routines \texttt{CompSpecUB} and \texttt{PseudoSpecUB} in Appendix \ref{append_pseudo} realise the sharp $\Sigma^A_1$ inclusions, and in the case of $\Xi_2$, the output is guaranteed to be inside the true pseudospectrum.
\end{theorem}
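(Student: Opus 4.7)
The strategy splits naturally into a constructive upper bound (membership in $\Sigma_1^A$) and an impossibility lower bound (exclusion from $\Delta_1^G$). Within the constructive half, I would first do the diagonal case $\Omega_D$ and then promote the ideas to $\Omega_g$ and $\Omega_g^{\mathcal{G}}$; within the impossibility half, the diagonal case is enough, since every other class contains it. The central object to approximate is the injection modulus $\|R(z,A)\|^{-1}$, since $\mathrm{Sp}_\epsilon(A)=\mathrm{cl}(\{z:\|R(z,A)\|^{-1}<\epsilon\})$ and, via the resolvent bound $g_m$, the spectrum inside $B_m(0)$ is detected through the smallness of $\|R(z,A)\|^{-1}$.

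The key technical device is a rectangular-truncation quantity, computable arithmetically from finitely many matrix elements of $A$ and $A^*$:
\[
\gamma_n(z):=\min\!\left\{\sigma_{\min}\bigl(P_{f(n)}(A-zI)P_n\bigr),\ \sigma_{\min}\bigl(P_{f(n)}(A^*-\bar z I)P_n\bigr)\right\}.
\]
Each $\sigma_{\min}$ is the bottom eigenvalue of a finite Hermitian matrix and can be computed to any requested precision by arithmetic bisection applied to the characteristic polynomial (or signs of leading principal minors); this is what produces the superscript $A$. Using bounded dispersion $D_{f,n}(A)\le c_n$ together with the core assumption, one shows $|\gamma_n(z)-\|R(z,A)\|^{-1}|\le c_n + o(1)$ on compact balls, and in particular $\gamma_n(z)+c_n$ is a guaranteed upper bound for $\|R(z,A)\|^{-1}$ locally. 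For graphs, the neighbour function $S$ plays the role of $f$ and makes $c_n=0$, since $P_{S(n)}AP_n=AP_n$ exactly; this is why an operator-theoretic $\{g_m\}$ is not needed for $\Omega^{\mathcal{G}}$ in the pseudospectral statement.

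The $\Sigma_1^A$ algorithm then outputs
\[
\Gamma_n(A)\;=\;\bigcup_{z\in G_n,\ \gamma_n(z)+c_n<\epsilon}\overline{B_{2^{-n}}(z)},
\]
where $G_n\subset B_n(0)$ is a dense arithmetic grid of spacing $2^{-(n+3)}$ (for the pseudospectrum with parameter $\epsilon$). The additive slack $c_n$ enforces $\|R(z,A)\|^{-1}<\epsilon$ at every grid point kept, so $\Gamma_n(A)\subset\mathrm{Sp}_\epsilon(A)$, which is precisely the $\Sigma_1$ containment. Attouch--Wets convergence is then proved by the complementary estimate: any $z$ with $\|R(z,A)\|^{-1}<\epsilon$ is eventually close to a kept grid point. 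For the spectrum, replace the threshold $\epsilon$ by $g_m(2^{-n})$ and use $g_m(\mathrm{dist}(z,\mathrm{Sp}(A)))\le\|R(z,A)\|^{-1}$ to convert a small $\gamma_n(z)+c_n$ into a point at distance $\le 2^{-n}$ from $\mathrm{Sp}(A)$; thickening by $2^{-n}$-balls then yields $\Gamma_n(A)\subset \mathrm{Sp}(A)+B_{2\cdot 2^{-n}}(0)$. These are exactly the routines \texttt{CompSpecUB} and \texttt{PseudoSpecUB}.

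For the lower bound $\notin \Delta_1^G$, working inside $\Omega_D$ is enough. Suppose a general algorithm $\Gamma$ achieves $d_{\mathrm{AW}}(\Gamma(A),\Xi(A))\le 1/4$ for every $A\in\Omega_D$. Since $\Gamma(A)$ depends only on finitely many $\Lambda$-evaluations, a standard adversary argument produces $A_1,A_2\in\Omega_D$ agreeing on all queried matrix entries, dispersion bounds and $g_m$-values but whose diagonal tails differ by a large eigenvalue placed far out (respectively, at a different location in $B_1(0)$ for $\mathrm{Sp}_\epsilon$), forcing $d_{\mathrm{AW}}(\Xi(A_1),\Xi(A_2))>1/2$ and contradicting the error bound.

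The main obstacle I anticipate is the simultaneous, quantitative tracking of three different sources of error in the non-normal, unbounded regime: the finite-section error in both $A-zI$ and $A^*-\bar zI$, the gap between the computable $\gamma_n$ and the true injection modulus, and the conversion through $g_m$ from resolvent-norm smallness to spectral proximity. Threading these together so that (i) every output point is certified to lie in $\mathrm{Sp}_\epsilon(A)$ or within $2^{-n}$ of $\mathrm{Sp}(A)$ for the $\Sigma_1$ containment, while (ii) the algorithm still fills out the whole target set in the Attouch--Wets limit, requires matching the decay rate of $c_n$, the modulus $g_m$, and the grid spacing; the strict inequality together with the $+c_n$ slack is the mechanism that makes this work, but it must be implemented so as not to miss points where $\|R(z,A)\|^{-1}=\epsilon$ exactly.
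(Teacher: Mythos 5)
Your construction of $\gamma_n$ via rectangular truncations, the use of the core assumption plus the dispersion bound to get local uniform convergence with $\gamma_n(z)+c_n\geq\|R(z,A)\|^{-1}$, and the pseudospectrum algorithm with the \emph{fixed} threshold $\epsilon$ are essentially the paper's route (and the graph reduction via $S$ with $c_n=0$ is also how the paper handles $\Omega^{\mathcal{G}}$). Two caveats there: thickening the kept grid points by $\overline{B_{2^{-n}}(z)}$ forfeits the stated extra property that the output is \emph{inside} $\mathrm{Sp}_\epsilon(A)$ (keep the bare grid points, as \texttt{PseudoSpecUB} does); and in the adversary argument the perturbing diagonal entry must sit far out in matrix \emph{position} but at a bounded location in $\mathbb{C}$, since the Attouch--Wets metric discounts points of large modulus, so ``a large eigenvalue placed far out'' does not force $d_{\mathrm{AW}}>1/2$.

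The genuine gap is the spectrum algorithm. Replacing $\epsilon$ by the shrinking threshold $g_m(2^{-n})$ destroys convergence: the quantity you test, $\gamma_n(z)+c_n$, is bounded below by $c_n$ (and also by the unquantified finite-section error coming from the core assumption, which has no computable rate), whereas $g_m(2^{-n})$ decays at a prescribed rate. Concretely, for a self-adjoint diagonal $A$ with $g_m(x)=x$ and supplied dispersion bound $c_n=1/\log(n+2)$, no grid point ever passes the test and your $\Gamma_n(A)$ is empty for all $n$, so it cannot converge to $\mathrm{Sp}(A)$ in $d_{\mathrm{AW}}$. The missing idea is that for the spectrum one cannot certify error $\leq 2^{-n}$ a priori at step $n$; the paper instead uses an $n$-independent acceptance test ($\gamma_n(z,A)\leq(|z|^2+1)^{-1}$), keeps for each accepted grid point the \emph{local minimisers} of $\gamma_n$ over a ball of radius roughly $g^{-1}_{\lceil|z|\rceil}(\gamma_n(z,A))$ (this is \texttt{CompSpecUB}), proves Attouch--Wets convergence of that output, and attaches the computable a posteriori error $E_n(z)\approx g^{-1}_{\lceil|z|\rceil}(\gamma_n(z,A))$, valid because $\gamma_n\geq\|R(\cdot,A)\|^{-1}$ and tending to zero on the output; the $\Sigma_1^A$ classification is then obtained by the re-indexing/subsequence device of Lemma \ref{sigma1_unbounded}, not by forcing the error to be $2^{-n}$ at stage $n$. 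Without either the local-minimiser mechanism or that lemma, your scheme establishes the containment half of $\Sigma_1$ but not convergence, so the claimed inclusions for $\Xi_1$ on $\Omega_D$, $\Omega_g$, $\Omega_g^{\mathcal{G}}$ are not proved.
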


\begin{remark}The algorithm used to compute the pseudospectrum can be applied to cases where the spectrum or pseudospectrum are empty, and we provide a computational example of this below.\end{remark}

Finally, we consider two discrete problems, which also include the case when the spectrum is empty. Let $K$ be a non-empty and compact subset of $\mathbb{C}$ and denote the collection of such subsets by $\mathcal{K}(\mathbb{C})$. Consider
\begin{align*}
\Xi_3:(A,K) \rightarrow&\text{ ``Is $\mathrm{Sp}(A)\cap K=\emptyset$?''}\\
\Xi_4:(A,K) \rightarrow&\text{ ``Is $\mathrm{Sp}_{\epsilon}(A)\cap K=\emptyset$?''}
\end{align*}
Here we consider the space $\mathcal{M}=\{0,1\}$ with the discrete topology, where $1$ is interpreted as ``Yes'' and $0$ as ``No''. Thus our computational problem is a decision problem. The information we consider available to the algorithms in the $l^2(\mathbb{N})$ ($l^2(V(\mathcal{G}))$) case are the matrix elements of $A$ (the functions $\alpha$), the dispersion function $f$ and dispersion bounds $\{c_n\}$ (the finite sets $S_v$) and a sequence of finite sets $K_n\subset\mathbb{Q}+i\mathbb{Q}$, with the property that $d_\mathrm{H}(K_n,K)\leq 2^{-(n+1)}$. The following shows that the discrete problems $\Xi_3$ and $\Xi_4$ are harder than computing the spectrum.

\begin{theorem}[Does a set intersect the spectrum/pseudospectrum?]
\label{unbounded_test_theorem}
We have the following classifications for $j=3,4$:
$$
\Delta^G_2\not\owns\{\Xi_j,\hat\Omega\times\mathcal{K}(\mathbb{C})\}\in\Pi^A_2, \qquad \Delta^G_2\not\owns\{\Xi_j,\Omega_D\times\mathcal{K}(\mathbb{C})\}\in\Pi^A_2,\qquad
\Delta^G_2\not\owns\{\Xi_j,\Omega^{\mathcal{G}}\times\mathcal{K}(\mathbb{C})\}\in\Pi^A_2.
$$
The routines \texttt{TestSpec} and \texttt{TestPseudoSpec} in Appendix \ref{append_pseudo}, used for $\Xi_3$ and $\Xi_4$ respectively, realise the sharp $\Pi^A_2$ classifications. Furthermore, the proof makes clear that the lower bounds also hold when we restrict the allowed compact sets to any \textit{fixed} compact subset of $\mathbb{R}$.
\end{theorem}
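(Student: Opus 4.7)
The proof splits into the upper bound ($\in\Pi_2^A$) and the matching lower bound ($\notin\Delta_2^G$). For the upper bound, I plan to combine the $\Sigma_1^A$ spectrum (resp.\ pseudospectrum) approximation $\Gamma^{\mathrm{sp}}_{n_1}$ supplied by Theorem~\ref{unbounded_theorem} with a tolerance check at outer precision $2^{-n_2}$. Concretely, I would set
\[
\Gamma_{n_2,n_1}(A,K)=\text{``Yes''}\ \Longleftrightarrow\ \mathrm{dist}\bigl(\Gamma^{\mathrm{sp}}_{n_1}(A),K_{n_1}\bigr) > 2^{-n_2}+2^{-(n_1+1)},
\]
with the distance computed over the (finitely many) points of $\Gamma^{\mathrm{sp}}_{n_1}(A)$ lying in a ball that covers $K_{n_1}$ with a small buffer. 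The two analytical ingredients are: (a) if $\mathrm{dist}(\mathrm{Sp}(A),K)=0$, Attouch--Wets convergence yields $\mathrm{dist}(\Gamma^{\mathrm{sp}}_{n_1}(A),K_{n_1})\to 0$, so the inequality eventually fails and the inner limit is ``No''; (b) if $\delta:=\mathrm{dist}(\mathrm{Sp}(A),K)>0$ (which is legitimate since $K$ is compact), the $\Sigma_1^A$ containment $\Gamma^{\mathrm{sp}}_{n_1}(A)\subseteq\mathrm{Sp}(A)+B_{2^{-n_1}}$ forces $\mathrm{dist}(\Gamma^{\mathrm{sp}}_{n_1}(A),K_{n_1})\ge\delta-O(2^{-n_1})$, so the test succeeds once $n_1$ is large and $2^{-n_2}<\delta/2$. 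The iterated limit $\lim_{n_2}\lim_{n_1}$ then returns the correct answer; moreover, because a stable ``Yes'' at the outer level already witnesses $\delta>2^{-n_2}>0$, the outer limit carries the $\Pi_1$-certification structure needed for $\Pi_2^A$. The pseudospectrum version ($\Xi_4$) is identical with the $\Sigma_1^A$ algorithm for $\mathrm{Sp}_\epsilon$ replacing $\Gamma^{\mathrm{sp}}_{n_1}$, and the graph version is structurally the same after re-indexing the oracle queries.

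For the lower bound I would diagonalise against any hypothetical $\Delta_2^G$ family, working inside $\Omega_D$ (the hardest sub-domain to push out of $\Delta_2^G$). Fix $K=\{1\}\subset\mathbb{R}$, which is permitted by the strengthened claim. I plan to use two ``patterns'' for a real diagonal in $[1,2]$: Pattern~A uses entry $2$ (giving $\mathrm{Sp}=\{2\}$, so ``Yes''), while Pattern~B uses $1+1/k$ along an increasing-index tail (so $1\in\overline{\{1+1/k\}}=\mathrm{Sp}$ and the answer flips to ``No''). Assuming a $\Delta_2^G$ family $\{\Gamma_n\}$ exists, I build $B\in\Omega_D$ in stages: at stage $m$, tentatively extend $B$ past the currently committed prefix using the pattern that flips the stage-$(m-1)$ answer; by the postulated convergence of $\Gamma_n$ on this tentative operator, pick $n_m>n_{m-1}$ realising the flipped answer; then commit $B$'s diagonal up to the largest index queried by $\Gamma_{n_m}$, which is finite since a general algorithm makes only finitely many oracle queries on any given input. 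The resulting $B$ is bounded, self-adjoint and diagonal, hence in $\Omega_D$ (and also in $\hat\Omega$ and, after placing it on an infinite path graph with vertex-weight entries, in $\Omega^\mathcal{G}$), but $\Gamma_{n_m}(B,K)$ alternates between ``Yes'' and ``No'', contradicting convergence.

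The $\Xi_4$ lower bound with fixed $\epsilon>0$ is handled by the same scheme after rescaling the patterns so that $|M-1|>\epsilon$ for Pattern~A's constant $M$ and Pattern~B still forces $1\in\mathrm{Sp}$ (hence $1\in\mathrm{Sp}_\epsilon$). The main obstacle I anticipate is the inner-limit analysis in the upper bound: the $\Sigma_1^A$ algorithm supplies no \emph{quantitative} rate for the Attouch--Wets convergence of $\Gamma^{\mathrm{sp}}_{n_1}$, and this lack of rate is precisely what forces a second limit (managed by the tolerance $2^{-n_2}$) and what the diagonalisation exploits to evict the problem from $\Delta_2^G$; keeping the two sides of this argument properly balanced is the delicate part. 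A secondary bookkeeping task is verifying that the diagonal $B$ built in the lower bound genuinely carries the auxiliary data demanded by each class (one can take $g_m(x)=x$ since $B$ is self-adjoint, and dispersion is trivially zero for a diagonal), so that $B$ lies in $\Omega_D\cap\hat\Omega\cap\Omega^\mathcal{G}$ with all required oracles present.
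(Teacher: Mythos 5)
Your lower bound for $\Xi_3$ is sound and is a legitimate variant of the paper's argument: the paper builds a single operator by a one-directional construction (blocks $\mathrm{diag}\{1,1/2,\dots,1/m\}$ whose spectra only accumulate at the test point in the limit, so every finite stage gives the same answer while the limit operator's answer differs), whereas you force the outputs on the final operator to alternate; both only use finitely-many-queries plus consistency, and your fixed $K=\{1\}$ handles the ``fixed compact subset of $\mathbb{R}$'' clause. However, your $\Xi_4$ adaptation breaks: with Pattern~B still using entries $1+1/k$, the algorithm may read (and you must then commit) entries with $1/k\leq\epsilon$, and since for these self-adjoint operators $\mathrm{Sp}_\epsilon$ is the closed $\epsilon$-neighbourhood of the spectrum, the point $1$ then lies in $\mathrm{Sp}_\epsilon$ of \emph{every} subsequent tentative operator, so no later Pattern~A stage can flip the answer back to ``disjoint'' and the alternation dies after one B-stage. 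The paper's fix is to shift so that the accumulating values approach the test point at distance exactly $\epsilon$ (replace $A$ by $A+\epsilon I$); this works there precisely because its argument never needs to flip back, and you would need the analogous modification (e.g.\ entries $1+\epsilon+1/k$) to rescue your alternation.

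The upper bound also has a genuine gap: the first limit of your tower need not exist. Your test compares $D_{n_1}=\mathrm{dist}(\Gamma^{\mathrm{sp}}_{n_1}(A),K_{n_1})$ with the threshold $2^{-n_2}+2^{-(n_1+1)}$, and while the $\Sigma_1$ property gives one-sided control of $D_{n_1}$ from below, nothing controls it from above with a rate; when $\mathrm{dist}(\mathrm{Sp}(A),K)=2^{-n_2}$ exactly (such pairs $(A,K)$ are admissible for every $n_2$), $D_{n_1}$ can oscillate around the threshold indefinitely and $\lim_{n_1}\Gamma_{n_2,n_1}(A,K)$ need not exist, so the family is not a tower in the sense of Definition \ref{tower_funct}. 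This is exactly the issue the paper's construction is built to avoid: it evaluates the functions $\gamma_{n_1}(z,A)$ of Theorem \ref{unif_conv_gamma}, which converge to $\|R(z,A)\|^{-1}$ \emph{monotonically from above}, at the \emph{fixed} finite set $K_{n_2}$ against the \emph{fixed} threshold $2^{-n_2}$; then for each $z$ the event $\gamma_{n_1}(z,A)<2^{-n_2}$ is monotone in $n_1$, so the inner answer is eventually constant in all cases, and the outer sequence has the required one-sided ($\Pi$-type) behaviour because $\gamma_{n_1}\geq\|R(\cdot,A)\|^{-1}$ and $\|R(z_{n_2},A)\|^{-1}\leq\mathrm{dist}(z_{n_2},\mathrm{Sp}(A))$. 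You correctly identified the inner-limit analysis as the delicate point, but the remedy is to build monotonicity (or one-sidedness) into the tested quantity rather than into the tolerance; as a minor further point, the $\Sigma_1$ guarantee is in the Attouch--Wets metric, so the containment $\Gamma^{\mathrm{sp}}_{n_1}(A)\subseteq\mathrm{Sp}(A)+B_{2^{-n_1}}$ only holds on bounded regions and with a region-dependent constant, which your localisation to a ball around $K_{n_1}$ must absorb.
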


\begin{remark}
By considering singletons $K=\{z\}$, we can test whether a point lies in the spectrum or pseudospectrum. Even when restricting to such $K$, the proof shows that the classification remains the same.
\end{remark}

\begin{remark}
One could consider the problem of computing $\inf_{z\in K}\|R(z,A)\|^{-1}$. This quantity is zero if and only if $z\in\mathrm{Sp}(K)$. The problem of computing $\inf_{z\in K}\|R(z,A)\|^{-1}$ has $\mathrm{SCI}=1$ (using the metric space $\mathbb{R}$). Thus Theorem \ref{unbounded_test_theorem} is a demonstration of the following issue. It is often harder to solve the decision problem of whether a convergent sequence has a specific given limit ($0$ in the case of $\Xi_3$), than to compute the limit (in our case $\inf_{z\in K}\|R(z,A)\|^{-1}$, which could be non-zero). As discussed in Remark \ref{rem:needed_rd2} below, we emphasise that this holds regardless of the model of computation and is not an issue of finite-precision or round-off errors. Rather, it is due to the information our algorithms have access to in $\Lambda$. If we had a bound on how close our approximation is to $\inf_{z\in K}\|R(z,A)\|^{-1}$, then we could convert this into a $\Sigma_1^A$ tower for the problems in Theorem \ref{unbounded_test_theorem}. However, such information cannot be computed from our $\Lambda$ and corresponds to a very strong form of global information on the matrix representations of the relevant operators.
\end{remark}

\subsection{The spectral gap problem and classifications of the spectrum}
\label{spec_gap_se_class}
The spectral gap problem has a long tradition and is linked to many important problems such as the Haldane conjecture \cite{golinelli1994finite} and the Yang--Mills mass gap problem in quantum field theory \cite{bombieri2006millennium}. It is fundamental in physics, and \cite{Cubitt} showed that the spectral gap problem is undecidable when considering the thermodynamic limit of finite-dimensional Hamiltonians.

In this paper, we consider the general infinite-dimensional problem. We formulate the question as follows. Let $\widehat{\Omega}_{\mathrm{SA}}$ be the set of all self-adjoint and bounded below operators $A$ on $l^2(\mathbb{N})$, for which the linear span of the canonical basis form a core of $A$. Note that we do not assume that $A$ is bounded above. We say that $A\in\widehat{\Omega}_{\mathrm{SA}}$ is gapped if the minimum of $\mathrm{Sp}(A)$ is an isolated eigenvalue with multiplicity one. Otherwise we say that it is gappless. We also let $\widehat{\Omega}_{\mathrm{D}}$ denote the operators in $\widehat{\Omega}_{\mathrm{SA}}$ that are diagonal and define 
\begin{equation}\label{eq:gap}
\Xi_{\mathrm{gap}}: \widehat{\Omega}_{\mathrm{SA}}, \widehat{\Omega}_{\mathrm{D}} \ni A \mapsto \text{ ``Is the spectrum of } A \text{ gapped?''} 
\end{equation}

The above spectral gap problem is extended as follows. Let $\widehat{\Omega}_{\mathrm{SA}}^f\subset \widehat{\Omega}_{\mathrm{SA}}$ be the subclass of operators that have (known) bounded dispersion with respect to the function $f$. Let $a(A)=\inf\{x:x\in\mathrm{Sp}(A)\}$, then one of the four cases must hold:
\begin{enumerate}
	\item $a(A)$ lies in the discrete spectrum and has multiplicity $1$,
	\item $a(A)$ lies in the discrete spectrum and has multiplicity $>1$,
	\item $a(A)$ lies in the essential spectrum but is an isolated point of the spectrum,
	\item $a(A)$ is a cluster point of $\mathrm{Sp}(A)$.
\end{enumerate}
For example, if $A$ is compact, self-adjoint and non-negative, only (3) or (4) can hold. If $A$ is compact and self-adjoint but has negative eigenvalues, only (1) or (2) can hold. We consider the classification problem $\Xi_{\mathrm{class}}$ which maps $\widehat{\Omega}_{\mathrm{SA}}^f$ to the discrete space $\{1,2,3,4\}$ (with the natural ordering).

\begin{theorem}[Spectral gap and classification]
\label{class_thdfjlwdjfkl}
Let $\Xi_{\mathrm{gap}}$ be as in \eqref{eq:gap} and $\widehat{\Omega}_{\mathrm{SA}}, \widehat{\Omega}_{\mathrm{D}}$ as above. Similarly, let $\Xi_{\mathrm{class}}$ and $\widehat{\Omega}_{\mathrm{SA}}^f$ be as above.   
Then
\[
\Delta^G_2\not\owns\{\Xi_{\mathrm{gap}}, \widehat{\Omega}_{\mathrm{SA}}\} \in \Sigma^A_2,\quad \Delta^G_2\not\owns\{\Xi_{\mathrm{gap}}, \widehat{\Omega}_{\mathrm{D}}\} \in \Sigma^A_2.
\]
In particular, the routine \texttt{SpecGap} in Appendix \ref{append_pseudo} realises the sharp $\Sigma^A_2$ inclusions.
Moreover, 
\[
\Delta^G_2\not\owns\{\Xi_{\mathrm{class}}, \widehat{\Omega}_{\mathrm{SA}}^f\} \in \Pi^A_2,\quad\Delta^G_2\not\owns\{\Xi_{\mathrm{class}}, \widehat{\Omega}_{\mathrm{D}}\} \in \Pi^A_2,
\]
and \texttt{SpecClass} in Appendix \ref{append_pseudo} realises the sharp $\Pi^A_2$ inclusions.

\end{theorem}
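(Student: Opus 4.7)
For the $\Sigma^A_2$ inclusion of $\{\Xi_{\mathrm{gap}}, \widehat{\Omega}_{\mathrm{SA}}\}$, the plan is a two-level algorithm. Since $A$ is self-adjoint, bounded below, and the canonical basis spans a core, the min--max principle yields $a^{(n_1)} := \min \mathrm{Sp}(P_{n_1} A P_{n_1}) \downarrow a := \inf \mathrm{Sp}(A)$. For $z \in \mathbb{R}$, the finite-rank quantities $d_{n_1}(z) := \min\{\|(A-z)v\| : v \in P_{n_1}\ell^2, \|v\|=1\}$ are arithmetically computable from the matrix entries and, by self-adjointness (since $\|(A-z)^{-1}\|^{-1} = \mathrm{dist}(z, \mathrm{Sp}(A))$), decrease to $\mathrm{dist}(z, \mathrm{Sp}(A))$ as $n_1 \to \infty$. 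At outer level $n_2$, declare YES iff there exists $k \leq n_2$ such that, on a fine grid in $[a^{(n_1)} + 2^{-k-2}, a^{(n_1)} + 2^{-k}]$, all evaluations $d_{n_1}(z)$ exceed $2^{-k-4}$; otherwise declare NO. The ``exists $k \leq n_2$'' design enforces monotonicity of the outer limit, so the YES branch is $\Sigma^A_1$-verifiable. If $A$ is gapped with gap $\delta > 0$, the test succeeds once $n_2 \geq \lceil \log_2(1/\delta)\rceil + 1$; if $A$ is gapless, density of the spectrum in $[a, a+\epsilon]$ blocks every test, giving outer limit NO.

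For the $\Pi^A_2$ inclusion of $\{\Xi_{\mathrm{class}}, \widetilde{\Omega}_{\mathrm{SA}}^f\}$, the plan invokes Theorem \ref{unbounded_theorem} (applicable since $A$ has bounded dispersion $f$ and is self-adjoint, so $g_m(x) = x$) to build inner approximations $\Sigma_{n_1}(A) \subset N_{2^{-n_1}}(\mathrm{Sp}(A))$ converging to $\mathrm{Sp}(A)$ in Attouch--Wets. Concurrently, estimate the dimension of approximate eigenspaces at the inner minimum $\tilde a_{n_1}$ by counting small singular values of the rectangular block $P_{f(n_1)}(A - \tilde a_{n_1}) P_{n_1}$. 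At outer level $n_2$, output the largest $j \in \{1,2,3,4\}$ consistent with the observations at scale $2^{-n_2}$: class $4$ if inner spectrum appears within $2^{-n_2}$ of $\tilde a_{n_1}$, class $3$ if an isolated point is seen but with apparent high multiplicity, class $2$ if multiplicity $>1$ is detected but a gap of size $2^{-n_2}$ is visible, and class $1$ only when a gap is seen and the multiplicity count locks at $1$. The ``largest consistent'' rule ensures the output monotonically decreases to $\Xi(A)$ as $n_2 \to \infty$, realising the $\Pi_1$ structure at the outer level; the diagonal case is a direct specialisation.

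For the $\notin \Delta^G_2$ lower bounds, the plan is an adaptive diagonal adversary against any hypothetical $\Delta^G_2$ algorithm $\{\Gamma_n\}$. Build a diagonal $A$ stage by stage: commit $\lambda_1 = 0$, then at stage $k$ commit finitely many $\lambda_j$ while preserving the existence of two legal completions --- one in the gapped branch (by extending later with $\lambda_j \geq 1$, so $0$ is isolated) and one in the gapless branch (by densifying future $\lambda_j$ in $[0, \epsilon]$). Since any general algorithm reads only finitely many evaluation functionals before returning $\Gamma_{n_k}$, pick $n_k$ large so that $\Gamma_{n_k}$'s output on the partial operator is fixed by currently-committed entries, then extend $A$ along the legal completion that makes $\Gamma_{n_k}(A)$ wrong. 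Diagonalising over $k$ yields $A \in \widehat{\Omega}_{\mathrm{D}}$ with $\Gamma_n(A) \not\to \Xi_{\mathrm{gap}}(A)$, contradicting $\{\Gamma_n\} \in \Delta^G_2$. The same scheme, with three toggling branches (simple-isolated, higher-multiplicity-isolated, and clustered), establishes the classification lower bound on $\widetilde{\Omega}_{\mathrm{D}}$.

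The principal obstacle is maintaining valid completions throughout the adversarial construction. At each stage, the committed partial diagonal must admit both a gapped (resp.\ class $1$) and a gapless (resp.\ class $2$, $3$, or $4$) continuation; this forces careful bookkeeping of how many small values have already been committed and at what distance from $a$, since once the committed entries are too dense near $0$, a gapped completion is no longer possible. For the classification problem this obstruction is sharper: distinguishing multiplicity $>1$ (case $2$) from an essential-spectrum eigenvalue (case $3$) or a cluster point (case $4$) in the adversarial extension requires the partial operator to remain simultaneously continuable into all three regimes, which is achieved by reserving blocks of yet-unassigned indices capable of either forming a persistent finite cluster or densifying. Balancing these competing requirements against the finite-information constraint on $\Gamma_n$ is the technical heart of the argument.
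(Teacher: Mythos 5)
Your lower-bound sketches are essentially the paper's argument (adaptively chosen diagonal blocks exploiting conditions (ii)--(iii) of Definition \ref{Gen_alg}), and for $\Xi_{\mathrm{class}}$ the paper gets the lower bound even more cheaply by composing $\Xi_{\mathrm{gap}}$ with the map $1\mapsto 1$, $\{2,3,4\}\mapsto 0$. The serious problem is your $\Sigma^A_2$ algorithm for $\Xi_{\mathrm{gap}}$. Your YES-test certifies only that \emph{some} interval $[a+2^{-k-2},\,a+2^{-k}]$, whose lower endpoint sits a fixed distance $2^{-k-2}$ above the bottom of the spectrum, is spectrum-free. That is not the gap property: gaplessness only requires $[a,a+\epsilon]\subset\mathrm{Sp}(A)$ for \emph{some} $\epsilon>0$, and nothing prevents a spectral hole higher up. For the diagonal operator whose entries enumerate a dense subset of $[0,10^{-3}]\cup[1,2]$, the operator is gapless, yet for $k=1$ every point of $[a+1/8,\,a+1/2]$ is at distance $>2^{-5}$ from the spectrum, so your test passes already at $n_2=1$ and the outer limit is YES; this breaks both convergence to $\Xi_{\mathrm{gap}}(A)$ and the $\Sigma_1$ soundness of the YES branch. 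To certify that $a$ itself is isolated you need information arbitrarily close to $a$, which is exactly why the paper works instead with the gap $\mu_2^n-\mu_1^n$ between the two lowest eigenvalues of the square truncations $P_nAP_n$, whose behaviour below the essential spectrum is controlled by the min--max results (Lemmas \ref{case1_fs}, \ref{case2_fs}): $\mu_2^n-\mu_1^n\to 0$ iff the operator is gapless within this restricted class. A second, independent flaw is the claim that $d_{n_1}(z)=\min\{\|(A-z)v\|:v\in P_{n_1}\ell^2,\|v\|=1\}$ is ``arithmetically computable from the matrix entries'': its Gram matrix entries $\langle (A-z)e_j,(A-z)e_i\rangle$ are infinite sums, and $\widehat{\Omega}_{\mathrm{SA}}$ carries \emph{no} dispersion information, so no finite computation gives these to guaranteed precision (this is precisely why the paper's rectangular-truncation machinery needs $f$ and $c_n$, which are unavailable here). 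Finally, you never establish existence of the first limit; your test is not monotone in $n_1$ (the grid drifts with $a^{(n_1)}$ and thresholds can be hit exactly), whereas the paper forces first-limit convergence with the separated intervals $J^1_{n_2}=[0,1/(2n_2)]$, $J^2_{n_2}=(1/n_2,\infty)$ and the ``last $k$ landing in $J^1\cup J^2$'' rule.

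For $\Xi_{\mathrm{class}}$ your outline is in the right spirit (finite-section multiplicity counts plus an error-controlled spectrum algorithm, with a ``largest consistent class'' rule to get convergence from above), but as written it asserts rather than proves the two delicate points the paper's proof is built around: (i) that each threshold test has a well-defined inner limit (the paper makes $b_{n_2,n_1}$ monotone in $n_1$ and uses the $J^1_{n_2},J^2_{n_2}$ separation for the $l^j_n$), and (ii) that an output of $3$ genuinely excludes class $4$, which requires relating the computable quantity to $\mathrm{dist}(a+1/n_2,\mathrm{Sp}(A))$ as in Lemma \ref{cs_and_bs}; ``class 4 if inner spectrum appears within $2^{-n_2}$ of $\tilde a_{n_1}$'' is not yet an algorithmic predicate with a provable first limit and a one-sided guarantee. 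These gaps would need to be filled before the $\Pi^A_2$ inclusion can be considered proved.
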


\begin{remark}[Diagonal vs. full matrix]
Theorem \ref{class_thdfjlwdjfkl} shows that there is no difference in the classification of the spectral gap problem between the set of diagonal matrices and the collection of full matrices. 
\end{remark}

\subsection{Computing discrete spectra, multiplicities and approximate eigenvectors}
\label{disc_results}

For any normal operator $A$, there is a simple decomposition of $\mathrm{Sp}(A)$ into the discrete spectrum and the essential spectrum, denoted by $\mathrm{Sp}_d(A)$ and $\mathrm{Sp}_{\mathrm{ess}}(A)$ respectively. The discrete spectrum consists of isolated points of the spectrum that are also eigenvalues of finite multiplicity. The essential spectrum has numerous definitions for non-normal operators, but for normal operators is defined as the set of $z$ such that $A-zI$ is not a Fredholm operator.

\subsubsection{When we can bound the dispersion.}

Let $\Omega_{\mathrm{N}}^d$ denote the class of bounded normal operators on $l^2(\mathbb{N})$ with (known) bounded dispersion and with non-empty discrete spectrum. Denote by $\Omega_{\mathrm{D}}^{d}$ the class of bounded diagonal self-adjoint operators in $\Omega_{\mathrm{N}}^d$.  Define the problem function
\begin{equation}\label{eq:disc}
\Xi_1^d: \Omega_{\mathrm{N}}^d, \Omega_{\mathrm{D}}^d \ni A \mapsto \mathrm{cl}(\mathrm{Sp}_d(A)). 
\end{equation}
We take the closure and restrict to operators with non-empty discrete spectrum since we want convergence with respect to the Hausdorff metric. However, the algorithm we build, $\Gamma_{n_2,n_1}$, has the property that $\lim_{n_1\rightarrow\infty}\Gamma_{n_2,n_1}(A) \subset  \mathrm{Sp}_d(A)$, so this is not restrictive in practice.

We also let $\Omega_{\mathrm{N}}^f$ denote the class of bounded normal operators with (known) bounded dispersion with respect to the function $f$. In addition, let $\Omega_{\mathrm{D}}$ denote the class of bounded diagonal self-adjoint operators and consider the following discrete problem function
\begin{equation}\label{eq:disc2}
\Xi_2^d: \Omega_{\mathrm{N}}^f, \Omega_{\mathrm{D}} \ni A \mapsto \text{ ``Is }\mathrm{Sp}_d(A)\neq\emptyset{}\text{?''} 
\end{equation}
For $\Xi_2^d$ we consider the space $\mathcal{M}=\{0,1\}$ with the discrete topology, where $1$ is interpreted as ``Yes'' and $0$ as ``No''. Thus the computational problem is a decision problem.

\begin{theorem}
\label{discreteojoiojo}
Let $\Xi_1^d$, $\Omega_{\mathrm{N}}^d$ and $\Omega_{\mathrm{D}}^d$, as well as $\Xi_2^d$, $\Omega_{\mathrm{N}}^f$ and $\Omega_{\mathrm{D}}$, be as above. Then,
\[
\Delta^G_2\not\owns\{\Xi_1^d,\Omega_{\mathrm{N}}^d\} \in\Sigma^A_2,\quad
\Delta^G_2\not\owns\{\Xi_1^d,\Omega_{\mathrm{D}}^d\} \in\Sigma^A_2
\]
and
\[
\Delta^G_2\not\owns\{\Xi_2^d,\Omega_{\mathrm{N}}^f\} \in\Sigma^A_2,\quad\Delta^G_2\not\owns\{\Xi_2^d,\Omega_{\mathrm{D}}\} \in\Sigma^A_2.
\]
In particular, the routines \texttt{DiscreteSpec} and \texttt{DiscSpecEmpty} in Appendix \ref{append_pseudo} realise the sharp $\Sigma_2^A$ inclusions for $\Xi_1^d$ and $\Xi_2^d$ respectively.
\end{theorem}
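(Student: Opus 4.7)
I would treat $\Xi_1^d$ and $\Xi_2^d$ together and exploit that $\Omega_{\mathrm{D}}^d\subset\Omega_{\mathrm{N}}^d$ (and $\Omega_{\mathrm{D}}\subset\Omega_{\mathrm{N}}^f$), so that lower bounds on the diagonal subclass and algorithms on the larger class transfer in opposite directions. The key analytic input for the upper bound is that normality of $A$ gives $\|R(z,A)\|^{-1}=\mathrm{dist}(z,\mathrm{Sp}(A))$, so that via the bounded dispersion $\{c_n\}$ and rectangular truncations $P_{f(n_1)}(A-zI)P_{n_1}$ one obtains an arithmetic sequence $\tilde d_{n_1}(z)\to\mathrm{dist}(z,\mathrm{Sp}(A))$ uniformly on compact sets of $\mathbb{C}$, which is exactly the tool behind \texttt{CompSpecUB} in Theorem \ref{unbounded_theorem}. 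On top of this I would use the characterisation: for normal $A$, $\lambda\in\mathrm{Sp}_d(A)$ iff some $r>0$ has $\mathrm{Sp}(A)\cap\overline{B_r(\lambda)}=\{\lambda\}$ and $E_A(\overline{B_r(\lambda)})$ of finite rank; and the fact that since $P_{n_1}AP_{n_1}$ is a finite normal matrix, the number of its eigenvalues in $\overline{B_r(z)}$ converges to $\dim\mathrm{Ran}(E_A(\overline{B_r(z)}))$ whenever $\partial B_r(z)\cap\mathrm{Sp}(A)=\emptyset$.

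The algorithm $\Gamma_{n_2,n_1}$ sweeps a rational grid $G_{n_2}\subset(\mathbb{Q}+i\mathbb{Q})\cap\overline{B_{n_2}(0)}$ of mesh $\leq 2^{-n_2-2}$. At each $z\in G_{n_2}$ it searches finitely many rational radii $r\in[2^{-n_2-1},2^{-n_2}]$ for one satisfying simultaneously, at level $n_1$: (i) $\tilde d_{n_1}(z)<r$, certifying eventual spectrum in $\overline{B_r(z)}$; (ii) $\tilde d_{n_1}$ bounded strictly below on a fine rational net of $\partial B_r(z)$, certifying that $\partial B_r(z)$ avoids the spectrum in the limit; and (iii) the count $m_{n_1}(r,z)$ of eigenvalues of $P_{n_1}AP_{n_1}$ in $\overline{B_r(z)}$ has stabilised over recent truncations at a value $\leq n_1$, signalling finite rank. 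If such an $r$ exists, emit the eigenvalue $\lambda_{n_1}(z)$ of $P_{n_1}AP_{n_1}$ nearest to $z$ inside $\overline{B_r(z)}$ into $\Gamma_{n_2,n_1}(A)$. As $n_1\to\infty$ the inner limit $X_{n_2}(A)=\lim_{n_1}\Gamma_{n_2,n_1}(A)$ stabilises, and (i)--(iii) together with the weak convergence of finite normal spectra force $X_{n_2}(A)\subset\mathrm{Sp}_d(A)$, as noted in the remark preceding the theorem. Grid density then forces $X_{n_2}(A)\to\overline{\mathrm{Sp}_d(A)}$ in the Attouch--Wets/Hausdorff metric; since each $X_{n_2}(A)$ already lies inside the target, the outer convergence is automatically of $\Sigma_1$ type, giving the $\Sigma_2^A$ classification for $\Xi_1^d$. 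For $\Xi_2^d$ emit $1$ whenever the above set is non-empty and $0$ otherwise; the outer value is monotone in $n_2$ and eventually equals $1$ iff $\mathrm{Sp}_d(A)\ne\emptyset$. Only normality plus bounded dispersion are used, so this equally handles $\Omega_{\mathrm{N}}^f$.

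For the lower bounds I would run adversaries on the diagonal subclasses. For $\Xi_2^d$ on $\Omega_{\mathrm{D}}$, assume toward contradiction that $\{\Gamma_n\}$ is a general one-limit algorithm, and use that any algorithm reads only finitely many matrix entries on any input to build inductively a diagonal $A_\infty\in\Omega_{\mathrm{D}}$ with entries in $\{0,1\}$: begin with a long block of $0$s, so $\Gamma_n$ must eventually commit to $0$ (matching the all-zero operator, whose $\mathrm{Sp}_d=\emptyset$); once some $\Gamma_{n_1}$ outputs $0$ on this prefix, append a single $1$ plus enough $0$s that $\mathrm{Sp}_d=\{1\}$ in the current extension and wait for some $\Gamma_{n_2}>n_1$ to output $1$; then append a long block of $1$s so that both $0$ and $1$ occur infinitely often and $\mathrm{Sp}_d=\emptyset$, and wait for some $\Gamma_{n_3}>n_2$ to output $0$; iterate. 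The limiting $A_\infty\in\Omega_{\mathrm{D}}$ makes $\Gamma_{n_k}(A_\infty)$ alternate between $0$ and $1$, contradicting convergence. For $\Xi_1^d$ on $\Omega_{\mathrm{D}}^d$ the same iterative idea applies with the first entry $\lambda_1=0$ fixed (keeping $0\in\mathrm{Sp}_d$ and hence $A\in\Omega_{\mathrm{D}}^d$) and the remaining entries in $\{1,2\}$ toggled between $1$ appearing finitely (so $\overline{\mathrm{Sp}_d(A)}=\{0,1\}$) and $1$ appearing infinitely (so $\overline{\mathrm{Sp}_d(A)}=\{0\}$); the resulting Hausdorff (hence Attouch--Wets) fluctuation is bounded below by a positive constant. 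Since the construction is purely information-theoretic, both lower bounds hold in the full general model $G$.

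\textbf{Main obstacle.} The delicate part of the upper bound is the joint calibration of tests (ii) and (iii): one needs a quantitative version of the weak convergence of finite normal spectra, together with the fact that rectangular-truncation-based distance approximations really do exclude spurious spectrum on $\partial B_r(z)$. Showing that (i)--(iii) stabilise simultaneously into genuine discrete-spectrum certification, while failure of each still permits eventual detection of every isolated eigenvalue as $n_2$ grows, is the technical heart. The lower-bound tower construction itself is by now standard in the SCI literature, but one must be careful to keep $A_\infty$ bounded, diagonal and in the appropriate domain class throughout.
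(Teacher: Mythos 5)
Your lower bounds are fine: they are the standard finite-information adversary arguments on diagonal operators, in substance the same as the paper's block construction (the paper uses a single switch, e.g. $A=\mathrm{diag}\{1,2\}\oplus\bigoplus_m A_{k_m}$, rather than infinite toggling; one switch already suffices because the finite-stage answer differs from the limiting one). The genuine gap is in the upper bound, and it sits exactly where you flag the "main obstacle": test (iii) and the premise behind it. First, for normal but non-self-adjoint $A$ the square truncation $P_{n_1}AP_{n_1}$ is \emph{not} normal, and its eigenvalues need not approximate $\mathrm{Sp}(A)$ at all (truncate the bilateral shift: every truncation has spectrum $\{0\}$ while the operator has the unit circle), so both the counting test and the emission of "the eigenvalue of $P_{n_1}AP_{n_1}$ nearest to $z$" lose their meaning on $\Omega_{\mathrm{N}}^d$. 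Even in the self-adjoint case, square truncations suffer spectral pollution inside gaps of the essential spectrum, so the number of truncation eigenvalues in $\overline{B_r(z)}$ need not converge to the rank of the spectral projection of $A$ for $\overline{B_r(z)}$ even when $\partial B_r(z)$ avoids $\mathrm{Sp}(A)$; this is precisely why the paper's multiplicity routine works instead with rectangular truncations of $(A-zI)^*(A-zI)$, for which $0$ lies below the essential spectrum and Lemmas \ref{case1_fs}--\ref{case2_fs} apply. Second, even for diagonal $A$, where the counts are clean, "the count has stabilised over recent truncations" is not a certificate obtainable in the inner limit: take an isolated eigenvalue of \emph{infinite} multiplicity whose occurrences along the diagonal are very sparse; the count is then constant over arbitrarily long stretches of $n_1$, so your test passes for infinitely many $n_1$ and fails for infinitely many others, the inner limit $\lim_{n_1}\Gamma_{n_2,n_1}(A)$ need not exist, and any fixed-window weakening of the test instead accepts this essential-spectrum point as discrete, destroying the inclusion $X_{n_2}(A)\subset\mathrm{Sp}_d(A)$ on which your $\Sigma_2$ structure rests. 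Distinguishing finite from infinite multiplicity of an isolated spectral point is itself a two-limit problem (that is the content of the $\notin\Delta_2^G$ half), so it cannot be decided inside the first limit by a stabilisation heuristic.

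For comparison, the paper avoids this by importing the height-two tower $\tilde{\Gamma}_{n_2,n_1}$ for $\mathrm{Sp}_{\mathrm{ess}}(A)$ from \cite{ben2015can}, whose output is eventually constant in $n_1$ and whose first limits $\tilde{\Gamma}_{n_2}(A)$ are nested, decreasing to $\mathrm{Sp}_{\mathrm{ess}}(A)$ and always containing it, together with the $\Sigma_1$ spectrum algorithm $\hat{\Gamma}_{n_1}$ and its error function $E(n_1,\cdot)$: a point $z\in\hat{\Gamma}_{n_1}(A)$ is kept exactly when $E(n_1,z)$ is smaller than its distance to $\tilde{\Gamma}_{n_2,n_1}(A)+B_{1/n_2}(0)$, followed by a clustering step to output one point per eigenvalue. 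Thus the finite-rank discrimination is spread across both limits, and the inner limit exists because every ingredient is either eventually constant in $n_1$ or convergent with uniform error control; the first limit is then $\mathrm{Sp}_d(A)\cap(\tilde{\Gamma}_{n_2\vee N(A)}(A)+B_{1/(n_2\vee N(A))}(0))^c$, a subset of $\mathrm{Sp}_d(A)$ increasing to $\overline{\mathrm{Sp}_d(A)}$, which gives the $\Sigma_2^A$ classification for $\Xi_1^d$ and, via emptiness of the kept set, for $\Xi_2^d$. To salvage your route you would essentially have to re-prove a two-limit essential-spectrum (or finite-rank) certificate, which is the missing ingredient in your proposal.
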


The constructed algorithm $\Gamma_{n_2,n_1}$ (routine \texttt{DiscreteSpec}) has the following property. Given $A\in\Omega_{\mathrm{N}}^d$ and $z\in\mathrm{Sp}_d(A)$, the following holds. If $\epsilon>0$ is such that $\mathrm{Sp}(A)\cap B_{2\epsilon}(z)=\{z\}$, there is at most one point in $\Gamma_{n_2,n_1}(A)$ that also lies in $B_{\epsilon}(z)$. In other words, any point of $\mathrm{Sp}_d(A)$ has at most one point in $\Gamma_{n_2,n_1}(A)$ approximating it. Furthermore, the limit $\lim_{n_1\rightarrow\infty}\Gamma_{n_2,n_1}(A)=\Gamma_{n_2}(A)$ is contained in the discrete spectrum and increases to $\mathrm{cl}(\mathrm{Sp}_d(A))$ in the Hausdorff metric.

\subsubsection{Eigenvectors and multiplicities.}

Suppose that $z_{n_2,n_1}\in\Gamma_{n_2,n_1}(A)$ (the output of \texttt{DiscreteSpec}) with $$\lim_{n_2\rightarrow\infty}z_{n_2,n_1}=z_{n_2}=z\in \mathrm{Sp}_d(A).$$
Our tower also computes a function $h_{n_2,n_1}(A,\cdot)$ over the output $\Gamma_{n_2,n_1}(A)$ such that
$$\lim_{n_2\rightarrow\infty}\lim_{n_1\rightarrow\infty}h_{n_2,n_1}(A,z_{n_2,n_1})=h(A,z)$$
(where $h(A,z)$ denotes the multiplicity of the eigenvalue $z$) in $\mathbb{Z}_{\geq0}$ with the discrete metric. The routine \texttt{Multiplicity} in Appendix \ref{append_pseudo} computes $h_{n_2,n_1}$.

\texttt{ApproxEigenvector} in Appendix \ref{append_pseudo} approximates eigenvectors. For simplicity, we stick to eigenspaces of multiplicity $1$, but these ideas can be easily extended to higher multiplicities to approximate the whole eigenspace. Given $z_{n_1}$ in the output $\Gamma_{n_2,n_1}(A)$ of the algorithm \texttt{DiscreteSpec} and an approximation
\begin{equation}
\label{sing_val_bound}
\sigma_{\mathrm{inf}}(P_{f(n_1)}(A-z_{n_1}I)|_{P_{n_1}\mathcal{H}})\leq E({n_1},z_{n_1}),
\end{equation}
where $\sigma_{\mathrm{inf}}$ denotes the smallest singular value, can we find a $x_{n_1}$ of unit norm satisfying $\left\|(A-z_{n_1}I)x_{n_1}\right\|\leq E({n_1},z_{n_1})+c_{n_1}$ (recall that $c_{n_1}$ is the dispersion bound)? The discussion in \S \ref{disc_num} shows that such a sequence is an approximate eigenvector sequence.

\begin{theorem}
\label{evector_theorem}
Suppose $A\in\Omega_{\mathrm{N}}^f$. Let $\delta>0$ and $z_{n_1}\in\Gamma_{n_2,n_1}(A)$ such that $z_{n_1}\rightarrow z\in\mathrm{Sp}_d(A)$. Suppose we also have the computed bound (\ref{sing_val_bound}), then we can compute a corresponding vector $x_{n_1}$ (of finite support) satisfying 
$$
\left\|(A-z_{n_1}I)x_{n_1}\right\|<\left\|x_{n_1}\right\|\big(E({n_1},z_{n_1})+c_{n_1}+\delta\big)\text{ and }1-\delta<\left\|x_{n_1}\right\|<1+\delta
$$
in finitely many arithmetic operations. 
\end{theorem}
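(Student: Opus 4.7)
The plan is to reduce the claim to a finite-dimensional near-null-vector computation on the rectangular matrix $B := P_{f(n_1)}(A - z_{n_1}I)|_{P_{n_1}\mathcal{H}}$ and to solve that problem purely with rational arithmetic. First, I would use bounded dispersion to pass from $B$ back to the full operator: for any $v \in P_{n_1}\mathcal{H}$ one has $(I - P_{f(n_1)})v = 0$ since $f(n_1) \geq n_1$, and so
$$
(A - z_{n_1}I)v = Bv + (I - P_{f(n_1)})AP_{n_1}v,
$$
with the two summands orthogonal. The dispersion bound $\|(I - P_{f(n_1)})AP_{n_1}\| \leq c_{n_1}$ then yields $\|(A - z_{n_1}I)v\|^2 \leq \|Bv\|^2 + c_{n_1}^2\|v\|^2$, hence $\|(A - z_{n_1}I)v\| \leq \|Bv\| + c_{n_1}\|v\|$. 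Thus any $v$ supported on the first $n_1$ coordinates with $\|Bv\| \leq (E(n_1,z_{n_1}) + \delta/2)\|v\|$, once rescaled to near-unit norm, meets every conclusion of the theorem.

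Second, hypothesis (\ref{sing_val_bound}) says that the Hermitian positive semi-definite matrix $M := B^*B$ has smallest eigenvalue $\lambda_1(M) \leq E(n_1,z_{n_1})^2$. The remaining task is to produce a vector $v \in (\mathbb{Q}+i\mathbb{Q})^{n_1}$ with $v^{*} M v \leq (E(n_1,z_{n_1}) + \delta/2)^2 \|v\|^2$ using only arithmetic operations and comparisons on the rational entries of $B$. I would accomplish this by a rational grid search: compute an upper bound $\beta$ for $\|B\|$ from the sum of moduli of its entries; fix a rational spacing $\eta > 0$ small enough (depending on $\beta$, $n_1$, and $\delta$, as quantified below) and enumerate the finite lattice of points $y \in \eta(\mathbb{Z}+i\mathbb{Z})^{n_1}$ lying in the rational annulus $\tfrac12 \leq \|y\|^2 \leq 2$; for each such $y$ compute the rational quantity $\|By\|^2/\|y\|^2$ and retain a minimizer. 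A unit smallest right singular vector $v^{\star}$ of $B$ lies within distance $\eta\sqrt{2n_1}$ of some lattice point $y$ in the annulus, and $w\mapsto\|Bw\|/\|w\|$ is Lipschitz on that annulus with constant a fixed multiple of $\beta$; choosing $\eta$ so that this Lipschitz constant times $\eta\sqrt{2n_1}$ is less than $\delta/2$ forces $\|By\|/\|y\| < E(n_1,z_{n_1}) + \delta/2$. All such inequalities can be certified by comparing rational squares, so no square roots are ever required.

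Given such a $v$, a final bisection produces a positive rational $s$ with $(1-\delta)^2 < s^2\|v\|^2 < (1+\delta)^2$; setting $x_{n_1} := sv$ gives a vector of finite support (inside $P_{n_1}\mathcal{H}$) with $\|x_{n_1}\| \in (1-\delta, 1+\delta)$ and
$$
\|(A - z_{n_1}I)x_{n_1}\| \leq \|Bx_{n_1}\| + c_{n_1}\|x_{n_1}\| \leq \bigl(E(n_1,z_{n_1}) + \tfrac{\delta}{2} + c_{n_1}\bigr)\|x_{n_1}\| < \|x_{n_1}\|\bigl(E(n_1,z_{n_1}) + c_{n_1} + \delta\bigr),
$$
as required. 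The main obstacle, and essentially the only subtlety, is the interplay of finite arithmetic with the potential irrationality of $\sigma_1(B)$ and its singular vector; the rational grid search (or, equivalently, adaptively-terminated inverse iteration on $M + \epsilon I$ using the rational certificate $\|Bv\|^2/\|v\|^2 \leq (E(n_1,z_{n_1})+\delta/2)^2$) sidesteps this cleanly without ever needing an exact eigendecomposition or a square root.
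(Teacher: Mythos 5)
Your proof is correct, but it takes a genuinely different route from the paper's. The reduction to the rectangular truncation $B=P_{f(n_1)}(A-z_{n_1}I)|_{P_{n_1}\mathcal{H}}$ via orthogonality and the dispersion bound $c_{n_1}$ is the same in both arguments; the difference lies in how the near-minimising vector is produced. The paper forms the Hermitian matrix $P_{n_1}(A-z_{n_1}I)^*P_{f(n_1)}(A-z_{n_1}I)P_{n_1}-(E(n_1,z_{n_1})+\delta)^2I$, which is indefinite by \eqref{sing_val_bound}, computes an $LDL^*$ factorisation, extracts a vector $y$ with $\langle y,Dy\rangle<0$ from a negative $1\times1$ block or a $2\times2$ block (trace/determinant test plus a finite one-parameter search), solves the triangular system $L^*\tilde x=y$ and approximately normalises; negativity of the quadratic form then yields $\|Bx_{n_1}\|<(E(n_1,z_{n_1})+\delta)\|x_{n_1}\|$, and the dispersion term is added exactly as you do. You instead perform a brute-force search for the quotient $\|Bw\|/\|w\|$ over the lattice $\eta(\mathbb{Z}+i\mathbb{Z})^{n_1}$ intersected with an annulus, using the covering radius of the lattice and a Lipschitz estimate, with \eqref{sing_val_bound} guaranteeing the true minimum is at most $E(n_1,z_{n_1})$. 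Both arguments meet the theorem's requirement of finitely many arithmetic operations and comparisons; what the paper's route buys is practicality (the factorisation and triangular solve are polynomial in $n_1$ and underlie the routine \texttt{ApproxEigenvector}), whereas your lattice search enumerates on the order of $(c/\eta)^{2n_1}$ points, exponential in $n_1$, but is more elementary, avoiding the symmetric-indefinite factorisation and the $2\times2$-block handling altogether. Two small repairs to your write-up: obtain the bound $\beta\ge\|B\|$ without exact moduli of complex entries (replace $|b_{ij}|$ by $|\mathrm{Re}\,b_{ij}|+|\mathrm{Im}\,b_{ij}|$, since exact moduli require square roots), and supplement your choice of $\eta$ with an absolute cap such as $\eta\sqrt{2n_1}\le 1/4$ so that the lattice point nearest the unit singular vector is guaranteed to lie in your annulus even when $\delta$ is large or $\beta$ is tiny; with these, and noting that your normalisation step gives $1-\delta<\|x_{n_1}\|<1+\delta$ for $\delta<1$, the proof is complete.
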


\subsubsection{What happens when we cannot bound the dispersion?}

Whilst Theorem \ref{discreteojoiojo} shows that computing the discrete spectrum requires two limits, the constructed tower of algorithms $\{\Gamma_{n_2,n_1}\}$ is still useful since
$
\lim_{n_1\rightarrow\infty}\Gamma_{n_2,n_1}(A)\subset\mathrm{Sp}_d(A).
$
Moreover, Theorem \ref{evector_theorem} shows that we can still effectively approximate eigenspaces with error control. But what happens if we do not know a dispersion function $f$ as in \eqref{bd_disp}? To answer this, let $\Omega_1^d$ denote the class of bounded normal operators with non-empty discrete spectrum and $\Omega_2^d$ the class of bounded normal operators. As the following theorem reveals, we get a jump in the SCI hierarchy. 

\begin{theorem}
\label{discrete_dont_now_f}
Let $\Xi_i^d$ and $\Omega_i^d$ be as above. Then,
\[
\Delta^G_3\not\owns\{\Xi_1^d,\Omega_1^d\} \in\Sigma^A_3
\quad \text{and} \quad \Delta^G_3\not\owns\{\Xi_2^d,\Omega_2^d\} \in\Sigma^A_3.
\]
\end{theorem}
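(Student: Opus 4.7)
My strategy is two-fold. For the upper bound $\in\Sigma^A_3$, I would nest the $\Sigma^A_2$ tower of Theorem \ref{discreteojoiojo} (which requires a known dispersion function $f$ and a null sequence $\{c_n\}\geq D_{f,n}(A)$) inside an outer loop that estimates these missing data from growing truncations of $A$, followed by a final verification sieve that enforces the required $\Sigma_1$ behavior at the outer level. For the lower bound $\notin \Delta^G_3$, I would use an adversarial diagonal construction against any hypothetical two-limit general tower. The underlying intuition is that, without any a priori control on how fast the off-diagonal mass of $A$ decays, far-out matrix entries can hide discrete-spectrum information that no two-limit procedure can reliably detect, so a third limit is forced.

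For the upper bound construction, at the outer index $n_3$ I would fix a candidate dispersion function $f_{n_3}(n):=n+n_3$ and compute, via arithmetic operations on the matrix elements of $A$, finite-submatrix norms
\[
d_n^{(n_3)}\,:=\,\max\Bigl\{\|(I-P_{f_{n_3}(n)})P_{K(n,n_3)} A P_n\|,\ \|(I-P_{f_{n_3}(n)})P_{K(n,n_3)} A^* P_n\|\Bigr\}
\]
for a rapidly growing cutoff $K(n,n_3)$. The $d_n^{(n_3)}$ are approximations from below to $D_{f_{n_3}, n}(A)$, and by padding them with a safety term and reindexing in $n_3$, one obtains sequences $(\hat c^{(n_3)}_n)$ whose $n\to\infty$ limits vanish and which eventually dominate the true dispersion. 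Feeding $(f_{n_3}, \hat c^{(n_3)})$ into the $\Sigma^A_2$ tower of Theorem \ref{discreteojoiojo} yields a tentative output, which I would then filter by a sieve retaining a point $z$ only if a computable approximation to $\mathrm{dist}(z,\mathrm{Sp}(A))$ — available for normal operators via smallest-singular-value estimates on rectangular truncations — is certified below $2^{-n_3}$. The sieve forces $\Gamma_{n_3}(A):=\lim_{n_2}\lim_{n_1}\Gamma_{n_3,n_2,n_1}(A)$ to lie inside the closed $2^{-n_3}$-neighborhood of $\overline{\mathrm{Sp}_d(A)}$, which is the $\Sigma^A_1$ property at the outer level; Attouch--Wets convergence $\Gamma_{n_3}(A)\to\overline{\mathrm{Sp}_d(A)}$ follows because, for $n_3$ large, the estimates become valid dispersion data and the inner tower captures every isolated eigenvalue. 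The decision problem $\Xi_2^d$ is handled by outputting \texttt{yes} precisely when the sieve retains a point.

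For the lower bound, suppose for contradiction that a two-limit general tower $\Gamma_{n_2, n_1}$ computes $\Xi^d_i$ on $\Omega^d_i$. Starting from a base normal operator with a controlled discrete spectrum (the zero operator for $\Xi_2^d$; a rank-one projector for $\Xi_1^d$), I would inject tiny self-adjoint rank-one perturbations $\alpha_{n_1, n_2}\,e_{N_{n_1, n_2}} e_{N_{n_1, n_2}}^*$ at depths $N_{n_1, n_2}$ chosen adaptively beyond any matrix entry the tower has queried at stage $(n_1, n_2)$. A standard two-parameter Baire-category diagonalisation then produces a single $A\in\Omega^d_i$ on which the tower disagrees with $\overline{\mathrm{Sp}_d(A)}$ (respectively with the correct Boolean answer) by a uniform positive amount. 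The perturbations are normal and preserve membership in $\Omega^d_i$.

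The main obstacle is the lower bound: the adversarial family must simultaneously defeat both the inner and the outer layers of the two-limit tower while maintaining normality and membership in $\Omega^d_i$ (in particular the non-empty discrete-spectrum requirement for $\Omega^d_1$). This requires careful bookkeeping of which matrix entries the tower has committed to at each of the $(n_1, n_2)$ stages, and is the natural three-limit escalation of the two-limit lower-bound templates used for Theorem \ref{discreteojoiojo} and Theorem \ref{unbounded_test_theorem}.
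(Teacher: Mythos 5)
Both halves of your proposal contain genuine gaps. For the inclusion $\in\Sigma^A_3$, the step that fails is the claim that padded truncation estimates $\hat c^{(n_3)}_n$ ``eventually dominate the true dispersion'' while remaining null in $n$. The quantities $d^{(n_3)}_n$ you compute are approximations \emph{from below} to $D_{f_{n_3},n}(A)$, and no finite amount of padding computable from finitely many matrix entries can turn them into valid upper bounds: the off-diagonal mass beyond the cutoff $K(n,n_3)$ is exactly the information you do not have. Worse, for the fixed candidate $f_{n_3}(n)=n+n_3$ a general bounded normal operator need not satisfy $D_{f_{n_3},n}(A)\to 0$ at all, so no null sequence dominating the truth exists, and the $\Sigma^A_2$ tower of Theorem \ref{discreteojoiojo} fed with invalid data loses all of its guarantees (including the error function $E(n,\cdot)$ and the essential-spectrum tower it relies on). The repair you propose, a sieve certifying $\mathrm{dist}(z,\mathrm{Sp}(A))<2^{-n_3}$ via rectangular truncations, has the same defect: without dispersion control only $\sigma_1(P_K(A-z)P_n)$ is computable, which is a lower bound for $\sigma_1((A-z)|_{P_n\mathcal{H}})$ and hence bounds nothing from above in finitely many operations; and even granting the certification, closeness to $\mathrm{Sp}(A)$ does not imply closeness to $\overline{\mathrm{Sp}_d(A)}$, so retained points sitting on the essential spectrum would destroy the claimed outer $\Sigma_1$ property. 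The paper instead assembles the height-three tower directly from the known height-three tower for $\mathrm{Sp}_{\mathrm{ess}}$ and the height-two tower for $\mathrm{Sp}$ (both without dispersion information, from \cite{ben2015can}), sieving with the threshold $2^{-n_3}-2^{-n_2}$ against the essential-spectrum approximation, which is what makes the first two limits land inside $\mathrm{Sp}_d(A)$.

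For $\notin\Delta^G_3$ the gap is more fundamental: the ``perturb beyond the queried entries'' device only defeats the lowest level of a tower, because only the maps $\Gamma_{n_2,n_1}$ are general algorithms constrained to finitely many evaluations; the first limit $\Gamma_{n_2}(A)=\lim_{n_1}\Gamma_{n_2,n_1}(A)$ has, in the limit, seen every entry, so an adversary who merely hides rank-one bumps past the read region cannot fool it, and ``a standard two-parameter Baire-category diagonalisation'' is not an argument. Ruling out \emph{all} height-two general towers requires encoding a genuinely three-quantifier obstruction; the paper does this by reduction from the problem of deciding whether an infinite $\{0,1\}$-matrix has only finitely many columns with finitely many non-zero entries (shown elsewhere to have $\mathrm{SCI}_G=3$), realised spectrally by block operators in which the eigenvalue $1$ has finite multiplicity (hence lies in $\mathrm{Sp}_d$) or infinite multiplicity (hence does not) according to the answer, together with an interval trick to extract a convergent decision from the hypothetical tower. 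Your sketch, as it stands, would at best reprove the $\notin\Delta^G_2$ statements already contained in Theorem \ref{discreteojoiojo}.
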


\subsubsection{Spectral classification is a much harder problem than the spectrum.}\label{sec:spec_class_lit_added} Theorems \ref{discreteojoiojo} and \ref{discrete_dont_now_f} show that computing spectral classifications is a much harder problem than computing the spectrum (Theorem \ref{unbounded_theorem}). This difficulty is reflected in software packages such as SLEIGN(2) \cite{bailey1978automatic,bailey2001algorithm}, SLEDGE \cite{pruess1993mathematical,eastham1996using,fulton2005automatic,fulton2005computing}, and MATSLISE(2) \cite{ledoux2005matslise,ledoux2016matslise} for Sturm--Liouville problems. Even for such structured problems in one dimension, it very difficult to develop reliable algorithms that classify the spectrum. Similar problems occur when counting the number of negative bound states of suitable Schr\"odinger operators \cite{marletta1991certification}.

As well as the classification into discrete and essential spectrum, one can consider the absolutely continuous, singular continuous and pure point parts of the spectrum. These computations also require more than one limit (three in the case of singular continuous spectra), both for the relevant spectral sets, and the corresponding spectral measures \cite{colbrook2021computing,colbrook2021computingSIREV}. However, computing the full spectral measures can be done in one limit. For applications of spectral measures using these algorithms, see \cite{colbrook2021computingTI,johnstone2021bulk}.

\section{Connection to Previous Work}\label{sec:connection_work}
{\bf The SCI hierarchy:} Our paper is part of the program on the SCI hierarchy \cite{hansen2011solvability,ben2015can,colb1,colb2,boche2019solvability,colbrook2021can,firenet_SIAM_NEWS,colbrook2022computing,colbrook2021computing,colbrook4,colbrook2020foundations,colbrookPSEUDO, opt_big, CRAS_SCI, Hansen2016ComplexityII, colbrook2021computingSIREV, benartzi2020computing, Jonathan_res}, which is a direct continuation of S. Smale's work and his program on the foundations of computational mathematics \cite{Smale2, Smale_Acta_Numerica, BSS_Machine, BCSS}. Related to our paper are the results by C. McMullen\cite{McMullen1, mcmullen1988braiding} and P. Doyle \& C. McMullen \cite{Doyle_McMullen} on polynomial root-finding, which are classification results in the SCI hierarchy, and the contributions by  L. Blum, F. Cucker, M. Shub \& S. Smale  \cite{BSS_Machine, BCSS, ShubDuke, Cucker_AH_real}. Further examples are the results by C. Fefferman and L. Seco \cite{fefferman1990, fefferman1992, fefferman1993aperiodicity,  fefferman1994, fefferman1994_2, fefferman1995, fefferman1996interval, fefferman1996, fefferman1997}, proving the Dirac--Schwinger conjecture on the asymptotic behaviour of the ground state energy of a family of Schr\"{o}dinger operators, which implicitly prove $\Sigma^A_1$ classifications in the SCI hierarchy. This is also the case in T. Hales' Flyspeck program \cite{Hales_Annals, hales_Pi} leading to the proof of Kepler's conjecture (Hilbert's 18th problem) which also implicitly proves $\Sigma^A_1$ classifications. Many other problems in the foundations of computations, such as the work by S. Weinberger \cite{Weinberger}, can be viewed in the context of the SCI hierarchy. 

{\bf Classical results on computing spectra:} Due to the vast literature on spectral computation, we can only cite a small subset related to this paper. The ideas of using computational and algorithmic approaches to obtain spectral information date back to leading physicists and mathematicians such as H. Goldstine \cite{Goldstine}, T. Kato \cite{kato1949upper}, F. Murray \cite{Goldstine},  E. Schr\"odinger \cite{schrodinger1940method}, J. Schwinger \cite{Schwinger} and J. von Neumann \cite{Goldstine}. Schwinger introduced finite-dimensional approximations to quantum systems in infinite-dimensional spaces that allow for spectral computations. An interesting observation is that Schwinger's ideas were already present in the work of H. Weyl \cite{weyl1950theory}.
The work by H. Goldstine, F. Murray and J. von Neumann \cite{Goldstine} was one of the first to establish rigorous convergence results, and their work yields $\Delta^A_1$ classification for certain self-adjoint finite-dimensional problems. 
In \cite{Digernes} T. Digernes, V. S. Varadarajan  and S. R. S. Varadhan proved convergence of spectra of Schwinger's finite-dimensional discretisation matrices for a specific class of Schr\"{o}dinger operators with certain types of potential, which yields a $\Delta^A_2$ classification in the SCI hierarchy. 

The finite-section method, which has been intensely studied for spectral computation, and has often been viewed in connection with Toeplitz theory, is very similar to Schwinger's idea of approximation using a finite-dimensional subspace. The reader may want to consult the pioneering work by A. B{\"o}ttcher \cite{Albrecht_Fields, Bottcher_pseu} and A. B{\"o}ttcher \& B. Silberman \cite{Bottcher_book, bottcher2006analysis}. 
W. Arveson  \cite{Arveson_cnum_lin94, Arveson_noncommute93,Arveson_role_of94,Arveson_Improper93,  Arveson_discrete91} 
and N. Brown \cite{brown2007quasi, Brown_2006, Brown_Memoars} pioneered the combination of spectral computation and the $C^*$-algebra literature (which dates back to the work of A. B{\"o}ttcher \& B. Silberman \cite{Albrecht1983}), both for the general spectral computation problem as well as for Schr\"{o}dinger operators. See also the work by N. Brown, K. Dykema, and D. Shlyakhtenko \cite{brown2002}, where variants of finite section analysis are implicitly used. Arveson also considered spectral computation in terms of densities, which is related to Szeg{\H o}'s work \cite{Szego} on finite section approximations. Similar results are also obtained by A. Laptev and Y. Safarov \cite{Laptev}. 
Typically, when applied to appropriate subclasses of operators, finite section approaches yield $\Delta^A_2$ classification results. There are also other approaches based on the infinite QR algorithm in connection with Toda flows with infinitely many variables pioneered by P. Deift, L. C. Li, and C. Tomei \cite{deift}. See also the work by P. Deift, J. Demmel, C. Li, and C. Tomei \cite{deift1991bidiagonal}.
E. B. Davies considered second order spectra methods
\cite{Davies00, Davies_sec_order}, and E. Shargorodsky \cite{Shargorodsky1} demonstrated how second order spectra methods \cite{Davies_sec_order} will never recover the whole spectrum.

{\bf Recent results on computing spectra:} There are many recent directions in computational spectral theory that are related to our work. 

\begin{itemize}
\item[(i)] {\it Infinite-dimensional numerical linear algebra:}
S. Olver,  A.Townsend and M. Webb have provided a foundational and practical framework for infinite-dimensional numerical linear algebra and foundational results on computations with infinite data structures \cite{Olver_Townsend_Proceedings, Olver_SIAM_Rev, Olver_code1, Olver_code2}. This includes efficient codes as well as theoretical results. The infinite-dimensional QL and QR algorithms, inspired by the work of Deift et. al. \cite{deift, deift1991bidiagonal} mentioned above, are important parts of this program that yield classifications in the SCI hierarchy of computing extreme elements in the spectrum, see also \cite{Hansen_JFA, colb2} for the infinite-dimensional QR algorithm. The recent work of M. Webb and S. Olver \cite{webb2021spectra} on computing spectra of Jacobi operators is also formulated in the SCI hierarchy.

\item[(ii)] {\it Finite section approaches:} In the cases where the finite section method works, it will typically yield $\Delta_2^A$ classifications in the SCI hierarchy, and occasionally $\Delta_1^A$ classifications, see, for example, the work by A. B{\"o}ttcher, H. Brunner, A. Iserles \& S. N{\o}rsett \cite{Arieh2}, A. B{\"o}ttcher, S. Grudsky \& A. Iserles \cite{bottcher_grudsky_iserles_2011}, H. Brunner, A. Iserles \& S. N{\o}rsett \cite{Arieh_IMA, brunner2011}, M. Marletta \cite{Marletta_pollution} and M. Marletta \& R. Scheichl \cite{Marletta_Spec_gaps}. The latter papers also discuss the failure of the finite section approach for certain classes of operators, see also \cite{Hansen_PRS, Hansen_JFA}.

\item[(iii)] {\it Resonances:} 
We would like to mention the recent work by M. Zworski \cite{Zworski1, zworski1999resonances} on computing resonances that can be viewed in terms of the SCI hierarchy. In particular, the computational approach in \cite{Zworski1} is based on expressing resonances as limits of non-self-adjoint spectral problems, and hence the SCI hierarchy is inevitable, see also \cite{sjostrand1999asymptotic}.  The recent work of J. Ben-Artzi, M. Marletta \& F. R\"{o}sler \cite{Jonathan_res,benartzi2020computing} on computing resonances is also formulated in terms of the SCI hierarchy.

\item[(iv)] {\it Computer-assisted proofs:} We have already mentioned the results by C. Fefferman and L. Seco \cite{fefferman1990, fefferman1992, fefferman1993aperiodicity,  fefferman1994, fefferman1994_2, fefferman1995, fefferman1996interval, fefferman1996, fefferman1997} on computer-assisted proofs proving classification results in the SCI hierarchy. However, recent results using computer-assisted proofs in spectral theory also includes the work of M. Brown, M. Langer, M. Marletta, C. Tretter, \& M. Wagenhofer \cite{brown2010} and S. B{\"o}gli, M.  Brown, M. Marletta, C.  Tretter \& M. Wagenhofer \cite{bogli2014guaranteed}.

\end{itemize}
Finally, since writing this paper, the first author has developed rigorous data-driven algorithms for spectral properties of Koopman operators (operators on infinite-dimensional spaces that globally linearise non-linear dynamical systems) \cite{colbrook2021rigorous,ResDMD_JFM,colbrook2022mpedmd}. For these problems, $\Lambda$ consists of snapshot data of the system.

\subsection*{Acknowledgements}
The authors would like to thank Percy Deift, Charlie Fefferman, Tom Hales, Ari Laptev, Steve Smale, Maciej Zworski and Shmuel Weinberger for helpful discussions. MJC acknowledges support from the UK Engineering and Physical Sciences Research Council (EPSRC) grant EP/L016516/1. ACH acknowledges support from a Royal Society University Research Fellowship, the Leverhulme Prize 2017, as well as the UK Engineering and Physical Sciences Research Council (EPSRC) grant EP/L003457/1.

\section{Mathematical Preliminaries}\label{SCI_Hierarchy}
In this section, we formally define the SCI hierarchy. We have already presented the definition of a computational problem $\{\Xi,\Omega,\mathcal{M},\Lambda\}$ in \S \ref{sec:SCI_hierarchy}. The goal is to find algorithms that approximate the function $\Xi$. More generally, the main pillar of our framework is the concept of a tower of algorithms, which is needed to describe problems that need several limits in the computation. However, first we need the definition of a general algorithm.
\begin{definition}[General Algorithm]\label{Gen_alg}
Given a computational problem $\{\Xi,\Omega,\mathcal{M},\Lambda\}$, a \emph{general algorithm} is a mapping $\Gamma:\Omega\to \mathcal{M}$ such that for each $A\in\Omega$
\begin{itemize}
\item[(i)] There exists a (non-empty) finite subset of evaluations $\Lambda_\Gamma(A) \subset\Lambda$, 
\item[(ii)] The action of $\,\Gamma$ on $A$ only depends on $\{A_f\}_{f \in \Lambda_\Gamma(A)}$ where $A_f := f(A),$
\item[(iii)] For every $B\in\Omega$ such that $B_f=A_f$ for every $f\in\Lambda_\Gamma(A)$, it holds that $\Lambda_\Gamma(B)=\Lambda_\Gamma(A)$.
\end{itemize}
\end{definition}

The definition of a general algorithm is more general than the definition of a Turing machine \cite{turing1937computable} or a Blum--Shub--Smale (BSS) machine \cite{BCSS}. A general algorithm has no restrictions on the operations allowed. The only restriction is that it takes a \textit{finite} amount of information, though it is allowed to \emph{adaptively} choose the finite amount of information it reads depending on the input. Condition (iii) ensures that the algorithm consistently reads the information. With a definition of a general algorithm, we can define the concept of towers of algorithms.

\begin{definition}[Tower of Algorithms]\label{tower_funct}
Given a computational problem $\{\Xi,\Omega,\mathcal{M},\Lambda\}$, a \emph{tower of algorithms of height $k$
 for $\{\Xi,\Omega,\mathcal{M},\Lambda\}$} is a family of sequences of functions
 $$\Gamma_{n_k}:\Omega
\rightarrow \mathcal{M},\ \Gamma_{n_k, n_{k-1}}:\Omega
\rightarrow \mathcal{M},\dots,\ \Gamma_{n_k, \hdots, n_1}:\Omega \rightarrow \mathcal{M},
$$
where $n_k,\hdots,n_1 \in \mathbb{N}$ and the functions $\Gamma_{n_k, \hdots, n_1}$ at the lowest level of the tower are general algorithms in the sense of Definition \ref{Gen_alg}. Moreover, for every $A \in \Omega$,
$$
\Xi(A)= \lim_{n_k \rightarrow \infty} \Gamma_{n_k}(A), \quad \Gamma_{n_k, \hdots, n_{j+1}}(A)= \lim_{n_j \rightarrow \infty} \Gamma_{n_k, \hdots, n_j}(A) \quad j=k-1,\dots,1.
$$
\end{definition}

In addition to a general tower of algorithms (defined above), we will focus on arithmetic towers.

\begin{definition}[Arithmetic Tower]\label{arith_tower_def}
Given a computational problem $\{\Xi,\Omega,\mathcal{M},\Lambda\}$, where $\Lambda$ is countable, we define the following: An {arithmetic tower of algorithms} of height $k$
 for $\{\Xi,\Omega,\mathcal{M},\Lambda\}$ is a tower of algorithms where the lowest functions $\Gamma = \Gamma_{n_k, \hdots, n_1} :\Omega \rightarrow \mathcal{M}$ satisfy the following:
 For all $A\in\Omega$ the mapping $(n_k, \hdots, n_1) \mapsto \Gamma_{n_k, \hdots, n_1}(A) = \Gamma_{n_k, \hdots, n_1}(\{A_f\}_{f \in \Lambda})$ is recursive, and $\Gamma_{n_k, \hdots, n_1}(A)$ is a finite string of complex numbers that can be identified with an element in $\mathcal{M}$. For arithmetic towers we let $\alpha = A$ 
\end{definition} 

\begin{remark}By recursive we mean the following. If $f(A) \in \mathbb{Q}$ (or $\mathbb{Q}+i\mathbb{Q}$) for all $f \in \Lambda$, $A \in \Omega$, and $\Lambda$ is countable, then $\Gamma_{n_k, \hdots, n_1}(\{A_f\}_{f \in \Lambda})$ can be executed by a Turing machine \cite{turing1937computable}, that takes $(n_k, \hdots, n_1)$ as input and an oracle input tape consisting of $\{A_f\}_{f \in \Lambda}$. If $f(A) \in \mathbb{R}$ (or $\mathbb{C}$) for all $f \in \Lambda$, then $\Gamma_{n_k, \hdots, n_1}(\{A_f\}_{f \in \Lambda})$ can be executed by a BSS machine \cite{BCSS} that takes $(n_k, \hdots, n_1)$, as input and an oracle that can access any $A_f$ for $f \in \Lambda$. 
\end{remark}

Given the definitions above, we can now define the key concept - the Solvability Complexity Index: 

\begin{definition}[Solvability Complexity Index]\label{complex_ind}
A computational problem $\{\Xi,\Omega,\mathcal{M},\Lambda\}$ is said to have {Solvability Complexity Index $\mathrm{SCI}(\Xi,\Omega,\mathcal{M},\Lambda)_{\alpha} = k$}, with respect to a tower of algorithms of type $\alpha$, if $k$ is the smallest integer for which there exists a tower of algorithms of type $\alpha$ of height $k$. If no such tower exists, $\mathrm{SCI}(\Xi,\Omega,\mathcal{M},\Lambda)_{\alpha} = \infty.$ If there exists a tower $\{\Gamma_n\}_{n\in\mathbb{N}}$ of type $\alpha$ and height one such that $\Xi = \Gamma_{n_1}$ for some $n_1 < \infty$, we define $\mathrm{SCI}(\Xi,\Omega,\mathcal{M},\Lambda)_{\alpha} = 0$. The type $\alpha$ may be General or Arithmetic, denoted by G and A, respectively. We sometimes write $\mathrm{SCI}(\Xi,\Omega)_{\alpha}$ to simplify notation when $\mathcal{M}$ and $\Lambda$ are obvious. 
\end{definition}

We let $\mathrm{SCI}(\Xi,\Omega)_{\mathrm{A}}$ and $\mathrm{SCI}(\Xi,\Omega)_{\mathrm{G}}$ denote the SCI with respect to an arithmetic tower and a general tower, respectively. Note that a general tower means just a tower of algorithms as in Definition \ref{tower_funct}, where there are no restrictions on the mathematical operations. Thus, clearly $\mathrm{SCI}(\Xi,\Omega)_{\mathrm{A}} \geq \mathrm{SCI}(\Xi,\Omega)_{\mathrm{G}}$. The definition of the SCI immediately induces the SCI hierarchy:

\begin{definition}[The Solvability Complexity Index Hierarchy]
\label{1st_SCI}
Consider a collection $\mathcal{C}$ of computational problems and let $\mathcal{T}$ be the collection of all towers of algorithms of type $\alpha$ for the computational problems in $\mathcal{C}$.
Define 
\begin{equation*}
\begin{split}
\Delta^{\alpha}_0 &:= \{\{\Xi,\Omega\} \in \mathcal{C} \ \vert \   \mathrm{SCI}(\Xi,\Omega)_{\alpha} = 0\}\\
\Delta^{\alpha}_{m+1} &:= \{\{\Xi,\Omega\}  \in \mathcal{C} \ \vert \   \mathrm{SCI}(\Xi,\Omega)_{\alpha} \leq m\}, \qquad \quad m \in \mathbb{N},\\
\text{as well as}\quad\quad\quad
\Delta^{\alpha}_{1} &:= \{\{\Xi,\Omega\}  \in \mathcal{C}   \  \vert \ \exists \ \{\Gamma_n\}_{n\in \mathbb{N}} \in \mathcal{T}\text{ s.t. } \forall A \ d(\Gamma_n(A),\Xi(A)) \leq 2^{-n}\}. 
\end{split}
\end{equation*}
\end{definition}

When there is additional structure on the metric space, such as in the spectral case when one considers the Attouch--Wets or the Hausdorff metric, one can extend the SCI hierarchy.

\begin{definition}[The SCI Hierarchy (Attouch--Wets/Hausdorff metric)]
Given the set-up in Definition \ref{1st_SCI}, suppose in addition that $(\mathcal{M},d)$ has the Attouch--Wets or the Hausdorff metric induced by another background metric space $(\mathcal{M}^{\prime},d')$. Define, for $m \in \mathbb{N}$,
\begin{align*}
\Sigma^{\alpha}_0 &= \Pi^{\alpha}_0 = \Delta^{\alpha}_0,\\
\Sigma^{\alpha}_{1} &= \{\{\Xi,\Omega\} \in \Delta_{2}^{\alpha} \ \vert \  \exists \ \{\Gamma_{n}\} \in \mathcal{T}, \ \text{s.t.} \ \forall \ A\in\Omega \ \lim_{n\rightarrow\infty}\Gamma_{n}(A)=\Xi(A) \ \text{and} \ \exists \ \{X_{n}(A)\}\subset\mathcal{M} \ \text{s.t.} \\
& \qquad \qquad \qquad \qquad \Gamma_{n}(A)  \mathop{\subset}_{\mathcal{M}^{\prime}} X_n(A) \ \text{with} \ d(X_{n}(A),\Xi(A))\leq 2^{-n}\}, \\
\Pi^{\alpha}_{1} &= \{\{\Xi,\Omega\} \in \Delta_{2}^{\alpha} \ \vert \  \exists \ \{\Gamma_{n}\} \in \mathcal{T}, \ \text{s.t.} \ \forall \ A\in\Omega \ \lim_{n\rightarrow\infty}\Gamma_{n}(A)=\Xi(A) \ \text{and} \ \exists \ \{X_{n}(A)\}\subset\mathcal{M} \ \text{s.t.} \\
& \qquad \qquad \qquad \qquad \Xi(A)  \mathop{\subset}_{\mathcal{M}^{\prime}} X_n(A) \ \text{with} \ d(X_{n}(A),\Gamma_n(A))\leq 2^{-n}\},
\end{align*}
where $\mathop{\subset}_{\mathcal{M}^{\prime}}$ means inclusion in the background metric space $(\mathcal{M}^{\prime},d')$, and $\{X_{n}(A)\}\subset\mathcal{M}$ is a sequence that may depend on $A$. Moreover, 
\begin{equation*}
\begin{split}
\Sigma^{\alpha}_{m+1} &= \{\{\Xi,\Omega\} \in \Delta_{m+2}^{\alpha} \ \vert \  \exists \ \{\Gamma_{n_{m+1},...,n_1}\} \in \mathcal{T}, \ \text{s.t.} \ \forall \ A\in\Omega \ \lim_{n_{m+1}\rightarrow\infty}\!...\!\lim_{n_{1}\rightarrow\infty}\Gamma_{n_{m+1},...,n_1}(A)=\Xi(A)\\  
& \quad \text{and} \ \exists \ \{X_{n_{m+1}}(A)\}\subset\mathcal{M} \ \text{s.t.} \ \Gamma_{n_{m+1}}(A)  \mathop{\subset}_{\mathcal{M}^{\prime}} X_{n_{m+1}}(A) \ \text{with} \ d(X_{n_{m+1}}(A),\Xi(A))\leq 2^{-n_{m+1}}\}, \\
\Pi^{\alpha}_{m+1} &= \{\{\Xi,\Omega\} \in \Delta_{m+2}^{\alpha} \ \vert \  \exists \ \{\Gamma_{n_{m+1},...,n_1}\} \in \mathcal{T}, \ \text{s.t.} \ \forall \ A\in\Omega \ \lim_{n_{m+1}\rightarrow\infty}\!...\!\lim_{n_{1}\rightarrow\infty}\Gamma_{n_{m+1},...,n_1}(A)=\Xi(A)\\  
& \quad \text{and} \ \exists \ \{X_{n_{m+1}}(A)\}\subset\mathcal{M} \ \text{s.t.} \ \Xi(A)  \mathop{\subset}_{\mathcal{M}^{\prime}} X_{n_{m+1}}(A) \ \text{with} \ d(X_{n_{m+1}}(A),\Gamma_{n_{m+1}}(A))\leq 2^{-n_{m+1}}\}.
\end{split}
\end{equation*}
In all of the above, $d$ can be either $d_{\mathrm{H}}$ or $d_{\mathrm{AW}}$.
\end{definition}

For example, suppose that $(\mathcal{M}',d')$ is the complex plane $\mathbb{C}$ with the usual metric and consider the Hausdorff metric on non-empty compact subsets of $\mathbb{C}$. A computational problem is in $\Sigma_1^{\alpha}$ if there exists a convergent sequence of algorithms $\{\Gamma_n\}$, such that for input $A$, there exists a sequence of non-empty compact subsets $X_n(A)\subset\mathbb{C}$ with $\Gamma_n(A)\subset X_n(A)$ and $d_{\mathrm{H}}(X_n(A),\Xi(A))\leq 2^{-n}$. Note that to build a $\Sigma_1^{\alpha}$ algorithm, it is enough by taking subsequences of $n$ to construct $\Gamma_n(A)$ such that $\Gamma_{n}(A) \subset \Xi(A)+B_{E_n(A)}(0)$ with some computable $E_n(A)$ that converges to zero. The sequence of sets $\Gamma_n(A)$ thus converges to $\Xi(A)$ and is contained in $\Xi(A)$ up to the arbitrarily small tolerance (convergence from below).

Similarly, a computational problem is in $\Pi_1^{\alpha}$ if there exists a convergent sequence of algorithms $\{\Gamma_n\}$, such that for input $A$, there exists a sequence of non-empty compact subsets $X_n(A)\subset\mathbb{C}$ with $\Xi(A)\subset X_n(A)$ and $d_{\mathrm{H}}(X_n(A),\Gamma_n(A))\leq 2^{-n}$. Note that to build a $\Pi_1^{\alpha}$ algorithm, it is enough by taking subsequences of $n$ to construct $\Gamma_n(A)$ such that $\Xi(A) \subset \Gamma_{n}(A)+B_{E_n(A)}(0)$ with some computable $E_n(A)$ that converges to zero. The sequence of sets $\Gamma_n(A)$ thus converges to $\Xi(A)$ and $\Xi(A)$ is contained in $\Gamma_n(A)$ up to the arbitrarily small tolerance (convergence from above).

The classes $\Sigma_m^{\alpha}$ and $\Pi_m^{\alpha}$ for $m>1$ generalise this notion of convergence from below or, respectively, above in the final limit.

The same extension can be applied to the real line with the usual metric, or to $\{0,1\}$ with the discrete metric. For decision problems, we use $\{0,1\}$, where we interpret $1$ as ``Yes'' and $0$ as ``No''.

\begin{definition}[The SCI Hierarchy (totally ordered set)]
Given the set-up in Definition \ref{1st_SCI}, suppose in addition that $\mathcal{M}$ is a totally ordered set. 
Define 
\begin{equation*}
\begin{split}
\Sigma^{\alpha}_0 &= \Pi^{\alpha}_0 = \Delta^{\alpha}_0,\\
\Sigma^{\alpha}_{1} &= \{\{\Xi,\Omega\} \in \Delta_{2}^{\alpha} \ \vert \  \exists \ \{\Gamma_{n}\} \in \mathcal{T} \text{ s.t. } \Gamma_{n}(A) \nearrow \Xi(A) \ \, \forall A \in \Omega\}, 
\\
\Pi^{\alpha}_{1} &= \{\{\Xi,\Omega\} \in \Delta_{2}^{\alpha} \ \vert \  \exists \ \{\Gamma_{n}\} \in \mathcal{T} \text{ s.t. } \Gamma_{n}(A) \searrow \Xi(A) \ \, \forall A \in \Omega\},
\end{split}
\end{equation*}
where $\nearrow$ and $\searrow$ denotes convergence from below and above respectively,
as well as, for $m \in \mathbb{N}$, 
\begin{equation*}
\begin{split}
\Sigma^{\alpha}_{m+1} &= \{\{\Xi,\Omega\} \in \Delta_{m+2}^{\alpha} \ \vert \  \exists \ \{\Gamma_{n_{m+1}, \hdots, n_1}\} \in \mathcal{T} \text{ s.t. }\lim_{n_{m+1}\rightarrow\infty}\!...\!\lim_{n_{1}\rightarrow\infty}\Gamma_{n_{m+1},...,n_1}(A)=\Xi(A) \\
& \hspace{60mm}\text{and }\Gamma_{n_{m+1}}(A) \nearrow \Xi(A) \ \, \forall A \in \Omega\}, \\
\Pi^{\alpha}_{m+1} &= \{\{\Xi,\Omega\} \in \Delta_{m+2}^{\alpha} \ \vert \  \exists \ \{\Gamma_{n_{m+1}, \hdots, n_1}\} \in \mathcal{T} \text{ s.t. }\lim_{n_{m+1}\rightarrow\infty}\!...\!\lim_{n_{1}\rightarrow\infty}\Gamma_{n_{m+1},...,n_1}(A)=\Xi(A) \\
& \hspace{60mm}\text{and }\Gamma_{n_{m+1}}(A) \searrow \Xi(A) \ \, \forall A \in \Omega\}.
\end{split}
\end{equation*}
\end{definition}

\begin{remark}[$\Delta^{\alpha}_1\subsetneq \Sigma^{\alpha}_1 \subsetneq \Delta^{\alpha}_2$]
Note that the inclusions are strict. For example, if $\Omega_K$ consists of the set of compact infinite matrices acting on $l^2(\mathbb{N})$ and $\Xi(A)=\mathrm{Sp}(A)$ (the spectrum of $A$), then $\{\Xi, \Omega_K\} \in \Delta^{\alpha}_2$ but not in $ \Sigma_1^\alpha\cup\Pi_1^\alpha$ for $\alpha$ representing either towers of arithmetical or general type (see \cite{ben2015can} for a proof). Moreover, as was demonstrated in \cite{colb1}, if $\tilde \Omega$ is the set of discrete Schr\"odinger operators on $l^2(\mathbb{Z})$, then $\{\Xi, \Omega\} \in \Sigma^{\alpha}_1$ but not in $\Delta^{\alpha}_1$.
\end{remark}

Suppose we are given a computational problem $\{\Xi, \Omega, \mathcal{M}, \Lambda\}$, and that $\Lambda = \{f_j\}_{j \in \beta}$, where $\beta$ is some index set that can be finite or infinite. Obtaining $f_j$ may be a computational task in its own right, which is exactly the problem in most areas of computational mathematics. For example, given $A \in \Omega$, $f_j(A)$ could be the number $e^{\frac{\pi}{j} i }$. Hence, we cannot access $f_j(A)$, but rather $f_{j,n}(A)$ where $f_{j,n}(A) \rightarrow f_{j}(A)$ as $n \rightarrow \infty$. 
Or, just as for problems that are high up in the SCI hierarchy, it could be that we need several limits. One may need mappings
$f_{j,n_m,\hdots, n_1}: \Omega \rightarrow \mathbb{Q} + i\mathbb{Q}$ such that 
\begin{equation}\label{Lambda_limits}
\lim_{n_m \rightarrow \infty} \hdots \lim_{n_1 \rightarrow \infty} \|\{f_{j,n_m,\hdots, n_1}(A)\}_{j\in\beta} - \{f_j(A)\}_{j\in\beta}\|_{\infty} = 0 \quad  \forall A \in \Omega.
\end{equation}

In particular, we may view the problem of obtaining $f_j(A)$ as a problem in the SCI hierarchy. Thus, $\Delta_1$ classification would correspond to the existence of mappings $f_{j,n}: \Omega \rightarrow \mathbb{Q} + i \mathbb{Q}$
such that 
 \begin{equation}\label{Lambda_limits2}
 \|\{f_{j,n}(A)\}_{j\in\beta} - \{f_j(A)\}_{j\in\beta}\|_{\infty} \leq 2^{-n} \quad \forall A \in \Omega.
 \end{equation}

The following definition formalises these ideas.

\begin{definition}[$\Delta_{m}$-information]\label{definition:Lambda_limits}
Let $\{\Xi, \Omega, \mathcal{M}, \Lambda\}$ be a computational problem. For $m \in \mathbb{N}$ we say that $\Lambda$ has $\Delta_{m+1}$-information if each $f_j \in \Lambda$ is not available, however, there are mappings $f_{j,n_m,\hdots, n_1}: \Omega \rightarrow \mathbb{Q} + i \mathbb{Q}$ such that \eqref{Lambda_limits} holds. Similarly, for $m = 0$, there are mappings $f_{j,n}: \Omega \rightarrow \mathbb{Q} + i \mathbb{Q}$
	such that \eqref{Lambda_limits2} holds. Finally, if $k \in \mathbb{N}$ and $\hat \Lambda$ is a collection of such functions described above, we say that $\hat \Lambda$ provides $\Delta_k$-information for $\Lambda$. We denote the family of all such $\hat \Lambda$ by $\mathcal{L}^k(\Lambda)$. 
\end{definition}

We want to have algorithms that can handle computational problems $\{\Xi,\Omega,\mathcal{M},\hat \Lambda\}$ for any $\hat \Lambda \in  \mathcal{L}^m(\Lambda)$. To formalise this, we define what we mean by a computational problem with $\Delta_m$-information.

\begin{definition}[Computational problem with $\Delta_m$-information]
	Given $m \in \mathbb{N}$, a computational problem where $\Lambda$ has $\Delta_m$-information is denoted by 
	$
	\{\Xi,\Omega,\mathcal{M},\Lambda\}^{\Delta_m} := \{\tilde \Xi,\tilde \Omega,\mathcal{M},\tilde \Lambda\},
	$ 
where 
\[
\tilde \Omega = \left\{ \tilde A = \{f_{j,n_m,\hdots, n_1}(A)\}_{j,n_m,\hdots, n_1 \in \beta \times \mathbb{N}^m} \, \vert \, A \in \Omega, \{f_j\}_{j \in \beta} = \Lambda, f_{j,n_m,\hdots, n_1} \text{ satisfy (*)} \right\},
\]
and (*) denotes \eqref{Lambda_limits} if $m > 1$ and (*) denotes \eqref{Lambda_limits2} if $m = 1$. Moreover, $\tilde \Xi(\tilde A) = \Xi(A)$, and we have  
$\tilde \Lambda = \{\tilde f_{j,n_m,\hdots, n_1}\}_{j,n_m,\hdots, n_1 \in \beta \times \mathbb{N}^m}$, where $\tilde f_{j,n_m,\hdots, n_1}(\tilde A) = f_{j,n_m,\hdots, n_1}(A)$. Note that $\tilde \Xi$ is well-defined by Definition \ref{def:comp_prob} of a computational problem. 
\end{definition}

The SCI and the SCI hierarchy, given $\Delta_m$-information, is then defined in the standard obvious way. We use the notation 
$
\{\Xi,\Omega,\mathcal{M},\Lambda\}^{\Delta_m} \in \Delta_k^{\alpha}
$
to denote that the computational problem is in $\Delta_k^{\alpha}$ given $\Delta_m$-information. When $\mathcal{M}$ and $\Lambda$ are obvious, we write $\{\Xi,\Omega\}^{\Delta_m} \in \Delta_k^{\alpha}$ for short.

\begin{remark}[Classifications in this paper]
\label{rem:needed_rd2}
For the problems considered in this paper, the SCI classifications do not change if we consider arithmetic towers with $\Delta_1$-information. This is easy to see through Church's thesis and an analysis of the stability of our algorithms. For example, when the input is rational we have been careful to restrict all relevant operations to $\mathbb{Q}$ rather than $\mathbb{R}$, and errors incurred from $\Delta_1$-information can be removed in the first limit. Explicitly, for the algorithms based on \texttt{DistSpec} (see Appendix \ref{append_pseudo}), it is possible to carry out an error analysis. We can also bound numerical errors (e.g., using interval arithmetic - see \S \ref{num_test}) and incorporate this uncertainty for the estimation of $\left\|R(z,A)\right\|^{-1}$ to gain the same classification of our problems. Similarly, for other algorithms based on similar functions. In other words, for the results of this paper, it does not matter which model of computation one uses for a definition of `algorithm'. From a classification point of view, they are equivalent for these spectral problems. This leads to rigorous $\Sigma_k^\alpha$ or $\Pi_k^\alpha$ type error control suitable for verifiable numerics. In particular, for $\Sigma_1^{\alpha}$ or $\Pi_1^{\alpha}$ towers of algorithms, this could be useful for computer-assisted proofs.
\end{remark}

\section{Proofs of Theorems on Unbounded Operators on Graphs}
\label{pf_unb_gr}

We now prove the theorems in \S \ref{gen_unbd}, whose proofs will be used in the proofs of the results of \S \ref{dfohjg}. The following argument shows that it is sufficient to consider the $l^2(\mathbb{N})$ case. Given the graph $\mathcal{G}$ and enumeration $v_1,v_2,...$ of the vertices, consider the induced isomorphism $l^2(V(\mathcal{G}))\cong l^2(\mathbb{N})$. This induces a corresponding operator on $l^2(\mathbb{N})$, where the functions $\alpha$ now become matrix values. For the lower bounds, we can consider diagonal operators in $\Omega^{\mathcal{G}}$ (that is, $\alpha(v,w)=0$ if $v\neq w$) with the trivial choice of $S(n)=n$. Hence, lower bounds for $\Omega_D$ translate to lower bounds for $\Omega^{\mathcal{G}}$ and $\Omega_g^{\mathcal{G}}$. For the upper bounds, the construction of algorithms for $l^2(\mathbb{N})$ shows that given the above isomorphism, we can compute a dispersion bounding function $f$ for the induced operator on $l^2(\mathbb{N})$ simply by taking $f(n)=S(n)$. This has $D_{f,n}(A)=0$. Any of the functions in $\Lambda$ for the relevant class of operators on $l^2(\mathbb{N})$ can be computed via the above isomorphism using functions in $\Lambda$ for the relevant class of operators on $l^2(V(\mathcal{G}))$. For instance, to evaluate matrix elements, we use $\alpha(v_i,v_j)$.

There is a useful characterisation of the Attouch--Wets topology. For any closed non-empty sets $C$ and $C_n$, the convergence $d_{\mathrm{AW}}(C_n,C)\rightarrow{0}$ holds if and only if $d_{K}(C_n,C)\rightarrow{0}$ for any compact $K\subset\mathbb{C}$ where
\begin{equation*}
d_{K}(C_1,C_2)=\max\left\{{\underset{a\in{C_1\cap{K}}}{\sup}\mathrm{dist}(a,C_2),\underset{b\in{C_2\cap{K}}}{\sup}\mathrm{dist}(b,C_1)}\right\},
\end{equation*}
with the convention that the supremum over the empty set is $0$. This occurs if and only if for any $\delta>0$ and $K$, there exists $N$ such that if $n>N$ then $C_n\cap K\subset{C+B_{\delta}(0)}$ and $C\cap K\subset{C_n+B_{\delta}(0)}$. Furthermore, it is enough to consider $K$ of the form $B_m(0)$, the closed ball of radius $m$ about the origin, for large $m\in\mathbb{N}$. Throughout this section we take our metric space $(\mathcal{M},d)$ to be $(\mathrm{Cl}(\mathbb{C}),d_{\mathrm{AW}})$.

\begin{remark}[A note on the empty set]
\label{empty_set_subtle}
There is a slight subtlety regarding the empty set. It could be the case that the output of our algorithm is the empty set and hence $\Gamma_n(A)$ does not map to the required metric space. However, the proofs show that for large $n$, $\Gamma_n(A)$ is non-empty, and we gain convergence. By successively computing $\Gamma_n(A)$ and outputting $\Gamma_{m(n)}(A)$, where $m(n)\geq n$ is minimal with $\Gamma_{m(n)}(A)\neq\emptyset$, we see that this does not matter for the classification, but the algorithm in this case is adaptive.
\end{remark}

The following lemma is a useful criterion for determining $\Sigma_1^A$ error control in the Attouch--Wets topology and will be used in the proofs without further comment.

\begin{lemma}
\label{sigma1_unbounded}
Suppose that $\Xi:\Omega\rightarrow(\mathrm{Cl}(\mathbb{C}),d_{\mathrm{AW}})$ is a problem function and $\Gamma_n$ is a sequence of arithmetic algorithms with each output a finite set such that
$$
\lim_{n\rightarrow\infty}d_{\mathrm{AW}}(\Gamma_n(A),\Xi(A))=0,\quad\forall A\in\Omega.
$$
Suppose also that there is a function $E_n$ provided by $\Gamma_n$ (and defined over the output of $\Gamma_n$), such that$$\lim_{n\rightarrow\infty}\sup_{z\in\Gamma_n(A)\cap B_m(0)}E_n(z)=0$$for all $m\in\mathbb{N}$ and such that
$$
\mathrm{dist}(z,\Xi(A))\leq E_n(z),\quad\forall z\in\Gamma_n(A).
$$ 
Then:
\begin{enumerate}
	\item For each $m\in\mathbb{N}$ and given $\Gamma_n(A)$, we can compute in finitely many arithmetic operations and comparisons a sequence of non-negative numbers $a_n^m\rightarrow 0$ (as $n\rightarrow\infty$) such that
\begin{equation*}
\Gamma_n(A)\cap B_{m}(0)\subset\Xi(A)+B_{a_n^m}(0).
\end{equation*}
	\item Given $\Gamma_n(A)$, we can compute in finitely many arithmetic operations and comparisons a sequence of non-negative numbers $b_n\rightarrow 0$ such that
$
\Gamma_n(A)\subset A_n
$ for some $A_n\in\mathrm{Cl}(\mathbb{C})$ with $d_{\mathrm{AW}}(A_n,\Xi(A))\leq b_n$.
\end{enumerate}
Hence we can convert $\Gamma_n$ to a $\Sigma_1^A$ tower using the sequence $\{b_n\}$ by taking subsequences if necessary.
\end{lemma}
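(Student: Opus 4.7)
For part (1), my plan is immediate: define $a_n^m := \max\{E_n(z) : z \in \Gamma_n(A) \cap B_m(0)\}$ with the convention that the maximum over the empty set is zero. Since $\Gamma_n(A)$ is finite, this is computable in finitely many arithmetic operations and comparisons; the pointwise bound $\mathrm{dist}(z,\Xi(A)) \leq E_n(z) \leq a_n^m$ yields the claimed inclusion $\Gamma_n(A)\cap B_m(0)\subset\Xi(A)+B_{a_n^m}(0)$, and $a_n^m\to 0$ follows directly from the uniformity-on-compacts hypothesis on $E_n$.

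The main content is (2). I would take the (existential, not computed) set $A_n := \Gamma_n(A)\cup\Xi(A)$, which manifestly contains $\Gamma_n(A)$, and bound $d_{\mathrm{AW}}(A_n,\Xi(A))$ by a computable quantity. Truncating the Attouch--Wets series at level $n$ leaves a tail of at most $2^{-n}$, so the task reduces to controlling $d_k := \sup_{|x|\le k}|\mathrm{dist}(x,A_n)-\mathrm{dist}(x,\Xi(A))|$ for $k\leq n$. Because $A_n\supseteq\Xi(A)$ the integrand is non-negative, and non-zero only when the point $z^*\in A_n$ nearest to $x$ lies in $\Gamma_n(A)$; in that case the triangle inequality gives $\mathrm{dist}(x,\Xi(A))-|x-z^*|\le\mathrm{dist}(z^*,\Xi(A))\le E_n(z^*)$. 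The key computable radius bound is $|z^*|\le 2k+\rho_n$, where $\rho_n:=\min_{z\in\Gamma_n(A)}|z|$, obtained from $|x-z^*|\le\mathrm{dist}(x,\Gamma_n(A))\le k+\rho_n$. Hence I set
\[
\tilde d_k^{(n)}:=\max\{E_n(z): z\in\Gamma_n(A),\ |z|\le 2k+\rho_n\},\qquad b_n:=\sum_{k=1}^{n}2^{-k}\min(1,\tilde d_k^{(n)})+2^{-n},
\]
both arithmetic functionals of the finite output $\Gamma_n(A)$ and the $E_n$ values, and $b_n\ge d_{\mathrm{AW}}(A_n,\Xi(A))$ by construction. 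Convergence $b_n\to 0$ follows from three observations: (a) $\rho_n$ is eventually bounded, because fixing any $y\in\Xi(A)$ the Attouch--Wets convergence $d_{\mathrm{AW}}(\Gamma_n(A),\Xi(A))\to 0$ forces a point of $\Gamma_n(A)$ to lie eventually within distance $1$ of $y$, so $\rho_n\le|y|+1$ for large $n$; (b) for each fixed $k$ the radius $2k+\rho_n$ therefore stays inside a fixed compact ball, and the hypothesis on $E_n$ gives $\tilde d_k^{(n)}\to 0$; (c) a standard splitting of the finite sum into an initial block on which the terms are small and a geometric tail bounded by $2^{-K_0}$ then yields $b_n\to 0$.

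The conversion to a $\Sigma_1^A$ tower is then formal. I define $N(j,A)$ to be the least $n$ with $b_n\le 2^{-j}$, found by computing $b_1,b_2,\ldots$ sequentially, a process which terminates in finitely many arithmetic steps because $b_n\to 0$. Setting $\tilde\Gamma_j(A):=\Gamma_{N(j,A)}(A)$ and $X_j(A):=A_{N(j,A)}$, one has $\tilde\Gamma_j(A)\subset X_j(A)$, $d_{\mathrm{AW}}(X_j(A),\Xi(A))\le 2^{-j}$, and $\tilde\Gamma_j(A)\to\Xi(A)$ because $N(j,A)\to\infty$. The possibility that $\Gamma_n(A)$ is empty for small $n$ is dealt with by the adaptive reindexing of Remark \ref{empty_set_subtle}. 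The one non-routine step is the middle paragraph: bounding $|z^*|$ without access to $\Xi(A)$. The trick is to use the minimum-norm element of $\Gamma_n(A)$ as a computable proxy for a nearby point of $\Xi(A)$, and then establish the boundedness of $\rho_n$ \emph{a posteriori} from the Attouch--Wets convergence combined with the standing assumption that $\Xi(A)\neq\emptyset$.
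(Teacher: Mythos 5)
Your argument is correct, and part (1) coincides with the paper's proof (take $a_n^m$ to be the maximum of $E_n$ over the finite set $\Gamma_n(A)\cap B_m(0)$). For part (2) you take a genuinely different route. The paper builds $A_n^m=\big((\Xi(A)+B_{a_n^m}(0))\cap B_m(0)\big)\cup\big(\Gamma_n(A)\cap\{z:|z|\ge m\}\big)$ and sets $b_n=a_n^{m(n)}+2^{-\lfloor m(n)/4\rfloor}$, where the truncation radius $m(n)$ is chosen adaptively (the maximal $k\le n$ with $a_n^k\le 2^{-k}$, defaulting to $4m_1$, with $m_1$ a radius computed from $\Gamma_1(A)$ for which $\Xi(A)\cap B_{m_1}(0)\ne\emptyset$); the Attouch--Wets bound then comes from the geometric observation that for $|z|<\lfloor m/4\rfloor$ and $m\ge 4m_1$ the nearest points of $A_n^m$ and of $\Xi(A)$ both lie in $B_m(0)$. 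You instead take $A_n=\Gamma_n(A)\cup\Xi(A)$ and estimate the AW series term by term, using the minimum modulus $\rho_n$ of the output as a computable proxy for the radius beyond which points of $\Gamma_n(A)$ cannot be the nearest ones, and proving boundedness of $\rho_n$ a posteriori from the AW convergence and $\Xi(A)\neq\emptyset$. Both arguments are sound; yours is more self-contained in that it dispenses with the auxiliary radius $m_1$ and the adaptive choice of $m(n)$, while the paper's version has the convenience that its $b_n$ is just the part-(1) quantity $a_n^{m(n)}$ plus a dyadic tail, so it reuses $a_n^m$ verbatim when the lemma is applied to the explicit error function of \texttt{CompSpecUB}.

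Two small points. Your final claim that $N(j,A)\to\infty$ is asserted rather than justified; it does follow from your own construction, since the additive tail term gives $b_n\ge 2^{-n}$, so $b_n\le 2^{-j}$ forces $N(j,A)\ge j$ --- this one line is needed, because without $N(j,A)\to\infty$ the reindexed sequence need not converge to $\Xi(A)$. Also, tests such as $|z|\le 2k+\rho_n$ involve square roots; to stay strictly within arithmetic operations and comparisons one should compare squares, e.g.\ use the threshold $|z|^2\le 8k^2+2\rho_n^2$, which only enlarges the set over which the maximum of $E_n$ is taken and does not affect either the upper bound or the convergence $b_n\to 0$. The paper is equally informal on this last point, so it is cosmetic.
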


\begin{proof}
For the proof of (1), we may take $a_n^m=\sup{\{E_n(z):z\in\Gamma_n(A)\cap B_m(0)\}}$ and the result follows. Note that we need $\Gamma_n(A)$ to be finite to compute this number with finitely many arithmetic operations and comparisons. We next show (2) by defining
$$
A_n^m=\big((\Xi(A)+B_{a^m_n}(0))\cap B_m(0)\big)\cup(\Gamma_n(A)\cap\{z:\left|z\right|\geq m\}).
$$
It is clear that $\Gamma_n(A)\subset A_n^m$ and given $\Gamma_n(A)$ we can easily compute a lower bound $m_1$ such that $\Xi(A)\cap B_{m_1}(0)\neq\emptyset$. Compute this from $\Gamma_1(A)$ and then fix it. Suppose that $m\geq 4m_1$, and suppose that $\left|z\right|<\left\lfloor m/4\right\rfloor$. Then the points in $A_n^m$ and $\Xi(A)$ nearest to $z$ must lie in $B_m(0)$ and hence
$$
\mathrm{dist}(z,A_n^m)\leq \mathrm{dist}(z,\Xi(A)),\quad \mathrm{dist}(z,\Xi(A))\leq \mathrm{dist}(z,A_n^m)+a^m_n.
$$
It follows that $$d_{\mathrm{AW}}(A_n^m,\Xi(A))\leq a^m_n + 2^{-\left\lfloor m/4\right\rfloor}.$$
We now choose a sequence $m(n)$ such that setting $A_n=A_n^{m(n)}$ and $b_n=a^{m(n)}_n + 2^{-\left\lfloor m(n)/4\right\rfloor}$ proves the result. Clearly it is enough to ensure that $b_n$ converges to zero. If $n<4m_1$, set $m(n)=4m_1$, otherwise consider $4m_1\leq k\leq n$. If such a $k$ exists with $a_n^k\leq 2^{-k}$, let $m(n)$ be the maximal such $k$. If no such $k$ exists, set $m(n)=4m_1$. For a fixed $m$, $a^m_n\rightarrow 0$ as $n\rightarrow\infty$. It follows that for large $n$, $a_n^{m(n)}\leq 2^{-m(n)}$ and that $m(n)\rightarrow\infty$.
\end{proof}

\begin{remark}
We will only consider algorithms where the output of $\Gamma_n(A)$ is at most finite for each $n$. Hence the above restriction does not matter in what follows.
\end{remark}

To build our algorithms, we characterise the reciprocal of resolvent norm in terms of the injection modulus. For $A\in\mathcal{C}(l^2(\mathbb{N}))$, we define the injection modulus as
\begin{equation}
\label{inj_mod_def}
\sigma_{\mathrm{inf}}(A)=\inf\{\left\|Ax\right\|:x\in\mathcal{D}(A),\left\|x\right\|=1\},
\end{equation}
and define the function
$$
\gamma(z,A)=\min\{\sigma_{\mathrm{inf}}(A-zI),\sigma_{\mathrm{inf}}(A^*-\bar{z}I)\}.
$$
The following shows that if $z\notin \mathrm{Sp}(A)$, $\gamma(z,A)=\sigma_{\mathrm{inf}}(A-zI)=\sigma_{\mathrm{inf}}(A^*-\bar{z}I)=\left\|R(z,A)\right\|^{-1}$. For $z\in\mathrm{Sp}(A)$, it can occur that $\sigma_{\mathrm{inf}}(A-zI)\neq\sigma_{\mathrm{inf}}(A^*-\bar{z}I)$, but we still must have $\gamma(z,A)=0$.

\begin{lemma}
\label{char_sigma}
For $A\in\mathcal{C}(l^2(\mathbb{N}))$, $\gamma(z,A)=1/\left\|R(z,A)\right\|$, where $R(z,A)$ denotes the resolvent $(A-zI)^{-1}$ and we adopt the convention that $1/\left\|R(z,A)\right\|=0$ if $z\in\mathrm{Sp}(A)$.
\end{lemma}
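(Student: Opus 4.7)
The plan is to split into cases based on whether $z$ lies in the resolvent set or the spectrum, and to exploit the standard duality $(A-zI)^* = A^*-\bar{z}I$ together with the range-kernel relation for closed operators.

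First, I would handle the case $z\in\rho(A)$. Here $R(z,A)$ is a bounded everywhere-defined operator. For any unit $x\in\mathcal{D}(A)$, writing $x = R(z,A)(A-zI)x$ yields $1\le \|R(z,A)\|\,\|(A-zI)x\|$, so $\sigma_1(A-zI)\ge 1/\|R(z,A)\|$. For the reverse inequality, take a sequence of unit vectors $y_n$ with $\|R(z,A)y_n\|\to \|R(z,A)\|$, and set $x_n = R(z,A)y_n/\|R(z,A)y_n\|$; these lie in $\mathcal{D}(A)$ and give $\|(A-zI)x_n\| = 1/\|R(z,A)y_n\|\to 1/\|R(z,A)\|$. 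Hence $\sigma_1(A-zI)= 1/\|R(z,A)\|$. Since $A$ is closed and densely defined, $(A-zI)^* = A^*-\bar z I$, and $((A-zI)^*)^{-1} = (R(z,A))^*$ has the same norm as $R(z,A)$, so the same argument applied to $A^*-\bar{z}I$ yields $\sigma_1(A^*-\bar zI) = 1/\|R(z,A)\|$ as well. Thus $\gamma(z,A) = 1/\|R(z,A)\|$.

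Next I would treat $z\in\mathrm{Sp}(A)$, where the convention sets $1/\|R(z,A)\| = 0$, so it suffices to show that one of the two injection moduli vanishes. Since $A$ is closed, $A-zI$ is closed. If $A-zI$ is not bounded below, there exist unit vectors $x_n\in\mathcal{D}(A)$ with $\|(A-zI)x_n\|\to 0$, giving $\sigma_1(A-zI)=0$. Otherwise $A-zI$ is bounded below and hence injective with closed range; not being invertible forces $\mathrm{Ran}(A-zI)$ to be a proper closed subspace of $l^2(\mathbb{N})$. Using the standard identity $\mathrm{Ran}(A-zI)^{\perp} = \ker((A-zI)^*) = \ker(A^*-\bar zI)$ for closed densely defined operators, this kernel contains a unit vector, so $\sigma_1(A^*-\bar zI)=0$. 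Either way $\gamma(z,A)=0$, matching the convention.

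The two cases together give the claim. The only slightly delicate point is the case $z\in\mathrm{Sp}(A)$ with $A-zI$ bounded below, since then $\sigma_1(A-zI)>0$ and one genuinely needs the adjoint term in the definition of $\gamma$; the $\min$ is essential precisely to detect spectrum arising from residual (non-surjectivity) behaviour. This is the one step worth emphasising: the need for the range-kernel duality, which requires $A$ to be closed and densely defined, justifying the inclusion of both $\sigma_1(A-zI)$ and $\sigma_1(A^*-\bar zI)$ in the definition of $\gamma$.
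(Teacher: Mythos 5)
Your proof is correct and takes essentially the same approach as the paper: the resolvent-set case is identical (the same two-sided estimate via $x=R(z,A)(A-zI)x$ and maximising vectors, plus $R(z,A)^*=R(\bar z,A^*)$), and the spectral case rests on the same range--kernel duality $\mathcal{R}(A-zI)^{\perp}=N(A^*-\bar z I)$. The only cosmetic difference is that you handle $z\in\mathrm{Sp}(A)$ directly (bounded below and closed $\Rightarrow$ closed proper range $\Rightarrow$ nontrivial adjoint kernel), whereas the paper argues by contradiction, extending the inverse from the dense range and invoking closedness of $A$; both are sound.
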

\begin{proof}
We deal with the case $z\notin\mathrm{Sp}(A)$ first, where we prove that $\sigma_{\mathrm{inf}}(A-zI)=\sigma_{\mathrm{inf}}(A^*-\bar{z}I)=1/\left\|R(z,A)\right\|$. We show this for $\sigma_{\mathrm{inf}}(A-zI)$ and the other case is similar using the fact that $R(z,A)^*=R(\overline{z},A^*)$ and $\left\|R(z,A)\right\|=\left\|R(z,A)^*\right\|$. Let $x\in\mathcal{D}(A)$ with $\left\|x\right\|=1$, then
$$
1=\left\|R(z,A)(A-zI)x\right\|\leq\left\|R(z,A)\right\|\left\|(A-zI)x\right\|.
$$
Hence upon taking the infinum over such $x$, $\sigma_{\mathrm{inf}}(A-zI)\geq 1/\left\|R(z,A)\right\|$. Conversely, let $x_n\in l^2(\mathbb{N})$ such that $\left\|x_n\right\|=1$ and $\left\|R(z,A)x_n\right\|\rightarrow \left\|R(z,A)\right\|$. It follows that
$$
1=\left\|(A-zI)R(z,A)x_n\right\|\geq\sigma_{\mathrm{inf}}(A-zI)\left\|R(z,A)x_n\right\|.
$$
Letting $n\rightarrow\infty$ we get $\sigma_{\mathrm{inf}}(A-zI)\leq 1/\left\|R(z,A)\right\|$.

Now suppose that $z\in\mathrm{Sp}(A)$. If at least one of $A-zI$ or $A^*-\bar{z}I$ is not injective on their respective domain then we are done, so assume both are one to one. Suppose also that $\sigma_{\mathrm{inf}}(A-zI),\sigma_{\mathrm{inf}}(A^*-\bar{z}I)>0$ otherwise we are done. It follows that $\mathcal{R}(A-zI)$ is dense in $l^2(\mathbb{N})$ by injectivity of $A^*-\bar{z}I$ since $\mathcal{R}(A-zI)^{\perp}=N(A^*-\bar{z}I)$. It follows that we can define $(A-zI)^{-1}$, bounded on the dense set $\mathcal{R}(A-zI)$. We can extend this inverse to a bounded operator on the whole of $l^2(\mathbb{N})$. Closedness of $A$ now implies that $(A-zI)(A-zI)^{-1}=I$. Clearly $(A-zI)^{-1}(A-zI)x=x$ for all $x\in\mathcal{D}(A)$. Hence, $(A-zI)^{-1}=R(z,A)\in\mathcal{B}(l^2(\mathbb{N}))$ so that $z\notin\mathrm{Sp}(A)$, a contradiction. 
\end{proof}

Suppose we have a sequence of functions $\gamma_n(z,A)$ that converge uniformly to $\gamma(z,A)$ on compact subsets of $\mathbb{C}$. Define the grid
\begin{equation}
\label{grid_def_un}
\texttt{Grid}(n)=\frac{1}{n}(\mathbb{Z}+i\mathbb{Z})\cap B_n(0).
\end{equation}
For an strictly increasing continuous function $g:\mathbb{R}_{\geq0}\rightarrow\mathbb{R}_{\geq0}$, with $g(0)=0$ and diverging at infinity, for $n\in\mathbb{N}$ and $y\in\mathbb{R}_{\geq0}$ define
\begin{equation}
\label{comp_inv_g_un}
\texttt{CompInvg}(n,y,g)=\min\{k/n:k\in\mathbb{N},g(k/n)>y\}.
\end{equation}
$\texttt{CompInvg}(n,y,g)$ can be computed from finitely many evaluations of the function $g$. We now build the algorithm converging to the spectrum using the functions in (\ref{comp_ball_gs}). 
For each $z\in\texttt{Grid}(n)$, let 
$$\Upsilon_{n,z}=B_{\texttt{CompInvg}(n,\gamma_n(z,A),g_{\left\lceil \left|z\right|\right\rceil})}(z)\cap{}\texttt{Grid}(n).$$
If $\gamma_n(z,A)>\left(\left|z\right|^2+1\right)^{-1}$, set $M_z=\emptyset$, otherwise set
$$
M_z=\{w\in \Upsilon_{n,z}:\gamma_n(w,A)=\min_{v\in \Upsilon_{n,z}}\gamma_n(v,A) \}.
$$
Finally, define $\Gamma_n(A)=\cup_{z\in \texttt{Grid}(n)}M_z$. It is clear that if $\gamma_n(z,A)$ can be computed in finitely many arithmetic operations and comparisons from the relevant functions in $\Lambda$ for each problem, then this procedure defines an arithmetic algorithm $\Gamma_n$. If $A\in\mathcal{C}(l^2(\mathbb{N}))$ with non-empty spectrum, there exists $z\in B_m(0)$ with $\gamma(z,A)\leq(m^2+1)^{-1}/2$ and, for large $n$, $z_n\in\texttt{Grid}(n)$ sufficiently close to $z$ with $\gamma(z_n,A)\leq(\left|z_n\right|^2+1)^{-1}$. Hence, by computing successive $\Gamma_n(A)$, we can assume that $\Gamma_n(A)\neq\emptyset$ without loss of generality (see Remark \ref{empty_set_subtle}).

\begin{proposition}
\label{conv_alg_unbounded}
Suppose $A\in\mathcal{C}(l^2(\mathbb{N}))$ with non-empty spectrum and we have a function $\gamma_n(z,A)$ that converges uniformly to $\gamma(z,A)$ on compact subsets of $\mathbb{C}$. Suppose also that (\ref{comp_ball_gs}) holds, namely
$$
g_m(\mathrm{dist}(z,\mathrm{Sp}(A)))\leq\left\|R(z,A)\right\|^{-1},\quad \forall z\in B_m(0).
$$
Then $\Gamma_n(A)$ converges in the Attouch--Wets topology to $\mathrm{Sp}(A)$ (assuming $\Gamma_n(A)\neq\emptyset$ without loss of generality).
\end{proposition}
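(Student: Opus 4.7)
The plan is to establish Attouch--Wets convergence by fixing a closed ball $K=B_m(0)$ and $\delta>0$, and verifying that for all large $n$ both $\mathrm{Sp}(A)\cap K\subset\Gamma_n(A)+B_\delta(0)$ and $\Gamma_n(A)\cap K\subset\mathrm{Sp}(A)+B_\delta(0)$ hold. Throughout I will identify $\gamma(z,A)$ with $1/\|R(z,A)\|$ via Lemma \ref{char_sigma}, write $\epsilon_n(m):=\sup_{|v|\leq m}|\gamma_n(v,A)-\gamma(v,A)|\to 0$ for the uniform error, and use the resolvent bound \eqref{comp_ball_gs} rephrased as $\mathrm{dist}(z,\mathrm{Sp}(A))\leq g_{\lceil|z|\rceil}^{-1}(\gamma(z,A))$, together with the standard $1$-Lipschitz property $|\gamma(z,A)-\gamma(w,A)|\leq|z-w|$ that falls out of the proof of Lemma \ref{char_sigma}.

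For the first inclusion, fix $z^\dagger\in\mathrm{Sp}(A)\cap K$ and let $\tilde z_n\in\texttt{Grid}(n)$ be a closest grid point to $z^\dagger$, so $|\tilde z_n-z^\dagger|\leq 1/n$. The Lipschitz bound yields $\gamma(\tilde z_n,A)\to 0$, hence $\gamma_n(\tilde z_n,A)<(|\tilde z_n|^2+1)^{-1}$ for $n$ large, so $M_{\tilde z_n}\neq\emptyset$. The defining inequalities of $\texttt{CompInvg}$ show that the associated radius satisfies $r_n\leq g_{\lceil|\tilde z_n|\rceil}^{-1}(\gamma_n(\tilde z_n,A))+1/n\to 0$ by continuity of $g_j^{-1}$ at the origin. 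Hence any $w_n\in M_{\tilde z_n}$ converges to $z^\dagger$, and a finite-cover argument over the compact set $\mathrm{Sp}(A)\cap K$ supplies the first inclusion.

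For the reverse inclusion I argue by contradiction: along a subsequence suppose $w_n\in\Gamma_n(A)\cap K$ with $\mathrm{dist}(w_n,\mathrm{Sp}(A))\geq\delta$, and write $w_n\in M_{z_n}$. Applying the resolvent bound at $w_n$ together with uniform convergence yields $\gamma_n(w_n,A)\geq g_m(\delta)/2$ eventually, which combined with the defining inequality $\gamma_n(w_n,A)\leq\gamma_n(z_n,A)\leq(|z_n|^2+1)^{-1}$ bounds $|z_n|$ uniformly. Extracting further subsequences, I may assume $w_n\to w^*$, $z_n\to z^*$, $\lceil|z_n|\rceil=j^*$ is eventually constant, and $r_n\to r^*:=g_{j^*}^{-1}(\gamma(z^*,A))$, the latter using both bounds in the definition of $\texttt{CompInvg}$. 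Then $\gamma(z^*,A)>0$, $\mathrm{dist}(z^*,\mathrm{Sp}(A))\leq r^*$, and the triangle inequality yields $r^*\geq\delta/2$ for any nearest point $s^*\in\mathrm{Sp}(A)$ to $z^*$.

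The main obstacle is that, absent further regularity of the $g_j$, the point $s^*$ may lie exactly on the boundary of $B_{r^*}(z^*)$, and it is not immediate that it (or nearby grid points) falls inside the search balls $\Upsilon_{n,z_n}$. I circumvent this by approximating $s^*$ from the interior: set $v_k:=z^*+(1-1/k)(s^*-z^*)$, so $|v_k-z^*|<r^*$ and $v_k\to s^*$. For each fixed $k$ and sufficiently large $n$, the grid point $v_{k,n}$ nearest $v_k$ lies in $\Upsilon_{n,z_n}$, and uniform convergence gives $\gamma_n(v_{k,n},A)\to\gamma(v_k,A)$. Minimality of $w_n$ on $\Upsilon_{n,z_n}$ then yields $\gamma(w^*,A)\leq\gamma(v_k,A)$, and sending $k\to\infty$ forces $\gamma(w^*,A)\leq\gamma(s^*,A)=0$, contradicting $w^*\notin\mathrm{Sp}(A)$.
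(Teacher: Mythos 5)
Your proof is correct and follows essentially the same route as the paper's: the same ball-by-ball characterisation of Attouch--Wets convergence, the same argument that a grid point near a spectral point has small $\gamma_n$, hence nonempty $M_z$ with radius tending to zero, and the same contradiction based on minimality of the output over the search set $\Upsilon_{n,z_n}$ combined with the resolvent bound \eqref{comp_ball_gs}. The only divergence is technical: where the paper treats the possibility that the nearest spectral point lies on the boundary of the search ball by an explicit geometric estimate (producing a grid point $v_{n_j}$ in the search set with $|v_{n_j}-y_{n_j}|\le 4/n_j+g^{-1}_{\lceil |w_{n_j}|\rceil}(\gamma(w_{n_j},A))-g^{-1}_{\lceil |w_{n_j}|\rceil}(\gamma_{n_j}(w_{n_j},A))$), you instead pass to subsequential limits $z_n\to z^*$, $r_n\to r^*$ and approximate the nearest spectral point from the interior via the points $v_k$, which works equally well and incidentally avoids the paper's implicit normalisation that the $g_k$ are non-increasing in $k$.
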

\begin{proof}
We use the characterisation of the Attouch--Wets topology. Suppose that $m\in\mathbb{N}$ is large such that $B_m(0)\cap\mathrm{Sp}(A)\neq\emptyset$. We must show that given $\delta>0$, there exists $N$ such that if $n>N$ then $\Gamma_n(A)\cap B_m(0)\subset{\mathrm{Sp}(A)+B_{\delta}(0)}$ and $\mathrm{Sp}(A)\cap B_m(0)\subset{\Gamma_n(A)+B_{\delta}(0)}$. Throughout the rest of the proof we fix such an $m$. Let $\epsilon_n=\left\|\gamma_n(\cdot,A)-\gamma(\cdot,A)\right\|_{\infty,B_{m+1}(0)}$, where the notation means the supremum norm over the set $B_{m+1}(0)$. 

We deal with the second inclusion first. Suppose that $z\in\mathrm{Sp}(A)\cap B_m(0)$, then there exists some $w\in\texttt{Grid}(n)$ such that $\left|w-z\right|\leq 1/n$. It follows that
$$
\gamma_n(w,A)\leq \gamma(w,A)+\epsilon_n\leq \mathrm{dist}(w,\mathrm{Sp}(A))+\epsilon_n\leq \epsilon_n+1/n.
$$
By choosing $n$ large, we can ensure that $\epsilon_n<(2m^2+2)^{-1}$ and that $1/n\leq (2m^2+2)^{-1}$ so that $\gamma_n(w,A)<(\left|w\right|^2+1)^{-1}$. It follows that $M_w$ is non-empty. If $y\in M_w$,
$$
\left|y-z\right|\leq \left|w-z\right|+\left|y-w\right|\leq 1/n + 1/n + g^{-1}_{\left\lceil w\right\rceil}(\gamma_n(w,A)).
$$
But the $g_k$'s are non-increasing in $k$, strictly increasing continuous functions with $g_k(0)=0$. Since $\gamma_n(w,A)\leq \epsilon_n+1/n$, it follows that
\begin{equation}
\label{spec_conv1}
\left|y-z\right|\leq 2/n+g^{-1}_{m+1}(\epsilon_n+1/n). 
\end{equation}
There exists $N_1$ such that if $n\geq N_1$ then (\ref{spec_conv1}) holds and $2/n+g^{-1}_{m+1}(\epsilon_n+1/n)\leq\delta$. This gives the second inclusion $\mathrm{Sp}(A)\cap B_m(0)\subset{\Gamma_n(A)+B_{\delta}(0)}$.

For the first inclusion, suppose for a contradiction that this is false. Then there exists $n_j\rightarrow\infty$, $\delta>0$ and $z_{n_j}\in\Gamma_{n_j}(A)\cap B_m(0)$ such that $\mathrm{dist}(z_{n_j},\mathrm{Sp}(A))\geq \delta$. Then $z_{n_j}\in M_{w_{n_j}}$ for some $w_{n_j}\in \texttt{Grid}(n_j)$. Let 
\[
I(j)=B_{\texttt{CompInvg}(n_j,\gamma_{n_j}(w_{n_j},A),g_{\lceil |w_{n_j}|\rceil})}(w_{n_j})\cap{}\texttt{Grid}(n_j),
\]
the set that we compute minima of $\gamma_{n_j}$ over. Let $y_{n_j}\in\mathrm{Sp}(A)$ be of minimal distance to $w_{n_j}$ (such a $y_{n_j}$ exists since the spectrum restricted to any compact ball is compact). It follows that $\left|y_{n_j}-w_{n_j}\right|\leq g_{\lceil |w_{n_j}|\rceil}^{-1}(\gamma(w_{n_j},A))$. A simple geometrical argument (which also works when we restrict everything to the real line for self-adjoint operators), shows that there must be a $v_{n_j}$ in $I(j)$ so that
$$
\left|v_{n_j}-y_{n_j}\right|\leq \frac{4}{n_j}+g^{-1}_{\left\lceil \left|w_{n_j}\right|\right\rceil}(\gamma(w_{n_j},A))-g^{-1}_{\left\lceil \left|w_{n_j}\right|\right\rceil}(\gamma_{n_j}(w_{n_j},A)).
$$
Since $z_{n_j}$ minimises $\gamma_{n_j}$ over $I(j)$ and $M_{w_{n_j}}$ is non-empty, it follows that
$$
\gamma(z_{n_j},A)\leq\gamma_{n_j}(z_{n_j},A)+\epsilon_{n_j}\leq \min\left\{\frac{1}{\left|w_{n_j}\right|^2+1},\gamma_{n_{j}}(v_{n_j},A)\right\}+\epsilon_{n_j}.
$$
This implies that
\begin{equation}
\label{contradiction_wanted}
\delta\leq\mathrm{dist}(z_{n_j},\mathrm{Sp}(A))\leq g_{m}^{-1}\left(\min\left\{\frac{1}{\left|w_{n_j}\right|^2+1},\gamma_{n_{j}}(v_{n_j},A)\right\}+\epsilon_{n_j}\right),
\end{equation}
where we recall that $g_m^{-1}$ is continuous. It follows that the $w_{n_j}$ must be bounded and hence so are the $v_{n_j}$. Due to the local uniform convergence of $\gamma_n$ to $\gamma$, it follows that
$$
 \frac{4}{n_j}+g^{-1}_{\left\lceil \left|w_{n_j}\right|\right\rceil}(\gamma(w_{n_j},A))-g^{-1}_{\left\lceil \left|w_{n_j}\right|\right\rceil}(\gamma_{n_j}(w_{n_j},A))\rightarrow 0,\quad \text{as }n_j\rightarrow\infty.
$$
But then
$$
\gamma(v_{n_j},A)\leq \mathrm{dist}(v_{n_j},\mathrm{Sp}(A))\leq\left|v_{n_j}-y_{n_j}\right|\rightarrow 0.
$$
The local uniform convergence implies that $\gamma_{n_{j}}(v_{n_j},A)\rightarrow 0$, which contradicts (\ref{contradiction_wanted}) and completes the proof.
\end{proof}

Next, given such a sequence $\gamma_n$, we would like to provide an algorithm for computing the pseudospectrum. However, care must be taken in the unbounded case since the resolvent norm can be constant on open subsets of $\mathbb{C}$ \cite{shargorodsky2008level}. Simply taking
$
\texttt{Grid}(n)\cap\{z:\gamma_n(z,A)\leq \epsilon\}
$
is not guaranteed to converge, as can be seen in the case that $\gamma_n$ is identically $\gamma$ and $A$ is such that the level set $\{\|R(z,A)\|^{-1}=\epsilon\}$ has non-empty interior. To get around this, we need an extra assumption on the functions $\gamma_n$.

\begin{lemma}
\label{pseudospec_unbounded}
Suppose $A\in\mathcal{C}(l^2(\mathbb{N}))$ with non-empty spectrum and let $\epsilon>0$. Suppose we have a sequence of functions $\gamma_n(z,A)$ that converge uniformly to $\left\|R(z,A)\right\|^{-1}$ on compact subsets of $\mathbb{C}$. Set
$$
{\Gamma}_n^{\epsilon}(A)=\texttt{\textup{Grid}}(n)\cap\{z:\gamma_n(z,A)<\epsilon\}.
$$
Then for large $n$, $\Gamma_n^{\epsilon}(A)\neq\emptyset$ (so we can assume this without loss of generality). Suppose also that there exists $N\in\mathbb{N}$ (possibly dependent on $A$ but independent of $z$) such that if $n\geq N$ then $\gamma_n(z,A)\geq \left\|R(z,A)\right\|^{-1}$. Then $d_{\mathrm{AW}}({\Gamma}_n^{\epsilon}(A),\mathrm{Sp}_{\epsilon}(A))\rightarrow 0$ as $n\rightarrow\infty$.
\end{lemma}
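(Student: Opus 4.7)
The plan is to use the standard Attouch--Wets characterisation already employed in the proof of Proposition \ref{conv_alg_unbounded}: fix $m\in\mathbb{N}$ and $\delta>0$, and show that for all sufficiently large $n$ we have
\[
\Gamma_n^{\epsilon}(A)\cap B_m(0)\subset \mathrm{Sp}_{\epsilon}(A)+B_{\delta}(0)
\quad\text{and}\quad
\mathrm{Sp}_{\epsilon}(A)\cap B_m(0)\subset\Gamma_n^{\epsilon}(A)+B_{\delta}(0).
\]
The key preparatory fact to invoke is that $z\mapsto\gamma(z,A)=\|R(z,A)\|^{-1}$ is $1$-Lipschitz on $\mathbb{C}$: from the variational definition $\sigma_1(A-zI)=\inf_{\|x\|=1,\,x\in\mathcal{D}(A)}\|(A-zI)x\|$ and the triangle inequality one gets $|\sigma_1(A-zI)-\sigma_1(A-wI)|\leq|z-w|$, and similarly for the adjoint, so $|\gamma(z,A)-\gamma(w,A)|\leq|z-w|$, combined with Lemma \ref{char_sigma}.

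For the first inclusion, the hypothesis $\gamma_n(z,A)\geq\|R(z,A)\|^{-1}$ for $n\geq N$ makes things immediate: if $z\in\Gamma_n^{\epsilon}(A)$ with $n\geq N$, then $\|R(z,A)\|^{-1}\leq\gamma_n(z,A)<\epsilon$, so $z\in\{w:\|R(w,A)\|^{-1}<\epsilon\}\subset\mathrm{Sp}_{\epsilon}(A)$. Thus in fact $\Gamma_n^{\epsilon}(A)\subset\mathrm{Sp}_{\epsilon}(A)$ for all $n\geq N$, which is stronger than needed for Attouch--Wets convergence and is exactly the property required for a $\Sigma_1^A$ tower (guaranteeing the output is always inside the true pseudospectrum).

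The main work is the second inclusion, and the key obstacle is that $\mathrm{Sp}_{\epsilon}(A)$ is defined as a \emph{closure}: a given $z\in\mathrm{Sp}_{\epsilon}(A)\cap B_m(0)$ need not satisfy $\|R(z,A)\|^{-1}<\epsilon$ (resolvent norms can be constant on open sets). To get around this, given $\delta>0$, pick $w$ with $|w-z|<\delta/2$ and $\|R(w,A)\|^{-1}<\epsilon$; set $\eta=\epsilon-\|R(w,A)\|^{-1}>0$. Using the uniform convergence of $\gamma_n$ to $\gamma$ on $B_{m+1}(0)$ and the $1$-Lipschitz property above, any grid point $w'\in\texttt{Grid}(n)$ with $|w-w'|<\min(\eta/3,\delta/2)$ satisfies
\[
\gamma_n(w',A)\leq\|R(w',A)\|^{-1}+\tfrac{\eta}{3}\leq\|R(w,A)\|^{-1}+|w-w'|+\tfrac{\eta}{3}<\|R(w,A)\|^{-1}+\tfrac{2\eta}{3}<\epsilon
\]
for all $n$ large enough that the uniform error on $B_{m+1}(0)$ is below $\eta/3$ and the grid spacing $\sqrt{2}/n$ is below $\min(\eta/3,\delta/2)$. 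Such a $w'$ lies in $\Gamma_n^{\epsilon}(A)$ and satisfies $|z-w'|<\delta$, giving the second inclusion.

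Finally, to justify the remark that $\Gamma_n^{\epsilon}(A)\neq\emptyset$ eventually, apply the same construction to any $z\in\mathrm{Sp}(A)$: here $\|R(z,A)\|^{-1}=0<\epsilon$ by convention, so taking $w=z$ the argument above produces a grid point $w'\in\Gamma_n^{\epsilon}(A)$ for all sufficiently large $n$. Combining the two inclusions via the Attouch--Wets characterisation (with $K=B_m(0)$, $m$ large) yields $d_{\mathrm{AW}}(\Gamma_n^{\epsilon}(A),\mathrm{Sp}_{\epsilon}(A))\to 0$.
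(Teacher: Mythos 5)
Your route is essentially the paper's: the first inclusion is obtained exactly as in the paper from the hypothesis $\gamma_n(z,A)\geq\|R(z,A)\|^{-1}$ (giving $\Gamma_n^{\epsilon}(A)\subset\mathrm{Sp}_{\epsilon}(A)$ for $n\geq N$), and the second inclusion is likewise handled by replacing $z\in\mathrm{Sp}_{\epsilon}(A)$ with a nearby $w$ satisfying $\|R(w,A)\|^{-1}<\epsilon$ and then passing to a nearby grid point; your $1$-Lipschitz bound for $\gamma(\cdot,A)$ is correct (via the variational formula and Lemma \ref{char_sigma}) and is a harmless sharpening of the continuity the paper invokes.

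There is, however, a uniformity gap in your second inclusion as written. You fix $z\in\mathrm{Sp}_{\epsilon}(A)\cap B_m(0)$, choose $w$ with $\|R(w,A)\|^{-1}<\epsilon$, set $\eta=\epsilon-\|R(w,A)\|^{-1}$, and then require $n$ large enough that the uniform error on $B_{m+1}(0)$ is below $\eta/3$ and the grid spacing is below $\min(\eta/3,\delta/2)$. This threshold depends on $z$ through $\eta$, and you give no positive lower bound for $\eta$ over all $z$ in $\mathrm{Sp}_{\epsilon}(A)\cap B_m(0)$; but the inclusion $\mathrm{Sp}_{\epsilon}(A)\cap B_m(0)\subset\Gamma_n^{\epsilon}(A)+B_{\delta}(0)$ needs a single $N_1$ valid for all such $z$ simultaneously. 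The repair is routine: either take a finite $\delta/2$-net of the compact set $\mathrm{Sp}_{\epsilon}(A)\cap B_m(0)$ and run your argument (with $\delta/2$ in place of $\delta$) at the finitely many net points, taking the maximum of the resulting thresholds; or note that $z\mapsto\inf_{|w-z|\leq\delta/2}\gamma(w,A)$ is continuous (by your Lipschitz bound) and strictly less than $\epsilon$ at every point of $\mathrm{Sp}_{\epsilon}(A)$, hence bounded away from $\epsilon$ on the compact set, which yields a uniform $\eta$. The paper sidesteps this entirely by proving the second inclusion by contradiction, extracting a convergent subsequence $z_{n_j}\to z$, so only one limit point must be treated. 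With this one fix your argument is complete; the nonemptiness remark and the first inclusion are fine as stated.
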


\begin{proof}
Since the pseudospectrum is non-empty, $\Gamma_n^{\epsilon}(A)\neq\emptyset$ for large $n$. It follows from our usual argument of computing successive $\Gamma_n^{\epsilon}(A)$ (see Remark \ref{empty_set_subtle}) that we may assume $\Gamma_n^{\epsilon}(A)\neq\emptyset$ for all $n$ without loss of generality. We use the characterisation of the Attouch--Wets topology. Suppose that $m$ is large such that $B_m(0)\cap\mathrm{Sp}_{\epsilon}(A)\neq\emptyset$. $\exists N\in\mathbb{N}$ such that if $n\geq N$, $\gamma_n(z,A)\geq \left\|R(z,A)\right\|^{-1}$ and hence $\Gamma_n^{\epsilon}(A)\cap B_m(0)\subset\mathrm{Sp}_{\epsilon}(A)$. Hence we must show that given $\delta>0$, there exists $N_1$ such that if $n>N_1$ then $\mathrm{Sp}_{\epsilon}(A)\cap B_m(0)\subset{\Gamma_n^{\epsilon}(A)+B_{\delta}(0)}$. Suppose for a contradiction that this were false. Then there exists $z_{n_j}\in\mathrm{Sp}_{\epsilon}(A)\cap B_m(0)$, $\delta>0$ and $n_j\rightarrow\infty$ such that $\mathrm{dist}(z_{n_j},\Gamma_{n_j}^{\epsilon}(A))\geq \delta$. Without loss of generality, we can assume that $z_{n_j}\rightarrow z\in\mathrm{Sp}_{\epsilon}(A)\cap B_m(0)$. There exists some $w$ with $\left\|R(w,A)\right\|^{-1}<\epsilon$ and $\left|z-w\right|\leq \delta/2$. Assuming $n_j>m+\delta$, there exists $y_{n_j}\in \texttt{Grid}(n_j)$ with $\left|y_{n_j}-w\right|\leq 1/{n_j}$. It follows that
$$
\gamma_{n_j}(y_{n_j},A)\leq \left|\gamma_{n_j}(y_{n_j},A)-\gamma(y_{n_j},A)\right|+\left|\gamma(w,A)-\gamma(y_{n_j},A)\right|+\left\|R(w,A)\right\|^{-1}.
$$
But $\gamma$ is continuous and $\gamma_{n_j}$ converges uniformly to $\gamma$ on compact subsets. Hence for large $n_j$, it follows that $\gamma_{n_j}(y_{n_j},A)<\epsilon$ so that $y_{n_j}\in\Gamma_{n_j}^{\epsilon}(A)$. But
$
\left|y_{n_j}-z\right|\leq \left|z-w\right|+\left|y_{n_j}-w\right|\leq \delta/2 + 1/{n_j},
$
which is smaller than $\delta$ for large $n_j$. This inequality gives the required contradiction.
\end{proof}

Now suppose that $A\in\hat\Omega$ (recall that $\hat\Omega$ is the class of all $A\in\mathcal{C}(l^2(\mathbb{N}))$ with non-empty spectrum such that (1) and (2) from \S \ref{gen_unbd} hold) and let $D_{f,n}(A)\leq c_n$. The following shows that we can construct the required sequence $\gamma_n(z,A)$. Each function output requires only finitely many arithmetic operations and comparisons of the corresponding input information.

\begin{theorem}
\label{unif_conv_gamma}
Let $A\in\hat\Omega$ and define the function
$$
\tilde{\gamma}_n(z,A)=\min\{\sigma_{\mathrm{inf}}(P_{f(n)}(A-zI)|_{P_n(l^2(\mathbb{N}))}),\sigma_{\mathrm{inf}}(P_{f(n)}(A^*-\bar{z}I)|_{P_n(l^2(\mathbb{N}))})\}.
$$
We can compute $\tilde{\gamma}_n$ up to precision $1/n$ using finitely many arithmetic operations and comparisons. We call this approximation $\hat{\gamma}_n$ and set
$$
\gamma_n(z,A)=\hat{\gamma}_n(z,A)+c_n+1/n.
$$
Then $\gamma_n(z,A)$ converges uniformly to $\gamma(z,A)$ on compact subsets of $\mathbb{C}$ and $\gamma_n(z,A)\geq \gamma(z,A)$. 
\end{theorem}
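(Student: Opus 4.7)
The plan is to establish three points: (a) $\tilde\gamma_n$ is computable to precision $1/n$ by an arithmetic algorithm, (b) the lower bound $\gamma_n(z,A) \geq \gamma(z,A)$ holds pointwise, and (c) $\gamma_n(z,A) \to \gamma(z,A)$ uniformly on compact subsets of $\mathbb{C}$.

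Claim (a) is a finite-dimensional matter. The quantity $\sigma_1(P_{f(n)}(A-zI)|_{P_n\mathcal{H}})^2$ equals the smallest eigenvalue of the Hermitian positive semidefinite matrix $M^*M$, where $M$ is the rectangular truncation $P_{f(n)}(A-zI)P_n$ of size $f(n)\times n$, whose entries are explicit arithmetic expressions in $z$, $\overline{z}$ and the matrix values $\langle Ae_j, e_i\rangle$. Approximating the smallest eigenvalue of a finite Hermitian matrix to any prescribed precision using only arithmetic operations and comparisons is a standard procedure (for instance by Sturm bisection on the characteristic polynomial, as used in \texttt{DistSpec}), and the $1/n$ precision on $M^*M$ is translated into the same order precision on $\sigma_1$ by a routine rescaling. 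Performing the analogous construction for $P_{f(n)}(A^*-\bar z I)P_n$ and taking the minimum produces $\hat\gamma_n$ with $|\hat\gamma_n - \tilde\gamma_n| \leq 1/n$.

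For (b), fix $x \in P_n\mathcal{H}$ with $\|x\| = 1$ and use the Pythagorean identity
\[
\|(A-zI)x\|^2 = \|P_{f(n)}(A-zI)x\|^2 + \|(I-P_{f(n)})(A-zI)x\|^2.
\]
Since $f(n) \geq n$, the vector $zx$ lies in $P_{f(n)}\mathcal{H}$, so the second summand equals $\|(I-P_{f(n)})AP_n x\|^2 \leq D_{f,n}(A)^2 \leq c_n^2$. Hence $\|P_{f(n)}(A-zI)x\|^2 \geq \|(A-zI)x\|^2 - c_n^2$, and the elementary inequality $\sqrt{a^2-b^2} \geq a-b$ for $a \geq b \geq 0$ (trivial when $a<b$) gives $\|P_{f(n)}(A-zI)x\| \geq \|(A-zI)x\| - c_n \geq \sigma_1(A-zI) - c_n$. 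Taking the infimum over $x$, and repeating the argument with $A^*$ in place of $A$, yields $\tilde\gamma_n(z,A) + c_n \geq \gamma(z,A)$. The precision guarantee $\hat\gamma_n \geq \tilde\gamma_n - 1/n$ then delivers $\gamma_n(z,A) = \hat\gamma_n + c_n + 1/n \geq \tilde\gamma_n + c_n \geq \gamma(z,A)$.

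For (c), observe that $|\gamma_n(z,A) - \gamma(z,A)| \leq |\hat\gamma_n - \tilde\gamma_n| + |\tilde\gamma_n(z,A) - \gamma(z,A)| + c_n + 1/n$, and the first, third and fourth terms are uniformly $O(c_n + 1/n)$ in $z$, so it suffices to show $\tilde\gamma_n(z,A) \to \gamma(z,A)$ uniformly on compacta. Pointwise, the lower bound from (b) gives $\tilde\gamma_n \geq \gamma - c_n$, while $\|P_{f(n)}(A-zI)x\| \leq \|(A-zI)x\|$ yields the upper estimate $\sigma_1(P_{f(n)}(A-zI)|_{P_n\mathcal{H}}) \leq \sigma_1((A-zI)|_{P_n\mathcal{H}})$; since $\mathrm{span}\{e_j\}$ is a core for $A$, approximating any near-minimizer of $\sigma_1(A-zI)$ by a finitely supported vector shows that the latter tends to $\sigma_1(A-zI)$ as $n \to \infty$, and similarly for $A^*$, so $\tilde\gamma_n(z,A) \to \gamma(z,A)$. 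To upgrade this to uniform convergence I will invoke equicontinuity: each map $z \mapsto \|(B-zI)x\|$ is $\|x\|$-Lipschitz, so $\gamma$ and $\tilde\gamma_n$ are $1$-Lipschitz infima, and the standard covering argument by finitely many $\epsilon$-balls converts pointwise convergence into uniform convergence on any compact set. The main obstacle is the upper bound in the pointwise step: one must verify that restricting the variational problems for $\sigma_1(A-zI)$ and $\sigma_1(A^*-\bar z I)$ to the finite-dimensional subspaces $P_n\mathcal{H}$ recovers the true injection moduli in the limit, which is exactly where the hypothesis that $\mathrm{span}\{e_j\}$ is a core for both $A$ and $A^*$ enters in an essential way.
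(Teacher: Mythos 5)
Your proposal is correct and follows the same overall skeleton as the paper's proof: computability via positive-definiteness tests on $M^*M$, the dispersion bound $c_n$ to compare the rectangular truncation with the square one, and the core hypothesis on $\mathrm{span}\{e_j\}$ to show $\sigma_1((A-zI)|_{P_n\mathcal{H}})\to\sigma_1(A-zI)$ (and likewise for $A^*$) via graph-norm approximation of a near-minimizer. Two local steps differ. First, for the rectangular-versus-square comparison you use the Pythagorean identity, whereas the paper simply invokes $|\sigma_1(B)-\sigma_1(C)|\leq\|B-C\|$ together with $\|(I-P_{f(n)})(A-zI)P_n\|\leq c_n$; both give $\tilde\gamma_n+c_n\geq\gamma$ and your version is, if anything, slightly more self-contained. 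Second, to upgrade pointwise to locally uniform convergence you use the fact that $\gamma(\cdot,A)$ and all $\tilde\gamma_n(\cdot,A)$ are $1$-Lipschitz in $z$ plus a finite covering, while the paper observes that the square-truncation quantities decrease monotonically in $n$ to the continuous limit $\gamma(\cdot,A)$ and applies Dini's theorem; both mechanisms are valid, and the equicontinuity route has the small advantage of not needing monotonicity at all (the paper only uses monotonicity again later, when it passes to $\upsilon_n=\min_{j\leq n}\gamma_j$). One minor imprecision: additive precision $\epsilon$ on the smallest eigenvalue of $M^*M$ yields only precision of order $\sqrt{\epsilon}$ on $\sigma_1$, so "the same order precision by a routine rescaling" is not literally right; either demand precision $1/n^2$ on the eigenvalue, or (as the paper and \texttt{DistSpec} do) bisect directly in the $\sigma_1$-scale by testing definiteness of $M^*M-(l/n)^2I$ over the grid $l/n$. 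This is a cosmetic fix, not a gap.
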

\begin{proof}
We will first prove that $
\sigma_{\mathrm{inf}}((A-zI)|_{P_n(l^2(\mathbb{N}))})\downarrow\sigma_{\mathrm{inf}}(A-zI)
$
as $n\rightarrow\infty$. It is trivial that $\sigma_{\mathrm{inf}}((A-zI)|_{P_n(l^2(\mathbb{N}))})\geq\sigma_{\mathrm{inf}}(A-zI)$ and that $\sigma_{\mathrm{inf}}((A-zI)|_{P_n(l^2(\mathbb{N}))})$ is non-increasing in $n$. Using Lemma \ref{char_sigma}, let $\epsilon>0$ and $x\in\mathcal{D}(A)$ such that $\left\|x\right\|=1$ and $\left\|(A-zI)x\right\|\leq \sigma_{\mathrm{inf}}(A-zI)+\epsilon$. Since $\mathrm{span}\{e_n:n\in\mathbb{N}\}$ forms a core of $A$, $AP_{n_j}x_{n_j}\rightarrow Ax$ and $P_{n_j}x_{n_j}\rightarrow x$ for some $n_j\rightarrow\infty$ and some sequence of vectors $x_{n_j}$  of norm $1$. It follows that for large $n_j$
$$
\sigma_{\mathrm{inf}}((A-zI)|_{P_{n_j}(l^2(\mathbb{N}))})\leq \frac{\left\|(A-zI)P_{n_j}x_{n_j}\right\|}{\left\|P_{n_j}x_{n_j}\right\|}\rightarrow\left\|(A-zI)x\right\|\leq\sigma_{\mathrm{inf}}(A-zI)+\epsilon.
$$
Since $\epsilon>0$ was arbitrary, this shows the convergence of $\sigma_{\mathrm{inf}}((A-zI)|_{P_n(l^2(\mathbb{N}))})$. The fact that $\mathrm{span}\{e_n:n\in\mathbb{N}\}$ forms a core of $A^*$ can also be used to show that $\sigma_{\mathrm{inf}}((A-zI)^*|_{P_n(l^2(\mathbb{N}))})\downarrow\sigma_{\mathrm{inf}}(A^*-\overline{z}I)$.

Next we use the assumption of bounded dispersion from assumption (2) of \S \ref{gen_unbd} for $A\in\hat\Omega$. For any bounded operators $B,C$, it holds that
$
\left|\sigma_{\mathrm{inf}}(B)-\sigma_{\mathrm{inf}}(C)\right|\leq \left\|B-C\right\|.
$
The definition of bounded dispersion now implies that
$$
\left|\tilde{\gamma}_n(z,A)-\min\{\sigma_{\mathrm{inf}}((A-zI)|_{P_n(l^2(\mathbb{N}))}),\sigma_{\mathrm{inf}}((A-zI)^*|_{P_n(l^2(\mathbb{N}))})\}\right|\leq c_n.
$$
The monotone convergence of $\min\{\sigma_{\mathrm{inf}}((A-zI)|_{P_n(l^2(\mathbb{N}))}),\sigma_{\mathrm{inf}}((A-zI)^*|_{P_n(l^2(\mathbb{N}))})\}$, together with Dini's theorem, imply that $\tilde{\gamma}_n(z,A)$ converges uniformly to the continuous function $\gamma(z,A)$ on compact subsets of $\mathbb{C}$ with $\tilde{\gamma}_n(z,A)+c_n\geq \gamma(z,A)$.

The proof will be complete if we can show that we can compute $\tilde{\gamma}_n(z,A)$ to precision $1/n$ using finitely many arithmetic operations and comparisons. To do this, consider the matrices
$$
B_n(z)=P_n(A-zI)^*P_{f(n)}(A-zI)P_n,\quad C_n(z)=P_n(A-zI)P_{f(n)}(A-zI)^*P_n.
$$
By an interval search routine and Lemma \ref{num_neg_evals} below, we can determine the smallest $l\in\mathbb{N}$ such that at least one of $B_n(z)-(l/n)^{2}I$ or $C_n(z)-(l/n)^{2}I$ has a negative eigenvalue. We then output $l/n$ to get the $1/n$ bound.
\end{proof}

Every finite Hermitian matrix $B$ (not necessarily positive semidefinite) has a decomposition
\begin{equation*}
PBP^T=LDL^*,
\end{equation*}
where $L$ is lower triangular with $1$'s along its diagonal, $D$ is block diagonal with block sizes $1$ or $2$ and $P$ is a permutation matrix \cite{bunch1971direct,bunch1977some} \cite[\S 4.4]{golub1996matrix}. The permutation matrix $P$ arises from pivoting strategies for stability. The above decomposition can be computed with finitely many arithmetic operations and comparisons.

\begin{lemma}
\label{num_neg_evals}
Let $B\in\mathbb{C}^n$ be self-adjoint (Hermitian), then we can determine the number of negative eigenvalues of $B$ in finitely many arithmetic operations and comparisons (assuming no round-off errors) on the matrix entries of $B$.
\end{lemma}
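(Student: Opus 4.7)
The plan is to reduce the counting of negative eigenvalues of $B$ to counting signs in a block-diagonal matrix with blocks of size at most two, exploiting Sylvester's law of inertia. We are given (and assume without loss of generality) that we can produce, in finitely many arithmetic operations and comparisons, a factorisation $B=LDL^*$ where $L$ is unit lower-triangular and $D$ is block-diagonal with $1\times 1$ and $2\times 2$ Hermitian blocks. Since $L$ is invertible, this is a congruence transformation, so by Sylvester's law of inertia $B$ and $D$ have the same number of negative, zero, and positive eigenvalues. Hence it suffices to count the negative eigenvalues of $D$ block by block.

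For each $1\times 1$ block $[d]\in\mathbb{R}$, a single comparison with $0$ determines whether it contributes a negative eigenvalue. For each $2\times 2$ Hermitian block
\[
M=\begin{pmatrix}a & c\\ \bar c & b\end{pmatrix},\quad a,b\in\mathbb{R},\ c\in\mathbb{C},
\]
I would avoid computing the eigenvalues explicitly (which would require a square root) and instead use the trace $\tau(M)=a+b$ and determinant $\delta(M)=ab-|c|^2=ab-c\bar c$, both of which are real and obtainable from the entries via arithmetic alone. Since $M$ is Hermitian its eigenvalues $\lambda_1,\lambda_2$ are real with $\lambda_1+\lambda_2=\tau(M)$ and $\lambda_1\lambda_2=\delta(M)$. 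From this, the signature of $M$ is decided by finitely many comparisons: $\delta(M)<0$ gives exactly one negative eigenvalue; $\delta(M)>0$ gives two negatives if $\tau(M)<0$ and none if $\tau(M)>0$; and $\delta(M)=0$ yields a sign determined by $\tau(M)$ alone (one negative if $\tau(M)<0$, none otherwise).

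Summing the contributions over all blocks of $D$ produces the number of negative eigenvalues of $D$, and hence of $B$, after finitely many arithmetic operations and comparisons. There is no real obstacle here beyond carefully enumerating the edge cases $\delta(M)=0$ or $\tau(M)=0$ for the $2\times 2$ blocks; all the decisions reduce to comparing real quantities built by rational expressions in the entries of $B$, so no square roots or other non-arithmetic operations are introduced.
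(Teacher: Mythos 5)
Your proposal is correct and follows essentially the same route as the paper: compute the (pivoted) $LDL^*$ factorisation, invoke Sylvester's law of inertia, and read off the sign pattern of each $1\times 1$ and $2\times 2$ block from its trace and determinant. The only difference is that you spell out the edge cases ($\delta(M)=0$, $\tau(M)=0$) explicitly, which the paper leaves implicit.
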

\begin{proof}
The matrix $\tilde B=PBP^T$ has the same eigenvalues as $B$. Hence, without loss of generality, we can consider $\tilde B$. We can compute the decomposition $\tilde B=LDL^*$ in finitely many arithmetical operations and comparisons. By Sylvester's law of inertia, $D$ has the same number of negative eigenvalues as $\tilde B$. It is then clear that we only need to deal with $2\times2$ matrices corresponding to the maximum block size of $D$. Let $\lambda_1,\lambda_2$ be the two eigenvalues of such a matrix. Then we can determine their sign pattern from the trace and determinant of the matrix. 
\end{proof}

This lemma has a corollary that is used in \S \ref{Sec:proof_spec_gap}.

\begin{corollary}
\label{eigs_fin_mat}
Let $B\in\mathbb{C}^n$ be self-adjoint (Hermitian) and list its eigenvalues in increasing order, including multiplicity, as $\lambda_1\leq\lambda_2\leq...\leq \lambda_n$. In exact arithmetic, given $\epsilon>0$, we can compute $\lambda_1,\lambda_2,...\lambda_n$ to precision $\epsilon$ using only finitely many arithmetic operations and comparisons.
\end{corollary}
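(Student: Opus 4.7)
The plan is to reduce everything to repeated evaluations of the counting function supplied by Lemma \ref{num_neg_evals}. Define, for any rational $\mu$, the quantity
\[
N(\mu) := \#\{i : \lambda_i(B) < \mu\},
\]
which is precisely the number of strictly negative eigenvalues of the Hermitian shifted matrix $B - \mu I$. By Lemma \ref{num_neg_evals} applied to $B - \mu I$, the integer $N(\mu)$ can be computed from the entries of $B$ and the rational $\mu$ using only finitely many arithmetic operations and comparisons. The key order-theoretic fact is that $\lambda_k \in [a,b]$ whenever $N(a) < k \le N(b)$, which is immediate from the definition of $N$.

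Next I would compute, with no square roots, a rational bound $R \ge \|B\|$. For instance take
\[
R := \sum_{i,j=1}^n \bigl(|\operatorname{Re}(B_{ij})| + |\operatorname{Im}(B_{ij})|\bigr),
\]
where each absolute value of a real number is obtained by one comparison with $0$. Since $|B_{ij}| \le |\operatorname{Re}(B_{ij})| + |\operatorname{Im}(B_{ij})|$, this bound dominates $\|B\|_1$ and hence, for Hermitian $B$, the spectral radius. Therefore every eigenvalue lies in $[-R,R]$, providing a common initial bracket for all $k$.

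With these two ingredients, run a standard bisection separately for each $k = 1, \dots, n$. Initialise $[a_0, b_0] = [-R, R]$ and maintain the invariant $N(a_j) < k \le N(b_j)$, which is ensured at step $0$ since $N(-R) = 0$ and $N(R) = n$. At step $j$ set $m_j = (a_j + b_j)/2 \in \mathbb{Q}$, compute $N(m_j)$ via Lemma \ref{num_neg_evals}, and update
\[
[a_{j+1}, b_{j+1}] = \begin{cases} [m_j, b_j], & N(m_j) < k, \\ [a_j, m_j], & N(m_j) \ge k. \end{cases}
\]
The invariant is preserved, so $\lambda_k \in [a_j, b_j]$ throughout. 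After $J = \lceil \log_2(2R/\epsilon) \rceil$ steps the bracket has length at most $\epsilon$, and outputting $m_J$ yields $|\lambda_k - m_J| \le \epsilon$. The total cost is $nJ$ invocations of Lemma \ref{num_neg_evals} plus elementary rational arithmetic on midpoints, so the whole procedure uses only finitely many arithmetic operations and comparisons on the real and imaginary parts of the entries of $B$.

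The only subtlety is ensuring that every intermediate quantity stays in the field generated by the entries of $B$ without requiring square roots or transcendental operations; the explicit formula for $R$ and the fact that bisection midpoints are dyadic combinations of $\pm R$ make this manifest, so I do not expect any genuine obstacle beyond bookkeeping.
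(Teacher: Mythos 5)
Your argument is correct and is essentially the paper's own proof: both treat Lemma \ref{num_neg_evals} as an eigenvalue-counting oracle for the shifted Hermitian matrices $B-\mu I$ and then locate each $\lambda_k$ by searching over shifts (the paper scans a uniform grid of spacing $<\epsilon$ on an interval $(-m_1,m_1)$ found by testing integer shifts, while you bisect inside $[-R,R]$ using an explicit $\ell^1$-type bound $R$ -- a minor implementation difference, not a different route). One cosmetic slip: the claim $N(R)=n$ can fail when the largest eigenvalue equals $R$ exactly, but since every eigenvalue satisfies $\lambda_k\le R$ the bracket $\lambda_k\in[a_j,b_j]$ is still maintained at every bisection step (or simply start with $b_0=R+1$), so the conclusion is unaffected.
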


\begin{proof}
Consider $A(\lambda)=B-\lambda I$. We will apply Lemma \ref{num_neg_evals} to $A(\lambda)$ for various $\lambda$. First, by considering the sequences $-1,-2,...$ and $1,2,...$ we can find $m_1\in\mathbb{N}$ such that $\mathrm{Sp}(B)\subset(-m_1,m_1)$. Now let $m_2\in\mathbb{N}$ such that $1/m_2<\epsilon$ and let $a_j$ be the output of Lemma \ref{num_neg_evals} applied to $A(j/m_2)$ for $-m_1m_2\leq j\leq m_1m_2$. Set
$$
\tilde{\lambda}_k=\min\{j:-m_1m_2\leq j\leq m_1m_2,a_j\geq k\}, \quad k=1,...,n.
$$
If $\lambda_k\in[j/m_2,(j+1)/m_2)$ then $\tilde{\lambda}_k=(j+1)/m_2$ and hence $\left|\tilde{\lambda}_k-\lambda_k\right|\leq 1/m_2<\epsilon$.
\end{proof}

\begin{remark}
Of course, in practice, there are much more computationally efficient ways to numerically compute eigenvalues or singular values. The above is purely used to show this can be done to any precision with finitely many arithmetic operations. Computing the eigenvalues and eigenvectors of finite-dimensional matrices dates back to Wilkinson \cite{MR0184422}, with guaranteed convergence for self-adjoint matrices via Wilkinson shifts, see \cite{parlett1998symmetric}. It is not completely straightforward to deduce Corollary \ref{eigs_fin_mat} via the QR algorithm with Wilkinson shifts, as one has to deal with halting criteria to achieve the correct precision. Moreover, one must approximate roots to extract the approximate eigenvalues from a potential $2 \times 2$ matrix block. One could make the cost of the method in Corollary \ref{eigs_fin_mat} logarithmic in the desired accuracy by using interval bisections. This is beyond the scope of the present paper (and not its purpose). For a polynomial-time (but impractical) algorithm for eigenvalues and eigenvectors based on Newton's method, see \cite{armentano2018stable}.
\end{remark}

By taking successive minima,
$
\upsilon_n(z,A)=\min_{1\leq j \leq n} \gamma_n(z,A),
$
we can obtain a sequence of functions $\upsilon_n$ that converge uniformly on compact subsets of $\mathbb{C}$ to $\gamma(z,A)$ monotonically from above. Hence without loss of generality, we will always assume that $\gamma_n$ have this property. We can now prove our main result. 

\begin{proof}[Proof of Theorem \ref{unbounded_theorem}]
By considering bounded diagonal operators, it is straightforward to see that none of the problems lie in $\Delta_1^G$. We first deal with the convergence of height one arithmetical towers. For the spectrum, we use the function $\gamma_n$ described in Theorem \ref{unif_conv_gamma} together with Proposition \ref{conv_alg_unbounded} and its described algorithm. For the pseudospectrum, we use the same function $\gamma_n$ described in Theorem \ref{unif_conv_gamma} and convergence follows from using the algorithm in Proposition \ref{pseudospec_unbounded}.

We are left with proving that our algorithms have $\Sigma^A_1$ error control. For any $A\in\hat\Omega$, the output of the algorithm in Proposition \ref{pseudospec_unbounded} is contained in the true pseudospectrum since $\gamma_n(z,A)\geq\gamma(z,A)=\left\|R(z,A)\right\|^{-1}$. Hence we need only show that the algorithm in Proposition \ref{conv_alg_unbounded} provides $\Sigma_1^A$ error control for input $A\in\Omega_g$. Denote the algorithm by $\Gamma_n$ and set 
$$
E_n(z)=\texttt{CompInvg}(n,\gamma_n(z,A),g_{\left\lceil \left|z\right|\right\rceil}^{-1})
$$
on $\Gamma_n(A)$ and zero on $\mathbb{C}\backslash\Gamma_n(A)$. Since $\gamma_n(z,A)\geq\left\|R(z,A)\right\|^{-1}$, the assumptions on $\{g_m\}$ imply that
$$
\mathrm{dist}(z,\mathrm{Sp}(A))\leq E_n(z),\quad \forall z\in \Gamma_n(A).
$$
Suppose for a contradiction that $E_n$ does not converge uniformly to zero on compact subsets of $\mathbb{C}$. Then there exists some compact set $K$, some $\epsilon>0$, a sequence $n_j\rightarrow\infty$ and $z_{n_j}\in K$ such that $E_{n_j}(z_{n_j})\geq \epsilon$. It follows that $z_{n_j}\in\Gamma_{n_j}(A)$. Without loss of generality, $z_{n_j}\rightarrow z$. By convergence of $\Gamma_{n_j}(A)$, $z\in\mathrm{Sp}(A)$ and hence $\gamma_{n_j}(z_{n_j},A)\rightarrow\gamma(z,A)=0$. Now choose $M$ large such that $K\subset B_{M}(0)$. But then
$$
E_{n_j}(z_{n_j})\leq g_{M}^{-1}(\gamma_{n_j}(z_{n_j},A))+\frac{1}{n_j}\rightarrow 0,
$$
the required contradiction.
\end{proof}

\begin{remark}
\label{E_conv2}
The above shows that $E_n(z)$ converges uniformly to the function $g_{\left\lceil \left|z\right|\right\rceil}^{-1}(\gamma(z,A))$ as $n\rightarrow\infty$ on compact subsets of $\mathbb{C}$.
\end{remark}

Finally, we consider the decision problems $\Xi_3$ and $\Xi_4$.

\begin{proof}[Proof of Theorem \ref{unbounded_test_theorem}]
It is clearly enough to prove the lower bounds for $\Omega_D\times\mathcal{K}(\mathbb{C})$ and the existence of towers for $\hat\Omega\times\mathcal{K}(\mathbb{C})$. The proof of lower bounds for $\Omega_D\times\mathcal{K}(\mathbb{C})$ can also be trivially adapted to the more restrictive versions of the problem than described in the theorem.

\textbf{Step 1}: $\{\Xi_3,\Omega_D\times\mathcal{K}(\mathbb{C})\}\not\in\Delta_2^G$. Suppose this were false, and $\Gamma_n$ is a height one tower solving the problem. For every $A$ and $n$ there exists a finite number $N(A,n)\in\mathbb{N}$ such that the evaluations from $\Lambda_{\Gamma_n}(A)$ only take the matrix entries $A_{ij} = \left\langle Ae_j , e_i\right\rangle$ with $i,j\leq N(A,n)$ into account. Without loss of generality (by shifting and rotating our argument), we assume that $K\cap[0,1]=\{0\}$. We will consider the operators $A_m=\mathrm{diag}\{1,1/2,...,1/m\}\in\mathbb{C}^{m\times m}$, $B_m=\mathrm{diag}\{1,1,...,1\}\in\mathbb{C}^{m\times m}$ and $C=\mathrm{diag}\{1,1,...\}$. Set $A=\bigoplus_{m=1}^{\infty}(B_{k_m}\oplus A_{k_m})$ where we choose an increasing sequence $k_m$ inductively as follows.

Set $k_1=1$ and suppose that $k_1,...,k_m$ have been chosen. $\mathrm{Sp}(B_{k_1}\oplus A_{k_1}\oplus...\oplus B_{k_m}\oplus A_{k_m}\oplus C)=\{1,1/2,...,1/m\}$ and hence
$$\Xi_3(B_{k_1}\oplus A_{k_1}\oplus...\oplus B_{k_m}\oplus A_{k_m}\oplus C)=\text{``No''}.$$
So there exists some $n_m\geq m$ such that if $n\geq n_m$ then
\begin{equation*}
\Gamma_n(B_{k_1}\oplus A_{k_1}\oplus...\oplus B_{k_m}\oplus A_{k_m}\oplus C)=\text{``No''}.
\end{equation*}
Now let $k_{m+1}\geq \max\{N(B_{k_1}\oplus A_{k_1}\oplus...\oplus B_{k_m}\oplus A_{k_m}\oplus C,n_m),k_m+1\}$. By assumption (iii) in Definition, \ref{Gen_alg} it follows that $\Lambda_{\Gamma_{n_m}}(B_{k_1}\oplus A_{k_1}\oplus...\oplus B_{k_m}\oplus A_{k_m}\oplus C)=\Lambda_{\Gamma_{n_m}}(A)$ and hence by assumption (ii) in the same definition that $\Gamma_{n_m}(A)=\Gamma_{n_m}(B_{k_1}\oplus A_{k_1}\oplus...\oplus B_{k_m}\oplus A_{k_m}\oplus C)=\text{``No''}$. But $0\in\mathrm{Sp}(A)$ and so must have $\lim_{n\rightarrow\infty}(\Gamma_n(A))=\text{``Yes''}$, a contradiction.

\textbf{Step 2}: $\{\Xi_4,\Omega_D\}\not\in\Delta_2^G$. The same proof as step 1, but replacing $A$ by $A+\epsilon I$ works in this case.

\textbf{Step 3}: $\{\Xi_3,\hat\Omega\times\mathcal{K}(\mathbb{C})\}\in\Pi^A_2$. Recall that we can compute, with finitely many arithmetic operations and comparisons, a function $\gamma_n$ that converges monotonically down to $\left\|R(z,A)\right\|^{-1}$ uniformly on compact subsets of $\mathbb{C}$. Set
$$
\Gamma_{n_2,n_1}(A)=\text{``Does there exist some $z\in K_{n_2}$ such that $\gamma_{n_1}(z,A)<1/2^{n_2}$?''}.
$$
This is an arithmetic algorithm since each $K_n$ is finite. Moreover,
$$
\lim_{n_1\rightarrow\infty}\Gamma_{n_2,n_1}(A)=\text{``Does there exist some $z\in K_{n_2}$ such that $\left\|R(z,A)\right\|^{-1}<1/2^{n_2}$?''}=:\Gamma_{n_2}(A).
$$
If $K\cap\mathrm{Sp}(A)=\emptyset$, $\left\|R(z,A)\right\|^{-1}$ is bounded below on the compact set $K$ and hence for large $n_2$, $\Gamma_{n_2}(A)=\text{``No''}$. However, if $z\in\mathrm{Sp}(A)\cap K$, let $z_{n_2}\in K_{n_2}$ minimise the distance to $z$. Then
$$\left\|R(z_{n_2},A)\right\|^{-1}\leq\mathrm{dist}(z_{n_2},\mathrm{Sp}(A))<1/2^{n_2}$$
and hence $\Gamma_{n_2}(A)=\text{``Yes''}$ for all $n_2$. This also shows the $\Pi^A_2$ classification.

\textbf{Step 4}: $\{\Xi_4,\hat\Omega\times\mathcal{K}(\mathbb{C})\}\in\Pi^A_2$. Set
$$
\Gamma_{n_2,n_1}(A)=\text{``Does there exist some $z\in K_{n_2}$ such that $\gamma_{n_1}(z,A)<1/2^{n_2}+\epsilon$?''},
$$
then the same argument used in step 3 works in this case.
\end{proof}

\subsection{Examples of $f$ used in the computational examples}

We end with some examples for the graph case $l^2(V(\mathcal{G}))$. Recall that we consider operators in (\ref{kthNeigh}), where for any $v\in V$ the set of $w\in V$ with $\alpha(v,w)\neq 0$ is finite. As noted at the start of \S \ref{pf_unb_gr}, we can equate our operators with operators on $l^2(\mathbb{N})$ with bounded dispersion as in (\ref{bd_disp}) with $D_{f,n}(A)=0$.

Suppose our enumeration of the vertices obeys the following pattern. The neighbours of $v_1$ (including itself) are $S_1=\{v_1,v_2,...,v_{q_1}\}$ for some finite $q_1$. The set of neighbours of these vertices is $S_2=\{v_1,...,v_{q_2}\}$ for some finite $q_2$, where we continue the enumeration of $S_1$. This process is continued to inductively enumerate each $S_m$. 
\begin{example}\label{findf2}
Suppose that the bounded operator $A$ can be written as
\begin{equation}
A=\sum_{v \sim_k w}\alpha(v,w)\left|v\right\rangle\left\langle w\right|
\label{kthNeigh2}
\end{equation}
for some $k\in\mathbb{N}$. Here we write $v \sim_k w$ for two vertices $v,w\in V$ if there is a path of at most $k$ edges connecting $v$ and $w$. Hence $A$ only involves $k$th nearest neighbour interactions. This is a graph operator version of $A$ being banded (though of course the representation matrix acting on $l^2(\mathbb{N})$ need not be banded in the usual sense). Suppose also that the vertex degree of $\mathcal{G}$ is bounded by $M$. It holds that $v_n\in S_n$ and $\{w\in V: v \sim_k w\}\subset S_{n+k}$. Inductively $\left|S_m\right|\leq(M+1)^m$ and hence we may take the upper bound
$$
S(n)=(M+1)^{n+k}.
$$
\end{example}

\begin{example}\label{findf3}
Consider a nearest neighbour operator ($k=1$ in (\ref{kthNeigh2})) on $l^2(\mathbb{Z}^d)$. It holds that $\left|S_{m}\right|\sim\mathcal{O}(m^{d})$ whilst $\left|S_{m+1}-S_m\right|\sim\mathcal{O}(m^{d-1})$ (by considering radial spheres). It is easy to see that we can choose a suitable $S$ such that
\begin{equation*}
S(n)-n\sim \mathcal{O}(n^{\frac{d-1}{d}}),
\end{equation*}
that is, $S$ grows at most linearly.
\end{example}

\section{Proofs of Theorems on Differential Operators on Unbounded Domains}
\label{proof_for_PDES}

Here we prove Theorems \ref{PDE1} and \ref{PDE2}. The constructed algorithms involve technical error estimates with parameters depending on these estimates. In the construction of the algorithms, our strategy will be to reduce the problem to one handled by the proofs in \S \ref{pf_unb_gr}. To do so, we must first select a suitable basis and then compute matrix values. Recall that we aim to compute the spectrum and pseudospectrum from the information given to us regarding the coefficient functions $a_k$ and $\tilde{a}_k$, with the information we can evaluate made precise by the mappings $\Xi_j^1$, $\Xi_j^2$, $\Xi_j^3$ and $\Xi_j^4$. We start by constructing the algorithms used for the positive results in Theorems \ref{PDE1} and \ref{PDE2}, and then prove the lower bounds.

\subsection{Construction of algorithms}
We begin with the description for $d=1$ and then comment on how this can easily be extended to arbitrary dimensions. As an orthonormal basis of $L^2(\mathbb{R})$ we choose the Hermite functions
$$
\psi_m(x)=(2^{m}m!\sqrt{\pi})^{-1/2}e^{-x^2/2}H_{m}(x),\quad m\in\mathbb{Z}_{\geq 0},
$$
where $H_m$ denotes the $m$th (physicists') Hermite polynomial defined by
$$
H_m(x)=(-1)^m\exp(x^2)\frac{d^m}{dx^m}\exp(-x^2).
$$
The Hermite functions obey the recurrence relations
\begin{align*}
\psi_m'(x)&=\sqrt{\frac{m}{2}}\psi_{m-1}(x)-\sqrt{\frac{m+1}{2}}\psi_{m+1}(x)\\
x\psi_m(x)&=\sqrt{\frac{m}{2}}\psi_{m-1}(x)+\sqrt{\frac{m+1}{2}}\psi_{m+1}(x).
\end{align*}
We let $C_H(\mathbb{R})=\mathrm{span}\{\psi_m:m\in\mathbb{Z}_{\geq 0}\}$. Note that since the Hermite functions decay like $e^{-x^2/2}$ (up to polynomials) and the functions $a_k$ and $\tilde{a}_k$ can only grow polynomially, the formal differential operator $T$ and its formal adjoint $T^*$ make sense as operators from $C_H(\mathbb{R})$ to $L^2(\mathbb{R})$. The next proposition says that we can use the chosen basis.

\begin{proposition}
\label{basis_core}
Consider an operator $T\in\Omega$. Then $C_H(\mathbb{R})$ forms a core of both $T$ and $T^*$.
\end{proposition}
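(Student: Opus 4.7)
The plan is to reduce the statement to the already-assumed fact that $C_0^\infty(\mathbb{R})$ is a core for $T$ and $T^*$. It then suffices to show that any $u\in C_0^\infty(\mathbb{R})$ can be approximated in the graph norms of $T$ and of $T^*$ by elements of $C_H(\mathbb{R})$. Since $u\in C_0^\infty(\mathbb{R})\subset\mathcal{S}(\mathbb{R})$, a natural candidate is the sequence of Hermite partial sums
\[
u_n=\sum_{m=0}^{n}\langle u,\psi_m\rangle\,\psi_m\in C_H(\mathbb{R}).
\]
Hence the proof reduces to two ingredients: (i) convergence of the Hermite series in a strong enough topology, and (ii) continuity of the action of $T$ and $T^*$ in that topology on the coefficients given by assumption (3).

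For (i), I would invoke the standard fact that the Hermite functions form an unconditional Schauder basis of the Schwartz space $\mathcal{S}(\mathbb{R})$: for every $u\in\mathcal{S}(\mathbb{R})$ and every pair of nonnegative integers $N,\alpha$,
\[
\sup_{x\in\mathbb{R}}(1+|x|^2)^N\bigl|\partial^\alpha(u-u_n)(x)\bigr|\longrightarrow 0\quad(n\to\infty).
\]
This follows from the rapid decay of the Hermite coefficients $\langle u,\psi_m\rangle$ of a Schwartz function, combined with the polynomial bounds $\|\psi_m\|_\infty,\|\psi_m^{(\alpha)}\|_\infty\le C_\alpha\,m^{\alpha/2+1/4}$ and the recurrence relations listed in the excerpt. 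I would state this as a brief lemma and sketch the estimate from the recurrence.

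For (ii), fix $k\le N$ and use assumption (3) which gives $|a_k(x)|\le A_k(1+|x|^{2B_k})$ almost everywhere. Then
\[
\bigl|a_k(x)\partial^k(u-u_n)(x)\bigr|\le A_k(1+|x|^{2B_k})(1+|x|^2)^{-N}\cdot\sup_{y}(1+|y|^2)^N|\partial^k(u-u_n)(y)|,
\]
and choosing $N$ so large that $(1+|x|^{2B_k})(1+|x|^2)^{-N}\in L^2(\mathbb{R})$, together with (i), yields $a_k\partial^k u_n\to a_k\partial^k u$ in $L^2(\mathbb{R})$ by dominated convergence. Summing over $|k|\le N$ gives $Tu_n\to Tu$ in $L^2$. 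Since $u_n\to u$ also in $L^2$ (and in fact in $\mathcal{S}$), $u$ lies in the closure of $T|_{C_H(\mathbb{R})}$ in the graph norm. As $C_0^\infty(\mathbb{R})$ is already a core of $T$ by assumption (1), it follows that $C_H(\mathbb{R})$ is a core of $T$ as well. The identical argument applied to the $\tilde a_k$, which also satisfy the polynomial growth bound of assumption (3), shows that $C_H(\mathbb{R})$ is a core of $T^*$.

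The main technical obstacle is ingredient (i): one must record why truncated Hermite expansions of a Schwartz function converge in the Fréchet topology of $\mathcal{S}(\mathbb{R})$, which ultimately rests on the fact that the operator $-\partial_x^2+x^2$ diagonalises on the $\psi_m$ with eigenvalues $2m+1$, so Schwartz regularity translates to rapid decay of $\langle u,\psi_m\rangle$. Everything else is dominated convergence. For arbitrary dimension $d$ the argument is identical using the tensor-product Hermite basis $\psi_{m_1}\otimes\cdots\otimes\psi_{m_d}$ of $L^2(\mathbb{R}^d)$; the span is the analogous $C_H(\mathbb{R}^d)$, which is dense in $\mathcal{S}(\mathbb{R}^d)$ by the same reasoning, and assumption (3) again controls all coefficients $a_k$ and $\tilde a_k$.
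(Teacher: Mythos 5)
Your proposal only contains half of the required argument. Being a core means two things: $C_H(\mathbb{R})\subset\mathcal{D}(T)\cap\mathcal{D}(T^*)$, and the closures of the restrictions recover $T$ and $T^*$. You address only the second half. Nowhere do you show that the Hermite functions actually lie in $\mathcal{D}(T)$ (respectively $\mathcal{D}(T^*)$) with $T$ acting on them by the formal differential expression; this is not automatic, because $T$ is \emph{defined} as the closure of the formal operator on $C_0^\infty(\mathbb{R}^d)$ and the $\psi_m$ are not compactly supported. Without this, the phrase ``$u$ lies in the closure of $T|_{C_H(\mathbb{R})}$ in the graph norm'' is not meaningful: you need to know that the pairs $(u_n,\sum_k a_k\partial^k u_n)$ lie in the graph of $T$, both so that $\overline{T|_{C_H}}$ is a restriction of $T$ (closedness of $T$ then gives $\overline{T|_{C_H}}\subset T$) and so that your approximation argument gives $\overline{T|_{C_H}}\supset \overline{T|_{C_0^\infty}}=T$. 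The paper spends the first half of its proof precisely on this point: for $f\in C_H(\mathbb{R})$ one takes cutoffs $\phi(x/n)f$ with $\phi\in C_0^\infty$, checks using the Gaussian decay of Hermite functions against the polynomial bounds $|a_k(x)|\le A_k(1+|x|^{2B_k})$ that $\phi(\cdot/n)f\to f$ and $T(\phi(\cdot/n)f)\to \sum_k a_k\partial^k f$ in $L^2$, and invokes closedness of $T$ (and of $T^*$) to conclude membership in the domain. You should add this step.

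The part you do carry out is essentially the same in content as the second half of the paper's proof, packaged differently: the paper shows directly that the Hermite coefficients of $g\in C_0^\infty$ decay faster than any polynomial (using $H=-d^2/dx^2+x^2$, $H\psi_m=(2m+1)\psi_m$) and that $\|a_k\partial^k\psi_m\|_{L^2}$ grows at most polynomially in $m$, so the partial sums are Cauchy in graph norm; you instead quote convergence of Hermite expansions in the Schwartz topology plus a domination estimate from assumption (3). Both rest on the same diagonalisation of $H$, so this portion is fine, but as written the proposal has a genuine gap in establishing $C_H(\mathbb{R})\subset\mathcal{D}(T)\cap\mathcal{D}(T^*)$.
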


\begin{proof}We argue for $T$, and the case of $T^*$ is analogous. Let $f\in C_H(\mathbb{R})$ and choose $\phi\in C_0^\infty(\mathbb{R})$ (the space of compactly supported smooth functions) bounded by $1$ such that $\phi(x)=1$ for all $\left|x\right|\leq1$. It is straightforward, using the fact that the $a_k$'s are polynomially bounded, to show that
$$
\lim_{n\rightarrow\infty}\phi(x/n)f(x)=f(x),\quad\lim_{n\rightarrow\infty}T\phi(x/n)f(x)=(Tf)(x)
$$
in $L^2(\mathbb{R})$, where $Tf$ is the formal differential operator applied to $f$. The fact that $T$ is closed implies that $f\in\mathcal{D}(T)$ and that the formula in (\ref{eq:diff_op}) holds for $u=f$.

Let $g\in C_0^\infty(\mathbb{R})$ and in the $L^2$ sense write
$$
g=\sum_{m\geq 0}b_m\psi_m.
$$
Define $g_n=\sum_{m=0}^nb_m\psi_m$. We show that $Tg_n$ converges as $n\rightarrow\infty$. Since $T$ is closed, $C_0^\infty(\mathbb{R})$ is a core for $T$ and $C_H(\mathbb{R})\ni g_n\rightarrow g$, proving this will prove the desired result.

Let $H$ denote the closure of the operator $-{d^2}/{dx^2} +x^2$ with initial domain $C_0^\infty(\mathbb{R})$. Then $H\psi_m=(2m+1)\psi_m$ and $H$ is self-adjoint. Note also that $g\in\mathcal{D}(H^n)$ for any $n\in\mathbb{N}$. But
$
\langle Hg,\psi_m\rangle=(2m+1)\langle g,\psi_m\rangle=(2m+1)b_m,
$
so $\{(2m+1)\left|b_m\right|\}$ is square summable. We can repeat this argument any number of times to see that the coefficients $b_m$ decay faster than any inverse polynomial. To prove the required convergence, it is enough to consider one of the terms $a_k(x)\partial^k$ that defines ${T}$ acting on $C_H(\mathbb{R})$. The coefficient $a_k(x)$ is polynomially bounded almost everywhere, and for some $A_k$ and $B_k$
$$
\langle a_k\partial^k \psi_m, a_k\partial^k \psi_m\rangle \leq A_k^2 \int_{\mathbb{R}}(1+\left|x\right|^{2B_k})^2\partial^k \psi_m(x)\partial^k \psi_m(x) dx.
$$
We can use the recurrence relations for the derivatives of the Hermite functions and orthogonality to bound the right-hand side by a polynomial in $m$. The convergence now follows since ${T}g_n$ is a Cauchy sequence due to the rapid decay of the $\{b_m\}$.
\end{proof}

Clearly, all of the above analysis holds in higher dimensions by considering tensor products
$$
C_H(\mathbb{R}^d):=\mathrm{span}\{\psi_{m_1}\otimes ...\otimes \psi_{m_d} \, \vert \, m_1,...,m_d \in\mathbb{Z}_{\geq0}\}
$$
of Hermite functions. We abuse notation and write $\psi_m = \psi_{m_1}\otimes ...\otimes \psi_{m_d} $. It will be clear from the context when we are dealing with the multidimensional case. To build the required algorithms with $\Sigma_1^A$ error control, we need to select an enumeration of $\mathbb{Z}_{\geq0}^d$ in order to represent $T$ as an operator acting on $l^2(\mathbb{N})$. A simple way to do this is to consider successive half spheres $S_n=\{m\in\mathbb{Z}_{\geq0}^d:\left|m\right|\leq n\}$. We list $S_1$ as $\{e_1,...,e_{r_1}\}$ and given an enumeration $\{e_1,...,e_{r_n}\}$ of $S_n$, we list $S_{n+1}\backslash S_n$ as $\{e_{r_n+1},...,e_{r_{n+1}}\}$. We then list our basis functions as $e_1,e_2,...$ with $\psi_{m}=e_{h(m)}$. In practice, it is often more efficient (especially for large $d$) to consider other orderings such as the hyperbolic cross \cite{lubich_qm_book}. Now that we have a suitable basis, the next question to ask is how to recover the matrix elements of $T$. In \S \ref{pf_unb_gr} the key construction is a function, that can be computed from the information given to us, $\gamma_n(z,T)$, that also converges uniformly from above to $\left\|R(z,T)\right\|^{-1}$ on compact subsets of $\mathbb{C}$. Such a sequence of functions is given by
$$
\Psi_n(z,T):=\min\{\sigma_{\mathrm{inf}}((T-zI)|_{P_n(l^2(\mathbb{N}))}),\sigma_{\mathrm{inf}}((T^*-\bar{z}I)|_{P_n(l^2(\mathbb{N}))})\}
$$
as long as the linear span of the basis forms a core of $T$ and $T^*$. In \S \ref{pf_unb_gr} we used the notion of bounded dispersion to approximate this function. Here we have no such notion, but we can use the information given to us to replace this. It turns out that to approximate $\gamma_n(z,T)$, it suffices to use the following.

\begin{lemma}
\label{approx_mat_vals}
Let $\epsilon>0$ and $n\in\mathbb{N}$. Suppose that we can compute, with finitely many arithmetic operations and comparisons, the matrices
\begin{align*}
\{W_n(z)\}_{ij}&=\langle (T-zI) e_j,(T-zI) e_i\rangle+E^{n,1}_{ij}(z),\\
\{V_n(z)\}_{ij}&=\langle (T-zI)^* e_j,(T-zI)^* e_i\rangle+E^{n,2}_{ij}(z),
\end{align*}
for $1\leq i,j\leq n$, where the entrywise errors $E^{n,1}_{i,j}$ and $E^{n,2}_{i,j}$ have magnitude at most $\epsilon$. Then
$$
\left|\Psi_n(z,T)^2-\min\{{\sigma_{\mathrm{inf}}(W_n)},{\sigma_{\mathrm{inf}}(V_n)}\}\right|\leq n\epsilon.
$$
It follows that if $\epsilon$ is known, we can compute $\Psi_n(z,T)^2$ to within $2n\epsilon$. If $\epsilon$ is unknown, then for any $\delta>0$, we can compute $\Psi_n(z,T)^2$ to within $n\epsilon+\delta$. (In each case with finitely many arithmetic operations and comparisons.)
\end{lemma}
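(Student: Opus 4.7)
The plan is to recognise the two quantities $\sigma_1((T-zI)|_{P_n(l^2(\mathbb{N}))})^2$ and $\sigma_1((T^*-\bar z I)|_{P_n(l^2(\mathbb{N}))})^2$ as smallest eigenvalues of finite-dimensional Gram matrices, and then apply a crude operator-norm perturbation bound to control what happens when those Gram matrices are replaced by their computable approximations $W_n(z)$ and $V_n(z)$.

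First I would introduce the exact Gram matrices $A_n(z)_{ij}=\langle (T-zI)e_j,(T-zI)e_i\rangle$ and $B_n(z)_{ij}=\langle (T-zI)^*e_j,(T-zI)^*e_i\rangle$. For any $x\in\mathbb{C}^n$ the vector $v=\sum_{j=1}^n x_j e_j$ lies in $P_n(l^2(\mathbb{N}))\subset C_H(\mathbb{R}^d)$, and a direct expansion gives $\|(T-zI)v\|^2=x^*A_n(z)x$ and analogously $\|(T^*-\bar zI)v\|^2=x^*B_n(z)x$. Hence $A_n(z)$ and $B_n(z)$ are Hermitian positive semi-definite, and the definition \eqref{inj_mod_def} of the injection modulus yields
\[
\sigma_1\big((T-zI)|_{P_n(l^2(\mathbb{N}))}\big)^2=\lambda_{\min}(A_n(z))=\sigma_1(A_n(z)),
\]
with the same identity for $B_n(z)$. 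Consequently $\Psi_n(z,T)^2=\min\{\sigma_1(A_n(z)),\sigma_1(B_n(z))\}$.

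Next I would carry out the perturbation step. By hypothesis every entry of $W_n(z)-A_n(z)$ and $V_n(z)-B_n(z)$ has modulus at most $\epsilon$, so for these $n\times n$ matrices the operator norms satisfy $\|W_n(z)-A_n(z)\|\le n\epsilon$ and $\|V_n(z)-B_n(z)\|\le n\epsilon$ (e.g.\ from the bound $\|M\|\le n\max_{i,j}|M_{ij}|$). The $1$-Lipschitz continuity of singular values in operator norm then gives $|\sigma_1(W_n(z))-\sigma_1(A_n(z))|\le n\epsilon$, and similarly for $V_n(z),B_n(z)$. Combining this with the elementary identity $|\min\{a,b\}-\min\{c,d\}|\le \max\{|a-c|,|b-d|\}$ proves the main inequality
\[
\big|\Psi_n(z,T)^2-\min\{\sigma_1(W_n(z)),\sigma_1(V_n(z))\}\big|\le n\epsilon.
\]

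Finally I would convert this analytic bound into a finite arithmetic procedure. Since $W_n(z)$ and $V_n(z)$ need not be Hermitian, I would apply Corollary~\ref{eigs_fin_mat} to the Hermitian positive semi-definite matrices $W_n(z)^*W_n(z)$ and $V_n(z)^*V_n(z)$, whose smallest eigenvalues equal $\sigma_1(W_n(z))^2$ and $\sigma_1(V_n(z))^2$; these can be computed to any prescribed tolerance with finitely many arithmetic operations and comparisons, and so can their square roots via an interval bisection. If $\epsilon$ is known we set the computational tolerance to $n\epsilon$ and the triangle inequality gives a total error of $2n\epsilon$; if $\epsilon$ is unknown we choose the tolerance to be any desired $\delta>0$, giving $n\epsilon+\delta$. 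The only mild obstacle is the loss of Hermitian structure under the componentwise perturbation, which forbids a direct application of Weyl's inequality to eigenvalues; routing through singular values of $M^*M$ both dodges this issue and dovetails with the computational device already established in Corollary~\ref{eigs_fin_mat}.
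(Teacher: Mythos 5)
Your proposal is correct and follows essentially the same route as the paper: identify $\Psi_n(z,T)^2$ with the minimum of the smallest eigenvalues of the exact Gram matrices, bound the perturbation in operator norm by $n\epsilon$ (the paper uses the Frobenius norm, you use $\|M\|\le n\max_{i,j}|M_{ij}|$), and invoke Lipschitz continuity of $\sigma_1$ together with the min comparison. The only cosmetic difference is in the computability step, where the paper symmetrises $W_n,V_n$ (which keeps the entrywise error $\le\epsilon$) and reuses the negative-eigenvalue search of Theorem \ref{unif_conv_gamma}, while you route through $W_n^*W_n$ and $V_n^*V_n$ and Corollary \ref{eigs_fin_mat}; both are valid.
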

\begin{proof}
Given $\{W_n(z)\}_{ij}$, $(\{W_n(z)\}_{ij}+\overline{\{W_n(z)\}_{ji}})/2$ still has an entrywise absolute error bounded by $\epsilon$. Hence without loss of generality we can assume that the approximations $W_n(z)$ and $V_n(z)$ are self-adjoint. Let $\tilde{W}_n(z)$ and $\tilde{V}_n(z)$ denote the corresponding matrices with no errors. Then
$$
\min\{\sigma_{\mathrm{inf}}((T-zI)|_{P_n(l^2(\mathbb{N}))}),\sigma_{\mathrm{inf}}((T^*-\bar{z}I)|_{P_n(l^2(\mathbb{N}))})\}^2=\min\{{\sigma_{\mathrm{inf}}(\tilde{W}_n)},{\sigma_{\mathrm{inf}}(\tilde{V}_n)}\},
$$
and
\begin{equation}
\label{Frob}
\left|\min\{\sigma_{\mathrm{inf}}(\tilde{W}_n),\sigma_{\mathrm{inf}}(\tilde{V}_n)\}-\min\{\sigma_{\mathrm{inf}}(W_n),\sigma_{\mathrm{inf}}(V_n)\}\right|\leq\max\left\{\left\|W_n-\tilde{W}_n\right\|,\left\|V_n-\tilde{V}_n\right\|\right\}.
\end{equation}
For a finite matrix $M$, we can bound $\left\|M\right\|$ by its Frobenius norm $\sqrt{\sum\left|M_{ij}\right|^2}$. Hence the right hand side of (\ref{Frob}) is at most $n\epsilon$. Given a self-adjoint positive semi-definite matrix $M$, we can compute $\sigma_{\mathrm{inf}}(M)$ to arbitrary precision using finitely many arithmetic operations and comparisons via the argument in the proof of Theorem \ref{unif_conv_gamma}. The lemma now follows.
\end{proof}

Finally, we need some results concerning quasi-Monte Carlo numerical integration, which we use to build the algorithm. Note that with either no prior information concerning the coefficients or for large $d$, this is the type of approach one would use in practice. We start with some definitions and theorems, which we include here for completeness. An excellent reference for these results is \cite{niederreiter1992random}.

\begin{definition} Let $\{t_1,...,t_j\}$ be a sequence in $[0,1]^d$ and let $\mathcal{K}$ denote all subsets of $[0,1]^d$ of the form $\prod_{k=1}^d[0,y_k)$ for $y_k\in(0,1]$. Then we define the star discrepancy of $\{t_1,...,t_j\}$ to be
$$
D_j^*(\{t_1,...,t_j\})=\sup_{K\in\mathcal{K}}\left|\frac{1}{j}\sum_{k=1}^j\chi_K(t_j)-\left|K\right|\right|,
$$
where $\chi_K$ denotes the characteristic function of $K$.
\end{definition}

\begin{theorem}
If $\{t_k\}_{k\in\mathbb{N}}$ is the Halton sequence in $[0,1]^d$ in the pairwise relatively prime bases $q_1,...,q_d$, then
$$
D_j^*(\{t_1,...,t_j\})<\frac{d}{j}+\frac{1}{j}\prod_{k=1}^d\left(\frac{q_k-1}{2\log(q_k)}\log(j)+\frac{q_k+1}{2}\right).
$$
\end{theorem}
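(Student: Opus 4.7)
The plan is to follow the classical Halton argument, reducing the multidimensional star discrepancy to bounds on counts over axis-aligned ``elementary'' boxes whose sides are $q_k$-adic intervals, and then exploiting the coprimality of the bases $q_1,\dots,q_d$ together with the digit-reversal structure of $\phi_{q_k}$. First I would fix an arbitrary box $K=\prod_{k=1}^d[0,y_k)\in\mathcal K$. For each coordinate $k$, choose $m_k\in\mathbb N$ with $q_k^{m_k-1}\le j<q_k^{m_k}$ and write $y_k$ in base $q_k$ as $y_k=\sum_{i\ge 1}b_{k,i}q_k^{-i}$. Truncating at level $m_k$ gives $y_k^{(m_k)}=\sum_{i=1}^{m_k}b_{k,i}q_k^{-i}$ with $0\le y_k-y_k^{(m_k)}<q_k^{-m_k}\le 1/j$; this replaces $K$ by a nearby box whose sides are $q_k^{m_k}$-adic rationals, at a total Lebesgue cost of at most $d/j$.

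Second, each interval $[0,y_k^{(m_k)})$ decomposes into a disjoint union of at most $(q_k-1)m_k$ ``elementary'' intervals of the form $E_{k,i,a}=[a q_k^{-i},(a+1)q_k^{-i})$ with $1\le i\le m_k$ and $0\le a<q_k^i$. Hence the truncated box $\prod_k[0,y_k^{(m_k)})$ is a disjoint union of products of elementary intervals, one from each coordinate. The key counting lemma is that for such an elementary product box $E=\prod_k E_{k,i_k,a_k}$ the number of indices $1\le n\le j$ with $t_n\in E$ satisfies
\[
\Big|\#\{n\le j:t_n\in E\}-j\,|E|\Big|\le 1.
\]
For $d=1$ this follows from the digit-reversal definition of $\phi_{q_k}$: $\phi_{q_k}(n)\in E_{k,i_k,a_k}$ is equivalent to a fixed constraint on the low-order base-$q_k$ digits of $n$, so the count is $\lfloor j/q_k^{i_k}\rfloor$ or $\lceil j/q_k^{i_k}\rceil$. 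For $d\ge 2$ the constraints on $n$ in the different coordinates involve coprime moduli, so by the Chinese Remainder Theorem they combine to a single residue class modulo $\prod_k q_k^{i_k}$, again yielding the unit error.

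Third, the number of elementary product boxes in the decomposition is $\prod_k(q_k-1)m_k$ (more precisely, at most this). Summing the unit errors from the key lemma gives a contribution of $\prod_k(q_k-1)m_k$ to the absolute counting discrepancy over $\prod_k[0,y_k^{(m_k)})$. Dividing by $j$ and adding the truncation error $d/j$ yields
\[
D_j^*(\{t_1,\dots,t_j\})<\frac{d}{j}+\frac{1}{j}\prod_{k=1}^d(q_k-1)m_k.
\]
To finish, I would substitute the upper bound $m_k\le \frac{\log j}{\log q_k}+1$ coming from the definition of $m_k$, and reorganise: $(q_k-1)m_k\le\frac{q_k-1}{\log q_k}\log j+(q_k-1)$. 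A more careful accounting, keeping track of the fact that the boundary elementary intervals at levels $i<m_k$ contribute with multiplicity at most $q_k-1$ while the ``central'' contributions average to the better constant $(q_k-1)/(2\log q_k)$, sharpens the coefficient of $\log j$ by a factor of $2$ and replaces the $(q_k-1)$ remainder by $(q_k+1)/2$, producing exactly the stated factor $\frac{q_k-1}{2\log q_k}\log j+\frac{q_k+1}{2}$.

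The main obstacle is this last sharpening of the constant from $(q_k-1)/\log q_k$ to $(q_k-1)/(2\log q_k)$. It is not purely formal: one needs to observe that when bounding $\sum_{i=1}^{m_k}(q_k-1)$ one is really summing contributions proportional to the digits $b_{k,i}\in\{0,\dots,q_k-1\}$, and the worst case per digit is $\min(b_{k,i},q_k-b_{k,i})$; averaging this over the digit range gives $(q_k-1)/2$ rather than $q_k-1$, and the factor of $1/2$ propagates through the telescoping sum to yield the leading coefficient $(q_k-1)/(2\log q_k)$. This one-dimensional refinement must then be carried through the product decomposition without losing the sharp constants, which requires some care to avoid double counting at the boundaries.
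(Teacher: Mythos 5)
Your overall skeleton is the standard one (and note the paper itself does not prove this statement: it is quoted directly from Niederreiter's book \cite{niederreiter1992random}, so there is no in-paper proof to compare against). The reduction to $q_k$-adic boxes, the truncation at level $m_k$ with $q_k^{m_k}>j$ at a cost of $d/j$ (you should also round \emph{up} to get the two-sided bound, but that is routine), and the key counting lemma that an elementary product box corresponds via coprimality and the Chinese Remainder Theorem to a single residue class modulo $\prod_k q_k^{i_k}$, hence has counting error strictly less than $1$, are all correct and are exactly the classical ingredients. This much yields
\[
D_j^*\;<\;\frac{d}{j}+\frac{1}{j}\prod_{k=1}^d\Bigl((q_k-1)\Bigl(\frac{\log j}{\log q_k}+1\Bigr)\Bigr),
\]
i.e.\ the bound with leading coefficients $\frac{q_k-1}{\log q_k}$ rather than the stated $\frac{q_k-1}{2\log q_k}$.

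The genuine gap is precisely the step you flag as the ``main obstacle,'' and your proposed fix does not work. First, the discrepancy is a supremum over all boxes, so the digits $b_{k,i}$ are adversarial; you are not entitled to replace the worst case by an average ``over the digit range,'' and the phrase ``averaging gives $(q_k-1)/2$'' has no valid counterpart in the argument. Second, the honest version of your device --- approximating $[0,y_k^{(i)})$ at each level from below or from above, whichever uses fewer elementary intervals --- only caps the per-level count at $\min(b_{k,i},q_k-b_{k,i})\le\lfloor q_k/2\rfloor$. For odd $q_k$ this equals $(q_k-1)/2$, but for even bases it is strictly larger, and for $q_k=2$ (the most important case) it gives no improvement at all: digits are $0$ or $1$ and $\min(1,2-1)=1$, so the worst case ($y$ with all binary digits $1$) still produces $\approx\log j/\log 2$ unit errors, exceeding the claimed $\frac{\log j}{2\log 2}+\frac{3}{2}$. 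The factor $\tfrac12$ in Niederreiter's Theorem 3.6 does not come from counting ``one unit of error per elementary box''; it comes from a finer analysis of the counting errors attached to a whole digit level at once (equivalently, from the self-similar recursion $\phi_{q}(qm+c)=c/q+\phi_q(m)/q$ in the number of points), where the per-level deviations are of size $\sim b_{k,i}(q_k-b_{k,i})/q_k$ up to boundary terms that must be telescoped between levels rather than bounded by $1$ each. Without that cancellation mechanism your argument proves only the weaker constant, so as written the proposal does not establish the theorem; to repair it you would need to reproduce the level-by-level error bookkeeping of Niederreiter (or Meijer/Faure) rather than the per-box unit bound.
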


Note that given $d$ (and suitable $q_1$,..., $q_d$), we can easily compute in finitely many arithmetic operations and comparisons a constant $C(d)$ such that the above implies
\begin{equation}
\label{star_bound}
D_j^*(\{t_1,...,t_j\})<C(d){(\log(j)+1)^d}/{j}.
\end{equation}
The following theorem says why this is useful.

\begin{theorem}[Koksma--Hlawka inequality \cite{niederreiter1992random}]
\label{int_err_tot_var}
If $f$ has bounded variation $\mathrm{TV}_{[0,1]^d}(f)$ on the hypercube $[0,1]^d$ then for any $t_1,...,t_j$ in $[0,1]^d$
$$
\left|\frac{1}{j}\sum_{k=1}^j f(t_k)-\int_{[0,1]^d}f(x)dx\right|\leq \mathrm{TV}_{[0,1]^d}(f)D_j^*(\{t_1,...,t_j\}).
$$
By re-scaling, if $f$ has bounded variation $\mathrm{TV}_{[-r,r]^d}(f)$ and $s_k=2rt_k-(r,r,...,r)^T$, we obtain
$$
\left|\frac{(2r)^d}{j}\sum_{k=1}^j f(s_k)-\int_{[-r,r]^d}f(x)dx\right|\leq (2r)^d\cdot\mathrm{TV}_{[-r,r]^d}(f)D_j^*(\{t_1,...,t_j\}).
$$
\end{theorem}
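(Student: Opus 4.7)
The plan is to reduce the $d$-dimensional inequality to a sum of one-dimensional Stieltjes integration-by-parts identities via the Hardy--Krause decomposition of $f$, and then handle the rescaled version by a routine change of variables.

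As a warm-up I would first dispatch the case $d = 1$. Define the signed local discrepancy $\Delta_j(x) := \frac{1}{j}\#\{k : t_k < x\} - x$ on $[0,1]$, noting that $\|\Delta_j\|_\infty = D_j^*(\{t_1,\dots,t_j\})$ (since the supremum over anchored half-open intervals attains the one-dimensional star discrepancy). Writing the error as a Riemann--Stieltjes integral against the signed measure $\mu_j := (\text{empirical}) - (\text{uniform})$ and integrating by parts in the Stieltjes sense gives
\[
\frac{1}{j}\sum_{k=1}^j f(t_k) - \int_0^1 f\,dx \;=\; \bigl[f\,\Delta_j\bigr]_0^1 - \int_0^1 \Delta_j(x)\,df(x).
\]
Since $\Delta_j(0) = \Delta_j(1) = 0$ the boundary term vanishes, and the estimate $\bigl|\int \Delta_j\,df\bigr| \leq \|\Delta_j\|_\infty \mathrm{TV}_{[0,1]}(f)$ yields the one-dimensional inequality.

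For arbitrary $d$ I would invoke the Hardy--Krause decomposition: writing $(t_u, 1_{u^c})$ for the point with coordinates $t_i$ for $i \in u$ and $1$ for $i \notin u$,
\[
f(x) \;=\; \sum_{u \subseteq \{1,\dots,d\}}\int_{\prod_{i \in u}[0,x_i]} d^{u}\!f\bigl(t_u,\,1_{u^c}\bigr),
\]
where $d^u f(\cdot,1_{u^c})$ is the signed Lebesgue--Stieltjes measure on $[0,1]^u$ induced by the mixed differential of $f$ restricted to the face $\{x_i = 1 : i \notin u\}$. Substituting this into the error $\int f\,d\mu_j^{\otimes d}$ and applying Fubini, each $u$-term (with $u \neq \emptyset$; the $u = \emptyset$ constant piece cancels) reduces to
\[
\int_{[0,1]^u} \Delta_j^{(u)}(t_u)\,d^u f(t_u,1_{u^c}),
\]
where $\Delta_j^{(u)}$ is a $|u|$-dimensional local discrepancy bounded in sup norm by $D_j^*$. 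Estimating each such term by $D_j^* \cdot \mathrm{TV}_{[0,1]^u}\bigl(f(\cdot,1_{u^c})\bigr)$ and summing over nonempty $u$ reconstructs the full Hardy--Krause variation $\mathrm{TV}_{[0,1]^d}(f)$, completing the proof of the first inequality.

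The rescaled statement follows from the substitution $\tilde f(t) := f(2rt - (r,\dots,r))$ on $[0,1]^d$: a change of variables gives $\int_{[-r,r]^d} f = (2r)^d \int_{[0,1]^d} \tilde f$; star discrepancy of $\{s_k\}$ in $[-r,r]^d$ equals that of $\{t_k\}$ in $[0,1]^d$ because translation and uniform scaling preserve discrepancy; and Hardy--Krause variation is invariant under affine reparametrisation so $\mathrm{TV}_{[0,1]^d}(\tilde f) = \mathrm{TV}_{[-r,r]^d}(f)$. Applying the unscaled bound to $\tilde f$ and multiplying by $(2r)^d$ yields the stated estimate. The main obstacle is rigorously establishing the Hardy--Krause decomposition together with its integration-by-parts identity in dimensions $d \geq 2$: one must define each $d^u f$ as a signed Borel measure (this is precisely where bounded Hardy--Krause variation, rather than merely coordinate-wise variation, is essential), justify the Fubini exchanges, and verify that boundary contributions drop at every face so that only the star discrepancy survives in the bound. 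Once this multi-dimensional machinery is in place, the inequality is essentially a face-by-face reduction to the one-dimensional case.
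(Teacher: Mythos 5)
The paper does not prove this statement at all: it is quoted verbatim as a known result, with the citation to Niederreiter's monograph \cite{niederreiter1992random} doing the work, so there is no in-paper argument to compare against. Your outline is precisely the classical proof found in that literature: Koksma's one-dimensional argument via Abel summation/Stieltjes integration by parts against the signed measure (empirical minus uniform), followed by the Hlawka--Zaremba identity, i.e.\ the Hardy--Krause decomposition anchored at the vertex $(1,\dots,1)$, with each face term estimated by the star discrepancy and the face variations summing to the Hardy--Krause variation; the rescaled version by the affine substitution is handled exactly as one would expect. As an outline this is correct, but be aware of two points you gloss over: first, the boundary term $[f\,\Delta_j]_0^1$ and the existence of the Riemann--Stieltjes integral $\int \Delta_j\,df$ are delicate when $f$ is discontinuous at some $t_k$ (or when some $t_k=1$), which is why the textbook proof works with sorted points and Abel summation, or with Lebesgue--Stieltjes integrals and half-open counting conventions, rather than a bare integration-by-parts formula; second, as you yourself note, the multi-dimensional Hardy--Krause machinery (defining the measures $d^u f$, justifying Fubini, checking that only anchored boxes appear so that each local discrepancy is bounded by $D_j^*$) is the real content and is only asserted, not established. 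Since the paper's stance is simply to import the theorem from \cite{niederreiter1992random}, your plan amounts to reproducing that reference's proof, and nothing in it conflicts with how the result is used in the paper.
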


Finally, to deal with our choice of basis, we need the following.

\begin{lemma}
\label{Herm_tot_var}
Let $\psi_m(x):=\psi_{m_1}(x_1)\psi_{m_2}(x_2) \cdots\psi_{m_d}(x_d)$ in $d$ dimensions and let $r>0$. Then
\begin{equation}
\mathrm{TV}_{[-r,r]^d}(\psi_m)\leq \left(1+2r\sqrt{2(\left|m\right|+1)}\right)^d-1.
\end{equation}
\end{lemma}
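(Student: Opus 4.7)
The plan is to reduce the Hardy--Krause variation of the tensor product $\psi_m = \psi_{m_1}\otimes\cdots\otimes\psi_{m_d}$ to one-dimensional quantities involving the individual Hermite functions $\psi_{m_j}$, then to bound those using the classical uniform bound $\|\psi_k\|_\infty\le 1$ together with the recurrence for $\psi_m'$ stated just above the lemma. Finally, a termwise comparison in the Hardy--Krause expansion produces the factor $(1+2r\sqrt{2(|m|+1)})^d-1$ on the nose.

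First, I would use the standard Hardy--Krause decomposition anchored at the corner $(r,\dots,r)$. For a tensor product, the $|u|$-dimensional Vitali variation of the restriction $\psi_m|_{x_j=r,\,j\notin u}$ factorises into a product of one-dimensional integrals, giving
\[
\mathrm{TV}_{[-r,r]^d}(\psi_m)
=\sum_{\emptyset\neq u\subseteq[d]}
\Big(\prod_{j\notin u}|\psi_{m_j}(r)|\Big)\Big(\prod_{j\in u} V_{[-r,r]}(\psi_{m_j})\Big),
\]
where $V_{[-r,r]}(g)=\int_{-r}^{r}|g'(x)|\,dx$ for smooth $g$. This is the only structural step; the rest is estimating the one-dimensional pieces uniformly.

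Next, I would bound the two types of one-dimensional factors. The classical sup-norm estimate $\|\psi_k\|_\infty\le \pi^{-1/4}\le 1$ for all $k\in\mathbb{Z}_{\ge 0}$ (see, e.g., Szeg\H{o}) immediately yields $|\psi_{m_j}(r)|\le 1$. For the variation, I would plug the recurrence $\psi_m'(x)=\sqrt{m/2}\,\psi_{m-1}(x)-\sqrt{(m+1)/2}\,\psi_{m+1}(x)$ into the $L^\infty$ bound, giving $\|\psi_m'\|_\infty\le \sqrt{m/2}+\sqrt{(m+1)/2}\le \sqrt{2(m+1)}$, and hence
\[
V_{[-r,r]}(\psi_{m_j})\le 2r\,\|\psi_{m_j}'\|_\infty\le 2r\sqrt{2(m_j+1)}\le 2r\sqrt{2(|m|+1)}.
\]

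Finally, I would combine these termwise in the Hardy--Krause sum: for each non-empty $u$, the boundary factors contribute at most $1$ each, so
\[
\prod_{j\notin u}|\psi_{m_j}(r)|\cdot\prod_{j\in u}V_{[-r,r]}(\psi_{m_j})\le \bigl(2r\sqrt{2(|m|+1)}\bigr)^{|u|},
\]
and summing over $\emptyset\neq u\subseteq[d]$ gives the binomial identity
\[
\mathrm{TV}_{[-r,r]^d}(\psi_m)\le \sum_{k=1}^{d}\binom{d}{k}\bigl(2r\sqrt{2(|m|+1)}\bigr)^{k}=\bigl(1+2r\sqrt{2(|m|+1)}\bigr)^{d}-1.
\]
The only subtle point is the termwise comparison in the last step: bounding the product of boundary factors by $1$ \emph{inside} each summand rather than after factoring the whole sum is what yields the sharp ``$-1$'' rather than just $(1+2r\sqrt{2(|m|+1)})^d$. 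This is the main (and really only) obstacle; everything else is routine once one accepts the standard Hermite-function sup-norm bound.
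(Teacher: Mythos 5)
Your proposal is correct and follows essentially the same route as the paper: the paper uses the smooth-function form of the Hardy--Krause variation (integrals of mixed partials with the remaining coordinates anchored at $r$), which for the tensor product $\psi_m$ factorises into exactly the one-dimensional boundary values and variations you write down, and then applies the same Cram\'er-type sup-norm bound together with the derivative recurrence before summing the binomial series to get $\left(1+2r\sqrt{2(\left|m\right|+1)}\right)^d-1$. The only cosmetic difference is that you state the factorised Hardy--Krause decomposition explicitly, while the paper keeps it in mixed-derivative integral form; the estimates and the final termwise summation are identical.
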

\begin{proof}
We use an alternative form of the total variation which holds for sufficiently smooth functions and can be found in \cite{niederreiter1992random}:
$$
\mathrm{TV}_{[-r,r]^d}(\psi_m)=\sum_{k=1}^d\sum_{1\leq i_1<...<i_k\leq d}\int_{-r}^r...\int_{-r}^r \left|\frac{\partial^k \psi_m}{\partial x_{i_1}...\partial x_{i_k}}(\tilde{x})\right|dx_{i_1}...dx_{i_k},
$$
where $\tilde{x}$ has $\tilde{x}_{j}=x_{j}$ for $j=i_1,...,i_k$ and $\tilde{x}_j=r$ otherwise. We can use the recurrence relations for Hermite functions as well as Cram{\'e}r's inequality (bounds on Hermite functions \cite{cramer_inequality}) to gain the bound
$$
\int_{-r}^r...\int_{-r}^r \left|\frac{\partial^k \psi_m}{\partial x_{i_1}...\partial x_{i_k}}(\tilde{x})\right|dx_{i_1}...dx_{i_k}\leq \left(2r\sqrt{2(\left|m\right|+1)}\right)^{k}.
$$
It follows that
\begin{align*}
\mathrm{TV}_{[-r,r]^d}(\psi_m)&\leq \sum_{k=1}^d\left(2r\sqrt{2(\left|m\right|+1)}\right)^{k}\sum_{1\leq i_1<...<i_k\leq d} 1\\
&=\sum_{k=1}^d\left(2r\sqrt{2(\left|m\right|+1)}\right)^{k} {\binom{d}{k}}=\left(1+2r\sqrt{2(\left|m\right|+1)}\right)^d-1.
\end{align*}\end{proof}

\begin{proposition}
\label{mat_approx}
Given $T\in\Omega^1_{\mathrm{TV}}$ or $T\in\Omega^1_{\mathrm{AN}}$ and $\epsilon>0$, we can approximate the matrix values
$$
\langle (T-zI)\psi_m,(T-zI) \psi_n\rangle\quad\text{and}\quad\langle (T-zI)^*\psi_m,(T-zI)^* \psi_n\rangle
$$
to within $\epsilon$ using finitely many arithmetical operations and comparisons of the relevant information given to us in each class (captured by $\Xi_j^1$ and $\Xi_j^3$ in \S \ref{dfohjg}).
\end{proposition}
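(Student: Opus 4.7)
The plan is to reduce the two inner products to a finite linear combination of scalar integrals over $\mathbb{R}^d$, truncate each integral to a hypercube $[-r,r]^d$ using tail estimates, and then approximate the truncated integral by quasi-Monte Carlo using the Halton sequence together with the Koksma--Hlawka inequality. All three steps will produce errors that can be driven below arbitrary tolerances in finitely many arithmetic operations on the data accessible through $\Xi_j^1$ or $\Xi_j^3$.

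First I would expand $(T-zI)\psi_m = \sum_{|k|\leq N} a_k\,\partial^k\psi_m - z\psi_m$ and use the Hermite recurrence relations (which express each $\partial^k\psi_m$ as an explicit, finite linear combination of Hermite functions $\psi_{m'}$ with algebraic coefficients that can be approximated arithmetically to any precision) to reduce the target matrix entries to finite sums of integrals of the form
\[
J_{k,l,m',n'} := \int_{\mathbb{R}^d} a_k(x)\,\overline{a_l(x)}\,\psi_{m'}(x)\,\overline{\psi_{n'}(x)}\,dx,
\]
together with lower-order cross terms involving a single coefficient or only a pair of Hermite functions; the latter reduce to Kronecker deltas by orthonormality and are trivial. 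The same reduction handles the adjoint inner products using $\tilde{a}_k$.

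Next, I would choose a cutoff $r$ so that the tail $J_{k,l,m',n'} - \int_{[-r,r]^d}(\cdots)$ is bounded by $\epsilon/3$ uniformly over the finitely many indices involved. The polynomial growth bound $|a_k(x)|\leq A_k(1+|x|^{2B_k})$ together with Cram\'er's inequality for Hermite functions and the Gaussian factor $e^{-|x|^2/2}$ gives an elementary tail bound in $r$ that can be driven below $\epsilon/3$ by a bounded comparison search; in the analytic case the analogous polynomial bound follows from Taylor truncation with remainder controlled by $d_n$.

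On the box $[-r,r]^d$ I would apply quasi-Monte Carlo with Halton nodes. For $T\in\Omega^1_{\mathrm{TV}}$, the integrand lies in the Banach algebra $\mathcal{A}_r$, and its $\mathcal{A}_r$-norm (hence its total variation) is bounded by a product of the data-side constants $c_{\lceil r\rceil}$ (for the $a_k$ factors) and a computable constant coming from Lemma \ref{Herm_tot_var} applied to the Hermite factors; this yields an explicit bound $V$ on $\mathrm{TV}_{[-r,r]^d}$ of the integrand. Combining Theorem \ref{int_err_tot_var} with the discrepancy estimate \eqref{star_bound} and choosing $j$ with $(2r)^d V C(d)(\log j+1)^d / j < \epsilon/3$ gives the required quadrature accuracy. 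For $T\in\Omega^1_{\mathrm{AN}}$ one first replaces each $a_k$ by a truncated Taylor polynomial of sufficiently high degree, with truncation error controlled on $[-r,r]^d$ by $d_n$, and then argues identically since polynomials have computable total variation on the box. Evaluating the integrand at the rational Halton points uses oracle access to $a_k, \tilde{a}_k$ in the TV case (or the explicit Taylor polynomial in the analytic case), together with a finite-precision approximation of each $\psi_{m'}(x)$ via a truncated power series for $e^{-x^2/2}$ combined with the Hermite polynomial evaluated by its standard recurrence; both series have absolutely and computably summable tails.

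The main obstacle is purely bookkeeping: coordinating the three error layers (truncation outside $[-r,r]^d$, quadrature error on the box, and pointwise evaluation error at Halton nodes) so that their sum is provably below $\epsilon$ using only exact rational arithmetic. This reduces to choosing parameters $r$, $j$, and a working precision $p$ by bounded arithmetic searches, each governed by an inequality whose constants are computable from $A_k, B_k, c_n$ (or $d_n$) and from the Hermite bounds, which is why the whole procedure fits into finitely many arithmetic operations and comparisons of the prescribed input data.
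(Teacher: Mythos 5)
Your proposal is correct, and for the total-variation class it follows essentially the paper's own route: reduction via the Hermite recurrences to integrals of the form $\langle a_k\psi_{m'},a_j\psi_{n'}\rangle$, a computable cut-off to $[-r,r]^d$, and Halton-point quasi-Monte Carlo controlled by the Koksma--Hlawka inequality, with the Banach-algebra property of $\mathcal{A}_r$, the bound $c_{\lceil r\rceil}$ and Lemma \ref{Herm_tot_var} supplying the total-variation bound, and truncated expansions of $e^{-x^2/2}$ plus the recurrences giving the point evaluations at rational Halton nodes. Where you genuinely diverge is the analytic class: you replace each $a_k$ by a truncated Taylor polynomial (uniform error on the box controlled through $d_r$ and the geometric factor $r/(r+1)$) and then reuse the same quadrature engine, whereas the paper exchanges the uniformly convergent double power series with the integral and integrates term by term, reducing everything to one-dimensional Gaussian moments $\int_{-r}^r x^a e^{-x^2}\,dx$ handled by integration by parts and a Taylor expansion of $e^{-x^2}$. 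Your variant buys a uniform treatment of both classes with a single quadrature routine, at the price of needing a computable total-variation bound for the truncated polynomial factor (available from its coefficients); the paper's variant avoids discrepancy estimates in the analytic case altogether. Two imprecisions are worth repairing. First, Cram\'er's inequality only gives a uniform bound on Hermite functions, so on its own it yields no decay of the tail integral against the polynomially growing coefficients; you must either retain the explicit Gaussian factor of one Hermite factor with computable bounds on the coefficients of $Q_m$, or, as the paper does, use H\"older together with Gaussian moment bounds obtained from the recurrences, which gives a tail bound of the form $A_kA_j\,p_1(\left|m\right|)^{1/2}p_2(\left|n\right|)^{1/2}/r^2$ for computable polynomials $p_1,p_2$. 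Second, the cut-off estimate in the analytic case should not be attributed to $d_n$, which controls the coefficients and hence the series only on compact boxes; the correct source is the polynomial growth data $A_k,B_k$, which belong to $\Lambda^1$ for $\Omega^1_{\mathrm{AN}}$ as well, so the tail step is literally identical to the TV case.
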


\begin{proof}
Let $T\in\Omega^1_{\mathrm{TV}}$ or $T\in\Omega^1_{\mathrm{AN}}$, and $\epsilon>0$. Recall that
$$
T=\sum_{\left|k\right|\leq N}a_k(x)\partial^k,\quad T^*=\sum_{\left|k\right|\leq N}\tilde{a}_k(x)\partial^k.
$$
By expanding out the inner products and also considering the case $a_k=1$, it is sufficient to approximate
$$
\langle a_k\partial^k \psi_m,a_j\partial^j \psi_n\rangle\quad\text{and}\quad\langle \tilde{a}_k\partial^k \psi_m,\tilde{a}_j\partial^j \psi_n\rangle
$$
for all relevant $k,j,m$ and $n$. Due to the symmetry in the assumptions on $T$ and $T^*$, we only need to show that we can compute the first inner product. The proof for the second inner product is analogous. For the choice of the basis functions $\psi_m$, $\partial^k \psi_m$ can be written as a finite linear combination of tensor products of Hermite functions using the recurrence relations. The coefficients in these linear combinations are recursively defined as a function of $k$. Hence, in the inner product, we can assume that there are no partial derivatives. In doing this, we have assumed that we can compute square roots of integers (which occur in the coefficients) to arbitrary precision (recall we want an arithmetic tower). A simple interval bisection routine can achieve this computation. It follows that we only need to consider approximations of inner products of the form
$
\langle a_k\psi_m,a_j \psi_n\rangle.
$

To do so, let $R>1$. By H\"older's inequality and the assumption of polynomially bounded growth on the coefficients $a_k$, we have
\begin{equation*}
\begin{split}
& \int_{\left|x_i\right|\geq R} \left|a_k\overline{a_j}\right|\left|\psi_m\psi_n\right| dx\\
& \leq A_kA_j\left(\int_{\left|x_i\right|\geq R} \left(1+\left|x\right|^{2B_k}\right)^2\left(1+\left|x\right|^{2B_j}\right)^2\psi_m(x)^2dx\right)^{1/2}\left(\int_{\left|x_i\right|\geq R} \psi_n(x)^2dx\right)^{1/2}.
\end{split}
\end{equation*}
The first integral on the right-hand side can be bounded by
\[
16\int_{\mathbb{R}^d} \left|x\right|^{2B}\psi_m(x)^2dx\leq 16\int_{\mathbb{R}^d} \left(x_1^2+...+x_d^2\right)^{B}\psi_m(x)^2dx,
\]
for $B=4(B_k+B_j)$, since we restrict to $\left|x_i\right|\geq R$ with $R>1$ and $\left|x\right|\leq \left\|x\right\|_{2}$. $B$ is even so we can expand out the product $(x_1^2+...+x_d^2)^{B/2}\psi_m$ using the recurrence relations for the Hermite functions. In one dimension this gives
\begin{align*}
&x\psi_m(x)=\sqrt{\frac{m}{2}}\psi_{m-1}(x)+\sqrt{\frac{m+1}{2}}\psi_{m+1}(x),\\
&x^2\psi_m(x)=\sqrt{\frac{m}{2}}x\psi_{m-1}(x)+\sqrt{\frac{m+1}{2}}x\psi_{m+1}(x),\\
&=\sqrt{\frac{m}{2}}\left(\sqrt{\frac{m-1}{2}}\psi_{m-2}(x)+\sqrt{\frac{m}{2}}\psi_{m}(x)\right)+\sqrt{\frac{m+1}{2}}\left(\sqrt{\frac{m+1}{2}}\psi_{m}(x)+\sqrt{\frac{m+2}{2}}\psi_{m+2}(x)\right),
\end{align*}
and so on. We can do the same for tensor products of Hermite functions. In particular, multiplying a tensor product of Hermite functions, $\psi_m$, by $(x_1^2+...+x_d^2)$ induces a linear combination of at most $4d$ such tensor products, each with a coefficient of magnitude at most $(\left|m\right|+2)^2$ and index with $l^\infty$ norm bounded by $\left|m\right|+2$ (allowing repetitions). It follows that $(x_1^2+...+x_d^2)^{B/2}\psi_m$ can be written as a linear combination of at most $(4d)^{B/2}$ such tensor products, each with a coefficient of magnitude at most $(\left|m\right|+B)^B$. Squaring this and integrating, the orthogonality and normalisation of the tensor product of Hermite functions implies that
$$
16\int_{\mathbb{R}^d} (x_1^2+...+x_d^2)^{B}\psi_m(x)^2dx\leq 16(4d)^{B/2}(\left|m\right|+B)^{2B}=:p_1(\left|m\right|).
$$
For the other integral, define $p_2(\left|n\right|):=4d(\left|n\right|+2)^4.$ We then have
\begin{align*}
\int_{\left|x_i\right|\geq R} \psi_n^2dx\leq \frac{1}{R^{4}}\int_{\mathbb{R}^d} \left|x\right|^{4}\psi_n^2dx\leq \frac{p_2(\left|n\right|)}{R^{4}},
\end{align*}
by using the same argument as above but with $B=2$.

So given $\delta>0$ and $n,m,B,A_k,A_j$, (and $d$) we can choose $r\in\mathbb{N}$ large such that
$$
\int_{\left|x_i\right|\geq r} \left|a_k\overline{a_j}\right|\left|\psi_m\psi_n\right| dx\leq A_kA_j\frac{p_1(\left|m\right|)^{1/2}p_2(\left|n\right|)^{1/2}}{r^2}\leq\delta.
$$
We now have to consider the cases $T\in\Omega^1_{\mathrm{TV}}$ or $T\in\Omega^1_{\mathrm{AN}}$ separately, noting that it is sufficient to approximate the integral $\int_{\left|x_i\right|\leq r} a_k\overline{a_j}\psi_m\psi_n dx$ to any given precision. For notational convenience, let
$$
L_r(m)=\left[1+(3^d+1)\left(\left(1+2r\sqrt{2(\left|m\right|+1)}\right)^d-1\right)\right]
$$
so that with the definition of $\left\|\cdot\right\|_{\mathcal{A}_r}$, we have via Lemma \ref{Herm_tot_var} that $\left\|\psi_m\right\|_{\mathcal{A}_r}\leq  L_r(m).$

\textbf{Case 1: $T\in\Omega^1_{\mathrm{TV}}$.} Given $k,j,m,n,\delta$ and $r\in\mathbb{N}$ as above, choose $M$ large such that
\begin{equation}
(2r)^d\cdot\frac{C(d)\big(\log(M)+1\big)^d}{M}\cdot c_r^2\cdot L_r(m)\cdot L_r(n)\leq \delta/2,
\end{equation}
where $C(d)$ is as (\ref{star_bound}) and $c_r$ controls the total variation as in (\ref{tot_var_bound}). Again, note that such an $M$ can be chosen in finitely many arithmetic operations and comparisons with the given data, assuming that logarithms and square roots can be computed to arbitrary precision (say by a power series representation and bound on the remainder). Using the fact that $\mathcal{A}_r$ is a Banach algebra (so that we can bound the norms of products of functions by the product of their norms) and Theorem \ref{int_err_tot_var}, it follows that
$$
\left|\frac{(2r)^d}{M}\sum_{l=1}^M a_k(s_l)\overline{a_j}(s_l)\psi_m(s_l)\psi_n(s_l)-\int_{\left|x_i\right|\leq r} a_k\overline{a_j}\psi_m\psi_n dx\right|\leq \delta/2,
$$
where $s_l=2rt_l-(r,r,...,r)^T$ are the rescaled Halton points. Hence it is enough to show that each product $a_k(s_l)\overline{a_j}(s_l)\psi_m(s_l)\psi_n(s_l)$ can be computed to a given accuracy using finitely many arithmetic operations and comparisons. Since each $s_l\in\mathbb{Q}^d$, we can evaluate $a_k(s_l)\overline{a_j}(s_l)$. Note that we can compute $\exp(-x^2/2)$ to arbitrary precision with finitely many arithmetic operations and comparisons (e.g., by a power series representation and bound on the remainder) and that we can compute the coefficients of the polynomials $Q_m$ with
$
\psi_m(x)=Q_m(x)\exp(-x^2/2)
$
using the recursion formulae to any given precision. It follows that we can compute $\psi_m(s_l)\psi_n(s_l)$ to a given accuracy using finitely many arithmetic operations and comparisons. Using the bounds on the $a_k$ and $\overline{a_j}$ and Cram{\'e}r's inequality, we can bound the error in the product, and hence the result follows.

\textbf{Case 2: $T\in\Omega^1_{\mathrm{AN}}$.}
On the compact cube $\left|x_i\right|\leq r$ the double series
$$
a_k(x)\overline{a_j(x)}=\sum_{t\in(\mathbb{Z}_{\geq 0})^d}\sum_{s\in(\mathbb{Z}_{\geq 0})^d}a_k^t\overline{a_j^s}x^{t+s}
$$
converges uniformly (recall that $\{a_k^t\}_{t\in(\mathbb{Z}_{\geq 0})^d}$ are the power series coefficients for $a_k$). It follows that we can exchange this series and integration to write
\begin{equation}
\label{series_tail}
\int_{\left|x_i\right|\leq r} a_k\overline{a_j}\psi_m\psi_n dx=\sum_{t,s\in(\mathbb{Z}_{\geq 0})^d}a_k^t\overline{a_j^s}\int_{\left|x_i\right|\leq r} x^{s+t}\psi_m(x)\psi_n(x) dx.
\end{equation}
But $\left|\int_{\left|x_i\right|\leq r} x^{s+t}\psi_m(x)\psi_n(x) dx\right|$ is bounded by
$
r^{\left|t\right|+\left|s\right|}\int_{x\in\mathbb{R}^d} \left|\psi_m\right|\left|\psi_n\right| dx\leq r^{\left|t\right|+\left|s\right|},
$
by H\"older's inequality. Let $\tau=r/(r+1)$. Since we know $d_r$ in (\ref{bound_tail}), we can bound the tail of the series in (\ref{series_tail}) by
$$
d_r^2\sum_{\left|t\right|,\left|s\right|>M}\tau^{\left|t\right|+\left|s\right|}\leq d_r^2\left(\sum_{\left|t\right|>M}\tau^{\frac{\left|t_1\right|}{d}+...+\frac{\left|t_d\right|}{d}}\right)^2,
$$
using the fact that $\left|x\right|\leq(\left|x_1\right|+...+\left|x_d\right|)/d$. We can explicitly sum this series (as the difference of geometric series) to gain the bound
\begin{align*}
d_r^2\left[\frac{1-(1-\tau^{(M+1)/d})^d}{\left(1-\tau^{1/d}\right)^{d}}\right]^2.
\end{align*}
Given $r$ and $d_r$ (and $d$), we can keep increasing $M$ and evaluating the bound (strictly speaking an upper bound accurate to $1/M$ say), to choose $M$ large such that the tail is smaller than $\delta/2$ for any given $\delta>0$. It follows that it is enough to estimate integrals of the form
$
\int_{\left|x_i\right|\leq r} x^{s+t}\psi_m(x)\psi_n(x) dx.
$
Using the recurrence relations for Hermite functions and writing $\psi_m(x)=Q_m(x)\exp(-x^2/2)$, it is enough to split the multidimensional integral up as products and sums of one-dimensional integrals of the form
$
\int_{-r}^r x^a \exp(-x^2) dx,
$
for $a\in\mathbb{Z}_{\geq0}$. Again, we have assumed that we can compute the coefficients of the $Q_m$ to any given accuracy using finitely many arithmetic operations and comparisons, and using this we can bound the total error of the expression by $\delta/2$. The above integral vanishes unless $a$ is even, so integration by parts (again assuming we can evaluate $\exp(-x^2)$ to any desired accuracy) reduces this to estimating
$
\int_{-r}^r \exp(-x^2) dx.
$
Consider the Taylor series for $\exp(-x^2)$. The tail can be bounded by
$$
\sum_{k>N}\frac{r^{2k}}{k!}\leq \frac{r^{2N}}{N!}\exp(-r^2).
$$
Integrating this estimate over the interval $[-r,r]$, we can bound this by any given $\eta>0$ by choosing $N$ large enough. We can then explicitly compute $\int_{-r}^r \sum_{k\leq N}{x^{2k}}/{k!} dx$. Keeping track of all the errors is elementary. Hence $\int_{\left|x_i\right|\leq r} a_k\overline{a_j}\psi_m\psi_n dx$ can be approximated with finitely many arithmetic operations and comparisons, as required.
\end{proof}

In some cases, we can also directly compute matrix elements without the cut-off argument used in the above proof. For instance, if each $a_k(x)$ (and hence $\tilde{a}_k(x)$) is a polynomial then we can simply integrate the power series to compute $\langle a_k(x)\psi_m,a_j(x) \psi_n\rangle$ and use the recurrence relations for Hermite functions. If we know a bound on the degree of the polynomials, then clearly we can compute
\begin{equation}
\label{mat_vals_analytic}
\langle (T-zI)\psi_m,(T-zI) \psi_n\rangle\quad\text{and}\quad\langle (T-zI)^*\psi_m,(T-zI)^* \psi_n\rangle
\end{equation}
to within $\epsilon$ using finitely many arithmetical operations and comparisons directly.

We can now prove the positive parts of Theorems \ref{PDE1} and \ref{PDE2}.

\begin{proof}[Proof of inclusions in Theorems \ref{PDE1} and \ref{PDE2}]

\textbf{Step 1}: $\{\Xi_1^1,\Omega^1_{\mathrm{TV}}\},\{\Xi_1^3,\Omega^1_{\mathrm{AN}}\}\in\Sigma_A^1$. The proof of this simply strings together the above results. The linear span of $\{e_1,e_2,...\}$ (the reordered Hermite functions) is a core of $T$ and $T^*$ by Proposition \ref{basis_core}. By Proposition \ref{mat_approx}, we can compute the inner products
$
\langle (T-zI)e_j,(T-zI) e_i\rangle
$
and
$
\langle (T-zI)^*e_j,(T-zI)^* e_i\rangle
$
up to arbitrary precision with finitely many arithmetic operations and comparisons. Using Lemma \ref{approx_mat_vals}, given $z\in\mathbb{C}$, we can compute some approximation $\upsilon_n(z,T)$ in finitely many arithmetic operations and comparisons such that
$$
\left|\upsilon_n(z,T)^2-\min\{\sigma_{\mathrm{inf}}((T-zI)|_{P_n(l^2(\mathbb{N}))}),\sigma_{\mathrm{inf}}((T^*-\bar{z}I)|_{P_n(l^2(\mathbb{N}))})\}^2\right|\leq \frac{1}{n^2}.
$$
We now set
\begin{equation}
\label{key_fun_PDE}
\gamma_n(z,T)=\upsilon_n(z,T)+1/n.
\end{equation}
Then $\gamma_n$ satisfies the hypotheses of Proposition \ref{conv_alg_unbounded}. The proof of Theorem \ref{unbounded_theorem} also makes clear that we have error control since $\gamma_n(z,T)\geq \left\|R(z,T)\right\|^{-1}$.

\textbf{Step 2}: $\{\Xi_2^1,\Omega^1_{\mathrm{TV}}\},\{\Xi_2^3,\Omega^1_{\mathrm{AN}}\}\in\Sigma_A^1$. Consider the sequence of functions $\gamma_n$ defined by equation (\ref{key_fun_PDE}). These converge uniformly to $\left\|R(z,T)\right\|^{-1}$ on compact subsets of $\mathbb{C}$ and satisfy $\gamma_n(z,T)\geq \left\|R(z,T)\right\|^{-1}$. We can now apply Proposition \ref{pseudospec_unbounded}.

\textbf{Step 3:} $\{\Xi_1^2,\Omega^2_{\mathrm{TV}}\},\{\Xi_2^2,\Omega^2_{\mathrm{TV}}\}\in\Delta^A_2$. Let $T\in\Omega^2_{\mathrm{TV}}$. Our strategy will be to compute the inner products
$
\langle (T-zI)e_j,(T-zI) e_i\rangle\quad\text{and}\quad\langle (T-zI)^*e_j,(T-zI)^* e_i\rangle
$
to an error that decays rapidly enough as we increase the cut-off parameter $r$. We follow the proof of Proposition \ref{mat_approx} closely. Recall that given $n,m$, we can choose $r\in\mathbb{N}$ large such that
$$
\int_{\left|x_i\right|\geq r} \left|a_k\overline{a_j}\right|\left|\psi_m\psi_n\right| dx\leq A_kA_j\frac{p_1(\left|m\right|)^{1/2}p_2(\left|n\right|)^{1/2}}{r^2},
$$
with the crucial difference that now we do not assume we can compute $A_k,A_j,p_1$ or $p_2$. It follows that there exists some polynomial $p_3$, with coefficients not necessarily computable from the given information, such that
$$
\int_{\left|x_i\right|\geq r} \left|a_k\overline{a_j}\right|\left|\psi_m\psi_n\right| dx\leq \frac{p_3(\left|m\right|,\left|n\right|)}{r^2},
$$
for all $\left|j\right|,\left|k\right|\leq N$. Now we use the sequence $b_r$ to bound the error in the integral over the compact cube asymptotically. We assume without loss of generality that $b_r$ increases monotonically to infinity as $r\rightarrow\infty$. Using Halton sequences and the same argument in the proof of Proposition \ref{mat_approx}, we can approximate 
$
\int_{\left|x_i\right|\leq r} a_k\overline{a_j}\psi_m\psi_n dx,
$
with an error that, asymptotically up to some unknown constant, is bounded by
\begin{equation}
\label{asym_err_TV}
r^d\cdot \frac{\big(\log(M)+1\big)^d}{M}\cdot b_r^2\cdot L_r(m)\cdot L_r(n),
\end{equation}
where $M$ is the number of Halton points. We can let $M$ depend on $r,n$ and $m$ such that (\ref{asym_err_TV}) is bounded by a constant times $1/r^2$. It follows that we can bound the total error in approximating
$
\langle a_k\psi_m,a_j \psi_n\rangle
$
for any $j,k$ by $p_3(\left|m\right|,\left|n\right|)/{r^2}$, by making the coefficients of $p_3$ larger if necessary. We argue similarly for the adjoint and note that
$
\langle (T-zI)\psi_m,(T-zI) \psi_n\rangle
$
and 
$\langle (T-zI)^*\psi_m,(T-zI)^* \psi_n$
are both approximated to within
$$
(1+\left|z\right|^2)\frac{P(\left|m\right|,\left|n\right|)}{r^2},
$$
for some unknown polynomial $P$.
Hence we can apply Lemma \ref{approx_mat_vals} (the form where we do not know the error in inner product estimates), changing the polynomial $P$ to take into account the basis mapping from $\mathbb{Z}_{\geq0}^d$ to $\mathbb{N}$ to some polynomial $Q$, to gain some approximation $\upsilon_n(z,T)$ in finitely many arithmetic operations and comparisons such that
\begin{equation}
\label{key_fun_PDE2}
\left|\upsilon_n(z,T)^2-\min\{\sigma_{\mathrm{inf}}((T-zI)|_{P_n(l^2(\mathbb{N}))}),\sigma_{\mathrm{inf}}((T^*-\bar{z}I)|_{P_n(l^2(\mathbb{N}))})\}^2\right|\leq \frac{n(1+\left|z\right|^2)Q(n)}{r(n,z)^2}+\frac{1}{n^3}.
\end{equation}
We now choose $r(z,n)$ larger if necessary such that $r(z,n)\geq (1+\left|z\right|^2)\exp(n)$. We now set
$
\gamma_n(z,T)=\upsilon_n(z,T)+1/n.
$
Then $\gamma_n$ satisfies the hypotheses of Proposition \ref{conv_alg_unbounded} and Proposition \ref{pseudospec_unbounded} since the error in (\ref{key_fun_PDE2}) decays faster than $1/n^2$. We can use these propositions to build the required arithmetical algorithm.

\textbf{Step 4:} $\{\Xi_1^4,\Omega^2_{\mathrm{AN}}\},\{\Xi_2^4,\Omega^2_{\mathrm{AN}}\}\in\Delta^A_2$.
We argue as in step 3. To control the error in the approximation of the integral over a compact hypercube, choose the cut-off $M(r)$ such that
$$
\sum_{\left|t\right|,\left|s\right|>M(r)}\left(\frac{r}{r+1}\right)^{\left|t\right|+\left|s\right|}\leq \frac{1}{b_r^2r^2}.
$$
It follows that there exists some (unknown) constant $B$ such that we can bound the error in approximating
$
\int_{\left|x_i\right|\leq r} a_k\overline{a_j}\psi_m\psi_n dx
$
by $B/r^2$. Here we have absorbed the arbitrarily small error that comes from approximating the integral of the truncated power series using finitely many arithmetic operations and comparisons. The rest of the argument is the same as in step 3.
\end{proof}

\subsection{Proofs of impossibility results in Theorems \ref{PDE1} and \ref{PDE2}}

We first deal with Theorem \ref{PDE1}. Recall the maps
\begin{align*}
&\Xi_1^k: \Omega_{\mathrm{TV}}^k\ni T\mapsto \mathrm{Sp}(T)\in \mathrm{Cl}(\mathbb{C}),\quad \text{for }k=1,2,\\
&\Xi_2^k: \Omega_{\mathrm{TV}}^k\ni T\mapsto \mathrm{Sp}_{\epsilon}(T) \in \mathrm{Cl}(\mathbb{C}),\quad \text{for } k=1,2.
\end{align*}
We split up the arguments to deal first with $\Omega^1_{\mathrm{TV}}$, and then $\Omega^2_{\mathrm{TV}}$.

\begin{proof}[Proof that $\{\Xi_j^1, \Omega_{\mathrm{TV}}^1\}\notin\Delta^G_1$]

Suppose first for a contradiction that a height one tower, $\Gamma_n$, exists for the problem $\{\Xi_1^1,\Omega_{\mathrm{TV}}^1\}$ such that $d_{\mathrm{AW}}(\Gamma_n(T),\Xi_1^1(T))\leq 2^{-n}$. We deal with the one-dimensional case since higher dimensions are similar. Let $\rho(x)$ be any smooth bump function with maximum value $1$, minimum value $0$ and support $[0,1]$. Let $\rho_n$ denote the translation of $\rho$ to have support $[n,n+1]$. We consider the two (self-adjoint and bounded) operators
$$
(T_0u)(x)=0, \qquad(T_mu)(x)=\rho_m(x)u(x),
$$
which have spectra $\{0\}$ and $[0,1]$ respectively. For these operators we can take the polynomial bound (the $\{A_k\}$ and $\{B_k\}$) to be $1$ and the total variation bound to be $c_r=1+(3^d+1)\mathrm{TV}_{[0,1]}(\rho)$. When we compute $\Gamma_2(T_0)$, we only use finitely many evaluations of the coefficient function $a_0(x)=0$ (as well as the other given information). We can then choose $m$ large such that the support of $\rho_m$ does not intersect the points of evaluation. By assumptions (ii) and (iii) in Definition \ref{Gen_alg}, $\Gamma_2(T_m)=\Gamma_2(T_0)$. But this contradicts the triangle inequality since $d_{\mathrm{AW}}(\{0\},[0,1])\geq 1$

To argue for the pseudospectrum let $\epsilon>0$ and note that $2\epsilon\notin\mathrm{Sp}_\epsilon(T_0)$ but $2\epsilon\in\mathrm{Sp}_\epsilon(\epsilon T_m)$. We now alter the given $c_r$ to $\epsilon(1+(3^d+1)\mathrm{TV}_{[0,1]}(\rho))$ and the polynomial bound to $\epsilon$. The argument is now exactly as before. Namely, we choose $n$ large such that 
\[
d_{\mathrm{AW}}(\Gamma_n(T_0),[-\epsilon,2\epsilon])>2^{-n},
\]
and then choose $m$ large such that 
$
\Gamma_n(T_0)=\Gamma_n(\epsilon T_m).
$
\end{proof}

\begin{proof}[Proof that $\{\Xi_j^2, \Omega_{\mathrm{TV}}^2\}\notin\Sigma^G_1\cup\Pi^G_1 $]

Suppose first of all that a $\Sigma^G_1$ tower, $\Gamma_n$, exists for $\{\Xi_1^2,\Omega_{\mathrm{TV}}^2\}$. We deal with the one-dimensional case since higher dimensions are similar. Consider the operators
$$
(T_0u)(x)=0, \qquad(T_1u)(x)=f(x)u(x),
$$
where we define $f$ in terms of $\Gamma_n$ as follows. We will ensure that $f(x)=1$ except for finitely many values of $x$, where it takes the value $0$. Hence $T_0$ and $T_1$ have spectra $\{0\}$ and $\{1\}$, respectively, and are both self-adjoint. Note that once the zeros of $f$ are fixed, this choice ensures that $f$ has total variation bounded by a constant on any hypercube and hence we may take $b_r=1$ for all $r\in\mathbb{N}$. There exists some $n$ such that $\Gamma_n(T_0)$ contains $z_n\in B_{1/8}(0)$ with a guaranteed error estimate of $\mathrm{dist}(z_n,\mathrm{Sp}(T_0))\leq 1/4$. But $\Gamma_{n}(T_0)$ can only depend on finitely many evaluations of $0$ (as well as $b_r=1$ and the trivial choice of $g_j(x)=x$). We choose $f$ to be zero at precisely these evaluation points. By assumptions (ii) and (iii) in Definition \ref{Gen_alg}, $\Gamma_{n}(T_1)=\Gamma_{n}(T_0)$, including the given error estimates, which is the required contradiction.

For $\{\Xi_2^2, \Omega_{\mathrm{TV}}^2\}\notin\Sigma^G_1$, given $\epsilon>0$ we replace $f$ by $3\epsilon f$ in the above argument and keep all other inputs the same. Hence $T_0$ and $T_1$ have $\epsilon$-pseudospectra $[-\epsilon,\epsilon]$ and $[2\epsilon,4\epsilon]$ respectively. We note that again there exists some $n$ such that $\Gamma_n(T_0)$ contains $z_n\in B_{\epsilon/8}(0)$ with a guaranteed error bound of $\mathrm{dist}(z_n,\mathrm{Sp}_{\epsilon}(T_0))\leq \epsilon/4$. But $\Gamma_{n}(T_0)$ can only depend on finitely many evaluations of $0$ (as well as $b_r=1$ and the trivial choice of $g_j(x)=x$). We choose $f$ to be zero at precisely these evaluation points. By assumptions (ii) and (iii) in Definition \ref{Gen_alg}, $\Gamma_{n}(T_1)=\Gamma_{n}(T_0)$, including the given error bounds, which is the required contradiction.

To argue that neither problem lies in $\Pi^G_1$, we can use the same arguments in the proof that $\{\Xi_j^1, \Omega_{\mathrm{TV}}^1\}\notin\Delta^G_1$. The only change now is that the algorithm, $\Gamma_n$, used to derive the contradiction provides $\Pi_1^G$ information rather than $\Delta_1^G$. For the spectrum, we consider the operators
$$
(T_0u)(x)=0\quad \text{and}\quad(T_mu)(x)=\rho_m(x)u(x),
$$
and choose $n$ large such that $\Gamma_n(T_0)$ produces the guarantee $\mathrm{Sp}(T_0)\cap B_{1/4}(0)^c=\emptyset$. For $m$ sufficiently large, we argue as before to get $\Gamma_n(T_m)=\Gamma_n(T_0)$, including the guarantee, the required contradiction. Again a similar argument works for the pseudospectrum by rescaling $T_m$ to $2\epsilon T_m$.
\end{proof}

We now deal with the impossibility results in Theorem \ref{PDE2}, where
\begin{align*}
&\Xi_1^{k+2}: \Omega_{\mathrm{AN}}^k\ni T\mapsto \mathrm{Sp}(T)\in \mathrm{Cl}(\mathbb{C}),\quad \text{for }k=1,2,\\
&\Xi_2^{k+2}: \Omega_{\mathrm{AN}}^k\ni T\mapsto \mathrm{Sp}_{\epsilon}(T) \in \mathrm{Cl}(\mathbb{C}),\quad \text{for } k=1,2.
\end{align*}

\begin{proof}[Proof that $\{\Xi_j^3, \Omega_{\mathrm{AN}}^1\}\notin\Delta^G_1$]

Suppose for a contradiction that a height one tower, $\Gamma_n$, exists for $\{\Xi_1^3,\Omega_{\mathrm{AN}}^1\}$ such that $d_{\mathrm{AW}}(\Gamma_n(T),\Xi_1^3(A))\leq 2^{-n}$. Now consider the two (self-adjoint and bounded) operators
$$
(T_1u)(x)=0\quad \text{and}\quad(T_2u)(x)=x^k\exp(-x^2)u(x)/s_k,
$$
where $k$ is even and will be chosen later. We choose $s_k$ such that the range of the function is $x^k\exp(-x^2)/s_k$ is $[0,1]$ and hence $T_2$ has spectrum $[0,1]$. We can take the polynomial bounding function to be the constant $1$ for both operators and must show that we can use the same $d_r$ for both operators in (\ref{bound_tail}), independent of $k$. Simple calculus yields that $s_k=(k/(2e))^{k/2}$. It follows that such a $d_r$ must satisfy
\begin{equation}
\label{bounded?}
\left(\frac{2e}{k}\right)^{k/2}\frac{(r+1)^{2m+k}}{m!}\leq d_r,\quad\forall k\in2\mathbb{N},m\in\mathbb{Z}_{\geq0}.
\end{equation}
Hence it suffices to show that the function on the left-hand side of (\ref{bounded?}) is bounded (as a function of $m,k$ for all $r\in\mathbb{N}$). Using Stirling's approximation (explicitly the bounds on $m!$) this will follow if we can show
$$
\frac{r^{2m+k}}{k^{k/2}m^{m+1/2}}\leq \left(\frac{r}{\sqrt{k}}\right)^{k}\left(\frac{r}{\sqrt{m}}\right)^{2m}
$$
is bounded for all $r\in\mathbb{N}$ (now with $m>1)$. But this is obvious.

We can now choose $k$ (which depends on the algorithm $\Gamma_n$) to gain a contradiction. Since $\mathrm{Sp}(T_1)=\{0\}$ and $1\in\mathrm{Sp}(T_2)$ for all even $k$, there exists $n$ such that $\mathrm{dist}(1,\Gamma_n(T_1))>1/4$ but $\mathrm{dist}(1,\Gamma_n(T_2))<1/4$. However, $\Gamma_n(T)$ can only depend on finitely many of the coefficients $\{c_j\}$, say $c_1,...,c_{\tilde{N}(T,n)}$, of $T$ (as well as the other given information). By assumption (iii) in Definition \ref{Gen_alg}, we can choose $k$ such that the coefficient corresponding to $x^k$, call it $c_{l_k}$, has $l_k>\tilde{N}(T_1,n)$ and get $\Gamma_n(T_1)=\Gamma_n(T_2)$, the required contradiction.

To show $\{\Xi_2^3,\Omega_{\mathrm{AN}}^1\}\notin\Delta^G_1$, we use exactly the same argument as above. To gain the necessary separation $3\epsilon\notin\mathrm{Sp}_\epsilon(T_1)$ but $3\epsilon\in\mathrm{Sp}_\epsilon(T_2)$, we rescale $T_2$ to $3\epsilon T_2$. Then there exists $n$ such that $\mathrm{dist}(3\epsilon,\Gamma_n(T_1))>\epsilon/2$ but $\mathrm{dist}(3\epsilon,\Gamma_n(T_2))<\epsilon/2$. The rest of the contradiction follows.
\end{proof}

\begin{proof}[Proof that $\{\Xi_j^4, \Omega_{\mathrm{AN}}^2\},\{\Xi_j^4, \Omega_p\}\notin\Sigma^G_1\cup\Pi^G_1$.]
Since $\Omega_p\subset\Omega_{\mathrm{AN}}^2$, it is enough to prove the results for $\Omega_p$.

Suppose for a contradiction that there exists a $\Sigma^G_1$ algorithm, $\Gamma_n$, for $\{\Xi_1^4,\Omega_{p}\}$. Consider
$$
(T_1u)(x)=xu(x)\quad \text{and}\quad(T_2u)(x)=(x-x^k)u(x),
$$
where $k$ is even and chosen later. $(T_j\pm iI)C_0^\infty(\mathbb{R})$ are dense in $L^2(\mathbb{R})$ with $T_j$ initially defined on $C_0^\infty(\mathbb{R})$ symmetric. It follows that the closure of $T_j|_{C_0^\infty(\mathbb{R})}$ is self-adjoint and hence that $T_j\in\Omega_p$. Note that $\mathrm{Sp}(T_1)=\mathbb{R}$ but $\mathrm{Sp}(T_2)\subset(-\infty,1]$. Now choose $n$ such that $\Gamma_n(T_1)$ contains a point $z_n\in B_{1/4}(2)$ with a guaranteed error estimate of $\mathrm{dist}(z_n,\mathrm{Sp}(T_1))\leq 1/4$. However, $\Gamma_n(T)$ can only depend on the first $\tilde{N}(T,n)$ coefficients, $c_1,...,c_{\tilde{N}(T,n)}$, of $T$ (as well as the trivial choice $g_j(x)=x$ and the numbers $b_n=n!$). By assumption (iii) in Definition \ref{Gen_alg}, we can choose $k$ such that the coefficient corresponding to $x^k$, call it $c_{r_k}$, has $r_k>\tilde{N}(T_1,n)$ and get $\Gamma_n(T_1)=\Gamma_n(T_2)$, the required contradiction. Similarly by rescaling as above, we get $\{\Xi_2^4,\Omega_{p}\}\notin\Sigma^G_1$.

To show $\{\Xi_1^4,\Omega_{p}\}\notin\Pi^G_1$ we argue the same way, but now set $(T_1u)(x)=0$ and $(T_2u)(x)=x^ku(x)$. As before, $T_j\in\Omega_p$, but now $\mathrm{Sp}(T_1)=\{0\}$ and $1\in\mathrm{Sp}(T_2)$. Choose $n$ such that $\Gamma_n(T_1)$ produces the guarantee $\mathrm{Sp}(T_1)\cap B_{1/4}(0)^c=\emptyset$. Again, choose $k$ such that $c_{r_k}$ has $r_k>\tilde{N}(T_1,n)$ and we get $\Gamma_n(T_1)=\Gamma_n(T_2)$, the required contradiction. Rescaling and using the same argument shows $\{\Xi_2^4,\Omega_{p}\}\notin\Pi^G_1$.
\end{proof}

\section{Proofs of Theorems on Discrete Spectra}
\label{pf_disc}

We first need some results on finite section approximations to the discrete spectrum of a Hermitian operator below the essential spectrum. There are two cases to consider. Either there are infinitely many eigenvalues below the essential spectrum, or there are only finitely many. The following two lemmas are well-known and follow from the `min-max' theorem characterising eigenvalues.

\begin{lemma}
\label{case1_fs}
Let $B\in\mathcal{B}(l^2(\mathbb{N}))$ be self-adjoint with eigenvalues $\lambda_1\leq\lambda_2\leq...$ (infinitely many, counted according to multiplicity) below the essential spectrum. Consider the finite section approximates $B_n=P_nBP_n\in\mathbb{C}^n$ and list the eigenvalues of $B_n$ as $\mu^{(n)}_1\leq\mu^{(n)}_2\leq...\leq\mu^{(n)}_n$. Then the following hold:
\begin{enumerate}
	\item $\lambda_j\leq\mu^{(n)}_j$ for $j=1,...,n$,
	\item for any $j\in\mathbb{N}$, $\mu_j^{(n)}\downarrow\lambda_j$ as $n\rightarrow\infty$ ($j\leq n$, so that $\mu_j^{(n)}$ makes sense). 
\end{enumerate}
\end{lemma}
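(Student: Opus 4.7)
The plan is to prove both parts simultaneously via the variational (min--max) characterisation of eigenvalues. Since $B$ is bounded self-adjoint and $\lambda_j$ lies below the essential spectrum, the Courant--Fischer--Weyl theorem gives
\[
\lambda_j \;=\; \inf_{\substack{V \subset l^2(\mathbb{N}) \\ \dim V = j}}\;\; \sup_{\substack{x \in V \\ \|x\|=1}} \langle Bx,x\rangle,
\]
while for the finite matrix $B_n$ one has the analogous formula with $V$ ranging over $j$-dimensional subspaces of $P_n l^2(\mathbb{N})$. Using the key identity $\langle B_n x,x\rangle = \langle P_n B P_n x, x\rangle = \langle B x,x\rangle$ for $x \in P_n l^2(\mathbb{N})$, the infimum defining $\mu_j^n$ is the same infimum defining $\lambda_j$ but restricted to the smaller class of subspaces contained in $P_n l^2(\mathbb{N})$. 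This immediately yields part (1), namely $\lambda_j \leq \mu_j^n$.

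For part (2), the sequence $\{\mu_j^n\}_{n\geq j}$ is non-increasing because $P_n l^2(\mathbb{N}) \subset P_{n+1} l^2(\mathbb{N})$, so the class of competing subspaces only grows with $n$. Combined with the lower bound from (1), the limit $\mu_j^\infty := \lim_{n\to\infty} \mu_j^n$ exists and satisfies $\mu_j^\infty \geq \lambda_j$. To establish the matching upper bound I would argue as follows: fix orthonormal eigenvectors $\phi_1,\ldots,\phi_j$ of $B$ with $B\phi_i = \lambda_i \phi_i$, and set $V = \mathrm{span}\{\phi_1,\ldots,\phi_j\}$. Since $P_n \to I$ strongly on $l^2(\mathbb{N})$, $P_n \phi_i \to \phi_i$ for each $i$, so for $n$ large enough the vectors $P_n\phi_1,\ldots,P_n\phi_j$ are linearly independent and span a $j$-dimensional subspace $V_n \subset P_n l^2(\mathbb{N})$. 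An elementary Gram--matrix perturbation argument (using that $\langle P_n\phi_i,P_n\phi_k\rangle \to \delta_{ik}$) produces an orthonormal basis of $V_n$ that converges entrywise to $\phi_1,\ldots,\phi_j$, and hence
\[
\sup_{\substack{x \in V_n \\ \|x\|=1}} \langle Bx,x\rangle \;\longrightarrow\; \sup_{\substack{x \in V \\ \|x\|=1}} \langle Bx,x\rangle \;=\; \lambda_j,
\]
using continuity of quadratic forms and boundedness of $B$. This gives $\mu_j^n \leq \lambda_j + \epsilon$ for all sufficiently large $n$ and any $\epsilon > 0$, yielding $\mu_j^\infty \leq \lambda_j$.

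The only subtlety worth flagging is the need for $B$ to have genuinely infinitely many eigenvalues below the essential spectrum, which ensures $\lambda_j$ is a bona fide eigenvalue for every $j$ and hence the approximation argument above has an honest eigenvector $\phi_j$ to use; the complementary case of only finitely many eigenvalues requires separate care (one must argue about $\mu_j^n$ for $j$ exceeding the number of discrete eigenvalues, where $\mu_j^n$ converges down into the essential spectrum) but that falls under the companion lemma rather than this one. Everything else is routine, so no real obstacle is expected.
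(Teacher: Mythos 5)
Your argument is correct and is exactly the route the paper intends: the paper states this lemma without proof, remarking only that it is well known and follows from the min--max (Courant--Fischer) characterisation of eigenvalues below the essential spectrum, which is precisely what you have fleshed out (restriction of the competing subspaces to $P_n l^2(\mathbb{N})$ for part (1) and monotonicity, plus the projected-eigenvector test space for the limit). No discrepancies to report.
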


\begin{lemma}
\label{case2_fs}
Let $B\in\mathcal{B}(l^2(\mathbb{N}))$ be self-adjoint with finitely many eigenvalues $\lambda_1\leq\lambda_2\leq...\leq\lambda_m$ (counted according to multiplicity) below the essential spectrum and let $a=\inf\{x:x\in\mathrm{Sp}_{\mathrm{ess}}(B)\}$. For $j>m$ we set $\lambda_j=a$. Consider the finite section approximates $B_n=P_nBP_n\in\mathbb{C}^n$ and list the eigenvalues of $B_n$ as $\mu^{(n)}_1\leq\mu^{(n)}_2\leq...\leq\mu^{(n)}_n$. Then the following hold:
\begin{enumerate}
	\item $\lambda_j\leq\mu^{(n)}_j$ for $j=1,...,n$,
	\item for any $j\leq m$, $\mu_j^{(n)}\downarrow\lambda_j$ as $n\rightarrow\infty$ ($j \leq n$ so that $\mu_j^{(n)}$ makes sense),
	\item given $\epsilon>0$ and $k\in\mathbb{N}$, there exists $N$ such that for all $n\geq N$, $\mu^{(n)}_{k}\leq a+\epsilon$.
\end{enumerate}
\end{lemma}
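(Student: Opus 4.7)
The plan is to derive all three statements from the Courant--Fischer min--max characterisation applied both to $B$ and to its finite sections. The first step is to record the formulas
$$\mu_j^n \;=\; \min_{\substack{V \subset P_n l^2(\mathbb{N}) \\ \dim V = j}} \;\max_{\substack{x\in V \\ \|x\|=1}} \langle Bx,x\rangle, \qquad \lambda_j \;=\; \min_{\substack{V \subset l^2(\mathbb{N}) \\ \dim V = j}} \;\max_{\substack{x\in V \\ \|x\|=1}} \langle Bx,x\rangle \quad (j\leq m),$$
observing that for $x\in P_n l^2(\mathbb{N})$ one has $\langle B_n x,x\rangle = \langle Bx,x\rangle$, so the finite and infinite Rayleigh quotients agree on the smaller domain.

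For part (1) with $j\leq m$, the bound $\mu_j^n\geq \lambda_j$ is immediate, since one is minimising over a smaller family of subspaces. For $j>m$ I would argue $\mu_j^n\geq a$ separately: let $\mathcal{E}$ denote the span of all eigenvectors for $\lambda_1,\ldots,\lambda_m$ (dimension $m$ counted with multiplicity); given any $j$-dimensional $V\subset P_n l^2(\mathbb{N})$ with $j>m$, a rank--nullity argument on the orthogonal projection $V\to\mathcal{E}$ produces a unit vector $v\in V\cap\mathcal{E}^\perp$, and since $\mathrm{Sp}(B)\subset\{\lambda_1,\ldots,\lambda_m\}\cup[a,\infty)$ the spectral theorem forces $\langle Bv,v\rangle\geq a$. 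Taking the outer minimum gives $\mu_j^n\geq a=\lambda_j$. Monotonicity $\mu_j^{n+1}\leq\mu_j^n$, needed for the $\downarrow$ convergence in (2), follows similarly from the inclusion $P_n l^2(\mathbb{N})\subset P_{n+1}l^2(\mathbb{N})$.

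For part (2), I would take orthonormal eigenvectors $v_1,\ldots,v_j$ corresponding to $\lambda_1,\ldots,\lambda_j$. Since $P_n v_i\to v_i$ in norm, for all sufficiently large $n$ the vectors $P_n v_1,\ldots,P_n v_j$ are linearly independent, so $V_n:=\mathrm{span}\{P_n v_1,\ldots,P_n v_j\}\subset P_n l^2(\mathbb{N})$ is $j$-dimensional. Continuity of the Rayleigh quotient and boundedness of $B$ then yield
$$\max_{x\in V_n,\,\|x\|=1}\langle Bx,x\rangle \;\longrightarrow\; \max_{x\in\mathrm{span}\{v_1,\ldots,v_j\},\,\|x\|=1}\langle Bx,x\rangle \;=\; \lambda_j,$$
which combined with the lower bound $\mu_j^n\geq\lambda_j$ from (1) gives $\mu_j^n\downarrow\lambda_j$.

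For part (3), the key input is that since $a=\inf\mathrm{Sp}_{\mathrm{ess}}(B)$ and there are only finitely many discrete eigenvalues strictly below $a$, the spectral projection $E_B([a,a+\epsilon/2])$ must have infinite-dimensional range: otherwise $a$ would be isolated with finite multiplicity, hence in the discrete spectrum and not in the essential spectrum. Choosing orthonormal $v_1,\ldots,v_k$ in this range, the spectral theorem gives $\langle Bx,x\rangle\leq a+\epsilon/2$ for every unit $x\in\mathrm{span}\{v_1,\ldots,v_k\}$. Approximating by $V_n:=\mathrm{span}\{P_n v_1,\ldots,P_n v_k\}$ exactly as in part (2) and applying the min--max formula to $B_n$ then produces $\mu_k^n\leq a+\epsilon$ for all sufficiently large $n$. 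The main obstacle is this spectral-theoretic input for (3): one must verify carefully that the infimum-of-essential-spectrum hypothesis, together with the finiteness of the discrete spectrum below $a$, supplies an infinite-dimensional approximating family, after which the Rayleigh-quotient manipulations are routine.
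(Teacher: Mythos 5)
Your proof is correct, and it follows exactly the route the paper indicates: the paper states Lemma \ref{case2_fs} without proof, remarking only that it follows from the min--max theorem, and your Courant--Fischer argument (including the key observation for part (3) that finiteness of the spectrum below $a$ forces $E_B([a,a+\epsilon/2])$ to have infinite-dimensional range) supplies precisely the standard details behind that remark. No gaps.
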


\begin{proof}[Proof of Theorem \ref{discreteojoiojo} for $\Xi_1^d$]
\textbf{Step 1}: $\{\Xi_1^d,\Omega_{\mathrm{D}}^d\}\notin\Delta_2^G$. Suppose this were false and that there exists some height one tower $\Gamma_n$ solving the problem. Consider the matrix operators $A_m=\mathrm{diag}\{0,0,...,0,2\}\in\mathbb{C}^{m\times m}$ and $C=\mathrm{diag}\{0,0,...\}$ and set
$$
A=\mathrm{diag}\{1,2\}\oplus\bigoplus_{m=1}^{\infty}A_{k_m},
$$
where we choose an increasing sequence $k_m$ inductively as follows. Set $k_1=1$ and suppose that $k_1,...,k_m$ have been chosen. $\mathrm{Sp}_{d}(\mathrm{diag}\{1,2\}\oplus A_{k_1}\oplus A_{k_2}\oplus...\oplus A_{k_m}\oplus C)=\{1,2\}$ is closed and so there exists some $n_m\geq m$ such that if $n\geq n_m$, then
\begin{equation}
\label{near2}
\mathrm{dist}(2,\Gamma_n(\mathrm{diag}\{1,2\}\oplus A_{k_1}\oplus...\oplus A_{k_m}\oplus C)\leq \frac{1}{4}.
\end{equation}
Now let $k_{m+1}\geq \max\{N(\mathrm{diag}\{1,2\}\oplus A_{k_1}\oplus...\oplus A_{k_m}\oplus C,n_m),k_m+1\}$. Arguing as in the proof of Theorem \ref{unbounded_test_theorem}, it follows that $\Gamma_{n_m}(A)=\Gamma_{n_m}(\mathrm{diag}\{1,2\}\oplus A_{k_1}\oplus...\oplus A_{k_m}\oplus C)$. But $\Gamma_{n_m}(A)$ converges to $\mathrm{Sp}_d(A)=\{1\}$, contradicting (\ref{near2}). 

\textbf{Step 2}: $\{\Xi_1^d,\Omega_{\mathrm{N}}^d\}\in\Sigma_2^A$. We now construct an arithmetic height two tower for $\Xi_1^d$ and the class $\Omega_{\mathrm{N}}^d$. To do this, we recall that a height two tower $\tilde{\Gamma}_{n_2,n_1}$ for the essential spectrum of operators in $\Omega_{\mathrm{N}}^d$ was constructed in \cite{ben2015can}. For completeness, we write out the algorithm here.\footnote{The actual algorithm is slightly more complicated to avoid the empty set, but its listed properties still hold.} Let $P_n$ be the usual projection onto the first $n$ basis elements and set $Q_n=I-P_n$. Define
\begin{align*}
\mu_{m,n}(A)&:=\min\{\sigma_{\mathrm{inf}}(P_{f(n)}(A-zI)|_{Q_mP_n(l^2(\mathbb{N}))}),\sigma_{\mathrm{inf}}(P_{f(n)}(A-zI)^*|_{Q_mP_n(l^2(\mathbb{N}))})\},\\
G_n&:=\min\left\{\frac{s+it}{2^n}:s,t\in\{-2^{2n},...,2^{2n}\}\right\},\\
\Upsilon_m(z)&:=z+\{w\in\mathbb{C}:\left|\mathrm{Re}(w)\right|,\left|\mathrm{Im}(w)\right|\leq2^{-(m+1)}\}.
\end{align*}
We then define the following sets for $n> m$:
\begin{align*}
S_{m,n}(z)&:=\{j=m+1,...,n:\exists w\in\Upsilon_m(z)\cap G_j\text{ with }\mu_{m,i}(w)\leq1/m\},\\
T_{m,n}(z)&:=\{j=m+1,...,n:\exists w\in\Upsilon_m(z)\cap G_j\text{ with }\mu_{m,i}(w)\leq1/(m+1)\},\\
E_{m,n}(z)&:=\left|S_{m,n}(z)\right|+\left|T_{m,n}(z)\right|-n,\\
I_{m,n}&:=\left\{z\in\left\{\frac{s+it}{2^m}:s,t\in\mathbb{Z}\right\}:E_{m,n}(z)>0\right\}.
\end{align*}
Finally, we define for $n_1>n_2$
$$
\tilde{\Gamma}_{n_2,n_1}(A)=\bigcup_{z\in I_{n_2,n_1}}\Upsilon_{n_2}(z),
$$
and set $\tilde{\Gamma}_{n_2,n_1}(A)=\{1\}$ if $n_1\leq n_2$.
Furthermore, the tower has the following desirable properties:
\begin{enumerate}
	\item For fixed $n_2$, the sequence $\tilde{\Gamma}_{n_2,n_1}(A)$ is eventually constant as we increase $n_1$,
	\item The sets $\lim_{n_1\rightarrow\infty}\tilde{\Gamma}_{n_2,n_1}(A)=:\tilde{\Gamma}_{n_2}(A)$ are nested, converging down to $\mathrm{Sp}_{\mathrm{ess}}(A)$. 
\end{enumerate}
We also need the height one tower, $\hat{\Gamma}_n$, for the spectrum of operators in $\Omega_{\mathrm{N}}^d$ discussed in \S \ref{gen_unbd} and \S \ref{pf_unb_gr}. Note that $\hat{\Gamma}_n(A)$ is a finite set for all $n$. For $z\in\hat{\Gamma}_n(z)$, this also outputs an error control $E(n,z)$ such that $\mathrm{dist}(z,\mathrm{Sp}(A))\leq E(n,z)$ and such that $E(n,z)$ converges to the true distance to the spectrum uniformly on compact subsets of $\mathbb{C}$ (with the choice of $g(x)=x$ since the operator is normal). We now fit the pieces together and initially define
$$
\zeta_{n_2,n_1}(A)=\{z\in\hat{\Gamma}_{n_1}(A):E({n_1},z)<\mathrm{dist}(z,\tilde{\Gamma}_{n_2,n_1}(A)+B_{1/n_2}(0))\}.
$$

We must show that this defines an arithmetic tower in the sense of Definitions \ref{Gen_alg} and \ref{arith_tower_def}. Given $z\in\hat{\Gamma}_{n_1}(A)$ and using Pythagoras' theorem, along with the fact that $\tilde{\Gamma}_{n_2,n_1}(A)$ consists of finitely many squares in the complex plane aligned with the real and imaginary axes, we can compute $\mathrm{dist}(z,\tilde{\Gamma}_{n_2,n_1}(A))^2$ in finitely many arithmetic operations and comparisons. We can compute $(E(n_1,z)+1/{n_2})^2$ and check if this is less than $\mathrm{dist}(z,\tilde{\Gamma}_{n_2,n_1}(A))^2$. Hence $\zeta_{n_2,n_1}(A)$ can be computed with finitely many arithmetic operations and comparisons. There are now two cases to consider:

\vspace{2mm}

\textbf{Case 1:} $\mathrm{Sp}_d(A)\cap (\tilde{\Gamma}_{n_2}(A)+B_{1/n_2}(0))^c=\emptyset$. For large $n_1$, $\tilde{\Gamma}_{n_2}(A)=\tilde{\Gamma}_{n_2,n_1}(A)$ and this set contains the essential spectrum. It follows, for large $n_1$, since $E(n_1,z)\geq \mathrm{dist}(z,\tilde{\Gamma}_{n_2,n_1}(A))$ for all $z\in\hat{\Gamma}_{n_1}(A)$, that $\zeta_{n_2,n_1}(A)=\emptyset$.

\textbf{Case 2:} $\mathrm{Sp}_d(A)\cap (\tilde{\Gamma}_{n_2}(A)+B_{1/n_2}(0))^c\neq\emptyset$. In this case, this set is a finite subset of $\mathrm{Sp}_d(A)$, $\{\hat{z}_1,...,\hat{z}_{m(n_2)}\}$, separated from the closed set $\tilde{\Gamma}_{n_2}(A)+B_{1/n_2}(0)$ (we need the $+B_{1/n_2}(0)$ for this to be true to avoid accumulation points of the discrete spectrum). There exists some $\delta_{n_2}>0$ such that the balls $B_{2\delta_{n_2}}(\hat{z}_j)$ for $j=1,...,m(n_2)$ are pairwise disjoint and such that their union does note intersect $\tilde{\Gamma}_{n_2}(A)+B_{1/n_2}(0)$. Using the convergence of $\hat{\Gamma}_{n_1}(A)$ to $\mathrm{Sp}(A)$ and $E(n,z)\geq \mathrm{dist}(z,\mathrm{Sp}(A))$, it follows that for large $n_1$
\begin{equation}
\label{inside_balls}
\zeta_{n_2,n_1}(A)\subset \bigcup_{j=1}^{m(n_2)}B_{\delta_{n_2}}(\hat{z}_j),
\end{equation}
is non-empty and that $\zeta_{n_2,n_1}(A)$ converges to $\mathrm{Sp}_d(A)\cap (\tilde{\Gamma}_{n_2}(A)+B_{1/n_2}(0))^c\neq\emptyset$ in the Hausdorff metric.

\vspace{2mm}

Suppose that $\zeta_{n_2,n_1}(A)$ is non-empty. Recall that we only want one output per eigenvalue in the discrete spectrum. To do this, we partition the finite set $\zeta_{n_2,n_1}(A)$ into equivalence classes as follows. For $z,w\in\zeta_{n_2,n_1}(A)$, we say that $z\sim_{n_1}w$ if there exists a finite sequence $z=z_1,z_2,...,z_n=w\in\zeta_{n_2,n_1}(A)$ such that $B_{E({n_1},z_j)}(z_j)$ and $B_{E({n_1},z_{j+1})}(z_{j+1})$ intersect. The idea is that equivalence classes correspond to clusters of points in $\zeta_{n_2,n_1}(A)$. Given any $z\in\zeta_{n_2,n_1}(A)$ we can compute its equivalence class using finitely many arithmetic operations and comparisons. Let $S_0$ be the set $\{z\}$ and given $S_n$, let $S_{n+1}$ be the union of any $w\in \zeta_{n_2,n_1}(A)$ such that $B_{E({n_1},w)}(w)$ and $B_{E({n_1},v)}(v)$ intersect for some $v\in S_n$. Given $S_n$, we can compute $S_{n+1}$ using finitely many arithmetic operations and comparisons. The equivalence class is any $S_n$ where $S_n=S_{n+1}$, which must happen since $\zeta_{n_2,n_1}(A)$ is finite. We let $\Phi_{n_2,n_1}$ consist of one element of each equivalence class that minimises $E(n_1,\cdot)$ over its respective equivalence class. By the above comments it is clear that $\Phi_{n_2,n_1}$ can be computed in finitely many arithmetic operations and comparisons from the given data. Furthermore, due to (\ref{inside_balls}) which holds for large $n_1$, the separation of the $B_{2\delta_{n_2}}(\hat{z}_j)$ and the fact that $E(n_1,\cdot)$ converges uniformly on compact subsets to the distance to $\mathrm{Sp}(A)$, it follows that for large $n_1$ there is exactly one point in each intersection $B_{2\delta_{n_2}}(\hat{z}_j)\cap\Phi_{n_2,n_1}(A)$. But we can shrink $\delta_{n_2}$ and apply the same argument to see that $\Phi_{n_2,n_1}(A)$ converges to $\mathrm{Sp}_d(A)\cap (\tilde{\Gamma}_{n_2}(A)+B_{1/n_2}(0))^c\neq\emptyset$ in the Hausdorff metric.

Now suppose that $\zeta_{n_2,n_1}(A)$ is non-empty, $z_1,z_2\in\Phi_{n_2,n_1}(A)$, and both lie in $B_{\epsilon}(z)$ for some $z\in\mathrm{Sp}_d(A)$ and $\epsilon>0$ with $\mathrm{Sp}(A)\cap B_{2\epsilon}(z)=\{z\}$. It follows that $z$ minimises the distance to the spectrum from both $z_1$ and $z_2$. Hence, $B_{E({n_1},z_1)}(z_1)$ and $B_{E({n_1},z_2)}(z_2)$ both contain the point $z$ so that $z_1\sim_{n_1} z_2$. But then at most one of $z_1,z_2$ can lie in $\Phi_{n_2,n_1}(A)$, and hence $z_1=z_2$.

To finish, we must alter $\Phi_{n_2,n_1}(A)$ to take care of the case when $\zeta_{n_2,n_1}(A)=\emptyset$ and to produce a $\Sigma_2^A$ algorithm. In the case that $\zeta_{n_2,n_1}(A)=\emptyset$, set $\Phi_{n_2,n_1}(A)=\emptyset$. Let $N(A)\in\mathbb{N}$ be minimal such that $\mathrm{Sp}_d(A)\cap (\tilde{\Gamma}_{N}(A)+B_{1/N}(0))^c\neq\emptyset$ (recall the discrete spectrum is non-empty for our class of operators). If $n_2>n_1$, set $\Gamma_{n_2,n_1}(A)=\{0\}$, otherwise consider $\Phi_{k,n_1}(A)$ for $n_2\leq k\leq n_1$. If all of these are empty, set $\Gamma_{n_2,n_1}(A)=\{0\}$, otherwise choose minimal $k$ with $\Phi_{k,n_1}(A)\neq \emptyset$ and let $\Gamma_{n_2,n_1}(A)=\Phi_{k,n_1}(A)$. Note that this defines an arithmetic tower of algorithms, with $\Gamma_{n_2,n_1}(A)$ non-empty. By the above case analysis, for large $n_1$ it holds that
$$
\Gamma_{n_2,n_1}(A)=\Phi_{n_2\vee N(A),n_1}(A)
$$
and it follows that
\begin{equation}
\label{dfgviwh}
\lim_{n_1\rightarrow\infty}\Gamma_{n_2,n_1}(A)=:\Gamma_{n_2}(A)=\mathrm{Sp}_d(A)\cap (\tilde{\Gamma}_{n_2\vee N(A)}(A)+B_{1/{n_2\vee N(A)}}(0))^c.
\end{equation}
Hence $\Gamma_{n_2}(A)\subset\mathrm{Sp}_d(A)$ and $\Gamma_{n_2}(A)$ converges up to $\mathrm{cl}(\mathrm{Sp}_d(A))$ in the Hausdorff metric.

\textbf{Step 3}: \textbf{Multiplicities.} Suppose that $z_{n_2,n_1}\in \Gamma_{n_2,n_1}(A)$ converges as $n_1\rightarrow\infty$ to some $z_{n_2}=z\in\Gamma_{n_2}(A)\subset\mathrm{Sp}_d(A)$, where $\Gamma_{n_2}$ is the first limit of the height two tower constructed in step 2. Consider the following operator, viewed as a finite matrix acting on $\mathbb{C}^{n}$,
$
A_n=P_n(A-zI)^*(A-zI)P_n.
$
This is a truncation of the operator $(A-zI)^*(A-zI)$. The key observation is that $0$ lies in the discrete spectrum of $(A-zI)^*(A-zI)$ with $h((A-zI)^*(A-zI),0)=h(A,z)$, the multiplicity of the eigenvalue $z$. To see this, note that $\mathrm{ker}(A-zI)=\mathrm{ker}((A-zI)^*(A-zI))$ and that if $\left\|x\right\|=1$ then
$$
\left\|(A-zI)x\right\|\leq\sqrt{\left\|(A-zI)^*(A-zI)x\right\|}.
$$
Since $(A-zI)$ is bounded below on $\mathrm{ker}(A-zI)^{\perp}$, the same must be true for $(A-zI)^*(A-zI)$. Now set
$$
h_{n_2,n_1}(A,z_{n_2,n_1})=\min\{n_2,\left|\{w\in\mathrm{Sp}(P_{n_1}(A-z_{n_2,n_1}I)^*P_{f(n_1)}(A-z_{n_2,n_1}I)P_{n_1}):\left|w\right|<1/{n_2}-d_{n_1}\}\right|\},
$$
where $d_{n_1}$ is some non-negative sequence converging to $0$ that we define below. As usual, we consider the relevant operator as a matrix acting on $\mathbb{C}^{n_1}$ and we count eigenvalues according to their multiplicity. Via shifting by $(1/{n_2}-d_{n_1})I$ and assuming that $d_{n_1}$ can be computed with finitely many arithmetic operations and comparisons, Lemma \ref{num_neg_evals} shows that $h_{n_2,n_1}$ is a general algorithm and can be computed with finitely many arithmetic operations and comparisons. Consider the similar function (that we cannot necessarily compute since we do not know $z$),
$$
q_{n_2,n_1}(A,z)=\min\{n_2,\left|\{w\in\mathrm{Sp}(A_{n_1}):\left|w\right|<1/{n_2}\}\right|\},
$$
where
$$
A_{n_1}=P_{n_1}(A-zI)^*(A-zI)P_{n_1}.
$$
We set $B=(A-zI)^*(A-zI)$ and list $\lambda_1\leq\lambda_2\leq...$ as in Lemmas \ref{case1_fs} and \ref{case2_fs}, then
$$
\lim_{n_1\rightarrow\infty}q_{n_2,n_1}(A,z)=\min\{n_2,\left|\lambda_j:\lambda_j<1/n_2\right|\}.
$$
It is then clear from the same lemmas that
$$
\lim_{n_2\rightarrow\infty}\lim_{n_1\rightarrow\infty}q_{n_2,n_1}(A,z)=h((A-zI)^*(A-zI),0)=h(A,z).
$$
We will have completed the proof if we can choose $d_{n_1}$ such that
$$
\lim_{n_1\rightarrow\infty}\left|h_{n_2,n_1}(A,z_{n_2,n_1})-q_{n_2,n_1}(A,z)\right|=0.
$$
It is straightforward to show that
\begin{equation*}
\begin{split}
&\left\|A_{n_1}-P_{n_1}(A-z_{n_2,n_1}I)^*P_{f(n_1)}(A-z_{n_2,n_1}I)P_{n_1}\right\| \\
& \qquad \leq\big(\left|z-z_{n_2,n_1}\right|+c_{n_1}\big)\big(\left\|AP_{n_1}\right\|+\left|z-z_{n_2,n_1}\right|+\left|z_{n_2,n_1}\right|+\left\|P_{f({n_1})}(A-z_{n_2,n_1}I)P_{n_1}\right\|\big)\\
& \qquad \leq\big(\left|z-z_{n_2,n_1}\right|+c_{n_1}\big)\big(2\left\|P_{f({n_1})}AP_{n_1}\right\|+\left|z-z_{n_2,n_1}\right|+2\left|z_{n_2,n_1}\right|+c_{n_1}\big),
\end{split}
\end{equation*}
where $D_{f,m}(A) \leq c_m$ is the dispersion bound. Choose
$$
d_{n_1}=\big(E({n_1},z_{n_2,n_1})+c_{n_1}\big)\big(E({n_1},z_{n_2,n_1})+2\left|z_{n_2,n_1}\right|+2k_{n_1}+c_{n_1}\big),
$$
where $k_{n_1}$ overestimates $\left\|P_{f({n_1})}AP_{n_1}\right\|$ by at most $1$. $k_{n_1}$ can be computed using a similar positive definiteness test as in \texttt{DistSpec} (see Appendix \ref{append_pseudo}). Since $z_{n_2,n_1}$ converges to $z\in\mathrm{Sp}_d(A)$, it is clear that
$$
\left\|A_{n_1}-P_{n_1}(A-z_{n_2,n_1}I)^*P_{f(n_1)}(A-z_{n_2,n_1}I)P_{n_1}\right\|\leq d_{n_1}
$$
eventually and that $d_{n_1}$ converges to 0. Weyl's inequality for eigenvalue perturbations of Hermitian matrices implies the needed convergence. 
\end{proof}

\begin{proof}[Proof of Theorem \ref{discreteojoiojo} for $\Xi_2^d$]
Since $\Omega_{\mathrm{D}}\subset\Omega_{\mathrm{N}}^f$, its suffices to show that $\{\Xi_2^d,\Omega_{\mathrm{N}}^f\}\in\Sigma_2^A$ and $\{\Xi_2^d,\Omega_{\mathrm{D}}\}\notin\Delta_2^G$.\\
\textbf{Step 1}: $\{\Xi_2^d,\Omega_{\mathrm{D}}\}\notin\Delta_2^G$. The proof is almost identical to step 1 in the proof of Theorem \ref{discreteojoiojo} for $\Xi_1^d$. Suppose there exists some height one tower $\Gamma_n$ solving the problem. Consider the matrix operators $A_m=\mathrm{diag}\{0,0,...,0,2\}\in\mathbb{C}^{m\times m}$ and $C=\mathrm{diag}\{0,0,...\}$ and set
$$
A=\bigoplus_{m=1}^{\infty}A_{k_m},
$$
where we choose an increasing sequence $k_m$ inductively as follows. Set $k_1=1$ and suppose that $k_1,...,k_m$ have been chosen. $\mathrm{Sp}_{d}(A_{k_1}\oplus A_{k_2}\oplus...\oplus A_{k_m}\oplus C)=\{2\}$ so there exists some $n_m\geq m$ such that if $n\geq n_m$ then
$$
\Gamma_n(A_{k_1}\oplus...\oplus A_{k_m}\oplus C)=1.
$$
Now let $k_{m+1}\geq \max\{N(\mathrm{diag}\{1,2\}\oplus A_{k_1}\oplus...\oplus A_{k_m}\oplus C,n_m),k_m+1\}$. Arguing as in the proof of Theorem \ref{unbounded_test_theorem}, it follows that $\Gamma_{n_m}(A)=\Gamma_{n_m}(A_{k_1}\oplus...\oplus A_{k_m}\oplus C)$. But $\Gamma_{n_m}(A)$ converges to $0$ as $A$ has no discrete spectrum and this contradiction finishes this step.\\
\textbf{Step 2}: $\{\Xi_2^d,\Omega_{\mathrm{N}}^f\}\in\Sigma_2^A$. Consider the height two tower, $\zeta_{n_2,n_1}$, defined in step 2 of the proof of Theorem \ref{discreteojoiojo} for $\Xi_1^d$. Let $A\in\Omega_{\mathrm{N}}^f$ and if $\zeta_{n_2,n_1}(A)=\emptyset$, define $\rho_{n_2,n_1}(A)=0$, otherwise define $\rho_{n_2,n_1}(A)=1$. The discussion in the proof of Theorem \ref{discreteojoiojo} for $\Xi_1^d$ shows that
$$
\lim_{n_1\rightarrow\infty}\rho_{n_2,n_1}(A)=:\rho_{n_2}(A)=\begin{cases}
0, \quad\text{ if }\mathrm{Sp}_d(A)\cap (\tilde{\Gamma}_{n_2}(A)+B_{1/n_2}(0))^c=\emptyset\\
1, \quad\text{ otherwise.}
\end{cases}
$$
Since $\mathrm{Sp}_d(A)\cap (\tilde{\Gamma}_{n_2}(A)+B_{1/n_2}(0))^c$ increases to $\mathrm{cl}(\mathrm{Sp}_d(A))$, it follows that $\lim_{n_2\rightarrow\infty}\rho_{n_2}(A)=\Xi_2^d(A)$ and that if $\rho_{n_2}(A)=1$, $\Xi_2^d(A)=1$. Hence, $\rho_{n_2,n_1}$ provides a $\Sigma_2^A$ tower for $\{\Xi_2^d,\Omega_{\mathrm{N}}^f\}$.
\end{proof}

\begin{proof}[Proof of Theorem \ref{evector_theorem}]
Let $\epsilon=(E({n_1},z_{n_1})+\delta)^2$ and consider the matrix
\begin{equation*}
B=P_{n_1}(A-z_{n_1}I)^*P_{f(n_1)}(A-z_{n_1}I)P_{n_1}-\epsilon I_n\in\mathbb{C}^{n\times n},
\end{equation*}
where $I_n$ is the $n\times n$ identity matrix. $B$ is a Hermitian matrix and is not positive semi-definite. It follows that $B$ can be put into the form
$$
PBP^T=LDL^*,
$$
where $L$ is lower triangular with $1$'s along its diagonal, $D$ is block diagonal with block sizes $1\times 1$ or $2\times 2$ and $P$ is a permutation matrix. This can be computed in finitely many arithmetic operations and comparisons. Let $x$ be an eigenvector of $B$ with negative eigenvalue and set $y=L^*Px$. Such an $x$ exists by assumption. Note that
\begin{equation*}
\langle y,Dy\rangle=\langle L^*Px,DL^*Px\rangle=\langle x,Bx\rangle<0.
\end{equation*}
It follows that there exists a unit vector $y_{n_1}$ with $\langle y_{n_1},Dy_{n_1}\rangle<0$. Such a vector is easy to spot if a value in one of the $1\times 1$ blocks of $D$ is negative. If not, we need to consider $2\times 2$ blocks. Using the argument in the proof of Lemma \ref{num_neg_evals}, we can find a $2\times 2$ block with a negative eigenvalue by computing the trace and determinant. Without loss of generality we assume that this block is the upper $2\times 2$ portion of $D$. It follows that there exist \textit{real} numbers $a,b$, not both equal to $0$, such that $y_{n_1}=(a,b,0,...,0)^{T}$ has $\langle y_{n_1},Dy_{n_1}\rangle<0$. If $D_{2,2}<0$, we can take $a=0,b=1$. Otherwise, $a\neq 0$ so set $a=1$. We then note that there is an open interval $J$ such that if $b\in J$ then $y_{n_1}=(a,b,0,...,0)^{T}$ has $\langle y_{n_1},Dy_{n_1}\rangle<0$. We can now perform a search routine on $\mathbb{R}$ with finer and finer spacing to find such a $b$.

Since $L^*$ is invertible and upper triangular, we can efficiently solve for $\tilde{x}_{n_1}=P^T(L^*)^{-1}y_{n_1}$ using finitely many arithmetic operations and comparisons. We then approximately normalise $\tilde{x}_{n_1}$ by computing
$\left\|\tilde{x}_{n_1}\right\|\approx t_{n_1}(\rho)>0$ to precision $\rho>0$ using arithmetic operations and comparisons. If we set $x_{n_1}=\tilde{x}_{n_1}/t_{n_1}(\rho)$ then
$$
1-\frac{\rho}{t_{n_1}(\rho)}=\frac{t_{n_1}(\rho)-\rho}{t_{n_1}(\rho)}\leq\left\|x_{n_1}\right\|\leq \frac{t_{n_1}(\rho)+\rho}{t_{n_1}(\rho)}=1+\frac{\rho}{t_{n_1}(\rho)}.
$$
So we successively choose $\rho$ smaller until we reach $\rho_{n_1}$ such that $\rho_{n_1}/t_{n_1}(\rho_{n_1})<\delta$. This is always possible since $\lim_{\rho\downarrow0}t_{n_1}(\rho)=\left\|\tilde{x}_{n_1}\right\|>0$. Let $t_{n_1}=t_{n_1}(\rho_{n_1})$, then
$$
\langle x_{n_1},Bx_{n_1}\rangle=t_{n_1}^{-2}\langle L^*P\tilde{x}_{n_1},DL^*P\tilde{x}_{n_1}\rangle=t_{n_1}^{-2}\langle y_{n_1},Dy_{n_1}\rangle<0.
$$
Note that
\begin{align*}
\left\|P_{f({n_1})}(A-z_{n_1}I)x_{n_1}\right\|^2&=\langle x_{n_1},Bx_{n_1}\rangle+\left\|x_{n_1}\right\|^2\epsilon<\left\|x_{n_1}\right\|^2\epsilon.
\end{align*}
Taking square roots and recalling that $D_{f,n_1}(A)\leq c_{n_1}$ and the definition of $D_{f,n_1}$ finishes the proof.
\end{proof}

Note that even in the finite-dimensional case, this type of error control is the best possible owing to numerical errors due to round-off and finite precision. This method is also efficient.

\begin{proof}[Proof of Theorem \ref{discrete_dont_now_f}]
{\bf Step 1:} $\{\Xi_1^d,\Omega_1^d\}\notin\Delta_3^G$. Suppose for a contradiction that $\Gamma_{n_2,n_1}$ is a height two tower solving this problem. For this proof we shall use one of the decision problems in \cite{colbrook4} that were proven to have $\mathrm{SCI}_G=3$. Let $(\mathcal{M},d)$ be the discrete space $\{0,1\}$, let $\Omega'$ denote the collection of all infinite matrices $\{a_{i,j}\}_{i,j\in\mathbb{N}}$ with entries $a_{i,j}\in\{0,1\}$ and consider the problem function
\begin{equation*}
\Xi'(\{a_{i,j}\}):\text{ ``Does }\{a_{i,j}\}\text{ have only finitely many columns containing only finitely may non-zero entries?''}
\end{equation*}
We will gain a contradiction by using the supposed height two tower for $\{\Xi_1^d,\Omega_1^d\}$, $\Gamma_{n_2,n_1}$, to solve $\{\Xi',\Omega'\}$.

Without loss of generality, identify $\mathcal{B}(l^2(\mathbb{N}))$ with $\mathcal{B}(X)$ where $X=\mathbb{C}^2\oplus\bigoplus_{j=1}^{\infty}X_j$ in the $l^2$-sense with $X_j=l^2(\mathbb{N})$. Now let $\{a_{i,j}\}\in\Omega'$ and for the $j$th column define $B_j\in\mathcal{B}(X_j)$ with the following matrix representation:
$$
B_j=\bigoplus_{r=1}^{M_j} A_{l_r^j},\quad A_{m}:=\begin{pmatrix}
1& & & &1\\
 &0& & & \\
& &\ddots& & \\
 & & &0& \\
1& & & &1\\
\end{pmatrix}
\in\mathbb{C}^{m\times m},
$$
where if $M_j$ is finite then $l_{M_j}^j=\infty$ with $A_{\infty}=\mathrm{diag}(1,0,0,...)$. The $l_r^j$ are defined such that
\begin{equation}
\label{crazy_indices}
\sum_{r=1}^{\sum_{i=1}^m a_{i,j}} l_r^j=m+\sum_{i=1}^m a_{i,j}.
\end{equation}
Define the self-adjoint operator
$$
A=\mathrm{diag}\{3,1\}\oplus\bigoplus_{j=1}^{\infty}B_j.
$$
Note that no matter what the choices of $l_r^j$ are, $3\in\mathrm{Sp}_d(A)$ and hence $A\in\Omega_1^d$. Note also that the spectrum of $A$ is contained in $\{0,1,2,3\}$. If $\Xi'(\{a_{i,j}\})=1$, $1$ is an isolated eigenvalue of finite multiplicity and hence in $\mathrm{Sp}_d(A)$. But if $\Xi'(\{a_{i,j}\})=0$, then $1$ is an isolated eigenvalue of infinite multiplicity so does not lie in the discrete spectrum and hence $\mathrm{Sp}_d(A)\subset\{0,2,3\}$.

Consider the intervals
$
J_1=[0,1/2],
$ and 
$
J_2=[3/4,\infty).
$
Set $\alpha_{n_2,n_1}=\mathrm{dist}(1,\Gamma_{n_2,n_1}(A))$. Let $k(n_2,n_1)\leq n_1$ be maximal such that $\alpha_{n_2,k}(A)\in J_1\cup J_2$. If no such $k$ exists or $\alpha_{n_2,k}(A)\in J_1$, set $\tilde{\Gamma}_{n_2,n_1}(\{a_{i,j}\})=1$. Otherwise set $\tilde{\Gamma}_{n_2,n_1}(\{a_{i,j}\})=0$. It is clear from (\ref{crazy_indices}) that this defines a generalised algorithm. In particular, given $N$ we can evaluate $\{A_{k,l}:k,l\leq N\}$ using only finitely many evaluations of $\{a_{i,j}\}$, where we can use a suitable bijection between bases of $l^2(\mathbb{N})$ and $\mathbb{C}^2\oplus\bigoplus_{j=1}^{\infty}X_j$ to view $A$ as acting on $l^2(\mathbb{N})$. The point of the intervals $J_1,J_2$ is that we can show $\lim_{n_1\rightarrow\infty}\tilde{\Gamma}_{n_2,n_1}(\{a_{i,j}\})=\tilde{\Gamma}_{n_2}(\{a_{i,j}\})$ exists. If $\Xi'(\{a_{i,j}\})=1$, then, for large $n_2$, $\lim_{n_1\rightarrow\infty}\alpha_{n_2,k}(A)<1/2$ and hence $\lim_{n_2\rightarrow\infty}\tilde{\Gamma}_{n_2}(\{a_{i,j}\})=1$. Similarly, if $\Xi'(\{a_{i,j}\})=0$, then, for large $n_2$, $\lim_{n_1\rightarrow\infty}\alpha_{n_2,k}(A)>3/4$ and hence it follows that $\lim_{n_2\rightarrow\infty}\tilde{\Gamma}_{n_2}(\{a_{i,j}\})=0$. Hence $\tilde{\Gamma}_{n_2,n_1}$ is a height two tower of general algorithms solving $\{\Xi',\Omega'\}$, a contradiction.

{\bf Step 2:} $\{\Xi_2^d,\Omega_2^d\}\notin\Delta_3^G$. To prove this we can use a slight alteration of the argument in step 1. Replace $X$ by $X=l^2(\mathbb{N})\oplus\bigoplus_{j=1}^{\infty}X_j$ and $A$ by
$$
A=\mathrm{diag}\{1,0,2,0,2,...\}\oplus\bigoplus_{j=1}^{\infty}B_j.
$$
It is then clear that $\Xi_2^d(A)=1$ if and only if $\Xi'(\{a_{i,j}\})=1$.

{\bf Step 3:} $\{\Xi_1^d,\Omega_1^d\}\in\Sigma_3^A$. For this we argue similarly to the proof of Theorem\ref{discreteojoiojo} for $\Xi_1^d$ step 2. It was shown in \cite{ben2015can} that there exists a height three arithmetic tower $\tilde{\Gamma}_{n_3,n_2,n_1}$ for the essential spectrum of operators in $\Omega_1^d$ such that
\begin{itemize}
	\item Each $\tilde\Gamma_{n_3,n_2,n_1}(A)$ consists of a finite collection of points in the complex plane.
	\item For large $n_1$, $\tilde\Gamma_{n_3,n_2,n_1}(A)$ is eventually constant and equal to $\tilde\Gamma_{n_3,n_2}(A)$.
	\item $\tilde\Gamma_{n_3,n_2}(A)$ is increasing with $n_2$ with limit $\tilde\Gamma_{n_3}(A)$ containing the essential spectrum. The limit $\tilde\Gamma_{n_3}(A)$ is also decreasing with $n_3$.
\end{itemize}
Furthermore, it was proven in \cite{ben2015can} that for operators in $\Omega_1^d$, there exists a height two arithmetic tower $\hat\Gamma_{n_2,n_1}$ for computing the spectrum such that
\begin{itemize}
	\item $\hat\Gamma_{n_2,n_1}(A)$ is constant for large $n_1$.
	\item For any $z\in\hat\Gamma_{n_2}(A)$, $\mathrm{dist}(z,\mathrm{Sp}(A))\leq 2^{-n_2}$.
\end{itemize}

Using these, we initially define
$$
\zeta_{n_3,n_2,n_1}(A)=\{z\in\hat{\Gamma}_{n_2,n_1}(A):2^{-n_3}-2^{-n_2}\leq\mathrm{dist}(z,\tilde{\Gamma}_{n_3,n_2,n_1}(A))\}.
$$
The arguments in the proof of Theorem \ref{discreteojoiojo} for $\Xi_1^d$ show that this can be computed in finitely many arithmetic operations and comparisons using the relevant evaluation functions. Note that for large $n_1$
$$
\zeta_{n_3,n_2,n_1}(A)=\{z\in\hat{\Gamma}_{n_2}(A):2^{-n_3}-2^{-n_2}\leq\mathrm{dist}(z,\tilde{\Gamma}_{n_3,n_2}(A))\}=:\zeta_{n_3,n_2}(A).
$$

There are now two cases to consider (we use $D_{\eta}(z)$ to denote the open ball of radius $\eta$ about a point $z$):

\textbf{Case 1:} $\mathrm{Sp}_d(A)\cap (\tilde{\Gamma}_{n_3}(A)+D_{2^{-n_3}}(0))^c=\emptyset$. Suppose, for a contradiction, in this case that there exists $z_{m_j}\in\zeta_{n_3,m_j}(A)$ with ${m_j}\rightarrow\infty$. Then, without loss of generality, $z_{m_j}\rightarrow z\in\mathrm{Sp}(A)$. We also have that
$$
\mathrm{dist}(z_{m_j},\tilde{\Gamma}_{n_3,m_j}(A))\geq 2^{-n_3}-2^{-m_j},
$$
which implies that $\mathrm{dist}(z,\tilde{\Gamma}_{n_3}(A))\geq 2^{-n_3}$ and hence $z\in\mathrm{Sp}_d(A)\cap (\tilde{\Gamma}_{n_3}(A)+D_{2^{-n_3}}(0))^c$, the required contradiction. It follows that $\zeta_{n_3,n_2}(A)$ is empty for large $n_2$.

\textbf{Case 2:} $\mathrm{Sp}_d(A)\cap (\tilde{\Gamma}_{n_3}(A)+D_{2^{-n_3}}(0))^c\neq\emptyset$. In this case, this set is a finite subset of $\mathrm{Sp}_d(A)$, $\{\hat{z}_1,...,\hat{z}_{m(n_3)}\}$. Each of these points is an isolated point of the spectrum. It follows that there exists $z_{n_2}\in\hat{\Gamma}_{n_2}(A)$ with $z_{n_2}\rightarrow\hat{z}_1$ and $\left|z_{n_2}-\hat{z}_1\right|\leq 2^{-n_2}$ for large $n_2$. Since the $\tilde \Gamma_{n_3,n_2}(A)$ are increasing, this implies that
\begin{align*}
\mathrm{dist}(z_{n_2},\tilde\Gamma_{n_3,n_2}(A))&\geq \mathrm{dist}(z_{n_2},\tilde\Gamma_{n_3}(A))\\
&\geq \mathrm{dist}(\hat{z}_1,\tilde\Gamma_{n_3}(A))-2^{-n_2}\geq 2^{-n_3}-2^{-n_2},
\end{align*}
so that $z_{n_2}\in\zeta_{n_3,n_2}(A)$. The same argument holds for points converging to all of $\{\hat{z}_1,...,\hat{z}_{m(n_3)}\}$. On the other hand, the argument used in Case 1 shows that any limit points of $\zeta_{n_3,n_2}(A)$ as $n_2\rightarrow\infty$ are contained in $\mathrm{Sp}_d(A)\cap (\tilde{\Gamma}_{n_3}(A)+D_{2^{-n_3}}(0))^c$. It follows that, in this case, $\zeta_{n_3,n_2}(A)$ converges to $\mathrm{Sp}_d(A)\cap (\tilde{\Gamma}_{n_3}(A)+B_{1/n_3}(0))^c\neq\emptyset$ in the Hausdorff metric as $n_2\rightarrow\infty$.

\vspace{2mm}

Let $N(A)\in\mathbb{N}$ be minimal such that $\mathrm{Sp}_d(A)\cap (\tilde{\Gamma}_{N}(A)+D_{2^{-N}}(0))^c\neq\emptyset$ (recall the discrete spectrum is non-empty for our class of operators). If $n_3>n_2$, set $\Gamma_{n_3,n_2,n_1}(A)=\{0\}$, otherwise consider $\zeta_{k,n_2,n_1}(A)$ for $n_3\leq k\leq n_2$. If all of these are empty, set $\Gamma_{n_3,n_2,n_1}(A)=\{0\}$, otherwise choose minimal $k$ with $\zeta_{k,n_2,n_1}(A)\neq \emptyset$ and let $\Gamma_{n_3,n_2,n_1}(A)=\zeta_{k,n_2,n_1}(A)$. Note that this defines an arithmetic tower of algorithms, with $\Gamma_{n_3,n_2,n_1}(A)$ non-empty. Since we consider finitely many of the sets $\zeta_{k,n_2,n_1}(A)$, and these are constant for large $n_1$, it follows that $\Gamma_{n_3,n_2,n_1}(A)$ is constant for large $n_1$ and constructed in the same manner with replacing $\zeta_{k,n_2,n_1}(A)$ by $\zeta_{k,n_2}(A)$. Call this limit $\Gamma_{n_3,n_2}(A)$.

For large $n_2$,
$$
\Gamma_{n_3,n_2}(A)=\zeta_{n_3\vee N(A),n_2}(A).
$$
It follows that
$$
\lim_{n_2\rightarrow\infty}\Gamma_{n_3,n_2}(A)=:\Gamma_{n_3}(A)=\mathrm{Sp}_d(A)\cap (\tilde{\Gamma}_{n_3\vee N(A)}(A)+D_{2^{-n_3\vee N(A)}}(0))^c.
$$
Hence $\Gamma_{n_3}(A)\subset\mathrm{Sp}_d(A)$ and $\Gamma_{n_3}(A)$ converges up to $\mathrm{cl}(\mathrm{Sp}_d(A))$ in the Hausdorff metric.

{\bf Step 4:} $\{\Xi_2^d,\Omega_2^d\}\in\Sigma_3^A$.
Consider the height three tower, $\zeta_{n_3,n_2,n_1}$, defined in step 3. Let $A\in\Omega_2^d$ and if $\zeta_{n_3,n_2,n_1}(A)=\emptyset$, define $\rho_{n_3,n_2,n_1}(A)=0$, otherwise define $\rho_{n_3,n_2,n_1}(A)=1$. The discussion in step 3 shows that
$$
\lim_{n_2\rightarrow\infty}\lim_{n_1\rightarrow\infty}\rho_{n_3,n_2,n_1}(A)=:\rho_{n_3}(A)=\begin{cases}
0, \quad\text{ if }\mathrm{Sp}_d(A)\cap (\tilde{\Gamma}_{n_3}(A)+D_{2^{-n_3}}(0))^c=\emptyset\\
1, \quad\text{ otherwise.}
\end{cases}
$$
Since $\mathrm{Sp}_d(A)\cap (\tilde{\Gamma}_{n_3}(A)+D_{2^{-n_3}}(0))^c$ increases to $\mathrm{cl}(\mathrm{Sp}_d(A))$, it follows that $\lim_{n_3\rightarrow\infty}\rho_{n_3}(A)=\Xi_2^d(A)$ and that if $\rho_{n_3}(A)=1$, then $\Xi_2^d(A)=1$. Hence, $\rho_{n_3,n_2,n_1}$ provides a $\Sigma_3^A$ tower for $\{\Xi_2^d,\Omega_2^d\}$.
\end{proof}

\section{Proof of Theorem on the Spectral Gap and Spectral Classification }
\label{Sec:proof_spec_gap}

\begin{proof}[Proof of Theorem \ref{class_thdfjlwdjfkl} for $\Xi_{\mathrm{gap}}$]
\textbf{Step 1}: $\{\Xi_{\mathrm{gap}},\widehat{\Omega}_{\mathrm{SA}}\}\in\Sigma^A_2$. Let $A\in\widehat{\Omega}_{\mathrm{SA}}$. Using Corollary \ref{eigs_fin_mat} we can compute all $n$ eigenvalues of $P_nAP_n$ to arbitrary precision in finitely many arithmetic operations and comparisons. In the notation of Lemmas \ref{case1_fs} and \ref{case2_fs} (whose analogous results also hold for the possibly unbounded $A\in\widehat{\Omega}_{\mathrm{SA}}$), consider an approximation
\begin{equation*}
0\leq l_n:=\mu_2^{(n)}-\mu_1^{(n)}+\epsilon_n,\quad n\geq 2,
\end{equation*}
where we have computed $\mu_2^{(n)}-\mu_1^{(n)}$ to accuracy $\left|\epsilon_n\right|\leq1/n$ using Corollary \ref{eigs_fin_mat} with $B=P_nAP_n$. Using Lemmas \ref{case1_fs} and \ref{case2_fs}, we see that $l_n$ converges to zero if and only if $\Xi_{\mathrm{gap}}(A)=0$, otherwise it converges to some positive number. If $n_1=1$, set $\Gamma_{n_2,n_1}(A)=1$, otherwise consider the following.

Let $J_{n_2}^1=[0,1/(2n_2)]$ and $J_{n_2}^2=(1/n_2,\infty)$. Given $n_1\in\mathbb{N}$, consider $l_k$ for $k\leq n_1$. If no such $k$ exists with $l_k\in J_{n_2}^1\cup J_{n_2}^2$, set $\Gamma_{n_2,n_1}(A)=0$. Otherwise, consider $k$ maximal with $l_k\in J_{n_2}^1\cup J_{n_2}^2$ and set $\Gamma_{n_2,n_1}(A)=0$ if $l_k\in J_{n_2}^1$ and $\Gamma_{n_2,n_1}(A)=1$ if $l_k \in J_{n_2}^2$. The sequence $l_{n_1}\rightarrow c\geq 0$ for some number $c$. The separation of the intervals $J_{n_2}^1$ and $J_{n_2}^2$, ensures that $l_{n_1}$ cannot be in both intervals infinitely often as $n_1\rightarrow\infty$ and hence the first limit $\Gamma_{n_2}(A):=\lim_{n_1\rightarrow\infty}\Gamma_{n_2,n_1}(A)$ exists. If $c=0$, $\Gamma_{n_2}(A)=0$, but if $c>0$, there exists $n_2$ with $1/n_2<c$ and hence for large $n_1$, $l_{n_1}\in J_{n_2}^2$. It follows in this case that $\Gamma_{n_2}(A)=1$ and we also see that if $\Gamma_{n_2}(A)=1$ then $\Xi_{\mathrm{gap}}(A)=1$. Hence $\Gamma_{n_2,n_1}$ provides a $\Sigma_2^A$ tower.

\textbf{Step 2}: $\{\Xi_{\mathrm{gap}},\widehat{\Omega}_{\mathrm{D}}\}\notin\Delta^G_2$. We argue by contradiction and assume the existence of a height one tower, $\Gamma_n$ converging to $\Xi_{\mathrm{gap}}$. The method of proof follows the same lines as before. For every $A$ and $n$ there exists a finite number $N(A,n)\in\mathbb{N}$ such that the evaluations from $\Lambda_{\Gamma_n}(A)$ only take the matrix entries $A_{ij} = \left\langle Ae_j , e_i\right\rangle$ with $i,j\leq N(A,n)$ into account. List the rationals in $(0,1)$ without repetition as $d_1,d_2,...$. We consider the operators $A_m=\mathrm{diag}\{d_1,d_2,...,d_m\}\in\mathbb{C}^{m\times m}$, $B_m=\mathrm{diag}\{1,1,...,1\}\in\mathbb{C}^{m\times m}$ and $C=\mathrm{diag}\{1,1,...\}$. Let 
\[
A=\bigoplus_{m=1}^{\infty}(B_{k_m}\oplus A_{k_m}),
\] where we choose an increasing sequence $k_m$ inductively as follows. In what follows, all operators considered are easily seen to be in $\widehat{\Omega}_{\mathrm{D}}$.

Set $k_1=1$ and suppose that $k_1,...,k_m$ have been chosen. Define $\zeta_p:=\min\{d_r:1\leq r\leq k_p\}$. $\mathrm{Sp}(B_{k_1}\oplus A_{k_1}\oplus...\oplus B_{k_m}\oplus A_{k_m}\oplus C)=\{d_1,d_2,...,d_m,1\}$ has $\zeta_m$ the minimum of its spectrum and an isolated eigenvalue of multiplicity $1$, hence
$$\Xi(B_{k_1}\oplus A_{k_1}\oplus...\oplus B_{k_m}\oplus A_{k_m}\oplus C)=\text{``Yes''}.$$
It follows that there exists some $n_m\geq m$ such that if $n\geq n_m$, then
\begin{equation*}
\Gamma_n(B_{k_1}\oplus A_{k_1}\oplus...\oplus B_{k_m}\oplus A_{k_m}\oplus C)=\text{``Yes''}.
\end{equation*}
Now let $k_{m+1}\geq \max\{N(B_{k_1}\oplus A_{k_1}\oplus...\oplus B_{k_m}\oplus A_{k_m}\oplus C,n_m),k_m+1\}$. The same argument used in the proof of Theorem \ref{unbounded_test_theorem} shows that $\Gamma_{n_m}(A)=\Gamma_{n_m}(B_{k_1}\oplus A_{k_1}\oplus...\oplus B_{k_m}\oplus A_{k_m}\oplus C)=\text{``Yes''}$. But $\mathrm{Sp}(A)=[0,1]$ is gappless, and so $\lim_{n\rightarrow\infty}(\Gamma_n(A))=\text{``No''}$, a contradiction.
\end{proof}

\begin{proof}[Proof of Theorem \ref{class_thdfjlwdjfkl} for $\Xi_{\mathrm{class}}$] By composing with the map
$$
\rho:\{1,2,3,4\}\rightarrow\{0,1\},
$$
$\rho(1)=1$, $\rho(2)=\rho(3)=\rho(4)=0$, it is clear that the result for $\Xi_{\mathrm{gap}}$ implies $\{\Xi_{\mathrm{class}}, \widehat{\Omega}_{\mathrm{SA}}^f\},\{\Xi_{\mathrm{class}}, \widehat{\Omega}_{\mathrm{D}}\}\notin\Delta^G_2$. Since $\widehat{\Omega}_{\mathrm{D}}\subset\widehat{\Omega}_{\mathrm{SA}}^f$, we need only construct a $\Pi^A_2$ tower for $\{\Xi_{\mathrm{class}}, \widehat{\Omega}_{\mathrm{SA}}^f\}$.

Let $A\in \widehat{\Omega}_{\mathrm{SA}}^f$. For a given $n$, set $B_{n}=P_{n}AP_{n}$ and in the notation of Lemmas \ref{case2_fs} and \ref{case1_fs}, let
$$
0\leq l_{n}^j:=\mu_{j+1}^{(n)}-\mu_{1}^{(n)}+\epsilon_{n}^j, \text{ for }j<n.
$$
where we again have computed $\mu_{j+1}^{(n)}-\mu_{1}^{(n)}$ to accuracy $\left|\epsilon_{n}^j\right|\leq1/n$ using only finitely many arithmetic operations and comparisons by Corollary \ref{eigs_fin_mat}. $\Xi_{\mathrm{class}}(A)=1$ if and only if $l_{n}^1$ converges to a positive constant as $n\rightarrow\infty$. $\Xi_{\mathrm{class}}(A)=2$ if and only if $l_{n}^1$ converges to zero as $n\rightarrow\infty$ and there exists $j$ with $l_{n}^j$ convergent to a positive constant. 

Note that we can use the algorithm presented in \S \ref{pf_unb_gr}, denoted $\hat\Gamma_{n}$, to compute the spectrum, with error function denoted by $E(n,\cdot)$ converging uniformly on compact subsets of $\mathbb{C}$ to the true error from above (again with the choice of $g(x)=x$ since the operator is normal). Setting
$$
a_n(A)=\min_{x\in\hat\Gamma_{n}(A)}\{x+E(n,x)\},
$$
we see that $a_n(A)\geq a(A):=\inf_{x\in\mathrm{Sp}(A)}\{x\}$ and that $a_n(A)\rightarrow a(A)$. Now consider
$$
b_{n_2,n_1}(A)=\min\{E(k,a_k(A)+1/n_2)+1/k:1\leq k\leq n_1\},
$$
then $b_{n_2,n_1}(A)$ is positive and decreasing in $n_1$, so converges to some limit $b_{n_2}(A)$ as $n_1\rightarrow\infty$.

\begin{lemma}
\label{cs_and_bs}
Let $A\in\widehat{\Omega}_{\mathrm{SA}}^f$ and $c_{n_2,n_1}(A)=E(n_1,a_{n_1}(A)+1/n_2)+1/n_1$, then
$$
\lim_{n_1\rightarrow\infty}c_{n_2,n_1}(A)=:c_{n_2}(A)= \mathrm{dist}(a+1/n_2,\mathrm{Sp}(A)).
$$
Furthermore, if $\Xi_{\mathrm{class}}(A)\neq 4$ then for large $n_2$ it follows that $c_{n_2}(A)=b_{n_2}(A)=1/n_2$.
\end{lemma}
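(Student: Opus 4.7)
My plan is to handle the two assertions separately, using only the properties of $E(n,\cdot)$ and $a_n(A)$ established earlier in the paper.

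For the first assertion, I would begin by invoking Remark~\ref{E_conv2} with the choice $g_m(x)=x$ (valid since $A$ is self-adjoint and hence normal, so $\|R(z,A)\|^{-1}=\mathrm{dist}(z,\mathrm{Sp}(A))$). This gives that $E(n,z)$ converges uniformly on compact subsets of $\mathbb{C}$ to $\mathrm{dist}(z,\mathrm{Sp}(A))$ and satisfies $E(n,z)\geq\mathrm{dist}(z,\mathrm{Sp}(A))$. Since $a_{n_1}(A)\to a=a(A)$, the sequence $\{a_{n_1}(A)+1/n_2\}_{n_1}$ lies in a bounded set and converges to $a+1/n_2$. Combining the uniform convergence of $E(n_1,\cdot)$ with continuity of the distance function then yields
\[
E(n_1,a_{n_1}(A)+1/n_2)\to \mathrm{dist}(a+1/n_2,\mathrm{Sp}(A)),
\]
and adding the vanishing term $1/n_1$ gives $c_{n_2}(A)=\mathrm{dist}(a+1/n_2,\mathrm{Sp}(A))$.

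For the second assertion, the assumption $\Xi_{\mathrm{class}}(A)\neq 4$ means $a$ is an isolated point of $\mathrm{Sp}(A)$, so there exists $\delta=\delta(A)>0$ with $\mathrm{Sp}(A)\cap(a,a+\delta)=\emptyset$. Taking any $n_2$ with $1/n_2<\delta/2$ forces $\mathrm{dist}(a+1/n_2,\mathrm{Sp}(A))=1/n_2$, so the first part gives $c_{n_2}(A)=1/n_2$. Since $b_{n_2,n_1}(A)\leq c_{n_2,n_1}(A)$, passing to the limit gives $b_{n_2}(A)\leq 1/n_2$, so everything reduces to the matching lower bound $b_{n_2}(A)\geq 1/n_2$.

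To establish this lower bound I would show $c_{n_2,k}(A)\geq 1/n_2$ for \emph{every} $k\geq 1$ once $n_2$ is sufficiently large, which clearly suffices. Split on the size of $k$: if $k\leq n_2$, then the defining term $1/k\geq 1/n_2$ alone gives $c_{n_2,k}(A)\geq 1/n_2$. For $k>n_2$, use that $a_k(A)\to a$ to pick $K_1=K_1(A)$ with $a_k(A)-a<\delta/4$ whenever $k\geq K_1$. Requiring $n_2\geq K_1$ and $1/n_2<\delta/4$, the point $z:=a_k(A)+1/n_2$ satisfies $a+1/n_2\leq z<a+\delta/2$ and therefore lies strictly inside the spectral gap below $a+\delta$, so $\mathrm{dist}(z,\mathrm{Sp}(A))=z-a\geq 1/n_2$. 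Since $E(k,z)\geq\mathrm{dist}(z,\mathrm{Sp}(A))$, we conclude $c_{n_2,k}(A)\geq 1/n_2$ in this range too.

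The only subtlety is ensuring the estimate is uniform over all $k\geq 1$ simultaneously; this is precisely what the dichotomy $k\leq n_2$ versus $k>n_2$ handles, with the cross-over happening exactly where $1/k$ stops dominating $1/n_2$ and is replaced by the gap-based lower bound on the distance. Combining the two inequalities yields $b_{n_2}(A)=1/n_2$ for all sufficiently large $n_2$, completing the proof.
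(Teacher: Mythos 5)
Your proposal is correct and follows essentially the same route as the paper's proof: the first limit via uniform convergence of $E(n_1,\cdot)$ on compacts together with continuity of $z\mapsto\mathrm{dist}(z,\mathrm{Sp}(A))$ and $a_{n_1}(A)\to a(A)$, and the lower bound $b_{n_2}(A)\geq 1/n_2$ by splitting the minimising index $k$ into a finite range where $1/k$ dominates and a tail range where $E(k,a_k(A)+1/n_2)\geq\mathrm{dist}(a_k(A)+1/n_2,\mathrm{Sp}(A))\geq 1/n_2$ via the spectral gap and $a_k(A)\geq a(A)$. Your version merely makes the paper's choice of cut-off and gap parameters ($K_1$, $\delta/4$) explicit.
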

\begin{proof}[Proof of Lemma \ref{cs_and_bs}]
We know that $a_{n_1}(A)+1/n_2$ converges to $a(A)+1/n_2$ as $n_1\rightarrow\infty$. Furthermore, $\mathrm{dist}(z,\mathrm{Sp}(A))$ is continuous in $z$ and $E(n_1,z)$ converges uniformly to $\mathrm{dist}(z,\mathrm{Sp}(A))$ on compact subsets of $\mathbb{C}$. Hence, the limit $c_{n_2}(A)$ exists and is equal to $\mathrm{dist}(a(A)+1/n_2,\mathrm{Sp}(A))$. It is clear that $b_{n_2}(A)\leq c_{n_2}(A)$. Suppose now that $\Xi_{\mathrm{class}}(A)\neq 4$, then for large $n_1$, say bigger than some $N$, and for large enough $n_2$,
\begin{align*}
E(n_1,a_{n_1}(A)+1/n_2)&\geq \mathrm{dist}(a_{n_1}(A)+1/n_2,\mathrm{Sp}(A))\\ &=\left|a_{n_1}(A)+1/n_2-a(A)\right|\\&\geq 1/n_2=\mathrm{dist}(a(A)+1/{n_2},\mathrm{Sp}(A))=c_{n_2}(A).
\end{align*}
Now choose $n_2$ large such that the above inequality holds and $1/n_2\leq 1/N$. Then $b_{n_2,n_1}(A)\geq 1/n_2$. Taking limits finishes the proof.
\end{proof}

If $n_2\geq n_1$, set $\Gamma_{n_2,n_1}(A)=1$. Otherwise, for $1\leq j \leq n_2$, let $k^j_{n_2,n_1}$ be maximal with $1\leq k_{n_2,n_1}^j<n_1$ such that $l^j_{k^j_{n_2,n_1}}\in J_{n_2}^1\cup J_{n_2}^2$ if such $k^j_{n_2,n_1}$ exist, where $J_{n_2}^1$ and $J_{n_2}^2$ are as in the proof for $\Xi_{\mathrm{gap}}$. If $k^1_{n_2,n_1}$ exists with $l^1_{k^1_{n_2,n_1}}\in J_{n_2}^2$, set $\Gamma_{n_2,n_1}(A)=1$. Otherwise, if any of $k^m_{n_2,n_1}$ exists with $l^m_{k^m_{n_2,n_1}}\in J_{n_2}^2$ for $2\leq m \leq n_2$, set $\Gamma_{n_2,n_1}(A)=2$. Suppose that neither of these two cases hold. In this case, compute $b_{n_2,n_1}(A)$. If $b_{n_2,n_1}(A)\geq 1/{n_2}$, set $\Gamma_{n_2,n_1}(A)=3$, otherwise set $\Gamma_{n_2,n_1}(A)=4$. We now must show this provides a $\Pi^A_2$ tower solving our problem.

First we show convergence of the first limit. Fix $n_2$ and consider large $n_1$. The separation of the intervals $J_{n_2}^1$ and $J_{n_2}^2$ ensures that each sequence $\{l^j_{n}\}_{n\in\mathbb{N}}$ cannot visit each interval infinitely often. Since $b_{n_1,n_2}(A)$ is non-increasing in $n_1$, we also see that the question whether $b_{n_2,n_1}(A)\geq 1/{n_2}$ eventually has a constant answer. These observations ensure convergence of the first limit $\Gamma_{n_2}(A)=\lim_{n_1\rightarrow\infty}\Gamma_{n_2,n_1}(A)$.

If $\Xi_{\mathrm{class}}(A)=1$, then for large $n_2$, $l^1_{n_1}$ must eventually be in $J_{n_2}^2$ and hence
$\Gamma_{n_2}(A)=1$. It is also clear that if $\Gamma_{n_2}(A)=1$, then $l^1_{n_1}$ converges to a positive constant, which implies $\Xi_{\mathrm{class}}(A)=1$. If $\Xi_{\mathrm{class}}(A)=2$, then for large $n_2$, $l^m_{n_1}$ eventually lies in $J_{n_2}^2$ for some $2\leq m \leq n_2$, but $l^1_{n_1}$ eventually in $J_{n_2}^1$. It follows that $\Gamma_{n_2}(A)=2$. If $\Gamma_{n_2}(A)=2$, we know that there exists some $l^m_{n_1}$ convergent to $l\geq 1/n_2$ and hence we know $\Xi_{\mathrm{class}}(A)$ is either $1$ or $2$.

Suppose that $\Xi_{\mathrm{class}}(A)=3$, then for fixed $n_2$ and any $1\leq m \leq n_2$, $l^m_{n_1}$ eventually lies in $J_{n_2}^1$ and hence our lowest level of the tower must eventually depend on whether $b_{n_2,n_1}(A)\geq 1/{n_2}$. From Lemma \ref{cs_and_bs}, $b_{n_2}(A)=c_{n_2}(A)=1/n_2$ for large $n_2$. It follows that for large $n_2$, $b_{n_2}(A)\geq 1/n_2$ for all $n_1$ and $\Gamma_{n_2}(A)=3$. If $\Gamma_{n_2}(A)=3$ we know that $c_{n_2}(A)\geq b_{n_2}(A)\geq 1/n_2$, which implies $\Xi_{\mathrm{class}}(A)\neq 4$. Finally, note that if $\Xi_{\mathrm{class}}(A)=4$ but there exists $n_2$ with $\Gamma_{n_2}(A)\neq4$, then the above implies the contradiction $\Xi_{\mathrm{class}}(A)\neq 4$. The partial converses proven above imply $\Gamma_{n_2,n_1}$ realises the $\Pi^A_2$ classification.
\end{proof}

\section{Computational Examples}
\label{num_test}

We now demonstrate that, as well as being optimal from a foundations point of view, the algorithms constructed in this paper can be efficiently implemented for large scale computations. The algorithms have desirable convergence properties, with some converging monotonically or eventually constant as captured by the $\Sigma/\Pi$ classification. They are also completely local and hence \textit{parallelisable}. Pseudocodes are given in Appendix \ref{append_pseudo}.

\begin{remark}
Although we only stated results for the graph case, $l^2(V(\mathcal{G}))$, in \S \ref{gen_unbd}, the ideas used to prove the results show that all the classification results and algorithms in \S \ref{gen_unbd}, \S \ref{spec_gap_se_class} and \S \ref{disc_results} extend to general separable Hilbert spaces $\mathcal{H}$. Once a basis is chosen (so that matrix elements make sense), we can introduce concepts like bounded dispersion etc.
\end{remark}

\subsection{Polynomial coefficients}

We first consider computing spectra and pseudospectra of partial differential operators of the form
\begin{equation}
\label{formal}
T=P(x_1,...,x_d,\partial_1,...,\partial_d),
\end{equation}
for a polynomial $P$. In this case, the algorithms of \S \ref{dfohjg} (which use a Hermite basis in the proof) reduce to computing the spectrum/pseudospectrum of infinite matrices $A$ acting on $l^2(\mathbb{N})$. From the comments in Example \ref{findf3} and recurrence relations for Hermite functions, we can choose a basis such that $f(n)-n\sim Cn^{(d-1)/d}$, where $f$ is the dispersion function in (\ref{bd_disp}) and $C(d)$ is a constant. We also choose $f$ so that it describes the off-diagonal sparsity structure of $A$, so that $A_{n,k}=A_{k,n}=0$ if $k>f(n)$. Hence, this section also showcases the algorithms presented in \S \ref{gen_unbd}, and the two different methods become equivalent.

Our algorithms are built around routines such as \texttt{DistSpec} (Appendix \ref{append_pseudo}) that compute smallest singular values of matrices. For the examples in this section, all error bounds were \textit{verified with interval arithmetic} using the package INTLAB \cite{Ru99a} that runs in MATLAB. The most efficient way to do this is as follows. First, we compute a candidate smallest singular value $\lambda$ and right-singular vector $v$ of the corresponding finite matrix $B$ in standard double precision.\footnote{Sometimes if $B$ contains entries spanning several orders of magnitude, quadruple precision is used if high accuracy is desired. This is not due to any instabilities in our algorithm, but is simply an intrinsic problem of dealing with such matrices.} This computation can be done using a search routine such as \texttt{DistSpec}, or, often more efficiently if the matrix is sparse, using iterative methods. Let $\tilde B$ be the stored approximation of $B$. Once a candidate pair $(\lambda,v)$ has been computed, we compute a bound on the norm of the residual $\|(\tilde B-\lambda)v\|$ using interval arithmetic. This step is typically faster than the computation of $\lambda$ and $v$. Finally, to obtain an upper bound on the smallest singular value of $B$, we add error bounds corresponding to $\|B-\tilde B\|$ (and also the approximate normalisation of the vector $v$).

\subsubsection{Anharmonic oscillators}
First, consider operators of the form
$$
H=-\Delta+V(x)=-\Delta+\sum_{j}^d a_jx_j+\sum_{j,k=1}^d b_{j,k}x_jx_k+\sum_{\alpha\in\mathbb{Z}_{\geq0}^d,\left|\alpha\right|\leq M}c(\alpha)x^\alpha,
$$
where $a_j,b_{j,k},c(\alpha)\in\mathbb{R}$ and the multi-indices $\alpha$ are chosen such that $\sum_{\left|\alpha\right|\leq M}c(\alpha)x^\alpha$ is bounded below. The Faris--Lavine theorem \cite[Theorem X.28]{reed1975methods} shows that $H$ is essentially self-adjoint. Anharmonic oscillators have attracted interest in quantum research for over three decades \cite{bender2013advanced,weniger1996convergent,bender1973anharmonic,fernandez1989tight}. Amongst their uses are approximations of potentials near stationary points. The problem of developing efficient algorithms to compute their spectra has received renewed interest due to advances in asymptotic analysis and symbolic computing algebra \cite{gaudreau2015computing,barakat2005asymptotic,turbiner2010double}. Current methods are rich and diverse but lack uniformity. We show that we can obtain error control for general anharmonic operators in a computationally efficient manner.

We begin with comparisons to some known results in one dimension \cite{chaudhuri1991improved}:
\begin{align*}
V_1(x)&=x^2-4x^4+x^6, \quad &E_0&=-2,\\
V_2(x)&=4x^2-6x^4+x^6, \quad &E_1&=-9,\\
V_3(x)&=(105/64)x^2-(43/8)x^4+x^6-x^8+x^{10}, \quad &E_0&=3/8,\\
V_4(x)&=(169/64)x^2-(59/8)x^4+x^6-x^8+x^{10}, \quad &E_1&=9/8.
\end{align*}
Following the physicists' convention, if the spectrum is discrete and bounded below, we list the energy levels as $E_0\leq E_1\leq...$. The algorithm $\Gamma_n(A)$ for the spectrum is described by the routine \texttt{CompSpecUB}, shown as pseudocode in Appendix \ref{append_pseudo}. This relies on the approximation of $\left\|R(z,A)\right\|^{-1}$ in Theorem \ref{unif_conv_gamma} given by the routine \texttt{DistSpec}. Other methods such as finite section (of the matrices constructed using Hermite functions) will converge in this case since the spectrum is discrete. However, they do not provide the sharp $\Sigma_1$ classification. We found that the grid resolution of the search routine and the search accuracy for the smallest singular values, not the matrix size, were the main deciding factors in the final error bound. Once we know roughly where the eigenvalues are, we can speed up computations using the fact that the algorithm is local. Furthermore, the computational time of the search routine only grows \textit{logarithmically} in its precision. Hence we set the grid spacing and the spacing of the search routine to $(10^5n)^{-1}$.

Table \ref{test_run} shows the results. All values were computed using a local search grid. In this simple example, the output agrees precisely with the eigenvalues since they lie on the search grid. Note that we quickly gain convergence and the error bounds become the precision of the search routine in \texttt{DistSpec} (namely, $(10^5 n)^{-1}$). This is usually a pessimistic estimate of $\sigma_{\mathrm{inf}}(P_{f(n)}(H-zI)P_n)$. For example, using $n=500$ for the first potential $V_1$, the estimate for $\sigma_{\mathrm{inf}}(P_{f(n)}(H-zI)P_n)$ with $z=-2$ obtained by iterative methods and then verified with interval arithmetic is $5.8\times 10^{-12}$. Finally, for potentials with large polynomial order and for large truncation parameter $n$, the truncation of the matrix involves entries of large and small modulus. For example, for $n=1000$, the truncated matrix corresponding to the potential $V_4$ has a maximum entry modulus of approximately $7.85\times 10^{15}$. Hence we found it necessary to use quadruple precision for such values. This is not due to any instabilities in our algorithm, but is simply an intrinsic problem of dealing with the matrices of this example. Even when using quadruple precision, the total computation time (including verification with interval arithmetic) to compute any of the entries in Table \ref{test_run} was less than 20 seconds on a 3.9GHz desktop computer without parallelisation.

\begin{table}
\centering
\begin{tabular}{|c||c|c|c|}
 \hline
Potential & Exact & $n=500$ &$n=1000$\\
\hline
 $V_1$   & $-2$ &$ -2 \pm 2\times10^{-8} $&$ -2 \pm 10^{-8} $\\
 $V_2$   & $-9$ &$ -9 \pm 2\times10^{-8}$&$ -9 \pm 10^{-8}$\\
 $V_3$   & $0.375$&$ 0.375 \pm 1.62\times 10^{-4} $& $0.375 \pm  10^{-7}$\\
 $V_4$   & $1.125$ & $1.125 \pm 6.02\times 10^{-4}$&$1.125 \pm 2.4\times 10^{-7}$\\
  \hline
\end{tabular}
\vspace{4mm}
\caption{Test of our algorithm on some potentials with known eigenvalues. Note that we quickly converge to the eigenvalue with error bounds computed by the algorithm (through \texttt{DistSpec}) and verified using interval arithmetic.}
\label{test_run}
\end{table}

Next, we demonstrate how the algorithm can be used in more than one spatial dimension. We consider
$$
H_1=-\Delta+x_1^2x_2^2,
$$
which is a classic example of a potential that does not blow up at infinity in every direction, yet still induces an operator with compact resolvent \cite{simon1983some}. Figure \ref{2d_harmonic} shows the convergence of the estimate of $\left\|R(z,H_1)\right\|^{-1}$ from above, as well as finite section estimates. As expected from variational methods, the finite section method produces eigenvalues converging to the true eigenvalues from above (there is no essential spectrum and the operator is positive). Furthermore, the areas where $\texttt{DistSpec}$ has converged correspond to areas where finite section has converged. We also show rigorous error bounds computed using \texttt{DistSpec} for different $n$ for the first five eigenvalues. These are computed using an adaptive grid spacing to resolve the local minima of the approximation of $\left\|R(z,H_1)\right\|^{-1}$ using rectangular truncations. For $n=10^4$, it took about a second to locate the candidate eigenvalue and eigenvector pair (the candidate singular value and left-singular vector of the truncation) and on the order of milliseconds to verify with interval arithmetic. Both timings were on a 3.9GHz desktop computer without parallelisation.

\begin{figure}
\centering
\includegraphics[width=0.48\textwidth,trim={30mm 90mm 35mm 90mm},clip]{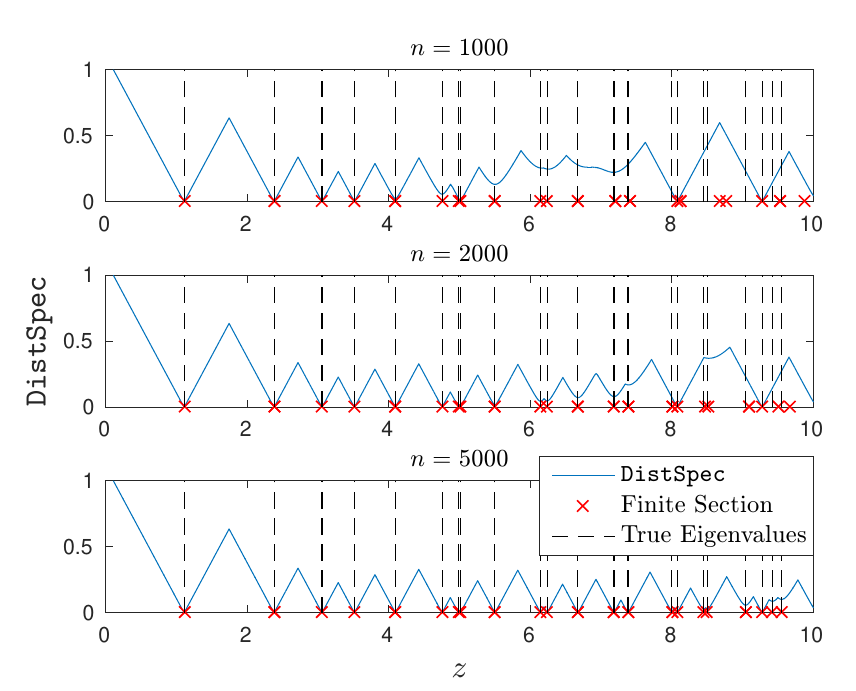}
\includegraphics[width=0.48\textwidth,trim={30mm 90mm 35mm 90mm},clip]{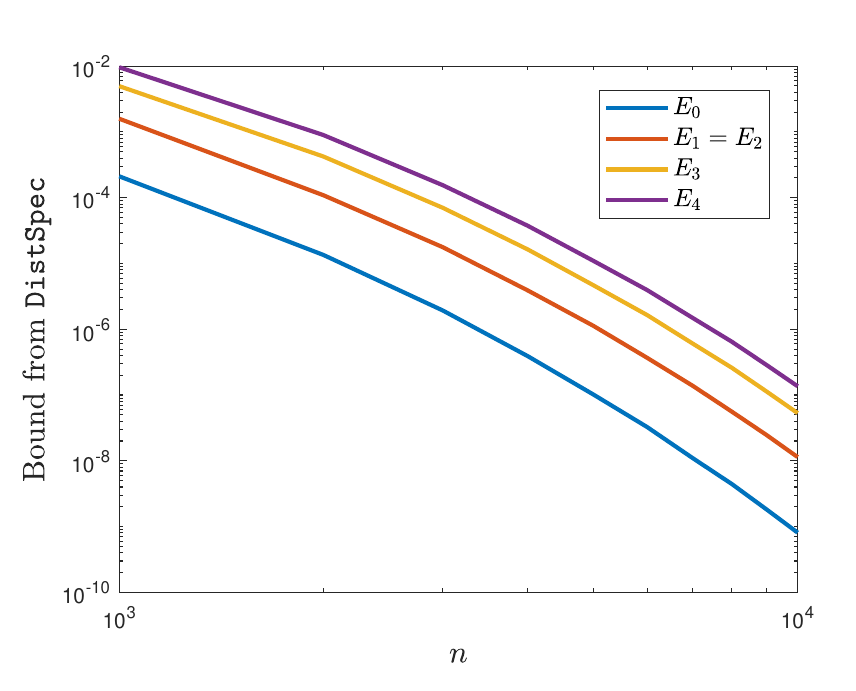}
\caption{Left: The convergence of the algorithm (shown as \texttt{DistSpec}) and finite section to the true eigenvalues on the interval $[0,10]$. Note that points with reliable finite section eigenvalues correspond to points where the estimate of the resolvent norm is well resolved. Right: Error bounds computed using \texttt{DistSpec} (with an adaptive grid spacing) and verified with interval arithmetic.}
\label{2d_harmonic}
\end{figure}

\subsubsection{Pseudospectra and $\mathcal{PT}$ symmetry}
\begin{figure}
\centering
\includegraphics[width=0.48\textwidth,trim={32mm 90mm 35mm 97mm},clip]{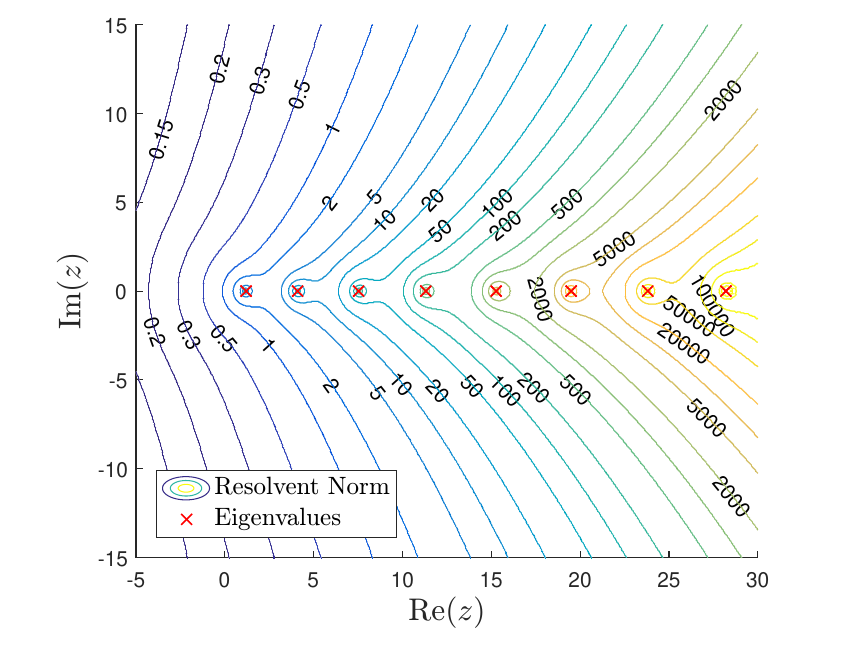}
\includegraphics[width=0.48\textwidth,trim={32mm 90mm 35mm 97mm},clip]{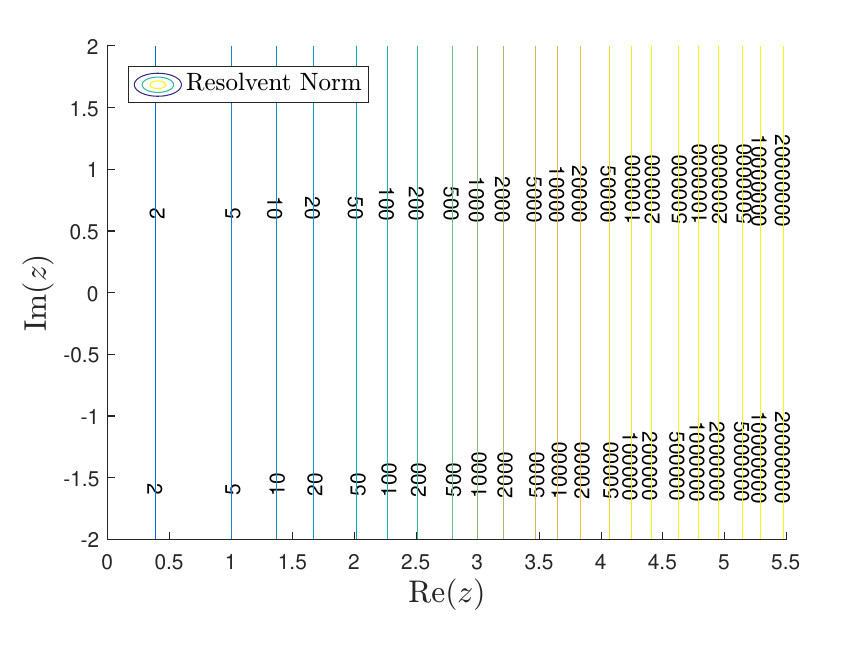}
\caption{Left: Computed pseudospectrum for the imaginary cubic oscillator. Note the clear presence of eigenvalues. Right: Computed pseudospectrum for imaginary Airy operator. Both figures were produced using $n=1000$ and verified with interval arithmetic.}
\label{cubic_osc}
\end{figure}

We now turn to pseudospectra and consider $\mathcal{PT}$-symmetric non-self-adjoint operators $T$ (we consider examples for which compactly supported smooth functions form a core of $T$ and $T^*$ \cite{edmunds1987spectral}). The first example is the imaginary cubic oscillator in one dimension,
$$
H_2=-d^2/{dx^2}+ix^3.
$$
This operator is the most studied example of a $\mathcal{PT}$-symmetric operator\footnote{Meaning $[H_2,\mathcal{PT}]=0$ with $(\mathcal{P}f)(x)=f(-x)$ and $(\mathcal{T}f)(x)=\overline{f(x)}$.}\cite{bender1998real,bender2002complex}, as well as appearing in statistical physics and quantum field theory \cite{fisher1978yang}. The resolvent is compact \cite{caliceti1980perturbation}, and all of the eigenvalues are simple and in $\mathbb{R}_{\geq 0}$ \cite{dorey2001spectral,tai2006simpleness}. The eigenvectors are complete but do not form a Riesz basis \cite{siegl2012metric}. Figure \ref{cubic_osc} (left) shows the computed pseudospectrum using $n=1000$. This demonstrates the instability of the spectrum of the operator. For this example, it took about $0.01$s to estimate the resolvent norm using $n=1000$ and on the order of milliseconds to verify with interval arithmetic, per point, on a 3.9GHz desktop computer without parallelisation.

Next, we consider the imaginary Airy operator
$$
H_3=-d^2/{dx^2}+ix.
$$
This is known to have an empty spectrum \cite{helffer2013spectral} and so demonstrates that the algorithm is effective in this case also. Note that any finite section method will necessarily overestimate the pseudospectrum in regions of the complex plane due to the presence of false eigenvalues. $H_3$ is $\mathcal{PT}$-symmetric and has compact resolvent. The resolvent norm $\left\|R(z,H_3)\right\|$ only depends on the real part of $z$ and blows up exponentially as $\mathrm{Re}(z)\rightarrow +\infty$. We have shown the computed pseudospectrum for $n=1000$ in Figure \ref{cubic_osc} (right). Timings for this example were similar to the imaginary cubic oscillator.

We do not need to discretise anything to apply the above method. Up to numerical errors in the testing of positive definiteness, all computed pseudospectra are guaranteed to be inside the correct pseudospectra. In fact, in our case, we checked results using interval arithmetic and obtained a verified lower bound on the resolvent norm.\footnote{To turn this into a formal proof, one would need a proof that the implementation of interval arithmetic is correct and a proof that our code uses the algorithms correctly. Both of these are beyond the scope of this paper.} This reliability is in contrast to the numerical experiments conducted in, for example \cite{davies1999pseudo}, where the operator is discretised. It is also easy to construct examples where discretisations fail dramatically, either not capturing the whole spectrum or suffering from spectral pollution. Even without spectral pollution, figuring out which parts of computations are trustworthy can be very difficult for finite section and related methods \cite{howmany}. Algorithms such as \texttt{PseudoSpec} are a valuable tool to test the reliability of such outputs.

\subsection{Partial differential operators with general coefficients}

\subsubsection{Perturbed harmonic oscillator}

As a first set of examples, we consider
$$
T=-\Delta +x^2 +V(x),
$$
on $L^2(\mathbb{R})$, where $V$ is a bounded potential. Such operators have discrete spectra. The perturbation $V$ causes the eigenvalues to shift relative to the classical harmonic oscillator, whose spectrum is the set of odd positive integers. Table \ref{shift_run} shows the first five eigenvalues for a range of potentials. Each entry in the table is computed with an error bound at most $10^{-9}$ provided by \texttt{DistSpec}. The truncation size is chosen adaptively to achieve this error, and computational times were on the order of seconds on a 3.9GHz desktop computer without parallelisation.

\begin{table}
\centering
\begin{tabular}{|c||c|c|c|c|c|}
 \hline
Potential $V$ & $E_0$ & $E_1$ &$E_2$&$E_3$&$E_4$\\
\hline
 $\cos(x)$   & $1.7561051579$  & $3.3447026910$ & $5.0606547136$ & $6.8649969390$ & $8.7353069954$ \\
 $\tanh(x)$   &$0.8703478514$ & $2.9666370800$& $4.9825969775$& $6.9898951678$& $8.9931317537$\\
 $\exp(-x^2)$   & $1.6882809272$& $3.3395578680$ & $5.2703748823$&$7.2225903394$&$9.1953373991$\\
 $(1+x^2)^{-1}$   & $1.7468178026$ & $3.4757613534$ &$5.4115076464$& $7.3503220313$& $9.3168983920$\\
  \hline
\end{tabular}
\vspace{4mm}
\caption{First five computed eigenvalues for different potentials. Each eigenvalue $E_k$ is computed with an error bound at most $10^{-9}$ via \texttt{DistSpec} with an adaptive truncation size. The eigenvalue $E_k$ is a perturbation of the harmonic oscillator eigenvalue $2k+1$.}
\label{shift_run}
\end{table}

\subsubsection{Fourth-order operator}

Next, we consider the operator
$$
T_\lambda=\frac{d^4}{dx_1^4}+\left(-i\frac{d}{dx_2}+\frac{x_1}{2}\right)^4+\frac{2\lambda x_2+\lambda^2}{1+x_2^2},
$$ 
on $L^2(\mathbb{R}^2)$, as an example with gaps in the essential spectrum. Figure \ref{beam} shows a portion of the spectrum, as well as the output of finite section, using $100$ basis functions in each spatial dimension. The maximum error bound provided by \texttt{DistSpec} is bounded by $10^{-2}$. For each value of $\lambda$, the computational time was on the order of five minutes for a grid spacing of $0.005$ (approximately $2000$ test points\footnote{Though we did not do so, one can do the following to reduce the number of grid points. Suppose one has access to finite section eigenvalue approximations of a self-adjoint operator. In that case, one can consider grid points close to the computed eigenvalues. This method works because finite sections of a self-adjoint operator will approximate all of the spectrum (though of course, suffer from spectral pollution). Another strategy is to choose the grid adaptively. Once an estimate of the distance $r$ to the spectrum has been computed at a point $z$, we can ignore grid points in a ball of radius $r$ around $z$.}) when executed with parallelisation using 20 CPU cores. Finite section produces heavy spectral pollution in the gaps of the essential spectrum. The spectrum for $\lambda=0$ is shown as red circles, and consists of isolated eigenvalues of infinite multiplicity. As $\lambda$ increases, these fan out to produce the essential spectra shown. 

\begin{figure}
\centering
\begin{overpic}[width=0.49\textwidth]{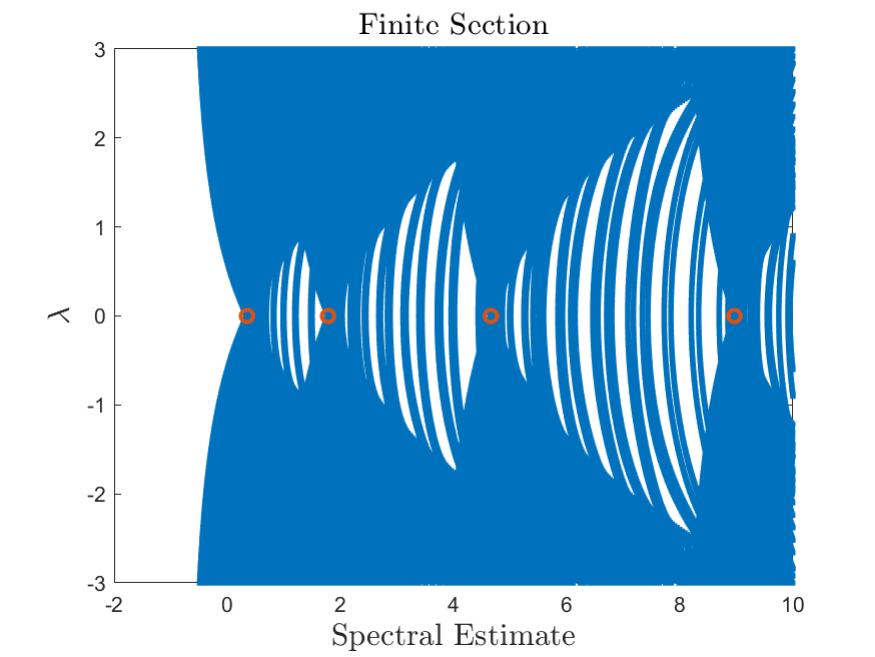}
\footnotesize
		\put (13.5,53) {\tiny{Spectral}}
		\put (13.5,50) {\tiny{Pollution}}
		\normalsize
		\put(22,49){\line(1,-1){10}}
		\put(22,49){\line(2,-1){20}}
		\put(22,49){\line(3,-1){30}}
		\put(22,49){\line(4,-1){40}}
		\put(22,49){\line(5,-1){50}}
\end{overpic}
\includegraphics[width=0.49\textwidth]{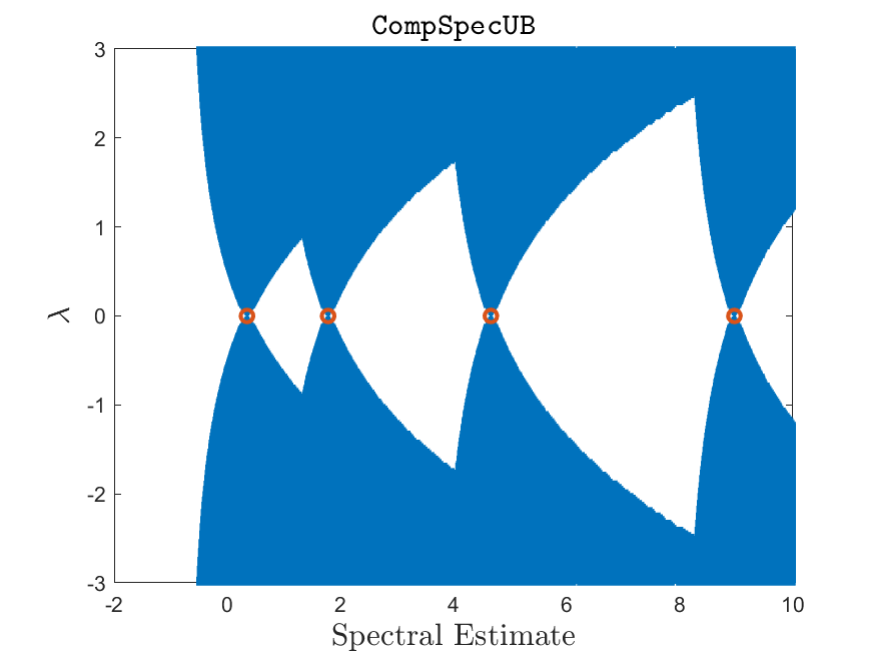}
\caption{Left: Output of finite section showing severe spectral pollution in the gaps of the essential spectrum. Right: Output of \texttt{CompSpecUB}. Both plots use $100$ basis functions in each spatial dimension. The spectrum of $T_0$ is shown as red circles.}
\label{beam}
\end{figure}

\subsection{Example for discrete Spectra}
\label{disc_num}
We now turn to the computation of discrete spectra. Although it is hard to analyse the convergence of a height two tower, we can take advantage of the extra structure in this problem. The routine \texttt{DiscreteSpec} in Appendix \ref{append_pseudo} computes $\Gamma_{n_2,n_1}(A)$ such that $\lim_{n_1\rightarrow\infty}\Gamma_{n_2,n_1}(A)$ is a finite subset of $\mathrm{Sp}_d(A)$. Furthermore, for each $z\in\mathrm{Sp}_d(A)$, there is at most one point in $z_{n_1}\in\Gamma_{n_2,n_1}(A)$ approximating $z$. We can use the routine \texttt{DistSpec} to gain an error bound of $\mathrm{dist}(z_{n_1},\mathrm{Sp}(A))$, which, for large $n_1$, will be equal to $\left|z-z_{n_1}\right|$ since $z$ is isolated. As we increase $n_2$, more and more of the discrete spectrum (in general portions nearer the essential spectrum) are approximated. 

Our example is the Almost Mathieu Operator on $l^2(\mathbb{Z})$ given by
\begin{equation*}
(H_{\alpha}x)_n=x_{n-1}+x_{n+1}+2\lambda\cos(2\pi n\alpha)x_n,
\end{equation*}
where we set $\lambda=1$ (critical coupling). For rational choices of $\alpha$, the operator is periodic and its spectrum is purely absolutely continuous. For irrational $\alpha$ the spectrum is a Cantor set (Ten Martini Problem). To generate a discrete spectrum, we add a perturbation of the potential of the form
\begin{equation}
\label{dsfghklg}
V(n)=V_n/(\left|n\right|+1),
\end{equation}
where $V_n$ are independent and uniformly distributed in $[-2,2]$. The perturbation is compact so preserves the essential spectrum. This type of problem is well studied, for example, in the more general setting of Jacobi operators \cite{teschl2000jacobi,hundertmark2002lieb}.

\begin{figure}
\centering
\includegraphics[width=1\textwidth,trim={10mm 0mm 10mm 0mm},clip]{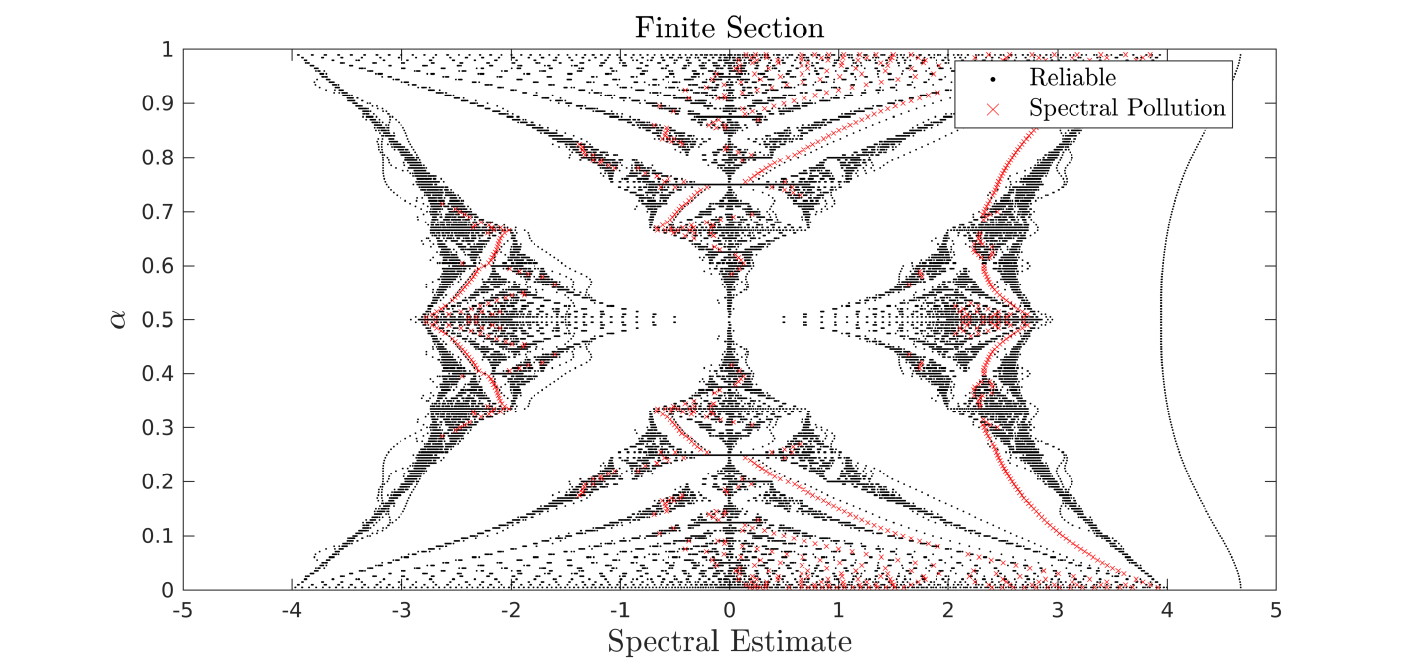}\\
\includegraphics[width=1\textwidth,trim={10mm 0mm 10mm 0mm},clip]{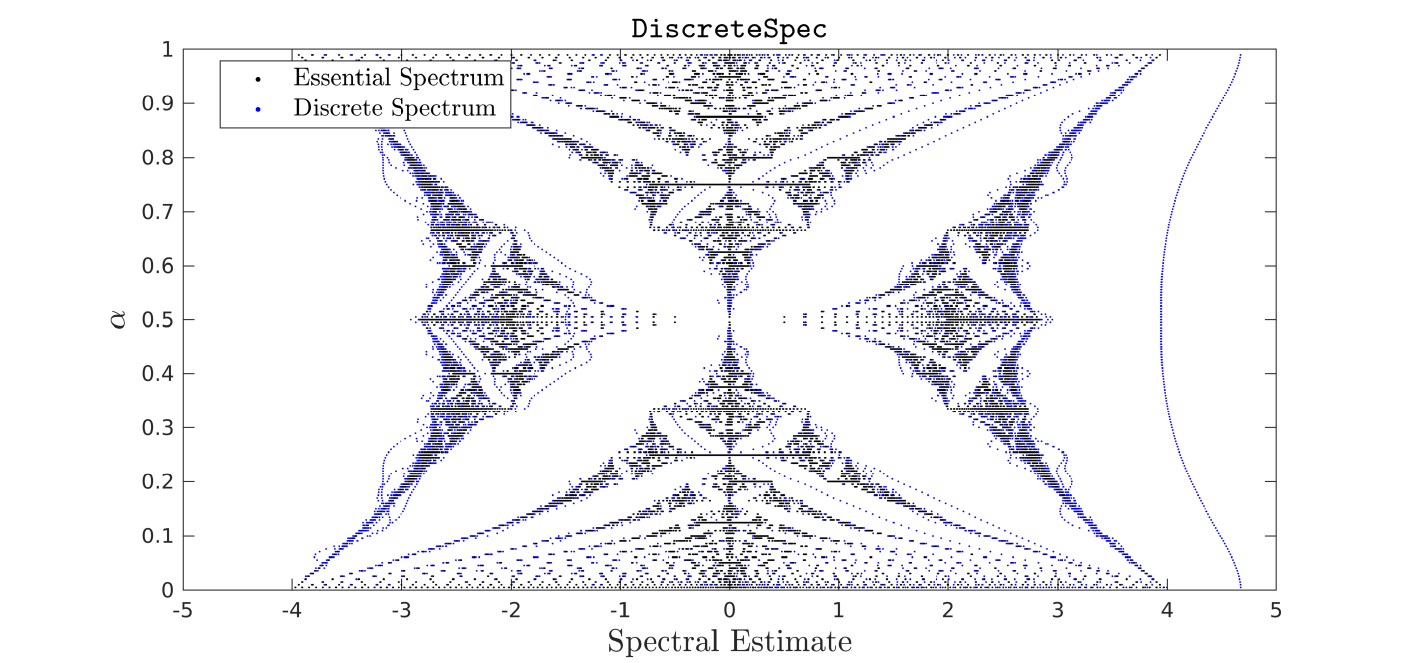}
\caption{Top: Output of finite section. Spectral pollution detected by our algorithm is shown as red crosses. Bottom: Output of \texttt{DiscreteSpec} and the splitting into the essential spectrum and the discrete spectrum. The output captures the discrete spectrum down to a distance $\approx 0.01$ away from the essential spectrum, which can be made smaller for larger $n_2$.}
\label{disc1}
\end{figure}

Figure \ref{disc1} shows a typical result for a realisation of the random potential. The top panel of the figure shows the output of finite section and our algorithm (with a uniform error bound of $10^{-2}$) for computing the total spectrum. The bottom panel of the figure shows the output of \texttt{DiscreteSpec}, which separates the discrete spectrum from the essential spectrum. For each value of $\alpha$, the computational time was on the order of tens of seconds on a 3.9GHz desktop computer without parallelisation. For each $\alpha$, we took $n_2$ large enough for an expected limit inclusion $\mathrm{Sp}_d(A)\subset\Gamma_{n_2}(A)+B_{0.01}(0)$ (obtained by comparing with the output of the height two tower for computing the essential spectrum). Taking $n_2$ larger caused sharper inclusion bounds. Additionally, we confirmed the accuracy of the results using a height one tower to compute the spectrum with and without the random potential. Note that it is difficult to detect spectral pollution when using finite section with the additional perturbation (\ref{dsfghklg}). In contrast, \texttt{DiscreteSpec} computes the discrete spectrum without spectral pollution and allows us to separate the discrete spectrum from the essential spectrum.

The error bounds provided by \texttt{DistSpec} (applied to the output of \texttt{DiscreteSpec}) can also be translated into computing approximates of the eigenvectors of an operator $A$, specifically those corresponding to the discrete spectrum, with an error bound in the following manner. The routine \texttt{ApproxEigenvector} in Appendix \ref{append_pseudo} computes a vector $x_{n_1}$ of norm $\approx1$ such that (in this case taking $\delta\downarrow0$, $c_n=0$)
\[
\left\|(A-z_{n_1}I)x_{n_1}\right\|\leq\texttt{DistSpec}(A,n_1,f(n_1),z_{n_1}).
\]
We write
$
x_{n_1}=x^{d}_{n_1}+y_{n_1},
$
where $x^{d}_{n_1}$ is an eigenvector of $A$ with eigenvalue $z$, and $y_{n_1}$ is perpendicular to the eigenspace associated with $z$ and $z_{n_1}\rightarrow z$. It follows that
$$
\left\|(A-zI)y_{n_1}\right\|\leq \left|z-z_{n_1}\right|+\texttt{DistSpec}(A,n_1,f(n_1),z_{n_1})\leq 2\times\texttt{DistSpec}(A,n_1,f(n_1),z_{n_1}),
$$
for large $n_1$. But $A-zI$ is bounded below on the orthogonal complement of the eigenspace, with lower bound $\mathrm{dist}(z,\mathrm{Sp}(A)\backslash \{z\})$. Hence,
$$
\left\|y_{n_1}\right\|\leq\frac{2\times\texttt{DistSpec}(A,n_1,f(n_1),z_{n_1})}{\mathrm{dist}(z,\mathrm{Sp}(A)\backslash \{z\})}
$$
for large $n_1$. This bound also bounds the $l^2$ distance of $x_{n_1}$ to the eigenspace, and can be estimated by approximating the spectrum of $A$. It is also straightforward to adjust this procedure to eigenvalues of multiplicity greater than $1$ and approximate the whole eigenspace. For the above example, all the eigenvalues were found to have multiplicity $1$, as expected for a random perturbation. Finally, the method of computing eigenvectors and error bounds can also be used for unbounded operators when $z$ lies in the discrete spectrum.

\renewcommand{\baselinestretch}{1.15}

\bibliographystyle{abbrv}      
\bibliography{PhD_bib}

\begin{thebibliography}{100}

\bibitem{firenet_SIAM_NEWS}
V.~Antun, M.~J. Colbrook, and A.~C. Hansen.
\newblock Proving existence is not enough: {M}athematical paradoxes unravel the
  limits of neural networks in artificial intelligence.
\newblock {\em SIAM News}, 55(4):1--4, 2022.

\bibitem{armentano2018stable}
D.~Armentano, C.~Beltr{\'a}n, P.~B{\"u}rgisser, F.~Cucker, and M.~Shub.
\newblock A stable, polynomial-time algorithm for the eigenpair problem.
\newblock {\em Journal of the European Mathematical Society}, 20(6):1375--1437,
  2018.

\bibitem{Arveson_discrete91}
W.~Arveson.
\newblock Discretized {CCR} algebras.
\newblock {\em Journal of Operator Theory}, 26(2):225--239, 1991.

\bibitem{Arveson_Improper93}
W.~Arveson.
\newblock Improper filtrations for {$C\sp *$}-algebras: spectra of unilateral
  tridiagonal operators.
\newblock {\em Acta Sci. Math. (Szeged)}, 57(1-4):11--24, 1993.

\bibitem{Arveson_noncommute93}
W.~Arveson.
\newblock Noncommutative spheres and numerical quantum mechanics.
\newblock In {\em Operator algebras, mathematical physics, and low-dimensional
  topology}, volume~5 of {\em Res. Notes Math.}, pages 1--10. A K Peters,
  Wellesley, MA, 1993.

\bibitem{Arveson_cnum_lin94}
W.~Arveson.
\newblock {$C^*$}-algebras and numerical linear algebra.
\newblock {\em Journal of Functional Analysis}, 122(2):333--360, 1994.

\bibitem{Arveson_role_of94}
W.~Arveson.
\newblock The role of {$C^\ast$}-algebras in infinite-dimensional numerical
  linear algebra.
\newblock In {\em {$C^\ast$}-algebras: 1943--1993 ({S}an {A}ntonio, {TX},
  1993)}, volume 167 of {\em Contemp. Math.}, pages 114--129. Amer. Math. Soc.,
  Providence, RI, 1994.

\bibitem{bailey2001algorithm}
P.~B. Bailey, W.~N. Everitt, and A.~Zettl.
\newblock Algorithm 810: The {SLEIGN2} {S}turm-{L}iouville code.
\newblock {\em ACM Transactions on Mathematical Software (TOMS)},
  27(2):143--192, 2001.

\bibitem{bailey1978automatic}
P.~B. Bailey, M.~K. Gordon, and L.~F. Shampine.
\newblock Automatic solution of the {S}turm-{L}iouville problem.
\newblock {\em ACM Transactions on Mathematical Software (TOMS)},
  4(3):193--208, 1978.

\bibitem{barakat2005asymptotic}
T.~Barakat.
\newblock The asymptotic iteration method for the eigenenergies of the
  anharmonic oscillator potential {$V (x)= Ax^{2\alpha}+ Bx^2$}.
\newblock {\em Physics Letters A}, 344(6):411--417, 2005.

\bibitem{opt_big}
A.~Bastounis, A.~C. Hansen, and V.~{Vla\v{c}i\'{c}}.
\newblock The extended {S}male's 9th problem -- {O}n computational barriers and
  paradoxes in estimation, regularisation, computer-assisted proofs and
  learning.
\newblock {\em arXiv:2110.15734}, 2021.

\bibitem{ben2015can}
J.~Ben-Artzi, M.~J. Colbrook, A.~C. Hansen, O.~Nevanlinna, and M.~Seidel.
\newblock Computing {S}pectra -- {O}n the {S}olvability {C}omplexity {I}ndex
  hierarchy and towers of algorithms.
\newblock {\em arXiv:1508.03280v5}, 2020.

\bibitem{CRAS_SCI}
J.~Ben-Artzi, A.~C. Hansen, O.~Nevanlinna, and M.~Seidel.
\newblock New barriers in complexity theory: On the {S}olvability {C}omplexity
  {I}ndex and the towers of algorithms.
\newblock {\em Comptes Rendus Mathematique}, 353(10):931 -- 936, 2015.

\bibitem{benartzi2020computing}
J.~Ben-Artzi, M.~Marletta, and F.~R{\"o}sler.
\newblock Computing the sound of the sea in a seashell.
\newblock {\em Foundations of Computational Mathematics}, pages 1--35, 2021.

\bibitem{Jonathan_res}
J.~Ben-Artzi, M.~Marletta, and F.~R\"{o}sler.
\newblock Computing scattering resonances.
\newblock {\em J. Eur. Math. Soc.}, (to appear).

\bibitem{bender1998real}
C.~M. Bender and S.~Boettcher.
\newblock Real spectra in non-{H}ermitian {H}amiltonians having {$PT$}
  symmetry.
\newblock {\em Physical Review Letters}, 80(24):5243, 1998.

\bibitem{bender2002complex}
C.~M. Bender, D.~C. Brody, and H.~F. Jones.
\newblock Complex extension of quantum mechanics.
\newblock {\em Physical Review Letters}, 89(27):270401, 2002.

\bibitem{bender2013advanced}
C.~M. Bender and S.~A. Orszag.
\newblock {\em Advanced mathematical methods for scientists and engineers {I}:
  {A}symptotic methods and perturbation theory}.
\newblock Springer Science \& Business Media, 2013.

\bibitem{bender1973anharmonic}
C.~M. Bender and T.~T. Wu.
\newblock Anharmonic oscillator. {II}. {A} study of perturbation theory in
  large order.
\newblock {\em Physical Review D}, 7(6):1620, 1973.

\bibitem{BCSS}
L.~Blum, F.~Cucker, M.~Shub, and S.~Smale.
\newblock {\em Complexity and Real Computation}.
\newblock Springer-Verlag New York, Inc., Secaucus, NJ, USA, 1998.

\bibitem{BSS_Machine}
L.~Blum, M.~Shub, and S.~Smale.
\newblock On a theory of computation and complexity over the real numbers:
  {NP}-completeness, recursive functions and universal machines.
\newblock {\em American Mathematical Society. Bulletin.}, 21(1):1--46, 1989.

\bibitem{boche2019solvability}
H.~Boche and V.~Pohl.
\newblock The solvability complexity index of sampling-based {H}ilbert
  transform approximations.
\newblock In {\em 2019 13th International conference on Sampling Theory and
  Applications (SampTA)}, pages 1--4. IEEE, 2019.

\bibitem{boffi2000problem}
D.~Boffi, F.~Brezzi, and L.~Gastaldi.
\newblock On the problem of spurious eigenvalues in the approximation of linear
  elliptic problems in mixed form.
\newblock {\em Mathematics of Computation}, 69(229):121--140, 2000.

\bibitem{Boffi2}
D.~Boffi, R.~G. Duran, and L.~Gastaldi.
\newblock A remark on spurious eigenvalues in a square.
\newblock {\em Appl. Math. Lett.}, 12(3):107--114, 1999.

\bibitem{bogli2014guaranteed}
S.~B{\"o}gli, B.~M. Brown, M.~Marletta, C.~Tretter, and M.~Wagenhofer.
\newblock Guaranteed resonance enclosures and exclosures for atoms and
  molecules.
\newblock {\em Proceedings of the Royal Society A: Mathematical, Physical and
  Engineering Sciences}, 470(2171):20140488, 2014.

\bibitem{bombieri2006millennium}
E.~Bombieri, S.~Cook, P.~Deligne, C.~Fefferman, J.~Gray, A.~Jaffe, J.~Milnor,
  A.~Wiles, and E.~Witten.
\newblock The {M}illennium {P}rize {P}roblems.
\newblock {\em CMI/AMS}, 2006.

\bibitem{Bottcher_pseu}
A.~B{\"o}ttcher.
\newblock Pseudospectra and singular values of large convolution operators.
\newblock {\em Journal of Integral Equations and Applications}, 6(3):267--301,
  1994.

\bibitem{Albrecht_Fields}
A.~B{\"o}ttcher.
\newblock Infinite matrices and projection methods.
\newblock In {\em Lectures on operator theory and its applications ({W}aterloo,
  {ON}, 1994)}, volume~3 of {\em Fields Inst. Monogr.}, pages 1--72. Amer.
  Math. Soc., Providence, RI, 1996.

\bibitem{Arieh2}
A.~B{\"o}ttcher, H.~Brunner, A.~Iserles, and S.~P. N{\o}rsett.
\newblock On the singular values and eigenvalues of the {F}ox-{L}i and related
  operators.
\newblock {\em New York J. Math.}, 16:539--561, 2010.

\bibitem{bottcher_grudsky_iserles_2011}
A.~B{\"o}ttcher, S.~Grudsky, and A.~Iserles.
\newblock Spectral theory of large {Wiener--Hopf} operators with
  complex-symmetric kernels and rational symbols.
\newblock {\em Mathematical Proceedings of the Cambridge Philosophical
  Society}, 151(1):161--191, 2011.

\bibitem{Albrecht1983}
A.~B\"{o}ttcher and B.~Silbermann.
\newblock The finite section method for {T}oeplitz operators on the
  quarter-plane with piecewise continuous symbols.
\newblock {\em Mathematische Nachrichten}, 110:279--291, 1983.

\bibitem{Bottcher_book}
A.~B{\"o}ttcher and B.~Silbermann.
\newblock {\em Introduction to large truncated {T}oeplitz matrices}.
\newblock Universitext. Springer-Verlag, New York, 1999.

\bibitem{bottcher2006analysis}
A.~B\"{o}ttcher and B.~Silbermann.
\newblock {\em Analysis of {T}oeplitz operators}.
\newblock Springer Monographs in Mathematics. Springer-Verlag, Berlin, second
  edition, 2006.

\bibitem{Brown_Memoars}
N.~Brown.
\newblock Invariant means and finite representation theory of {C*}-algebras.
\newblock {\em Memoirs of the American Mathematical Society}, 184, 05 2003.

\bibitem{Brown_2006}
N.~P. Brown.
\newblock A{F} embeddings and the numerical computation of spectra in
  irrational rotation algebras.
\newblock {\em Numer. Funct. Anal. Optim.}, 27(5-6):517--528, 2006.

\bibitem{brown2007quasi}
N.~P. Brown.
\newblock Quasi-diagonality and the finite section method.
\newblock {\em Mathematics of Computation}, 76(257):339--360, 2007.

\bibitem{brown2002}
N.~P. Brown, K.~Dykema, and D.~Shlyakhtenko.
\newblock Topological entropy of free product automorphisms.
\newblock {\em Acta Mathematica}, 189(1):1--35, 2002.

\bibitem{Arieh_IMA}
H.~Brunner, A.~Iserles, and S.~P. N{\o}rsett.
\newblock {The spectral problem for a class of highly oscillatory Fredholm
  integral operators}.
\newblock {\em IMA Journal of Numerical Analysis}, 30(1):108--130, 12 2008.

\bibitem{brunner2011}
H.~Brunner, A.~Iserles, and S.~P. N{\o}rsett.
\newblock The computation of the spectra of highly oscillatory {F}redholm
  integral operators.
\newblock {\em J. Integral Equations Applications}, 23(4):467--519, 12 2011.

\bibitem{Annalisa5}
A.~Buffa, P.~Houston, and I.~Perugia.
\newblock Discontinuous {G}alerkin computation of the {M}axwell eigenvalues on
  simplicial meshes.
\newblock {\em Journal of Computational and Applied Mathematics},
  204(2):317--333, 2007.

\bibitem{Annalisa2}
A.~Buffa and I.~Perugia.
\newblock Discontinuous {G}alerkin approximation of the {M}axwell eigenproblem.
\newblock {\em SIAM Journal on Numerical Analysis}, 44(5):2198--2226, 2006.

\bibitem{Annalisa3}
A.~Buffa, I.~Perugia, and T.~Warburton.
\newblock The mortar-discontinuous {G}alerkin method for the 2{D} {M}axwell
  eigenproblem.
\newblock {\em J. Sci. Comput.}, 40(1-3):86--114, 2009.

\bibitem{bunch1977some}
J.~R. Bunch and L.~Kaufman.
\newblock Some stable methods for calculating inertia and solving symmetric
  linear systems.
\newblock {\em Mathematics of computation}, pages 163--179, 1977.

\bibitem{bunch1971direct}
J.~R. Bunch and B.~N. Parlett.
\newblock Direct methods for solving symmetric indefinite systems of linear
  equations.
\newblock {\em SIAM Journal on Numerical Analysis}, 8(4):639--655, 1971.

\bibitem{caliceti1980perturbation}
E.~Caliceti, S.~Graffi, and M.~Maioli.
\newblock Perturbation theory of odd anharmonic oscillators.
\newblock {\em Communications in Mathematical Physics}, 75(1):51--66, 1980.

\bibitem{chaudhuri1991improved}
R.~Chaudhuri and M.~Mondal.
\newblock Improved {H}ill determinant method: {G}eneral approach to the
  solution of quantum anharmonic oscillators.
\newblock {\em Physical Review A}, 43(7):3241, 1991.

\bibitem{Snorre1}
S.~H. Christiansen and R.~Winther.
\newblock On variational eigenvalue approximation of semidefinite operators.
\newblock {\em IMA J. Numer. Anal.}, 33(1):164--189, 2013.

\bibitem{colbrook2020foundations}
M.~J. Colbrook.
\newblock {\em The Foundations of Infinite-Dimensional Spectral Computations}.
\newblock PhD thesis, University of Cambridge, 2020.

\bibitem{colbrookPSEUDO}
M.~J. Colbrook.
\newblock Pseudoergodic operators and periodic boundary conditions.
\newblock {\em Mathematics of Computation}, 89(322):737--766, 2020.

\bibitem{colbrook2021computing}
M.~J. Colbrook.
\newblock Computing spectral measures and spectral types.
\newblock {\em Communications in Mathematical Physics}, 384(1):433--501, 2021.

\bibitem{colbrook4}
M.~J. Colbrook.
\newblock On the computation of geometric features of spectra of linear
  operators on hilbert spaces.
\newblock {\em Foundations of Computational Mathematics}, 2021.

\bibitem{colbrook2022computing}
M.~J. Colbrook.
\newblock Computing semigroups with error control.
\newblock {\em SIAM Journal on Numerical Analysis}, 60(1):396--422, 2022.

\bibitem{colbrook2022mpedmd}
M.~J. Colbrook.
\newblock {The mpEDMD Algorithm for Data-Driven Computations of
  Measure-Preserving Dynamical Systems}.
\newblock {\em arXiv preprint arXiv:2209.02244}, 2022.

\bibitem{colbrook2021can}
M.~J. Colbrook, V.~Antun, and A.~C. Hansen.
\newblock The difficulty of computing stable and accurate neural networks: On
  the barriers of deep learning and {S}male’s 18th problem.
\newblock {\em Proceedings of the National Academy of Sciences},
  119(12):e2107151119, 2022.

\bibitem{ResDMD_JFM}
M.~J. Colbrook, L.~Ayton, and M.~Szoke.
\newblock {Residual Dynamic Mode Decomposition}: {R}obust and verified
  {K}oopmanism.
\newblock {\em arXiv preprint}, 2022.

\bibitem{colb2}
M.~J. Colbrook and A.~C. Hansen.
\newblock On the infinite-dimensional {QR} algorithm.
\newblock {\em Numerische Mathematik}, 143(1):17--83, 2019.

\bibitem{colbrook2021computingTI}
M.~J. Colbrook, A.~Horning, K.~Thicke, and A.~B. Watson.
\newblock Computing spectral properties of topological insulators without
  artificial truncation or supercell approximation.
\newblock {\em arXiv preprint arXiv:2112.03942}, 2021.

\bibitem{colbrook2021computingSIREV}
M.~J. Colbrook, A.~Horning, and A.~Townsend.
\newblock Computing spectral measures of self-adjoint operators.
\newblock {\em SIAM Review}, 63(3):489--524, 2021.

\bibitem{colb1}
M.~J. Colbrook, B.~Roman, and A.~C. Hansen.
\newblock How to compute spectra with error control.
\newblock {\em Physical Review Letters}, 122(25):250201, 2019.

\bibitem{colbrook2021rigorous}
M.~J. Colbrook and A.~Townsend.
\newblock Rigorous data-driven computation of spectral properties of koopman
  operators for dynamical systems.
\newblock {\em arXiv preprint arXiv:2111.14889}, 2021.

\bibitem{Cubitt}
T.~S. Cubitt, D.~Perez-Garcia, and M.~M. Wolf.
\newblock Undecidability of the spectral gap.
\newblock {\em Nature}, 528(7581):207, 2015.

\bibitem{Cucker_AH_real}
F.~Cucker.
\newblock The arithmetical hierarchy over the reals.
\newblock {\em J. Logic Comput.}, 2(3):375--395, 1992.

\bibitem{Davies_sec_order}
E.~B. Davies.
\newblock Spectral enclosures and complex resonances for general self-adjoint
  operators.
\newblock {\em LMS J. Comput. Math.}, 1:42--74, 1998.

\bibitem{davies1999pseudo}
E.~B. Davies.
\newblock Pseudo-spectra, the harmonic oscillator and complex resonances.
\newblock {\em R. Soc. Lond. Proc. Ser. A Math. Phys. Eng. Sci.},
  455(1982):585--599, 1999.

\bibitem{Davies00}
E.~B. Davies.
\newblock A hierarchical method for obtaining eigenvalue enclosures.
\newblock {\em Math. Comp.}, 69(232):1435--1455, 2000.

\bibitem{deift1991bidiagonal}
P.~Deift, J.~Demmel, L.-C. Li, and C.~Tomei.
\newblock The bidiagonal singular value decomposition and {H}amiltonian
  mechanics.
\newblock {\em SIAM journal on numerical analysis}, 28(5):1463--1516, 1991.

\bibitem{deift}
P.~Deift, L.~C. Li, and C.~Tomei.
\newblock Toda flows with infinitely many variables.
\newblock {\em Journal of Functional Analysis}, 64(3):358--402, 1985.

\bibitem{Digernes}
T.~Digernes, V.~S. Varadarajan, and S.~S. Varadhan.
\newblock Finite approximations to quantum systems.
\newblock {\em Rev. Math. Phys.}, 6(4):621--648, 1994.

\bibitem{dorey2001spectral}
P.~Dorey, C.~Dunning, and R.~Tateo.
\newblock Spectral equivalences, {B}ethe ansatz equations, and reality
  properties in {$PT$}-symmetric quantum mechanics.
\newblock {\em Journal of Physics A: Mathematical and General}, 34(28):5679,
  2001.

\bibitem{Doyle_McMullen}
P.~Doyle and C.~McMullen.
\newblock Solving the quintic by iteration.
\newblock {\em Acta Mathematica}, 163(3-4):151--180, 1989.

\bibitem{eastham1996using}
M.~S. Eastham, C.~T. Fulton, and S.~Pruess.
\newblock Using the {SLEDGE} package on {S}turm-{L}iouville problems having
  nonempty essential spectra.
\newblock {\em ACM Transactions on Mathematical Software (TOMS)},
  22(4):423--446, 1996.

\bibitem{edmunds1987spectral}
D.~E. Edmunds and W.~D. Evans.
\newblock {\em Spectral theory and differential operators}.
\newblock Oxford Mathematical Monographs. The Clarendon Press, Oxford
  University Press, New York, 1987.

\bibitem{fefferman1990}
C.~Fefferman and L.~Seco.
\newblock On the energy of a large atom.
\newblock {\em Bull. Amer. Math. Soc. (N.S.)}, 23(2):525--530, 1990.

\bibitem{fefferman1992}
C.~Fefferman and L.~Seco.
\newblock Eigenvalues and eigenfunctions of ordinary differential operators.
\newblock {\em Adv. Math.}, 95(2):145--305, 1992.

\bibitem{fefferman1993aperiodicity}
C.~Fefferman and L.~Seco.
\newblock Aperiodicity of the {H}amiltonian flow in the {T}homas-{F}ermi
  potential.
\newblock {\em Rev. Mat. Iberoamericana}, 9(3):409--551, 1993.

\bibitem{fefferman1997}
C.~Fefferman and L.~Seco.
\newblock The density in a one-dimensional potential.
\newblock {\em Adv. Math.}, 107(2):187--364, 1994.

\bibitem{fefferman1994}
C.~Fefferman and L.~Seco.
\newblock The eigenvalue sum for a one-dimensional potential.
\newblock {\em Adv. Math.}, 108(2):263--335, 1994.

\bibitem{fefferman1994_2}
C.~Fefferman and L.~Seco.
\newblock On the {D}irac and {S}chwinger corrections to the ground-state energy
  of an atom.
\newblock {\em Adv. Math.}, 107(1):1--185, 1994.

\bibitem{fefferman1995}
C.~Fefferman and L.~Seco.
\newblock The density in a three-dimensional radial potential.
\newblock {\em Adv. Math.}, 111(1):88--161, 1995.

\bibitem{fefferman1996}
C.~Fefferman and L.~Seco.
\newblock The eigenvalue sum for a three-dimensional radial potential.
\newblock {\em Adv. Math.}, 119(1):26--116, 1996.

\bibitem{fefferman1996interval}
C.~Fefferman and L.~Seco.
\newblock Interval arithmetic in quantum mechanics.
\newblock In {\em Applications of interval computations ({E}l {P}aso, {TX},
  1995)}, volume~3 of {\em Appl. Optim.}, pages 145--167. Kluwer Acad. Publ.,
  Dordrecht, 1996.

\bibitem{fernandez1989tight}
F.~M. Fern{\'a}ndez, Q.~Ma, and R.~Tipping.
\newblock Tight upper and lower bounds for energy eigenvalues of the
  {S}chr{\"o}dinger equation.
\newblock {\em Physical Review A}, 39(4):1605, 1989.

\bibitem{fisher1978yang}
M.~E. Fisher.
\newblock Yang-{L}ee edge singularity and {$\phi^3$} field theory.
\newblock {\em Physical Review Letters}, 40(25):1610, 1978.

\bibitem{fulton2005computing}
C.~T. Fulton, D.~Pearson, and S.~Pruess.
\newblock Computing the spectral function for singular {S}turm-{L}iouville
  problems.
\newblock {\em Journal of Computational and Applied Mathematics},
  176(1):131--162, 2005.

\bibitem{fulton2005automatic}
C.~T. Fulton, S.~Pruess, and Y.~Xie.
\newblock The automatic classification of {S}turm-{L}iouville problems.
\newblock {\em J. Appl. Math. Comput}, 124:149--186, 2005.

\bibitem{gaudreau2015computing}
P.~J. Gaudreau, R.~M. Slevinsky, and H.~Safouhi.
\newblock Computing energy eigenvalues of anharmonic oscillators using the
  double exponential {S}inc collocation method.
\newblock {\em Annals of Physics}, 360:520--538, 2015.

\bibitem{gil2003operator}
M.~I. Gil.
\newblock {\em Operator functions and localization of spectra}.
\newblock Springer, 2003.

\bibitem{Goldstine}
H.~H. Goldstine, F.~J. Murray, and J.~von Neumann.
\newblock The {J}acobi method for real symmetric matrices.
\newblock {\em J. ACM}, 6(1):59--96, Jan. 1959.

\bibitem{golinelli1994finite}
O.~Golinelli, T.~Jolicoeur, and R.~Lacaze.
\newblock Finite-lattice extrapolations for a {H}aldane-gap antiferromagnet.
\newblock {\em Physical Review B}, 50(5):3037, 1994.

\bibitem{golub1996matrix}
G.~H. Golub and C.~F. Van~Loan.
\newblock Matrix computations, 4th edition, 2013.

\bibitem{hales_Pi}
T.~Hales, M.~Adams, G.~Bauer, T.~D. Dang, J.~Harrison, L.~T. Hoang,
  C.~Kaliszyk, V.~Magron, S.~McLaughlin, T.~T. Nguyen, Q.~T. Nguyen, T.~Nipkow,
  S.~Obua, J.~Pleso, J.~Rute, A.~Solovyev, T.~H.~A. Ta, N.~T. Tran, T.~D.
  Trieu, J.~Urban, K.~Vu, and R.~Zumkeller.
\newblock A formal proof of the {K}epler conjecture.
\newblock {\em Forum Math. Pi}, 5:e2, 29, 2017.

\bibitem{Hales_Annals}
T.~C. Hales.
\newblock A proof of the {K}epler conjecture.
\newblock {\em Annals of Mathematics (2)}, 162(3):1065--1185, 2005.

\bibitem{Hansen_JFA}
A.~C. Hansen.
\newblock On the approximation of spectra of linear operators on {H}ilbert
  spaces.
\newblock {\em Journal of Functional Analysis}, 254(8):2092--2126, 2008.

\bibitem{Hansen_PRS}
A.~C. Hansen.
\newblock Infinite-dimensional numerical linear algebra: theory and
  applications.
\newblock {\em Proc. R. Soc. Lond. Ser. A Math. Phys. Eng. Sci.},
  466(2124):3539--3559, 2010.

\bibitem{hansen2011solvability}
A.~C. Hansen.
\newblock On the solvability complexity index, the {$n$}-pseudospectrum and
  approximations of spectra of operators.
\newblock {\em Journal of the American Mathematical Society}, 24(1):81--124,
  2011.

\bibitem{Hansen2016ComplexityII}
A.~C. Hansen and O.~Nevanlinna.
\newblock Complexity issues in computing spectra, pseudospectra and resolvents.
\newblock {\em Banach Center Publications}, 112:171--194, 2016.

\bibitem{helffer2013spectral}
B.~Helffer.
\newblock {\em Spectral theory and its applications}, volume 139 of {\em
  Cambridge Studies in Advanced Mathematics}.
\newblock Cambridge University Press, Cambridge, 2013.

\bibitem{hundertmark2002lieb}
D.~Hundertmark and B.~Simon.
\newblock Lieb-{T}hirring inequalities for {J}acobi matrices.
\newblock {\em Journal of Approximation Theory}, 118(1):106--130, 2002.

\bibitem{cramer_inequality}
J.~Indritz.
\newblock An inequality for {H}ermite polynomials.
\newblock {\em Proc. Amer. Math. Soc.}, 12:981--983, 1961.

\bibitem{johnstone2021bulk}
D.~Johnstone, M.~J. Colbrook, A.~E.~B. Nielsen, P.~\"Ohberg, and C.~W. Duncan.
\newblock Bulk localized transport states in infinite and finite quasicrystals
  via magnetic aperiodicity.
\newblock {\em Phys. Rev. B}, 106:045149, 2022.

\bibitem{kato1949upper}
T.~Kato.
\newblock On the upper and lower bounds of eigenvalues.
\newblock {\em Journal of the Physical Society of Japan}, 4(4-6):334--339,
  1949.

\bibitem{Laptev}
A.~Laptev and Y.~Safarov.
\newblock Szeg{\H o} type limit theorems.
\newblock {\em Journal of Functional Analysis}, 138(2):544--559, 1996.

\bibitem{ledoux2005matslise}
V.~Ledoux, M.~V. Daele, and G.~V. Berghe.
\newblock {MATSLISE}: {A} {MATLAB} package for the numerical solution of
  {S}turm-{L}iouville and {S}chr{\"o}dinger equations.
\newblock {\em ACM Transactions on Mathematical Software (TOMS)},
  31(4):532--554, 2005.

\bibitem{ledoux2016matslise}
V.~Ledoux and M.~Van~Daele.
\newblock {MATSLISE} 2.0: {A} {M}atlab toolbox for {S}turm-{L}iouville
  computations.
\newblock {\em ACM Transactions on Mathematical Software (TOMS)}, 42(4):1--18,
  2016.

\bibitem{lubich_qm_book}
C.~Lubich.
\newblock {\em From quantum to classical molecular dynamics: reduced models and
  numerical analysis}.
\newblock Zurich Lectures in Advanced Mathematics. European Mathematical
  Society (EMS), Z\"{u}rich, 2008.

\bibitem{brown2010}
B.~Malcolm~Brown, M.~Langer, M.~Marletta, C.~Tretter, and M.~Wagenhofer.
\newblock Eigenvalue enclosures and exclosures for non-self-adjoint problems in
  hydrodynamics.
\newblock {\em LMS Journal of Computation and Mathematics}, 13:65--81, 2010.

\bibitem{marletta1991certification}
M.~Marletta.
\newblock Certification of algorithm 700 numerical tests of the {SLEIGN}
  software for {S}turm-{L}iouville problems.
\newblock {\em ACM Transactions on Mathematical Software (TOMS)},
  17(4):481--490, 1991.

\bibitem{Marletta_pollution}
M.~Marletta.
\newblock Neumann-{D}irichlet maps and analysis of spectral pollution for
  non-self-adjoint elliptic {PDE}s with real essential spectrum.
\newblock {\em IMA J. Numer. Anal.}, 30(4):917--939, 2010.

\bibitem{Marletta_Spec_gaps}
M.~Marletta and R.~Scheichl.
\newblock Eigenvalues in spectral gaps of differential operators.
\newblock {\em Journal of Spectral Theory}, 2(3):293--320, 2012.

\bibitem{McMullen1}
C.~McMullen.
\newblock Families of rational maps and iterative root-finding algorithms.
\newblock {\em Annals of Mathematics (2)}, 125(3):467--493, 1987.

\bibitem{mcmullen1988braiding}
C.~McMullen.
\newblock Braiding of the attractor and the failure of iterative algorithms.
\newblock {\em Invent. Math.}, 91(2):259--272, 1988.

\bibitem{niederreiter1992random}
H.~Niederreiter.
\newblock {\em Random number generation and quasi-{M}onte {C}arlo methods},
  volume~63 of {\em CBMS-NSF Regional Conference Series in Applied
  Mathematics}.
\newblock Society for Industrial and Applied Mathematics (SIAM), Philadelphia,
  PA, 1992.

\bibitem{Olver_code1}
S.~Olver.
\newblock {ApproxFun.jl v0.8}.
\newblock {\em {github (online)
  https://github.com/JuliaApproximation/ApproxFun.jl}}, 2018.

\bibitem{Olver_SIAM_Rev}
S.~Olver and A.~Townsend.
\newblock A fast and well-conditioned spectral method.
\newblock {\em SIAM Review}, 55(3):462--489, 2013.

\bibitem{Olver_Townsend_Proceedings}
S.~Olver and A.~Townsend.
\newblock A {P}ractical {F}ramework for {I}nfinite-dimensional {L}inear
  {A}lgebra.
\newblock In {\em Proceedings of the 1st First Workshop for High Performance
  Technical Computing in Dynamic Languages}, HPTCDL '14, pages 57--62,
  Piscataway, NJ, USA, 2014. IEEE Press.

\bibitem{Olver_code2}
S.~Olver and M.~Webb.
\newblock {SpectralMeasures.jl}.
\newblock {\em {github (online)
  https://github.com/JuliaApproximation/SpectralMeasures.jl}}, 2018.

\bibitem{parlett1998symmetric}
B.~N. Parlett.
\newblock {\em The symmetric eigenvalue problem}, volume~20 of {\em Classics in
  Applied Mathematics}.
\newblock Society for Industrial and Applied Mathematics (SIAM), Philadelphia,
  PA, 1998.

\bibitem{pruess1993mathematical}
S.~Pruess and C.~T. Fulton.
\newblock Mathematical software for {S}turm-{L}iouville problems.
\newblock {\em ACM Transactions on Mathematical Software (TOMS)},
  19(3):360--376, 1993.

\bibitem{putnam1979operators}
C.~R. Putnam.
\newblock Operators satisfying a {$G_{1}$} condition.
\newblock {\em Pacific Journal of Mathematics}, 84(2):413--426, 1979.

\bibitem{Rappaz1}
J.~Rappaz.
\newblock Approximation of the spectrum of a non-compact operator given by the
  magnetohydrodynamic stability of a plasma.
\newblock {\em Numerische Mathematik}, 28(1):15--24, 1977.

\bibitem{rappaz1997spectral}
J.~Rappaz, J.~Sanchez~Hubert, E.~Sanchez~Palencia, and D.~Vassiliev.
\newblock On spectral pollution in the finite element approximation of thin
  elastic ``membrane'' shells.
\newblock {\em Numerische Mathematik}, 75(4):473--500, 1997.

\bibitem{reed1975methods}
M.~Reed and B.~Simon.
\newblock {\em Methods of modern mathematical physics. {II}. {F}ourier
  analysis, self-adjointness}.
\newblock Academic Press [Harcourt Brace Jovanovich, Publishers], New
  York-London, 1975.

\bibitem{Ru99a}
S.~Rump.
\newblock {INTLAB - INTerval LABoratory}.
\newblock In T.~Csendes, editor, {\em {Developments~in~Reliable Computing}},
  pages 77--104. Kluwer Academic Publishers, Dordrecht, 1999.

\bibitem{schrodinger1940method}
E.~Schr{\"o}dinger.
\newblock A method of determining quantum-mechanical eigenvalues and
  eigenfunctions.
\newblock {\em Proc. Roy. Irish Acad. Sect. A.}, 46:9--16, 1940.

\bibitem{Schwinger}
J.~Schwinger.
\newblock Unitary operator bases.
\newblock {\em Proc. Nat. Acad. Sci. U.S.A.}, 46:570--579, 1960.

\bibitem{shargorodsky2008level}
E.~Shargorodsky.
\newblock On the level sets of the resolvent norm of a linear operator.
\newblock {\em Bull. Lond. Math. Soc.}, 40(3):493--504, 2008.

\bibitem{Shargorodsky1}
E.~Shargorodsky.
\newblock On the limit behaviour of second order relative spectra of
  self-adjoint operators.
\newblock {\em Journal of Spectral Theory}, 3(4):535--552, 2013.

\bibitem{ShubDuke}
M.~Shub and S.~Smale.
\newblock {On the intractability of {H}ilbert's {N}ullstellensatz and an
  algebraic version of {$NP\not=P$}?}
\newblock {\em Duke Mathematical Journal}, 81(1):47--54, 1995.

\bibitem{siegl2012metric}
P.~Siegl and D.~Krej{\v{c}}i{\v{r}}{\'\i}k.
\newblock On the metric operator for the imaginary cubic oscillator.
\newblock {\em Physical Review D}, 86(12):121702, 2012.

\bibitem{simon1983some}
B.~Simon.
\newblock Some quantum operators with discrete spectrum but classically
  continuous spectrum.
\newblock {\em Annals of Physics}, 146(1):209--220, 1983.

\bibitem{sjostrand1999asymptotic}
J.~Sj\"{o}strand and M.~Zworski.
\newblock Asymptotic distribution of resonances for convex obstacles.
\newblock {\em Acta Mathematica}, 183(2):191--253, 1999.

\bibitem{Smale2}
S.~Smale.
\newblock The fundamental theorem of algebra and complexity theory.
\newblock {\em American Mathematical Society. Bulletin.}, 4(1):1--36, 1981.

\bibitem{smale_question}
S.~Smale.
\newblock On the efficiency of algorithms of analysis.
\newblock {\em Bull. Amer. Math. Soc. (N.S.)}, 13(2):87--121, 1985.

\bibitem{Smale_Acta_Numerica}
S.~Smale.
\newblock Complexity theory and numerical analysis.
\newblock In {\em Acta numerica, 1997}, volume~6 of {\em Acta Numer.}, pages
  523--551. Cambridge Univ. Press, Cambridge, 1997.

\bibitem{Szego}
G.~Szeg{\H o}.
\newblock Beitr\"{a}ge zur {T}heorie der {T}oeplitzschen {F}ormen.
\newblock {\em Mathematische Zeitschrift}, 6(3-4):167--202, 1920.

\bibitem{tai2006simpleness}
T.~D. Tai.
\newblock On the simpleness of zeros of {S}tokes multipliers.
\newblock {\em Journal of Differential Equations}, 223(2):351--366, 2006.

\bibitem{teschl2000jacobi}
G.~Teschl.
\newblock {\em Jacobi operators and completely integrable nonlinear lattices},
  volume~72 of {\em Mathematical Surveys and Monographs}.
\newblock American Mathematical Society, Providence, RI, 2000.

\bibitem{turbiner2010double}
A.~V. Turbiner.
\newblock Double well potential: perturbation theory, tunneling, {WKB} (beyond
  instantons).
\newblock {\em International Journal of Modern Physics A}, 25(02n03):647--658,
  2010.

\bibitem{turing1937computable}
A.~M. Turing.
\newblock On {C}omputable {N}umbers, with an {A}pplication to the
  {E}ntscheidungsproblem.
\newblock {\em Proc. London Math. Soc. (2)}, 42(3):230--265, 1936.

\bibitem{webb2021spectra}
M.~Webb and S.~Olver.
\newblock Spectra of {J}acobi operators via connection coefficient matrices.
\newblock {\em Communications in Mathematical Physics}, 382(2):657--707, 2021.

\bibitem{Weinberger}
S.~Weinberger.
\newblock {\em Computers, Rigidity, and Moduli: The Large-Scale Fractal
  Geometry of Riemannian Moduli Space}.
\newblock Princeton University Press, USA, 2004.

\bibitem{weniger1996convergent}
E.~J. Weniger.
\newblock A convergent renormalized strong coupling perturbation expansion for
  the ground state energy of the quartic, sextic, and octic anharmonic
  oscillator.
\newblock {\em Annals of Physics}, 246(1):133--165, 1996.

\bibitem{weyl1950theory}
H.~Weyl.
\newblock {\em The theory of groups and quantum mechanics}.
\newblock Dover Publications, Inc., New York, 1950.

\bibitem{MR0184422}
J.~H. Wilkinson.
\newblock {\em The algebraic eigenvalue problem}.
\newblock Clarendon Press, Oxford, 1965.

\bibitem{howmany}
Z.~Zhang.
\newblock How many numerical eigenvalues can we trust?
\newblock {\em Journal of Scientific Computing}, 65(2):455--466, 2015.

\bibitem{zhao2007spurious}
S.~Zhao.
\newblock On the spurious solutions in the high-order finite difference methods
  for eigenvalue problems.
\newblock {\em Computer methods in applied mechanics and engineering},
  196(49-52):5031--5046, 2007.

\bibitem{zworski1999resonances}
M.~Zworski.
\newblock Resonances in physics and geometry.
\newblock {\em Notices Amer. Math. Soc.}, 46(3):319--328, 1999.

\bibitem{Zworski1}
M.~Zworski.
\newblock Scattering resonances as viscosity limits.
\newblock In {\em Algebraic and Analytic Microlocal Analysis}, pages 635--654.
  Springer, 2013.

\end{thebibliography}

\newpage

\appendix
\newgeometry{left=1in,right=1in,top=1.15in,bottom=0.4in}
\section{Computational Routines}
\label{append_pseudo}

We provide pseudocode for the algorithms of this paper, all of which provide sharp classifications in the SCI hierarchy.

\small

\vspace{1mm}

 \begin{algorithm}[H]
 \SetKwInOut{Input}{Input}\SetKwInOut{Output}{Output}
\SetKwProg{Fn}{Function}{}{end} 
 \SetKwFunction{CompInvg}{CompInvg}
 \SetKwData{Left}{left}\SetKwData{This}{this}\SetKwData{Up}{up}
 \Fn{\CompInvg{$n$, $y$, $g$}}{
  \Input{$n \in \mathbb{N},$ $y \in \mathbb{R}_+$, $g: \mathbb{R}_+ \rightarrow \mathbb{R}_+$}
  \Output{$m \in \mathbb{R}_+$, an approximation of $g^{-1}(y)$}
 \BlankLine
$m = \min\{k/n:k\in\mathbb{N},g(k/n)>y\}$ 
 \BlankLine
   }

  \SetKwInOut{Input}{Input}\SetKwInOut{Output}{Output}
\SetKwProg{Fn}{Function}{}{end} 
 \SetKwFunction{DistSpec}{DistSpec}
  \SetKwFunction{IsPosDef}{IsPosDef}
 \SetKwData{Left}{left}\SetKwData{This}{this}\SetKwData{Up}{up}
\SetKwFunction{Union}{Union}\SetKwFunction{FindCompress}{FindCompress}
 \Fn{\DistSpec{$A$,$n$,$z$,$f(n)$}}{
  \Input{$n \in \mathbb{N},$ $f(n) \in \mathbb{N}$, matrix $A$, $z \in \mathbb{C}$}
  \Output{$y \in \mathbb{R}_+$, an approximation of $\left\|R(z,A)\right\|^{-1}$}
 \BlankLine
 $B = (A-zI)(1:f(n),1:n)$\\
 $C = (A-zI)^*(1:f(n),1:n)$\\
 $S = B^*B$\\
 $T = C^*C$\\
 $\nu =  1$, $l = 0$\\
 \While{$\nu = 1$}{
 $l = l+1$\\
 $p = \IsPosDef(S - \frac{l^2}{n^2})$\\
 $q = \IsPosDef(T - \frac{l^2}{n^2})$\\
 $\nu = \min(p,q)$
     }
 $y = \frac{l}{n}$
   }

 \SetKwInOut{Input}{Input}\SetKwInOut{Output}{Output}
 \BlankLine
\SetKwProg{Fn}{Function}{}{end} 
 \SetKwFunction{CompSpecUB}{CompSpecUB}
  \SetKwFunction{CompInvg}{CompInvg}
\SetKwFunction{Grid}{Grid}
 \SetKwData{Left}{left}\SetKwData{This}{this}\SetKwData{Up}{up}
  \SetKwFunction{DistSpec}{DistSpec}
\SetKwFunction{Union}{Union}\SetKwFunction{FindCompress}{FindCompress}
 \Fn{\CompSpecUB{$A$, $n$, $\{g_m\}$, $f(n)$, $c_n$}}{
  \Input{$n\in\mathbb{N}$, $f(n) \in \mathbb{N}$, $c_n\in\mathbb{R}_+$ (bound on dispersion), $g_m: \mathbb{R}_+ \rightarrow \mathbb{R}_+$, $A \in \Omega_g$}
  \Output{$\Gamma_n(A) \subset \mathbb{C}$, an approximation of $\mathrm{Sp}(A)$, and $E_n(A) \in \mathbb{R}_+$, the error bound}
  \BlankLine
$G =$ \Grid{$n$} (see (\ref{grid_def_un}))\\

\For{$z \in G$}{$F(z) = \DistSpec{$A$, $n$, $z$, $f(n)$}$\\
 \eIf{$F(z) \leq (\left|z\right|^2+1)^{-1}$}{
 
\For{$w_j \in B_{\CompInvg{$n$, $F(z)$, $g_{\left\lceil \left|z\right|\right\rceil}$}}(z)\cap{}G=\{w_1,...,w_k\}$}{ 
 $F_{j} = \DistSpec{$A$, $n$, $w_j$, $f(n)$}$
 }
 $M_z = \{w_{j} : F_{j} = \min_{q}\{F_{q}\}\}$
 }
{
$M_z = \emptyset$
}
 }
 $\Gamma_n(A) = \cup_{z \in G} M_z$\\
 $E_n(A) = \max_{z \in \Gamma_n(A)} \{\CompInvg{$n$, \DistSpec{$A$, $n$, $z$, $f(n)$}\!$+c_n$, $g_{\left\lceil \left|z\right|\right\rceil}$}\}$
 }
 \centering
\caption{The routine \texttt{CompSpecUB} computes spectra of unbounded operators on $l^2(\mathbb{N})$ (or, more generally, graphs) using the subroutines \texttt{CompInvg} and \texttt{DistSpec} described above, and provides $\Sigma_1$ error control. The subroutine \texttt{IsPosDef} checks whether a matrix is positive definite and is a standard routine that can be implemented in a myriad of ways. In practice, the while loop in \texttt{DistSpec} is replaced by a much more efficient interval bisection method. An alternative method for sparse matrices (which, however, does not rigorously guarantee an error bound on the smallest singular values but still gives an upper bound) is to compute the smallest singular values of the rectangular matrices using iterative methods.}\label{alg__1}
 \end{algorithm}
\newpage
 
\begin{algorithm}[H]
 \SetKwInOut{Input}{Input}\SetKwInOut{Output}{Output}
 \BlankLine
\SetKwProg{Fn}{Function}{}{end} 
 \SetKwFunction{PseudoSpecUB}{PseudoSpecUB}
\SetKwFunction{Grid}{Grid}
 \SetKwData{Left}{left}\SetKwData{This}{this}\SetKwData{Up}{up}
  \SetKwFunction{DistSpec}{DistSpec}
\SetKwFunction{Union}{Union}\SetKwFunction{FindCompress}{FindCompress}
 \Fn{\PseudoSpecUB{$A$, $n$, $f(n)$, $c_n$, $\epsilon$}}{
  \Input{$n\in\mathbb{N}$, $f(n) \in \mathbb{N}$, $c_n \in \mathbb{R}_{+}$, $A \in \hat\Omega$, $\epsilon > 0$}
  \Output{$\Gamma_n(A) \subset \mathbb{C}$, an approximation of $\mathrm{Sp}_{\epsilon}(A)$}
  \BlankLine
$G =$ \Grid{$n$}\\
\For{$z \in G$}{$F(z) = \DistSpec{$A$, $n$, $z$, $f(n)$}\!+c_n$
 }
 $\Gamma_n(A) = \bigcup \{z \in G \, \vert F(z)<\epsilon\}$
 } 
 \caption{\texttt{PseudoSpecUB} computes $\Gamma_n(A)\subset\mathrm{Sp}_\epsilon(A)$ with $\lim_{n\rightarrow\infty}\Gamma_n(A)=\mathrm{Sp}_\epsilon(A)$.}
\end{algorithm} 
 
\begin{algorithm}[H]
 \SetKwInOut{Input}{Input}\SetKwInOut{Output}{Output}
\SetKwProg{Fn}{Function}{}{end} 
 \SetKwFunction{TestSpec}{TestSpec}
\Fn{\TestSpec{$n_1$, $n_2$, $K_{n_2}$, $\gamma_{n_1}(z,A)$}}{
  \Input{$n_1,n_2 \in \mathbb{N}$, $K_{n_2}$ an approximation of $K$, access to evaluation of $\gamma_{n_1}(z,A)$.}
  \Output{$\Gamma_{n_2,n_1}(A)$, an approximation of $\Xi_3(A)$.}
	$\Gamma_{n_2,n_1}(A)=\text{``Does there exist some $z\in K_{n_2}$ such that $\gamma_{n_1}(z,A)<1/2^{n_2}$?''}$
 }

 \SetKwInOut{Input}{Input}\SetKwInOut{Output}{Output}
\SetKwProg{Fn}{Function}{}{end} 
 \SetKwFunction{TestPseudoSpec}{TestPseudoSpec}
\Fn{\TestPseudoSpec{$n_1$, $n_2$, $K_{n_2}$, $\gamma_{n_1}(z,A)$, $\epsilon$}}{
  \Input{$n_1,n_2 \in \mathbb{N}$, $K_{n_2}$ an approximation of $K$, access to evaluation of $\gamma_{n_1}(z,A)$, $\epsilon>0$.}
  \Output{$\Gamma_{n_2,n_1}(A)$, an approximation of $\Xi_4(A)$.}
	$\Gamma_{n_2,n_1}(A)=\text{``Does there exist some $z\in K_{n_2}$ such that $\gamma_{n_1}(z,A)<1/2^{n_2}+\epsilon$?''}$
 }

\caption{\texttt{TestSpec} solves $\{\Xi_3,\hat\Omega\times\mathcal{K}(\mathbb{C})\}$ (does $K$, compact, intersect $\mathrm{Sp}(A)$) with input $K_{n_2}$ and access to $\gamma_{n_1}(z,A)$ (e.g., via \texttt{DistSpec}). Similarly, \texttt{TestPseudoSpec} solves $\{\Xi_4,\hat\Omega\times\mathcal{K}(\mathbb{C})\}$ (does $K$, compact, intersect $\mathrm{Sp}_{\epsilon}(A)$) with input $K_{n_2}$, $\epsilon>0$ and access to $\gamma_{n_1}(z,A)$.}   
\end{algorithm}

\begin{algorithm}[H]
 \SetKwInOut{Input}{Input}\SetKwInOut{Output}{Output}
\SetKwProg{Fn}{Function}{}{end} 
 \SetKwFunction{SpecGap}{SpecGap}
 \SetKwData{Left}{left}\SetKwData{This}{this}\SetKwData{Up}{up}
\SetKwFunction{Union}{Union}\SetKwFunction{FindCompress}{FindCompress}
 \Fn{\SpecGap{$n_1$, $n_2$, $P_{n_1}AP_{n_1}$}}{
  \Input{$n_1,n_2\in\mathbb{N}$, $P_{n_1}AP_{n_1}$ the square truncation of the matrix $A$}
  \Output{$\Gamma_{n_2,n_1}(A)$, an approximation of $\Xi_{\mathrm{gap}}(A)$.}
 \BlankLine

\eIf{$n_1=1$}{
 Set $\Gamma_{n_2,n_1}(A)=1$\\
 }
{\For{$k\in\{2,...,n_1\}$}{Compute $l_k=\mu_2^{(k)}-\mu_1^{(k)}+\epsilon_k$, $\left|\epsilon_k\right|\leq 1/k$,\\
using Corollary \ref{eigs_fin_mat} and notation of Lemmas \ref{case1_fs}, and \ref{case2_fs} applied to $P_kAP_k$.\\
}
Set $J_{n_2}^1=[0,1/(2n_2)]$ and $J_{n_2}^2=(1/n_2,\infty)$\\
\eIf{$\{l_k:k\in\{1,...,n_1\}\cap( J_{n_2}^1\cup J_{n_2}^2)\}=\emptyset$}{
 Set $\Gamma_{n_2,n_1}(A)=0$\\
 }
{Let $\tilde{k}\leq n_1$ be maximal with $l_{\tilde{k}}\in J_{n_2}^1\cup J_{n_2}^2$.\\
\eIf{$l_{\tilde{k}}\in J_{n_2}^1$}{
 Set $\Gamma_{n_2,n_1}(A)=0$.
 }
{Set $\Gamma_{n_2,n_1}(A)=1$.
 }
 }
}
}
\caption{\texttt{SpecGap} solves the spectral gap problem in Theorem \ref{class_thdfjlwdjfkl}, and requires an eigenvalue solver to implement Corollary \ref{eigs_fin_mat} to compute all $n$ eigenvalues of $P_nAP_n$ to arbitrary precision.}
\end{algorithm}
\vspace{0.3cm}

\vspace{0.3cm}
\begin{algorithm}[H]
 \SetKwInOut{Input}{Input}\SetKwInOut{Output}{Output}
\SetKwProg{Fn}{Function}{}{end} 
 \SetKwFunction{SpecClass}{SpecClass}
 \SetKwData{Left}{left}\SetKwData{This}{this}\SetKwData{Up}{up}
\SetKwFunction{Union}{Union}\SetKwFunction{FindCompress}{FindCompress}
 \Fn{\SpecClass{$n_1$, $n_2$, $A$, $f$}}{
  \Input{$n_1,n_2\in\mathbb{N}$, $A \in \widehat{\Omega}_{\mathrm{SA}}^f$, $f$ the dispersion bounding function}
	\Output{$\Gamma_{n_2,n_1}(A)$, an approximation of $\Xi_{\mathrm{class}}(A)$.}
\eIf{$n_1\leq n_2$}{
 Set $\Gamma_{n_2,n_1}(A)=1$\\
 }
{\For{$n\in\{1,...,n_1\}$ and $j\in\{1,...,n-1\}$}{Compute $l_{n}^j=\mu_{j+1}^{(n)}-\mu_1^{(n)}+\epsilon_{n}^j$, $\left|\epsilon_{n}^j\right|\leq 1/{n}$,\\
using Corollary \ref{eigs_fin_mat} and notation of Lemmas \ref{case1_fs}, and \ref{case2_fs} applied to $P_{n}AP_{n}$.\\
}
Set $J_{n_2}^1=[0,1/(2n_2)]$ and $J_{n_2}^2=(1/n_2,\infty)$\\
{\For{$j\in\{1,...,n_2\}$}{Let $k^j_{n_2,n_1}$ be maximal with $1\leq k^j_{n_2,n_1}<n_1$ such that $l^j_{k_{n_2,n_1}^j}\in J_{n_2}^1\cup J_{n_2}^2$ if such $k_{n_2,n_1}^j$ exists.\\
}
\eIf{$k_{n_2,n_1}^1$ exists with $l^1_{k_{n_2,n_1}^1}\in J_{n_2}^2$}{
 Set $\Gamma_{n_2,n_1}(A)=1$\\
 }
{\eIf{any $k_{n_2,n_1}^m$ exists with $l^m_{k_{n_2,n_1}^m}\in J_{n_2}^2$ for $2\leq m\leq n_2$}{
 Set $\Gamma_{n_2,n_1}(A)=2$\\
 }
{
\For{$k\in\{1,...,n_1\}$}{Set $a_k(A)=\min_{x\in\hat{\Gamma}_k(A)}\{x+E(k,x)\}$,\\
Set $q_k=E(k,a_k(A)+1/{n_2})+1/k$.\\
}
Set $b_{n_2,n_1}(A)=\min\{q_k:1\leq k \leq n_1\}$.\\
\eIf{$b_{n_2,n_1}(A)\geq 1/{n_2}$}{
 Set $\Gamma_{n_2,n_1}(A)=3$\\
 }
{Set $\Gamma_{n_2,n_1}(A)=4.$
}
}
}
}
}
}
\caption{\texttt{SpecClass} solves spectral classification problem in Theorem \ref{class_thdfjlwdjfkl}. As well as an eigenvalue solver to implement Corollary \ref{eigs_fin_mat}, we need the algorithm \texttt{CompSpecUB} denoted by $\hat{\Gamma}_n$, which computes the spectrum together with an error bound $E(n,\cdot)$ on the output.}
\end{algorithm}

\newpage

\begin{algorithm}[H]
 \SetKwInOut{Input}{Input}\SetKwInOut{Output}{Output}
\SetKwProg{Fn}{Function}{}{end} 
 \SetKwFunction{DiscreteSpec}{DiscreteSpec}
  \SetKwFunction{IsPosDef}{IsPosDef}
 \SetKwData{Left}{left}\SetKwData{This}{this}\SetKwData{Up}{up}
\SetKwFunction{Union}{Union}\SetKwFunction{FindCompress}{FindCompress}
 \Fn{\DiscreteSpec{$n_1$, $n_2$, $\hat{\Gamma}_{n_1}(A)$, $E({n_1},\cdot)$, $\tilde{\Gamma}_{n_2,n_1}(A)$}}{
  \Input{$n_1,n_2\in\mathbb{N}$, $\hat{\Gamma}_{n_1}(A)$ an approximation of $\mathrm{Sp}(A)$, error estimate $E({n_1},\cdot)$ over $\hat{\Gamma}_{n_1}(A)$, $\tilde{\Gamma}_{n_2,n_1}(A)$ an approximation of $\mathrm{Sp}_{\mathrm{ess}}(A)$}
  \Output{$\Gamma_{n_2,n_1}(A)$, an approximation of $\mathrm{cl}(\mathrm{Sp}_d(A))$.}
 \BlankLine
\eIf{$n_2\leq n_1$}{
\For{$n_2\leq k\leq n_1$}{ 
 $\zeta_{k,n_1}(A)=\{z\in\hat{\Gamma}_{n_1}(A):E({n_1},z)<\mathrm{dist}(z,\tilde{\Gamma}_{k,n_1}(A))-1/k\}$\\

\For{$z,w \in \zeta_{k,n_1}(A)$}{$z\sim_{n_1} w$ if and only if $B_{E({n_1},w_j)}(w_j)\cap B_{E({n_1},w_{j+1})}(w_{j+1})\neq\emptyset$ for some $z=w_1,w_2,...,w_n=w\in\zeta_{k,n_1}(A)$\\
}
This gives equivalence classes $[z_1],...,[z_m]$\\
\For{$j\in\{1,...,m\}$}{Choose $z_{k_j}\in[z_j]$ of minimal $E({n_1},\cdot)$\\
}
\eIf{$\cup_{j\in\{1,...,m\}} \{z_{k_j}\}\neq \emptyset$}{
$\Phi_{k,n_1}(A) = \cup_{j\in\{1,...,m\}} \{z_{k_j}\}$\\
}
{$\Phi_{k,n_1}(A)=\emptyset$.\\
}
 }
\eIf{At least one of $\Phi_{k,n_1}(A)\neq\emptyset$}{
$\Gamma_{n_2,n_1}(A)=\Phi_{k,n_1}(A)=\emptyset$ with $k$ minimal such that $\zeta_{k,n_1}(A)\neq\emptyset$\\}
{$\Gamma_{n_2,n_1}(A)=\{0\}$\\}
}
{$\Gamma_{n_2,n_1}(A)=\{0\}$.\\}
}
\SetKwInOut{Input}{Input}\SetKwInOut{Output}{Output}
\SetKwProg{Fn}{Function}{}{end} 
 \SetKwFunction{Multiplicity}{Multiplicity}
  \SetKwFunction{IsPosDef}{IsPosDef}
 \SetKwData{Left}{left}\SetKwData{This}{this}\SetKwData{Up}{up}
\SetKwFunction{Union}{Union}\SetKwFunction{FindCompress}{FindCompress}
 \Fn{\Multiplicity{$A$, $n_1$, $n_2$, $f(n_1)$, $z_{n_1}$, $d_{n_1}$}}{
  \Input{$n_1,n_2 \in \mathbb{N},$ $f(n_1) \in \mathbb{N}$, $A \in \Omega_{\mathrm{N}}^d$, $z_{n_1} \in \mathbb{C}$, $d_{n_1}$}
  \Output{$h_{n_2,n_1}(A,z_{n_1})$, an integer approximation of $h(A,z)$, where $z_{n_1}\rightarrow z$.}
 \BlankLine
 $B=[(A-z_{n_1}I)(1:f(n_1),1:{n_1})]^*[(A-z_{n_1}I)(1:f({n_1}),1:{n_1})]-(1/{n_1}-d_{n_1}) I$\\
$[L,D,P^T]=\texttt{ldl}(B)$ (compute $L,D,P$ such that $PBP^T=LDL^*$)

 \eIf{$D$ is diagonal}{
 
Find $J$ the set of $j$ with $D(j,j)<0$\\
$h_{n_2,n_1}(A,z_{n_1})=\left|J\right|$
 }
{
Find $J_1$ the set of $j$ with size $1$ block $D(j,j)<0$\\
Find $J_2$ the number of negative eigenvalues corresponding to size $2$ blocks by looking at trace and\\determinant\\
$h_{n_2,n_1}(A,z_{n_1})=\left|J_1\right|+\left|J_2\right|$
 }
 }  
\caption{\texttt{DiscreteSpec} computes the closure of the discrete spectrum of $A$. The approximation of the essential spectrum, $\tilde{\Gamma}_{n_2,n_1}(A)$, is described in the proof of convergence and was given in \cite{ben2015can}. Moreover, $\lim_{n_1\rightarrow\infty}\Gamma_{n_2,n_1}(A)\subset \mathrm{Sp}_d(A)$ (see (\ref{dfgviwh})), and converges up to $\mathrm{cl}(\mathrm{Sp}_d(A))$ as $n_2\rightarrow\infty$. Given $z_{n_1}\rightarrow z$, \texttt{Multiplicity} computes the multiplicity, $h(A,z)$, of the eigenvalue $z$.}
\end{algorithm}

\newpage

\begin{algorithm}[H]
 \SetKwInOut{Input}{Input}\SetKwInOut{Output}{Output}
\SetKwProg{Fn}{Function}{}{end} 
 \SetKwFunction{DiscSpecEmpty}{DiscSpecEmpty}
  \SetKwFunction{IsPosDef}{IsPosDef}
 \SetKwData{Left}{left}\SetKwData{This}{this}\SetKwData{Up}{up}
\SetKwFunction{Union}{Union}\SetKwFunction{FindCompress}{FindCompress}
 \Fn{\DiscSpecEmpty{$n_1$, $n_2$, $\hat{\Gamma}_{n_1}(A)$, $E({n_1},\cdot)$, $\tilde{\Gamma}_{n_2,n_1}(A)$}}{
  \Input{$n_1,n_2\in\mathbb{N}$, $\hat{\Gamma}_{n_1}(A)$ an approximation of $\mathrm{Sp}(A)$, error estimate $E({n_1},\cdot)$ over $\hat{\Gamma}_{n_1}(A)$, $\tilde{\Gamma}_{n_2,n_1}(A)$ an approximation of $\mathrm{Sp}_{\mathrm{ess}}(A)$}
  \Output{$\Gamma_{n_2,n_1}(A)$, an approximation of $\Xi_2^d(A)$.}
 \BlankLine
 $\zeta_{n_2,n_1}(A)=\{z\in\hat{\Gamma}_{n_1}(A):E({n_1},z)<\mathrm{dist}(z,\tilde{\Gamma}_{n_2,n_1}(A))-1/n_2\}$\\

\eIf{$\zeta_{n_2,n_1}(A)\neq \emptyset$}{
$\Gamma_{n_2,n_1}(A) = 1$\\
}
{$\Gamma_{n_2,n_1}(A)=0$.
}
 }
\caption{\texttt{DiscSpecEmpty} computes $\Xi_2^d(A)$ (is the discrete spectrum non-empty) in two limits for $A\in\Omega_{\mathrm{N}}^f$, the class of bounded normal operators with known dispersion bounding function. The inputs are the algorithm $\hat{\Gamma}_n$ computing the spectrum (for example, \texttt{CompSpecUB}), the error control $E(n,z)$ (that converges to the true error uniformly on compact subsets of $\mathbb{C}$) and the height two tower $\tilde{\Gamma}_{n_2,n_1}$ presented in \cite{ben2015can} to compute the essential spectrum. }
\end{algorithm}

\begin{algorithm}[H]
 \SetKwInOut{Input}{Input}\SetKwInOut{Output}{Output}
\SetKwProg{Fn}{Function}{}{end} 
 \SetKwFunction{ApproxEigenvector}{ApproxEigenvector}
  \SetKwFunction{IsPosDef}{IsPosDef}
 \SetKwData{Left}{left}\SetKwData{This}{this}\SetKwData{Up}{up}
\SetKwFunction{Union}{Union}\SetKwFunction{FindCompress}{FindCompress}
 \Fn{\ApproxEigenvector{$A$, $n$, $f(n)$, $z_n$, $E(n,z_n)$, $\delta$}}{
  \Input{$n \in \mathbb{N},$ $f(n) \in \mathbb{N}$, $A$, $z_n \in \mathbb{C}$, error bound $E(n,z_n)$ and tolerance $\delta>0$}
  \Output{$x_n \in \mathbb{C}^n$, a vector satisfying $\left\|(A-z_nI)x_n\right\|\leq \left\|x_n\right\|(E(n,z_n)+c_n+\delta)$}
 \BlankLine
 $\epsilon=(E(n,z_n)+\delta)^2$\\
 $B=[(A-z_nI)(1:f(n),1:n)]^*[(A-z_nI)(1:f(n),1:n)]-\epsilon I$\\
 $[L,D,P^T]=\texttt{ldl}(B)$ (compute $L,D,P$ such that $PBP^T=LDL^*$)

 \eIf{$D$ is diagonal}{
 
Find $i$ with $D(i,i)<0$\\
$y=e_i$\\
 }
{
Find $y$ eigenvector of $D$ with eigenvalue $<0$ 
 }
Solve upper triangular system $y=L^*Px_n$ for $Px_n$, apply $P^T$ to obtain $x_n$, and then normalise to precision $\delta$.
 }
\caption{\texttt{ApproxEigenvector} takes as input $A$, $n$, $f(n)$, $z_n$ and the bound $E(n,z_n)$ where $\sigma_{\mathrm{inf}}(P_{f(n)}(A-z_{n}I)|_{P_{n}(l^2(\mathbb{N}))})\leq E(n,z_n).$ Given $\delta>0$, it computes an approximate eigenvector $x_n$ (of finite support) satisfying $\left\|(A-z_{n}I)x_{n}\right\|\leq \left\|x_n\right\|(E(n,z_n)+c_n+\delta)\text{ and }1-\delta<\left\|x_n\right\|<1+\delta.$}
\end{algorithm}

\end{document}